\DeclareRobustCommand{\SkipTocEntry}[5]{}
\newtheorem{theorem}{Theorem}[section]
\newtheorem{lemma}[theorem]{Lemma}
\newtheorem{conjecture}[theorem]{Conjecture}
\newtheorem{proposition}[theorem]{Proposition}
\newtheorem{corollary}[theorem]{Corollary}
\newtheorem{definition}[theorem]{Definition}
\newtheorem{question}[theorem]{Question}
\theoremstyle{definition}
\newtheorem{remark}[theorem]{Remark}
\DeclareMathOperator{\Vol}{\mathrm{Vol}}
\DeclareMathOperator{\vol}{\mathrm{vol}}
\DeclareMathOperator{\codim}{codim}
\DeclareMathOperator{\diff}{\mathrm{Diff}}
\newcommand{\bQ}{\mathbb{Q}}
\newcommand{\bR}{\mathbb{R}}
\newcommand{\supp}{{\rm Supp}}
\newcommand{\mys}{\mathcal{Y}_s}
\newcommand{\mcy}{\mathcal{Y}}
\newcommand{\mcx}{\mathcal{X}}
\address{Yau Mathematical Sciences Center, 
JingZhai, 
Tsinghua University, 
Hai Dian District, 
Beijing, 
China
100084.} 
\email{birkar@tsinghua.edu.cn}
\address{Department of Mathematics.
Princeton University,
Princeton,
NJ 08540, 
USA.} 
\email{gdi@math.princeton.edu}
\address{Universit\`a degli Studi di Milano,
Dipartimento di Matematica ``F. Enriques'', 
Via Saldini 50, 
20124 Milano (MI), 
Italy.}
\email{roberto.svaldi@unimi.it}
\author{Caucher Birkar, Gabriele Di Cerbo, and Roberto Svaldi}
\title[Boundedness of elliptic Calabi--Yau varieties with section]{Boundedness of elliptic Calabi--Yau varieties with a rational section}
\begin{document}
\begin{abstract}
We show that for each fixed dimension 
$d\geq 2$, 
the set of 
$d$-dimensional 
klt elliptic varieties with numerically trivial canonical bundle is bounded up to isomorphism in codimension one, provided that the torsion index of the canonical class is bounded and the elliptic fibration admits a rational section.
This case builds on an analogous boundedness result for the set of rationally connected log Calabi-Yau pairs with bounded torsion index. 
In dimension 
$3$, 
we prove the more general statement that the set of 
$\epsilon$-lc pairs 
$(X,B)$ 
with 
$-(K_X +B)$ 
nef and rationally connected 
$X$ 
is bounded up to isomorphism in codimension one.
\end{abstract}

\subjclass{Primary: 14J32. Secondary: 14E30 14J10 14J81}

\keywords{Calabi--Yau varieties, log Calabi--Yau pairs, boundedness, elliptic fibrations.}

\thanks{
CB was supported by a grant of the Leverhulme Trust and a grant of the Royal Society.
GD was supported by the NSF under grants numbers DMS-1702358 and DMS-1440140 while the author was in residence at the Mathematical Sciences Research Institute in Berkeley, California, during the Spring 2019 semester.
Part of this work was completed during a visit of RS to Princeton University. 
RS would like to thank Princeton University for the hospitality and the nice working environment, and J\'anos Koll\'ar for funding his visit.
RS kindly acknowledges support from Churchill College, Cambridge, the European Union's Horizon 2020 research and innovation programme under the Marie Sk\l{}odowska-Curie grant agreement No. 842071, and from the ``Programma per giovani ricercatori Rita Levi Montalcini''.
}

\maketitle
\tableofcontents

\addtocontents{toc}{\protect\setcounter{tocdepth}{1}}

\section{Introduction}\label{intro}
Throughout this paper, we work over an algebraically closed field
of characteristic 0, for instance, the field of complex numbers $\mathbb{C}$.

The central task in algebraic geometry is the classification of projective varieties.
There are two possible approaches to this end: either by identifying two distinct varieties that are isomorphic or by saying that two distinct varieties are birational equivalent (or simply, birational) if they both possess isomorphic dense open sets.
Birational equivalence, preserving many numerical and geometrical quantities, is a sufficiently coarse equivalence relation in the category of algebraic varieties; at the same time, it is more flexible than the classification by isomorphism type: 
we can modify a given variety as long as a dense open set is left untouched, thus constructing a new variety birational to the original one.
The Minimal Model Program predicts that, up to a special class of birational transformations, each projective variety decomposes into iterated fibrations with general fibres of $3$ basic types:
\begin{itemize}
	\item {\it Fano varieties}: mildly singular varieties with ample anti-canonical bundle;
	\item {\it K-trivial varieties}: mildly singular varieties with torsion canonical bundle;
	\item {\it log canonical models}: mildly singular varieties with ample canonical bundle.
\end{itemize}  
As these classes of varieties constitute the fundamental building blocks in the birational classification of algebraic varieties, the task of understanding their possible algebraic and topological structures is a central one. 

The second part of the classification scheme aims to construct compact {\it moduli spaces} that parametrize all isomorphism classes for the above 3 classes of varieties. 
For this purpose, one big question is whether or not in any given dimension there are just finitely many families of varieties for each of the 3 classes  -- perhaps after fixing certain geometric invariants. 
This property goes under the name of {\it boundedness}, see \S~\ref{bound.sect}.
This is a central question as these 3 classes of varieties are building blocks for many constructions in geometry and theoretical physics.

Showing boundedness for a certain class of varieties is a rather difficult task: in fact, one needs to embed all varieties in the chosen class in a fixed projective space, while, at the same time, controlling the degree of these embeddings. 
There have been some recent extraordinary developments on the study of boundedness for two of the building blocks introduced above:
Hacon--M\textsuperscript{c}Kernan--Xu proved that log canonical models with fixed volume are bounded in fixed dimension, see~\cite{MR3779687}; 
this result also implies the existence of moduli spaces of log canonical models, thanks to work of Koll\'ar--Shepherd-Barron, Alexeev, and others, see~\cite{MR3184176}.
Koll\'ar-Miyaoka-Mori showed in~\cite{MR1189503} that smooth Fano varieties form a bounded family; recently, Birkar proved the boundedness of $d$-dimensional $\epsilon$-klt Fano varieties for fixed $\epsilon >0$ and $d \in \mathbb{N}$, thus, proving the BAB Conjecture, see~\cite{Bir16b}; 
moreover, he also generalised this result to the case of log Fano fibrations,~\cites{1811.10709, 1811.107092}, where some ideas and techniques from~\cites{MR3862064, DCS} are used.

K-trivial varieties, in turn, are not bounded in the category of algebraic varieties: 
for example, projective K3 surfaces form infinitely many algebraic families, due to the surjectivity of the period mapping, see \cite{Huyb_K3}*{Chapter 7}; 
it also well-known that abelian varieties are not bounded, in any fixed dimension. 
Nonetheless, both K3 surfaces and abelian varieties of fixed dimension all fit into a unique topological family, once we consider also the non-algebraic ones. 
In dimension higher than 2, the situation is even more varied. 
In these contexts, the (possible) lack of boundedness can be remedied by fixing a polarization with bounded volume -- a classic approach in the study of moduli of algebraic varieties that do not possess an obvious polarization, cf.~\cite{MR1368632}.
More recently,~\cite{2006.11238}, Birkar has shown that polarised Calabi--Yau varieties and log Calabi--Yau pairs are well-behaved from the point of view of the Minimal Model Program and boundedness questions, much like in the case of Fano varieties or canonical models, despite the lack of uniqueness for the choice of polarization.

\subsection*{Boundedness for elliptic Calabi--Yau varieties} 
A now classical result due to Beauville and Bogomolov,~\cite{MR730926}, shows that, up to an \'etale cover, every smooth proper variety with numerically trivial canonical class can be decomposed into a product of abelian, hyperk\"ahler, and Calabi--Yau manifolds. 
A smooth proper variety $Y$ is Calabi--Yau if it is simply connected, $K_Y \sim 0$ and $H^i(Y, \mathcal{O}_Y)=0$ for $0<i<\dim Y$.

While we know that in any fixed dimension $d\geq 2$, there exists infinitely many algebraic families of abelian or algebraic hyperk\"ahler manifolds, Calabi--Yau varieties constitute a large class of K-trivial varieties for which boundedness is still a hard unresolved question.
\begin{question}
\label{cy.probl}
Fix $d \in \mathbb{N}$. 
Are $d$-dimensional Calabi--Yau manifolds bounded?
\end{question}
\noindent
The only known affirmative answer to the above question is due to Gross,~\cite{MR1272978}, who proved that boundedness holds for projective Calabi--Yau threefolds carrying an elliptic fibration, up to birational equivalence.
Recently, Gross's results was strengthened, by the third-named author together with Filipazzi and Hacon~\cite{2112.01352}, to show that projective elliptic Calabi--Yau threefolds are actually bounded.
These results are in opposition to the two-dimensional case: in fact, there exist infinitely many algebraic families of elliptic K3 surfaces.
Starting from dimension three, theoretical physicists have formulated the expectation, based on the known examples, that Calabi--Yau manifolds with sufficiently large Picard number may be modified birationally to obtain a model that is endowed with a fibration to a lower dimensional variety, see~\cite{Anderson2017}*{Conjecture, page~27}.
In view of this, then, proving boundedness results for elliptically fibred Calabi--Yau manifolds would provide an important step towards answering Question~\ref{cy.probl}, as it would show that the Picard number can be bounded from above.

It is not hard to show that if we require the presence of a section, already in the case of elliptic K3 surfaces, it is possible to construct polarizations with bounded volume, thus proving boundedness for this class of K3 surfaces.
The main goal of this paper is to prove the following generalization to arbitrary dimension of this phenomenon.

\begin{theorem}
\label{ell.cy.thm}
Fix a positive integer $d$.
Then the set of projective varieties $Y$ such that
\begin{enumerate}
    \item $Y$ is a Calabi--Yau manifold of dimension $d$ and
    \item $Y \to X$ is an elliptic fibration with a rational section $X \dashrightarrow Y$
\end{enumerate}
is bounded up to 
flops.
\end{theorem}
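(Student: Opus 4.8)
By definition a Calabi--Yau manifold $Y$ satisfies $K_Y\sim 0$, so its canonical class has torsion index $1$; thus Theorem~\ref{ell.cy.thm} is the torsion-index-one instance of the general boundedness statement for klt elliptic K-trivial varieties with a rational section, and the plan below is, in essence, how one proves that general result. The idea is to descend along the elliptic fibration $f\colon Y\to X$ to its base, recognise the base as a bounded log Calabi--Yau pair, and then reconstruct $Y$ up to flops from a bounded family of Weierstrass models.

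First I would apply the canonical bundle formula to $f$: since $K_Y\sim 0$ and $f$ has connected fibres it reads $K_X+B_X+M_X\sim_\bQ 0$, where $B_X$ is the discriminant divisor and $M_X$ the moduli part, so that $(X,B_X+M_X)$ is a generalized log Calabi--Yau pair. For an elliptic fibration the coefficients of $B_X$ lie in the finite set dictated by Kodaira's table of singular fibres, the moduli part $M_X$ is nef with $12M_X$ of controlled integrality, and the torsion index of $K_X+B_X+M_X$ divides a fixed multiple of that of $K_Y$, hence is bounded. Using the Calabi--Yau hypotheses on $Y$ — simple connectedness and $h^{i,0}(Y)=0$ for $0<i<d$, which give $\pi_1(X)=1$ (homotopy sequence of the fibration) and $h^{i,0}(X)=0$ for $0<i<\dim X$ (Leray) — together with the pseudo-effectivity of $-K_X\sim_\bQ B_X+M_X$, I would deduce that $X$ is rationally connected, for instance by showing that its maximal rationally connected fibration is trivial.

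Next I would invoke the boundedness theorem for rationally connected log Calabi--Yau pairs with bounded torsion index on which this case is built, to conclude that $(X,B_X+M_X)$ is log bounded: $X$ moves in a bounded family, $B_X$ has coefficients in a fixed finite set, and $M_X$ is a nef $\bQ$-divisor of bounded index lying in a bounded range. In particular $-K_X\sim_\bQ B_X+M_X$ is $\bQ$-Cartier of bounded index and $h^0(X,-4K_X)$, $h^0(X,-6K_X)$ stay bounded along the family. Now I use the rational section: $Y$ is birational over $X$ to the minimal Weierstrass model $g\colon W\to X$, cut out by Weierstrass data $(a_4,a_6)\in H^0(X,\sL^{\otimes 4})\times H^0(X,\sL^{\otimes 6})$ for the fundamental line bundle $\sL$; comparing $\omega_W\cong g^*(\omega_X\otimes\sL)$ with $K_W\sim_\bQ 0$ (valid since $W$ is birational to the K-trivial $Y$) forces $\sL=\oo_X(-K_X)$. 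By the previous step the data $(X,-K_X,a_4,a_6)$ vary in a bounded family, so the models $W$ are bounded. Finally $Y\dashrightarrow W$ is crepant — a birational map between a smooth projective variety and a normal projective variety both carrying numerically trivial canonical divisors is crepant, by the negativity lemma — so $Y$ is a crepant terminalization of $W$; as any two crepant terminalizations are joined by a sequence of flops, and crepant terminalizations of a bounded family of $W$'s remain bounded (after stratifying the base and running a relative minimal model program), the varieties $Y$ are bounded up to flops.

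I expect the crux to be bounding the base pair $(X,B_X+M_X)$: both the boundedness of rationally connected log Calabi--Yau pairs with bounded torsion index, and the verification that the base of our elliptic fibration genuinely satisfies its hypotheses — most delicately, pinning down and bounding the moduli part $M_X$ of the canonical bundle formula and establishing the rational connectedness of $X$. Granting a bounded base pair, bounding the Weierstrass data and carrying out the flop bookkeeping are then comparatively routine.
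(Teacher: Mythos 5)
Your first half (canonical bundle formula on the base, rational connectedness of $X$, boundedness of the resulting log Calabi--Yau base with bounded torsion index) follows the same route as the paper, which uses Corollary~\ref{rc.base.cor}, Theorem~\ref{index.fibr.thm} and Theorem~\ref{delta.lcy.thm}; note, though, that your derivation of rational connectedness from $\pi_1(X)=1$, $h^{i,0}(X)=0$ and pseudo-effectivity of $-K_X$ is only a sketch -- that implication is not a standard fact, and the paper instead deduces it from Koll\'ar--Larsen's product theorem together with a K\"unneth argument. The genuine problem lies in the second half, where you depart from the paper (which lifts boundedness from the base to $Y$ directly, via the closure of the section plus a pulled-back ample divisor, volume bounds, and Birkar's fibration boundedness, in Theorem~\ref{ell.cy.bound.base.thm}) and instead pass through a Weierstrass model $W$.

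The key step there is circular: you identify $\sL=\oo_X(-K_X)$ by comparing $\omega_W\cong g^*(\omega_X\otimes\sL)$ with ``$K_W\sim_\bQ 0$, valid since $W$ is birational to the K-trivial $Y$''. Being birational to a K-trivial variety does not give numerical triviality of the canonical class (a blow-up of a K3 surface already fails this); crepancy of $Y\dashrightarrow W$ is exactly what must be proved, not assumed, and it can genuinely fail when the Weierstrass data are non-minimal or $W$ has worse-than-canonical singularities. The supporting facts you would need -- existence of a minimal Weierstrass model with $\omega_W\cong g^*(\omega_X\otimes\sL)$ and canonical singularities -- are established over smooth bases, whereas the bases supplied by Theorem~\ref{delta.lcy.thm} are singular klt varieties bounded only up to flops, with $-K_X$ merely $\bQ$-Cartier, and in dimension $\geq 3$ the fibration $Y\to X$ need not be flat or equidimensional; handling this requires the birational surgery of Propositions~\ref{small.qfact.lcy.prop} and~\ref{bir.contr.lcy.prop} and the MMP/volume bookkeeping over the bounded base that the paper carries out in Theorem~\ref{ell.cy.bound.base.thm}, and assuming $K_W\sim_\bQ 0$ at the outset begs precisely that question. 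Your closing step (crepant $\bQ$-factorial terminalizations of a bounded family of K-trivial $W$'s are bounded up to flops, and terminal crepant models are isomorphic in codimension one) is fine in outline, but it only becomes available once boundedness and K-triviality of $W$ are actually established.
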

\noindent

Theorem~\ref{ell.cy.thm} is an important foundational result: it implies that there are just finitely many families of elliptic Calabi-Yau varieties carrying a rational section, up to birational operations that modify the variety in a subset of codimension at least $2$. 
Such birational operations do not modify excessively the geometry of the variety; for example, the Hodge diamond is left unaltered, as shown in~\cite{MR1672108}. 

In turn, Theorem~\ref{ell.cy.thm} implies that, in any fixed dimension, there are finitely many possible different Hodge diamonds of elliptic Calabi-Yau manifolds admitting a rational section.

\begin{corollary}
\label{hodge}
Fix a positive integer $d$. 
Then there exists a positive integer 
$M_d$ 
such that 
$h^{p,q}(Y)\leq M_d$ 
for any 
$p,q$, 
where 
$Y$ 
is a smooth manifold satisfying the conditions of Theorem~\ref{ell.cy.thm}.
\end{corollary}

The above result is relevant to applications to physics, where F-theory constructions are based on a choice of a Calabi-Yau manifold $Y$ with an elliptic fibration over a base space $X$. 
In this context, physical properties of the model can be translated into the geometry of the elliptically fibred Calabi-Yau and the uniform boundedness of some of their Hodge numbers is a property that has long been sought after in the field, see, for example,~\cite{MR3464638}.

When studying elliptic Calabi--Yau varieties in general, those admitting a rational section play a central role. 
Weierstrass models and Jacobian fibrations always carry a rational section and their boundedness is a fundamental step towards the proof of boundedness of elliptic Calabi--Yau threefolds in Gross' work. 
Thus, Theorem~\ref{ell.cy.thm} represents the realization of this first fundamental step in the generalization of~\cite{MR1272978} to higher dimension.
Theorem~\ref{ell.cy.thm} combined with a careful analysis of the Tate-Shafarevich group for elliptic fibrations should eventually produce a proof of the boundedness of general elliptic Calabi-Yau manifolds.

A similar result to Theorem~\ref{ell.cy.thm} holds for singular K-trivial varieties as long as we bound the torsion index of the canonical divisor.

\begin{theorem}
\label{ell.cy.thm.gen}
Fix positive integers $d, l$.
Then the set of varieties $Y$ such that
\begin{enumerate}
    \item $Y$ is klt and projective of dimension $d$,
    \item $lK_Y \sim 0$,
    \item $Y \to X$ is an elliptic fibration with a rational section $X \dashrightarrow Y$, and 
    \item $X$ is rationally connected
\end{enumerate}
is bounded up to flops.
\end{theorem}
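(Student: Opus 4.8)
The plan is to push the problem down to the base $X$ of the fibration --- where we can invoke the boundedness of rationally connected (generalised) log Calabi--Yau pairs with bounded torsion index --- and then to reconstruct $Y$ over $X$, first through its Weierstrass model and finally up to flops. \textbf{Step 1 (the base pair).} First I would apply the canonical bundle formula to $f\colon Y\to X$. Since $Y$ is klt with $lK_Y\sim0$ and $f$ has a rational section (so $f$ has no multiple fibres), there is a generalised pair $(X,B_X+M_X)$, with $B_X$ the discriminant part and $M_X$ a nef moduli part, such that $K_Y\sim_{\bQ}f^{*}(K_X+B_X+M_X)$, with $K_X+B_X+M_X\sim_{\bQ}0$ of bounded torsion index, and with $(X,B_X+M_X)$ generalised klt because $Y$ is klt. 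By hypothesis $(4)$, $X$ then underlies a rationally connected generalised log Calabi--Yau pair of bounded torsion index, so by the boundedness of such pairs $X$ lies in a bounded family carrying a bounded very ample divisor $H$; moreover the coefficients of $B_X$ lie in a fixed finite set, and $B_X$ and $M_X$ have bounded degree with respect to $H$. A caveat: this boundedness is only up to isomorphism in codimension one, but since a small birational modification of $X$ contracts no divisor, the induced modification of $Y$ is again small and keeps $Y$ klt with $lK_Y\sim0$, so we may freely replace $X$ by such a model and $Y$ by the corresponding flop.

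\textbf{Step 2 (the Weierstrass model).} The generic fibre of $f$ is an elliptic curve over $K(X)$, hence has a Weierstrass equation; taking closures inside a $\pp^{2}$-bundle over $X$ produces a Weierstrass model $\pi\colon W\to X$, a projective variety birational to $Y$ over $X$, with klt singularities and with $K_W\sim_{\bQ}\pi^{*}(K_X+\mathcal{L})$, where $\mathcal{L}=(R^{1}f_{*}\oo_Y)^{\vee}$ is the fundamental line bundle; the variety $W$ is determined by $\mathcal{L}$ together with Weierstrass coefficients $a_4\in H^{0}(X,\mathcal{L}^{\otimes4})$ and $a_6\in H^{0}(X,\mathcal{L}^{\otimes6})$. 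Comparing with Step 1, the Calabi--Yau condition pins $\mathcal{L}$ down up to the bounded correction relating it to $B_X$ and $M_X$ --- the discriminant of the Weierstrass equation is a section of $\mathcal{L}^{\otimes12}$ whose components and multiplicities are read off from $B_X$ --- so $\mathcal{L}$ is a bounded divisor class on the bounded variety $X$. Hence $h^{0}(X,\mathcal{L}^{\otimes4})$ and $h^{0}(X,\mathcal{L}^{\otimes6})$ are bounded, the pairs $(a_4,a_6)$ vary in a bounded family, and so do the Weierstrass models $W$. Thus every $Y$ in our class is crepant birational over $X$ to a member of a bounded family of klt Calabi--Yau varieties $W$.

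\textbf{Step 3 (recovering $Y$ up to flops).} The birational map $Y\dashrightarrow W$ over $X$ is crepant, since $Y$ and $W$ are governed by the same canonical bundle formula over $X$ and both have $\bQ$-trivial canonical class; hence every $\pi$-exceptional prime divisor on $Y$ is a crepant divisorial valuation of $(W,0)$. As $W$ ranges over a bounded family of klt varieties, the set of such valuations is uniformly bounded, so $Y$ is obtained from $W$ by extracting one of boundedly many subsets $S$ of these divisors and then flopping over $X$. For each fixed $S$ I would extract exactly the divisors of $S$ in families --- by fixing a resolution of the bounded family of Weierstrass models and running a relative minimal model program over $X$, whose length and steps can be controlled because the input is bounded --- producing finitely many bounded families; and any two klt models over $X$ with $\bQ$-trivial canonical class and the same divisors are linked by flops over $X$. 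Ranging over the finitely many $S$ yields the asserted bounded families, proving that the set of $Y$ is bounded up to flops.

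\textbf{The main obstacle.} Granting the boundedness of rationally connected (generalised) log Calabi--Yau pairs with bounded torsion index --- the substantial input, which is established separately --- the delicate part is keeping the birational models of Steps 1--3 coherent. The base pair, the Weierstrass model, and $Y$ itself are each pinned down only up to specified birational surgeries, and one must organise these so that the Weierstrass data $(\mathcal{L},a_4,a_6)$ genuinely vary in a bounded family --- which needs $\mathcal{L}$ extracted from the bounded \emph{geometry} of $(X,B_X+M_X)$, not merely from its numerical class --- and so that the passage from $W$ back to $Y$ stays within finitely many flop equivalence classes. That coherence, together with the boundedness of the relative minimal model programs in Step 3, is where I expect the real work to lie.
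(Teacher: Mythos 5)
Your overall skeleton (bound the base via the canonical bundle formula and the boundedness of rationally connected log Calabi--Yau structures with bounded torsion index, then rebuild $Y$ over the bounded base up to flops) agrees with the paper's first half, which is carried out via Theorem~\ref{index.fibr.thm}, Theorem~\ref{delta.lcy.thm} and Propositions~\ref{small.qfact.lcy.prop}--\ref{bir.contr.lcy.prop}. Two caveats already at Step 1: the paper does not prove boundedness of \emph{generalised} log Calabi--Yau pairs with bounded torsion index; it proves it for honest klt pairs (Theorem~\ref{delta.lcy.thm}), and the reduction to that case uses a feature special to elliptic fibres, namely that $|12M_{X'}|$ is base point free on a high model (Remark~\ref{rmk.coeff.ell.fibr}, used in Theorem~\ref{index.fibr.thm}) so that the moduli part can be replaced by an effective divisor making $(X,B)$ an actual klt pair. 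You should make that reduction explicit rather than invoking a generalised-pair statement that is not available.

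The genuine gap is in Steps 2--3, where you diverge from the paper (which never uses Weierstrass models; it works with the closure $S$ of the section, bounds $\vol(K+\tfrac12(S+G))$ via adjunction and the different, and then contracts degenerate divisors in bounded families, Theorem~\ref{ell.cy.bound.base.thm}). Your argument that $Y\dashrightarrow W$ is crepant is circular: writing $K_W\sim_{\bQ}\pi^*(K_X+\mathcal L)$ and $K_Y\sim_{\bQ}f^*(K_X+B_X+M_X)$, the discrepancies of $Y$ and $W$ agree if and only if $\mathcal L\sim_{\bQ}B_X+M_X$, i.e.\ exactly when the Weierstrass data is \emph{minimal}; for a non-minimal model ($a_4$, $a_6$ vanishing to orders $\ge 4,6$ along a divisor) $W$ is not klt, $K_W\not\sim_{\bQ}0$, and the map is not crepant, so ``both are governed by the same canonical bundle formula'' is precisely what has to be proved, not an observation. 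Producing a minimal Weierstrass model over the bases appearing here is itself nontrivial: $X$ is only klt (after the codimension-one surgeries, only $\bQ$-factorial), $\mathcal L$ is a priori just a Weil divisorial sheaf, and the minimalising twists $\mathcal L(-4P)$, $\mathcal L(-6P)$ need $P$ Cartier, so the embedding into a $\pp^2$-bundle of locally free sheaves and the adjunction computation do not come for free; likewise the boundedness of the triple $(\mathcal L,a_4,a_6)$ has to be extracted from the bounded geometry of $(X,B_X+M_X)$ (e.g.\ via the effective discriminant $\sim 12\mathcal L$ of bounded degree), which you assert but do not argue. Finally, your Step 3 quietly relies on running MMPs and extracting crepant divisors uniformly in bounded families; this is exactly the technical content of Proposition~\ref{fam.good.prop}, Lemma~\ref{degen.bound.lemma} and Proposition~\ref{MMP.deg.bound.prop} in the paper, so it can be supplied, but as written your proposal leaves both the crepancy/klt-ness of $W$ and the boundedness of the Weierstrass data unproved, and these are the heart of the matter.
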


Since in Theorem~\ref{ell.cy.thm.gen} we dropped the assumption on the simple connectedness of $Y$, there is a price to pay in terms of the assumptions we make: namely, we have to assume that the base $X$ of the fibration is rationally connected.
Considering elliptic K-trivial varieties with a rationally connected base is a rather natural restriction: indeed, it is simple to see that when $Y$ is Calabi--Yau then the base of an elliptic fibration is always rationally connected, cf. Corollary~\ref{rc.base.cor}.
More generally, imposing only that $K_Y$ be numerically trivial, it may happen that the base $X$ of the elliptic fibration $Y \to X$ is not rationally connected, as it is already easy to see by taking the product of an elliptic Calabi--Yau variety (together with its base) with another K-trivial variety.
If the base $X$ is not rationally connected, then we can consider different cases:
\begin{itemize}
\item 
if $X$ is not uniruled, then the canonical bundle formula, which we introduce in~\S~\ref{sect.cbf}, implies that the elliptic fibration
$Y \to X$
is isotrivial, see, for example,~\cite{DCS}*{Theorem 2.22};
\item 
if $X$ is uniruled but not rationally connected, then considering its MRC fibration $X \dashrightarrow Z$, to a non-uniruled variety $Z$, one can adapt the above reasoning to show that the induced rational fibration $Y \dashrightarrow Z$ is also isotrivial.
\end{itemize}
When $X$ is rationally connected, a slightly weaker result than the one in Theorem~\ref{ell.cy.thm.gen} was proven by the second and third named authors,~\cite{DCS}, for dimension up to $5$; they showed that the conclusion of the theorem holds for those elliptic Calabi--Yau manifolds with a section under some extra assumptions on the birational structure of the base $X$ of the fibration.
Hence, already in dimension $4$ and $5$, Theorem~\ref{ell.cy.thm.gen} provides a considerable improvement of our current knowledge.
What is worth noting is that, even when $X$ is rationally connected, it is possible that $X$ is birational to a product of a klt log Calabi--Yau pair $(X_1, B_1)$ and of a rationally connected K-trivial variety $X_2$, in which case $X_2$ is forced to have strictly klt singularities.
A rationally connected variety admitting this type of decomposition is said to be of product type, see~\S~\ref{rc.lcy.sect}.
Then, the same reasoning as above, regarding the isotriviality of the rational fibration $Y \dashrightarrow X_2$ can still be applied, see~\S~\ref{rc.cy.sect}. This is definitely the most delicate case to treat in proving the boundedness of elliptic fibrations.

Let us recall that Koll\'ar proved 
in~\cite{koll.elliptic}
that the existence of an elliptic fibration
$Y \to X$,
with 
$Y$ 
a Calabi--Yau manifold, 
satisfying a certain positivity condition with respect to the second Chern class of 
$Y$ 
is a deformation invariant of the any small deformation of $Y$.
Whether the same property holds also in the more singular case considered in the statement of
Theorem~\ref{ell.cy.thm.gen} is definitely a question that is worth studying and carries significant value not only in algebraic geometry but also for string theory (also for the case of an elliptic fibration that does not admit a rational section).

\subsection*{Boundedness for rationally connected log Calabi--Yau pairs}

The heart of the proof of Theorem~\ref{ell.cy.thm} relies on showing that the bases of elliptic Calabi--Yau manifolds form in turn a bounded family, up to isomorphism in codimension one. 
In the elliptically fibred setting, the canonical bundle formula implies that the base $X$ carries a structure of log Calabi-Yau pair, that is, there exists an effective divisor $B$ on $X$ with coefficients in $(0,1)$, $K_X+B$ is numerically trivial and the singularities of the pair $(X, B)$ are mild (klt), see~\S~\ref{pair.sings.sect} for the type of singularities that are allowed. 
The study of the bases of elliptic fibrations is our motivation to study the boundedness of rationally connected log Calabi--Yau pairs. 
We prove the following general result that holds in any dimension.

\begin{theorem}
\label{delta.lcy.thm}
Fix positive integers $d, l$.
Then the set of pairs $(X, B)$ such that
\begin{enumerate}
    \item $(X, B)$ is a projective klt pair of dimension $d$,
    \item $l(K_X+B) \sim 0$, and
    \item $X$ is rationally connected
\end{enumerate}
is log bounded up to flops.
\end{theorem}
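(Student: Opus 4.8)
To prove Theorem~\ref{delta.lcy.thm}, the plan is to argue by induction on the dimension $d$, in fact proving the analogous statement for generalised klt log Calabi--Yau pairs so that the canonical bundle formula can be fed back into the induction. When $d=1$ the statement is clear: $X\cong\pp^1$, the coefficients of $B$ lie in $\tfrac1l\bZ\cap(0,1)$ and $\deg B$ is bounded, so up to $\Aut(\pp^1)$ there are finitely many possibilities. For the inductive step one may first make harmless reductions: replacing $X$ by a small $\bQ$-factorialisation --- an isomorphism in codimension one, hence invisible for the conclusion --- we assume $X$ is $\bQ$-factorial, and the coefficients of $B$ lie in the fixed finite set $\tfrac1l\bZ\cap(0,1)$. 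By boundedness of complements there is $n=n(d,l)$ with $n(K_X+B)\sim 0$, so a trivialising isomorphism $\oo_X(n(K_X+B))\cong\oo_X$ is available throughout.

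The argument then divides according to whether or not $X$ is of Fano type. If $X$ is of Fano type, then from $-K_X\sim_{\bQ}B$, the klt log Calabi--Yau structure and the bounded complement index $n$ one expects, via effective birationality together with a tie-breaking/connectedness argument, that $(X,B)$ is $\delta$-lc for some $\delta=\delta(d,l)>0$; then BAB~\cite{Bir16b} (in its version for $\delta$-lc log Fano pairs) shows that the varieties $X$ form a bounded family, and bounding in addition $\supp B$ against a bounded very ample divisor --- using $n(K_X+B)\sim 0$ --- gives log boundedness of $(X,B)$. Since any two klt log Calabi--Yau minimal models in a fixed birational class differ by a sequence of flops and $(X,B)$ is such a model of itself, the phrase ``up to flops'' is then automatic.

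If $X$ is not of Fano type, then $B\sim_{\bQ}-K_X$ is pseudo-effective but not ``Fano-positive'' on the uniruled variety $X$, and the expectation is that, after replacing $(X,B)$ by a crepant small birational model --- a sequence of flops --- there is a non-trivial fibration $f\colon X\to Z$ with $0<\dim Z<d$ whose general fibre $(F,B|_F)$ is a klt log Calabi--Yau pair with $n(K_F+B|_F)\sim 0$. The canonical bundle formula of \S~\ref{sect.cbf} then produces a generalised klt log Calabi--Yau pair $(Z,B_Z+M_Z)$ with $n'(K_Z+B_Z+M_Z)\sim 0$ for a bounded $n'=n'(d,l)$, the bound on the Cartier index of the nef part $M_Z$ coming from effective adjunction and boundedness of the moduli part. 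As the image of a rationally connected variety is rationally connected, $Z$ is rationally connected, so by the (generalised) induction hypothesis $Z$ together with $B_Z$ is log bounded up to flops; and the general fibre is a klt log Calabi--Yau pair of dimension $d-\dim Z$, hence log bounded up to flops by induction as well. One then glues these data: bounding the variation of $f$ --- equivalently, showing that the moduli map of $f$ into the (quasi-projective, since the fibres are bounded) moduli space of its fibres has bounded degree against a bounded polarisation of $Z$ --- one concludes that $(X,\supp B)$ lies in a log bounded family up to flops. This gluing, combining boundedness of the base, of the fibres and of the moduli, is where the techniques of \cite{1811.10709} and of \cite{DCS} are used.

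I expect three main obstacles. First, in the non-Fano-type case, producing the fibration \emph{on $X$ itself, up to flops}: one must avoid the divisorial contractions of a naive $K_X$-minimal model program, which would destroy the isomorphism-in-codimension-one class, and instead extract the fibration from the geometry of the movable and effective cones of the Calabi--Yau pair $(X,B)$. Second, the gluing step: controlling the twisting of the family and the behaviour of the bad fibres over codimension-one points of $Z$, while remaining within the ``up to flops'' equivalence. Third, the bookkeeping of the induction --- carrying the generalised-pair statement through every stage and propagating the bounds on the coefficient set, the torsion index and the Cartier index of the nef part --- together with the extraction of the uniform $\delta$ in the Fano-type case, which is itself the delicate heart of that case.
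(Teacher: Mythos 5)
Your overall flavour (induction on dimension, a fibration produced by the MMP, the canonical bundle formula, rational connectedness of the base, input from \cite{1811.10709}) matches the paper, but the core of your non-Fano-type case has genuine gaps. First, your bound on the index of the generalised pair $(Z,B_Z+M_Z)$ on the base, which your induction needs, is attributed to ``effective adjunction and boundedness of the moduli part'': for an arbitrary fibration this is exactly Conjecture~\ref{conj:eff.semi}, which is open. The paper never needs it in that generality: it runs a full $K_X$-MMP and uses Theorem~\ref{structure} to produce a \emph{tower of Mori fibre spaces} ending either at a point or at a rationally connected $Z$ with $K_Z\equiv 0$; only at that bottom level is an index bound required, and there $B_Z=0$ and $M_Z\equiv 0$, so Floris' theorem (Theorem~\ref{flor.thm}, via Corollary~\ref{cbf.triv.cor}, with the general fibre bounded by Corollary~\ref{logCY.fibred.cor}) gives $mK_Z\sim 0$ with $m=m(d,l)$ (Theorem~\ref{mfs.tow.CYend.thm}). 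The induction is then run not for generalised log Calabi--Yau pairs (that is Conjecture~\ref{eps.glcy.conj}, treated only conditionally in \S~\ref{rc.gen.cy.sect}) but for rationally connected K-trivial varieties with bounded torsion index (Theorem~\ref{rc.cy.log.bound.thm}). Second, your ``gluing'' step --- bounding base, fibres and a moduli map of bounded degree --- is not a known mechanism for an arbitrary log Calabi--Yau fibration (torsor/twist phenomena are precisely why elliptic Calabi--Yau's without sections are hard); the paper's gluing is Birkar's boundedness of Fano-type fibrations and towers over a bounded base (Theorems~\ref{bir.bound.fibr.thm} and~\ref{birk.tower.thm}), which is why the relative steps must be Mori fibre spaces ($-K$ relatively ample, $\rho=1$) with boundary coefficients bounded below by $1/l$; your fibration is not arranged to be relatively Fano, so those theorems do not apply to it directly.

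Third, the obstacle you flag but do not resolve --- producing the fibration on $X$ itself after only flops --- is in fact unnecessary: the paper allows the divisorial contractions of the $K_X$-MMP, bounds the resulting tower, transfers the Mori fibre space structures across the flop of the bounded base via Proposition~\ref{bir.contr.lcy.prop}, and only at the end recovers the isomorphism-in-codimension-one class by re-extracting the contracted divisors (all of log discrepancy in $(0,1]$) over the bounded family using \cite{BCHM}*{Cor.~1.4.3} together with \cite{1811.10709}*{Theorem 1.3}, plus Proposition~\ref{flops.lcy.family.prop} to run the relevant MMPs in families. A small remark on your Fano-type branch: the uniform $\delta$-lc property you propose to extract by effective birationality and tie-breaking is immediate, since $l(K_X+B)$ Cartier forces all log discrepancies of the klt pair $(X,B)$ to lie in $\tfrac1l\mathbb{Z}_{>0}$, hence $(X,B)$ is $\tfrac1l$-lc; so that case is fine, but it is the easy one, and as it stands your treatment of the product-type/non-Fano-type case rests on an open conjecture and an unproved gluing principle.
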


This result is not only central to the study of boundedness for elliptic Calabi--Yau varieties, but it maintains its own independent relevance: in view of the unavoidable appearance of singularities in the birational classification of varieties, it is often necessary to work not just with mildly singular varieties, but rather with pairs of a normal projective variety and an effective divisor with coefficients in $[0, 1]$.
All conjectures and questions for Calabi--Yau manifolds discussed so far can be also formulated in the more general context of log Calabi--Yau pairs.
Hence, the importance of boundedness for log Calabi--Yau pairs should be immediately clear, being the natural extension to the realm of pairs of the K-trivial case. As such, then, it is an even more intriguing and complicated problem.
After the completion of this paper (that first appeared in 2020), a similar result was shown also by Jiang and Han,~\cite{HJ22}.

In proving Theorem~\ref{delta.lcy.thm}, we first show that, up to birational modifications, we can decompose $X$ into a tower of fibrations whose general fibres are Fano varieties with bounded singularities.
This result allows us to control the torsion index of the successive log Calabi--Yau pairs that we can construct on the base of each of the fibrations that decompose $X$.
At this point, we show inductively, using Birkar's recent results on boundedness for log Fano fibrations,~\cites{1811.10709, 1811.107092}, that, at each step in this tower of fibrations, if we assume that the base of the fibration belongs to a bounded family, then the same holds also for the total space of the fibration.

The condition that 
$l(K_X+B)\sim 0$
for some fixed 
$l$, 
depending only on the dimension of 
$X$ 
is a conjectured to hold by the so-called Index Conjecture, 
see, for example, \cite{Xu.thesis}.
Clearly, the condition that 
$l(K_X+B)\sim 0$
automatically implies the coefficients of 
$B$ 
vary in the set 
$\{\frac 1l, \frac 2l, \dots, \frac{l-2}{l}, \frac{l-1}{l}\}$.
Assuming the Index Conjecture, it is then not hard to see, thanks to the results of 
\cite{MR3224718},
that Theorem~\ref{delta.lcy.thm} holds under the weaker assumption that the coefficients of $B$ belong to a subset of 
$(0, 1)$
satisfying the descending chain condition\footnote{A subset of $\mathbb R$ satisfies the descending chain condition if any non-increasing sequence is eventually constant.}.

We regard Theorem~\ref{delta.lcy.thm} as providing strong evidence for a more general conjecture, generalizing the BAB Conjecture: the conjecture predicts the boundedness of all rationally connected varieties of log Calabi--Yau type with bounded singularities. 

\begin{conjecture}
\label{eps.lcy.conj}
(cf.~\cite{MP}*{Conj. 3.9},~\cite{CDHJS}*{Conj. 1.3})
Fix a positive integer $d$ and positive real number $\epsilon$.
Then the set of varieties $X$ such that
\begin{enumerate}
    \item $X$ is normal projective and rationally connected of dimension $d$ 
    \item $(X, B)$ is $\epsilon$-lc for some effective $\mathbb{R}$-divisor $B$, and 
    \item $-(K_X+B)$ is nef,
\end{enumerate}
is bounded.
\end{conjecture}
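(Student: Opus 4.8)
The plan is to argue by induction on $d$ and to reduce Conjecture~\ref{eps.lcy.conj} to the boundedness of weak log Fano pairs and of rationally connected log Calabi--Yau pairs, both of which are already available; this can be carried out completely for $d=3$. Since $\epsilon$-lc with $\epsilon>0$ forces $(X,B)$ to be klt, all the machinery of the minimal model program is at my disposal, and after a standard perturbation of the coefficients I may assume in addition that $B$ is a $\mathbb{Q}$-divisor (replacing $\epsilon$ by $\epsilon/2$). The dichotomy is according to whether or not $-(K_X+B)$ is big. If $-(K_X+B)$ is big, then $(X,B)$ is an $\epsilon$-lc weak log Fano pair, and $X$ is bounded --- on the nose, not merely up to flops, and without using rational connectedness --- by Birkar's generalized form of the BAB conjecture, \cite{Bir16b}, together with its relative version \cite{1811.10709}; this case contains the original BAB conjecture.

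Now suppose $-(K_X+B)$ is not big. If $-(K_X+B)\equiv 0$, then $(X,B)$ is a klt log Calabi--Yau pair with $X$ rationally connected, so Birkar's theory of complements bounds the torsion index $l$ of $K_X+B$ in terms of $(d,\epsilon)$ alone, and Theorem~\ref{delta.lcy.thm} then shows that this subfamily is bounded up to flops. The remaining case is $-(K_X+B)$ nef, not big and not numerically trivial; here I would first produce a contraction $f\colon X\to Z$ with $0<\dim Z<d$ and $-(K_X+B)\sim_{\mathbb{Q}}f^{*}A$ for some ample $\mathbb{Q}$-divisor $A$ on $Z$ --- equivalently, prove that $-(K_X+B)$ is semiample. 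Granting this, a general fibre $F$ of $f$ satisfies $(K_X+B)|_{F}\equiv 0$, so $(F,B|_{F})$ is an $\epsilon$-lc log Calabi--Yau pair of dimension $<d$ with $F$ rationally connected; by the previous case (induction on $d$) it is bounded up to flops and of bounded torsion index. On the base, the canonical bundle formula produces a generalized klt pair $(Z,B_Z+M_Z)$ with $K_X+B\sim_{\mathbb{Q}}f^{*}(K_Z+B_Z+M_Z)$; comparing with $-(K_X+B)\sim_{\mathbb{Q}}f^{*}A$ gives $-(K_Z+B_Z+M_Z)\sim_{\mathbb{Q}}A$, so $(Z,B_Z+M_Z)$ is a rationally connected generalized Fano pair, which is $\epsilon'$-lc for some $\epsilon'=\epsilon'(d,\epsilon)>0$ and has $M_Z$ of bounded type --- these last two points using the boundedness of the fibres and generalized adjunction --- and is therefore bounded by the BAB theorem for generalized pairs. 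Finally I would glue: boundedness of $Z$, boundedness up to flops of the fibres, and boundedness of the fibration data (the last by decomposing $f$ into a tower of Fano fibrations and applying Birkar's boundedness of log Fano fibrations \cite{1811.10709}, exactly as in the proof of Theorem~\ref{delta.lcy.thm}) together yield that $X$ is bounded up to flops.

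The hard part is the semiampleness of $-(K_X+B)$ in the nef, non-big, non-trivial case: this is an abundance-type statement for the anti-log-canonical divisor, known in dimension $\le 3$ (via three-dimensional abundance and non-vanishing, and the existence of good models for klt threefolds) but open in general --- which is exactly why the strategy completes the proof of Conjecture~\ref{eps.lcy.conj}, up to isomorphism in codimension one, for $d=3$ (and trivially below). Two further technical points need attention even for $d=3$: controlling, in terms of $(d,\epsilon)$ only, the discrepancy $\epsilon'$ and the nef part $M_Z$ of the generalized pair coming from the canonical bundle formula; and the bookkeeping by which boundedness up to flops of the fibres and of the base assembles into boundedness up to flops of $X$. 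Note, finally, that the conjecture as stated asks for honest boundedness: this holds automatically when $-(K_X+B)$ is big, but in the remaining cases it would moreover require bounding the birational models in each flop-equivalence class, which is strictly stronger than what the above method delivers.
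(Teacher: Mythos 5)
First, note that the statement you are proving is stated in the paper as a \emph{conjecture}: the paper itself offers no proof of it, and only establishes the weaker Theorem~\ref{eps.lcy.3.thm} (dimension $3$, boundedness up to flops), by a route quite different from yours -- namely, packaging $-(K_X+B)$ as the nef part of a generalised log Calabi--Yau pair $(X,B+M)$, running the generalised MMP to reach a tower of Mori fibre spaces as in Theorem~\ref{structure}, controlling the singularities of the (at most $2$-dimensional) bases via Lemma~\ref{sing.lemma} and Alexeev's surface BAB, treating $K_X\equiv 0$ by Jiang's theorem, and lifting boundedness with Theorem~\ref{bound.fano-type.fibr.thm}. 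Your proposal, by contrast, hinges on steps that genuinely fail. The most serious is the claimed semiampleness of $-(K_X+B)$ in the nef, non-big, non-trivial case, which you assert is ``known in dimension $\leq 3$ via abundance'': abundance concerns $K_X+B$, not its negative, and the statement is already false for surfaces. The blow-up $X$ of $\mathbb{P}^2$ at $9$ very general points is rational, smooth (hence $\epsilon$-lc with $B=0$), $-K_X$ is nef and non-big, yet $-K_X$ is not semiample, since its restriction to the unique anticanonical cubic is a non-torsion degree-zero line bundle. So the contraction $f\colon X\to Z$ with $-(K_X+B)\sim_{\mathbb{Q}}f^{*}A$ on which your entire inductive scheme rests need not exist, even for $d=2,3$.

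Two further steps are also gaps. Even when such a fibration exists, its general fibre need not be rationally connected (blow up $\mathbb{P}^3$ along the base locus of a general quartic pencil: the total space is rational, $-K_X$ is the pullback of an ample divisor on $\mathbb{P}^1$, and the fibres are quartic K3s), so the inductive hypothesis -- which requires rational connectedness -- cannot be applied to $(F,B|_F)$; lower-dimensional log Calabi--Yau pairs without that hypothesis are unbounded. Second, in the case $K_X+B\equiv 0$ your appeal to ``Birkar's theory of complements'' to bound the torsion index in terms of $(d,\epsilon)$ is unjustified: complements require $X$ of Fano type or coefficient/DCC control, not mere rational connectedness, and bounding the torsion index of rationally connected klt K-trivial varieties is precisely the open problem emphasized in the paper (by Corollary~\ref{equiv.rc.cy.bound.cor} it is \emph{equivalent} to the boundedness up to flops you want), so this step begs the question; Theorem~\ref{delta.lcy.thm} only applies once $l$ is fixed. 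Finally, as you concede yourself, even if every step worked the method would deliver boundedness up to flops rather than the boundedness asserted in the conjecture. In short, the proposal does not prove the statement, and its dimension-$3$ portion is not a valid alternative to the paper's proof of Theorem~\ref{eps.lcy.3.thm} because of the failure of the semiampleness step.
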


The 2-dimensional case of this conjecture was proved by Alexeev in~\cite{MR1298994}.
In dimension three, combining our techniques with the recent result of Jiang~\cite{1904.09642}, we are able to prove a slightly weaker version of the above conjecture, showing that rather than boundedness, we can obtain boundedness up to isomorphism in codimension one.

\begin{theorem}
\label{eps.lcy.3.thm}
Fix a positive real number $\epsilon$.
The set of varieties $X$ such that
\begin{enumerate}
    \item $X$ is normal projective of dimension $3$,
    \item $(X, B)$ is $\epsilon$-lc for some effective $\mathbb{R}$-divisor $B$,
    \item $-(K_X+B)$ is nef, and 
    \item $X$ is rationally connected
\end{enumerate}
is bounded up to flops.
\end{theorem}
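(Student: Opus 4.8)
The strategy is to deduce the statement from Theorem~\ref{delta.lcy.thm}. For that it is enough to produce a positive integer $l$, depending only on $\epsilon$, together with an effective $\bQ$-divisor $B^{+}$ on $X$ such that $(X,B^{+})$ is klt and $l(K_{X}+B^{+})\sim 0$: once such a bounded complement is available, Theorem~\ref{delta.lcy.thm} applied with $d=3$ to the family of pairs $(X,B^{+})$ shows that this set of pairs is log bounded up to flops, and in particular the underlying varieties $X$ are bounded up to flops, which is exactly the assertion. So the whole problem reduces to bounding, in terms of $\epsilon$ only, the torsion index of some klt $\bQ$-complement of $(X,B)$.

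I would first treat the case in which $D:=-(K_{X}+B)$ is big. Then $(X,B)$ is a weak log Fano pair and, since $D$ is then semi-ample by base-point-freeness, passing to its ample model $X\to X'$ -- a contraction trivial on $K_{X}+B$, hence crepant -- we may assume $-(K_{X}+B)$ is ample, so $X$ is of Fano type. Here Jiang's bound on anti-canonical volumes of $\epsilon$-lc log Fano threefolds, \cite{1904.09642}, controls $\Vol(-(K_{X}+B))$ from above; combining this with the $\epsilon$-lc hypothesis and Birkar's boundedness theorems for Fano type varieties and log Fano pairs, \cite{Bir16b} and \cite{1811.10709} (after the routine reduction to boundaries with coefficients in a fixed finite set, which is harmless because decreasing $B$ keeps $-(K_{X}+B)$ big), one gets that $X$ lies in a bounded family. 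A variety in a bounded family carries an $n$-complement with $n=n(\epsilon)$, which may be taken klt after a small perturbation; pulling it back along $X\to X'$ produces the desired $B^{+}$.

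If $D$ is not big, then its numerical dimension is $0$, $1$ or $2$. Since $X$ is rationally connected, $K_{X}$ is not pseudo-effective, and I would use the machinery behind Theorem~\ref{delta.lcy.thm} -- the description of $X$, up to a suitable birational modification, by a tower of Fano fibrations obtained by running appropriate minimal model programs -- to reduce to the situation of a fibration $f\colon X\to T$ with $0<\dim T\le 2$, general fibre $F$ an $\epsilon'$-lc Fano variety ($\epsilon'$ depending only on $\epsilon$, hence $F$ rationally connected), and $-(K_{X}+B)$ trivial over $T$; the remaining case of numerical dimension $0$, i.e. $K_{X}+B\equiv 0$, is handled identically with the trivial fibration. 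Since $\dim F\le 2$, Birkar's theory of complements gives an $m$-complement of $(F,B|_{F})$ with $m=m(\epsilon)$; spreading it out over $T$ and applying the canonical bundle formula to $f$, the pair $(X,B)$ becomes, up to $\bQ$-linear equivalence, the pullback of a generalised log Fano (or log Calabi--Yau) pair $(T,B_{T}+M_{T})$. The base $T$ is a rationally connected curve or surface, so $T$ and its boundary $B_{T}$ are bounded -- by \cite{MR1298994} together with effective results in dimension $\le 2$ -- and one shows that the moduli part $M_{T}$ descends with bounded denominators. Assembling the fibre index $m$, the index of $K_{T}+B_{T}$, and the denominators of $M_{T}$ yields a uniform $l=l(\epsilon)$ and a klt complement $(X,B^{+})$ with $l(K_{X}+B^{+})\sim 0$; Theorem~\ref{delta.lcy.thm} then concludes.

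I expect the main obstacle to be the control of the moduli part $M_{T}$: a priori it is only defined up to $\bQ$-linear equivalence and is merely nef on a high model, and bounding its denominators -- an effective form of the b-semi-ampleness of $M_{T}$ -- is precisely where dimension $3$ is essential, since the bases $T$ that occur are curves and surfaces, for which effective adjunction is available (Prokhorov--Shokurov and subsequent refinements), while the higher-dimensional statement is open; this is the reason Theorem~\ref{eps.lcy.3.thm}, unlike Theorem~\ref{delta.lcy.thm}, is restricted to threefolds. Two secondary technical points, already implicit above, are: that the coefficients of $B$ are only known to lie in $[0,1-\epsilon]$ and need not form a DCC set, so one must pass to auxiliary boundaries with coefficients in a fixed finite set before invoking the complement and boundedness machinery; and that the birational modifications used to bound $l$ must be arranged so that the index bound can be transported back to the original $X$, which is where the crepant birational geometry and the ``up to flops'' formulation interact.
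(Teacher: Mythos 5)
There is a genuine gap, and it sits in exactly the case your reduction cannot reach. You write that, since $X$ is rationally connected, $K_X$ is not pseudo-effective; this is false precisely when $K_X\equiv 0$ (a numerically trivial class is pseudo-effective), and in that case the nefness of $-(K_X+B)$ forces $B=0$, so $X$ is a rationally connected klt Calabi--Yau threefold. Here there is no Mori fibre space and no tower, and your fallback of ``the trivial fibration'' asks for a klt complement of bounded index on a three-dimensional total space: Birkar's theory of complements does not apply, because such an $X$ is not of Fano type, and bounding the torsion index of $K_X$ in this situation is essentially equivalent to the boundedness statement you are trying to prove (compare Corollary~\ref{equiv.rc.cy.bound.cor}). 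The paper disposes of this case at the very first line by invoking Jiang's theorem (\cite{1904.09642}, Theorem~1.6) that rationally connected klt Calabi--Yau threefolds are bounded up to flops; you cite Jiang only for volume bounds in the big case, where plain BAB (\cite{Bir16b}) already gives boundedness outright, so the one place where Jiang's result is indispensable is missing from your argument.

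A second, related gap is in the fibred case: before Alexeev's surface result \cite{MR1298994} can be applied to the base $T$, you must know that the structure induced on $T$ by the canonical bundle formula is $\delta$-lc with $\delta=\delta(\epsilon)$. With real coefficients and no a priori index this is not automatic; it is the content of the Shokurov--Birkar conjecture on singularities of bases, known in relative dimension one (\cite{MR3507921}, Corollary~1.7), and the paper spends Lemma~\ref{sing.lemma} (its generalised-pair extension) plus the Picard-rank-two Hodge-index analysis in case (b) of its proof precisely on this point; your sketch does not address it. More broadly, the paper's route differs from yours in a way that removes your ``main obstacle'': it never constructs a global bounded complement and never needs effective b-semi-ampleness of the moduli part. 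Instead it keeps the moduli part as a nef b-divisor, works with generalised pairs along the tower, bounds the last base (a point, a $\delta$-lc surface, or Jiang's Calabi--Yau case), and lifts boundedness step by step using Birkar's boundedness of ($(d,r,\epsilon)$-)Fano type fibrations (Theorem~\ref{bound.fano-type.fibr.thm} and its generalised analogue in \cite{1811.10709}), which requires no coefficient or index control. So the dimension-three restriction in the theorem comes from Jiang's Calabi--Yau threefold theorem and from surface BAB with the relative-dimension-one singularity control, not from effective adjunction over low-dimensional bases as you suggest; if you want to salvage your complement-based strategy you would still have to supply both missing inputs above, at which point you would essentially be reproducing the paper's argument.
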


After the completion of this paper, a different version of a boundedness result for (klt non-canonical) log Calabi--Yau pairs in dimension 3 appeared in~\cite{2202.05287}*{Theorem~1.9}.
Afterwards, Birkar provided a full proof to Conjecture~\ref{eps.lcy.conj},~\cite{birkar.last}.

\subsection*{Boundedness for proper Calabi--Yau varieties}
The results presented in the introduction can be extended also to proper varieties:
namely, 
Theorem~\ref{ell.cy.thm} 
and
Corollary~\ref{hodge} 
hold for elliptic proper Calabi--Yau manifolds admitting a rational section;
Theorem~\ref{ell.cy.thm.gen}
holds for proper klt varieties $Y$ that satisfy conditions (2)-(4) from the statement of the theorem;
Theorems~\ref{delta.lcy.thm} 
(resp.~\ref{eps.lcy.3.thm}) 
holds for proper klt pairs 
$(X, B)$ 
that satisfy conditions (2)-(3) 
(resp. (2)-(4)) 
from the statement of the theorem.
Let us observe that in the case of proper varieties by ``bounded up to flops'' we mean that the birational equivalence 
can be decomposed into a finite sequence of rational maps that contract/extract rational curves that have intersection 
$0$
with the canonical divisor 
(resp. the divisor $K_X+B$ in the log Calabi--Yau case).

The possibility of extending the above result to the category of proper varieties follows by the simple observation that, by Hironaka's resolution of singularties and Chow's lemma imply, via the Minimal Model Program, every proper Calabi--Yau manifold (resp. klt log Calabi-Yau pair, klt variety with canonical bundle of $l$-torsion) is isomorphic in codimension one to a projective Calabi--Yau variety with only terminal singularities (resp. klt log Calabi-Yau pair, klt variety with canonical bundle of $l$-torsion), 
cf. the argument in the proof of Theorem~\ref{ell.cy.bound.base.thm}. 
It is then easy to reduce the more general statement for proper varieties to the projective case.

\subsection*{Strategy of proof of Theorem~\ref{ell.cy.thm}-\ref{ell.cy.thm.gen}}

The standard approach to bound a set of fibred varieties is to first bound the set of bases and general fibres, and then lift boundedness from the bases to the set of total spaces. 
Here we follow this very same strategy; the existence of a rational section makes bounding the set of bases of an elliptic fibration the most important and technical step in our proof. 
One of the main novelties here is that we are able to deal with the case where $X$ is rationally connected and $K_X \equiv 0$, an important missing piece in~\cite{DCS}. 
Recall that examples of this type of fibration are known to exists already in dimension $3$, see~\cite{MR1228584}.

From the viewpoint of boundedness, the current techniques in the Minimal Model Program do not provide enough tools to approach rationally connected varieties with $K_X \equiv 0$. 
As predicted in Conjecture~\ref{eps.lcy.conj} for the case when $B=0$, these varieties are expected to be bounded once their singularities are. 
In general, boundedness is expected to hold once the torsion index of the canonical divisor is uniformly bounded, cf.~\cites{CDHJS, 1904.09642}; unfortunately, it is not easy to show that such an hypothesis is satisfied in higher dimension. 
We manage to show that those rationally connected K-trivial varieties that are bases of K-trivial elliptic fibrations with bounded torsion index satisfy boundedness of the torsion index in turn. 
Once uniform boundedness of the torsion index is settled, we can reduce the theorem to boundedness of Fano fibrations (or towers of such fibrations) which is one of the main results of~\cites{1811.10709, 1811.107092}. 
Here for simplicity, we state only the case where the total space is smooth.

\begin{theorem}[cf. Theorem~\ref{index.fibr.thm}]
Fix a positive integer $d$.
Then there exists a positive integer $m = m(d)$ such that if
\begin{enumerate}
    \item $Y$ is a Calabi--Yau manifold of dimension $d$ and
    \item $Y \to X$ is an elliptic fibration with $K_X \equiv 0$,
\end{enumerate}
then $m K_X \sim 0$.
\end{theorem}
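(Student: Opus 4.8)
The plan is to use the canonical bundle formula to push $K_Y\sim 0$ down to $X$, and then to pull the torsion of $K_X$ back \emph{up} to $Y$, where simple connectedness of the Calabi--Yau manifold $Y$ destroys it. First I would apply the canonical bundle formula of \S~\ref{sect.cbf} to $f\colon Y\to X$: from $K_Y\sim 0$ one obtains a klt pair $(X,B_X)$, a moduli part $M_X$, and $K_X+B_X+M_X\sim_{\bQ}0$. Since $K_X\equiv 0$ this yields $B_X+M_X\equiv 0$; as $B_X\ge 0$ is effective and $M_X$, the trace of a $b$-nef $b$-divisor, is pseudo-effective, salience of $\Eff(X)$ forces $B_X\equiv M_X\equiv 0$, hence $B_X=0$, and the $j$-map of $f$ is constant, so $f$ is isotrivial with no $I_n$ fibre ($n\ge 1$); together with $B_X=0$, which rules out additive and multiple fibres, this shows $f$ is smooth over an open $U\subseteq X$ with $\codim_X(X\setminus U)\ge 2$. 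Moreover, for elliptic fibrations $\mathbf M$ is $b$-semiample with $12\mathbf M$ $b$-free (Kodaira's canonical bundle formula; cf.~\cite{MR1272978}), and a negativity-lemma argument then upgrades $M_X\equiv 0$ to $12M_X\sim 0$. Finally, by Theorem~\ref{rc.base.cor} $X$ is rationally connected, so it has rational singularities and $H^1(X,\oo_X)=0$; thus $\operatorname{Pic}^{\tau}(X)$ is finite, $K_X$ is torsion, and I let $m_0$ be its order in $\operatorname{Cl}(X)$.

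Next I would form the connected cyclic cover $\pi\colon X'\to X$ of degree $m_0$ attached to $[K_X]$; since $X$ is klt, $\pi$ is étale over $X_{\mathrm{sm}}$, hence in codimension one, and $K_{X'}\sim 0$. Put $Y':=(Y\times_X X')^{\nu}$. As $f$ has connected elliptic general fibre, $f'\colon Y'\to X'$ is again an elliptic fibration and $Y'$ is irreducible; and --- after replacing $f$ by an equidimensional model, so that its degenerate fibres lie in codimension $\ge 2$ in $Y$ (a reduction not affecting $m_0$) --- the finite map $Y'\to Y$ is étale in codimension one. Here is the key point: $Y$ is \emph{smooth}, so Zariski--Nagata purity of the branch locus makes $Y'\to Y$ étale; and since $\pi_1(Y)=1$, a connected degree-$m_0$ étale cover of $Y$ must have $m_0=1$. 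Hence $K_X\sim 0$, and the assertion holds even with $m=1$.

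I expect this last implication to be the main obstacle, and it is exactly what fails for the klt generalisation $lK_Y\sim 0$ of Theorem~\ref{index.fibr.thm}: there $Y'\to Y$ is only crepant and étale in codimension one, so one learns merely that $Y'$ is again a klt Calabi--Yau with $lK_{Y'}\sim 0$, elliptically fibred over $X'$ with $K_{X'}\sim 0$, not that $m_0=1$. To bound $m_0$ uniformly in that generality one needs a boundedness statement for the degrees of covers étale in codimension one of $d$-dimensional klt Calabi--Yau varieties with bounded torsion index --- a uniform version of the finiteness of the étale fundamental group of klt pairs --- together with an effective canonical bundle formula controlling the index of $K_X+B_X+M_X$ for elliptic fibrations (by $12$ and the finitely many admissible boundary denominators in dimension $\le d$). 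A smaller but real technical point, used above, is the passage to an equidimensional model of $f$, needed so that its degenerate fibres genuinely lie in codimension $\ge 2$ in $Y$ and the covers are honestly étale in codimension one.
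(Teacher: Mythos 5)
Your first paragraph essentially reproduces the paper's argument: from the canonical bundle formula $K_Y\sim f^*(K_X+B_X+M_X)$ and $K_X\equiv 0$ one gets $B_X=0$ and $M_X\equiv 0$, and Kodaira's effective formula for elliptic fibrations (Remark~\ref{rmk.coeff.ell.fibr}) together with a negativity argument gives $12M_X\sim 0$. But at that point you are already done and you do not draw the conclusion: by Remark~\ref{cbf.lin.eq.rmk} (with $r=1$) one has $K_Y\sim f^*(K_X+M_X)$, hence $0\sim 12K_Y\sim f^*(12K_X+12M_X)\sim f^*(12K_X)$, and the projection formula gives $12K_X\sim 0$. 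This is exactly how Theorem~\ref{index.fibr.thm} is proved in the paper ($m=12l$, so $m=12$ here). Rational connectedness of $X$ (Corollary~\ref{rc.base.cor}) by itself only tells you $K_X$ is torsion of an a priori unbounded order, so your detour through $\operatorname{Pic}^\tau$ loses the effectivity you had already secured.

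The second half, which claims $m_0=1$, has a genuine gap and its conclusion is false. The normalized base change $Y'=(Y\times_X X')^\nu\to Y$ is only known to be étale over $f^{-1}(U)$, where $U$ is the locus where $m_0K_X$ is locally trivial; since $f$ need not be equidimensional, $f^{-1}(X\setminus U)$ may contain divisors of $Y$, and $Y'\to Y$ can ramify along them, so ``étale in codimension one'' fails on the smooth model and Zariski--Nagata purity cannot be invoked. Your proposed repair --- passing to an equidimensional model of $f$ --- replaces $Y$ itself: the new total space is in general singular and the purity/$\pi_1(Y)=1$ input disappears (the fundamental group of the smooth locus of the equidimensional model is the relevant, and generally nontrivial, object). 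You cannot simultaneously keep $Y$ smooth and simply connected and make $f$ equidimensional. Indeed the assertion $K_X\sim 0$ is not true: already in dimension $3$ there are smooth simply connected Calabi--Yau threefolds with isotrivial elliptic fibrations over log Enriques surfaces whose canonical class has index $2$ or more --- e.g.\ crepant resolutions of quotients $(S\times E)/\iota$ with $S$ a K3 and $\iota$ acting non-symplectically on $S$ and by $-1$ on $E$; these are the dimension-$3$ examples the paper itself cites,~\cite{MR1228584}. In those examples the exceptional divisors lying over the non-Gorenstein points of $X$ are precisely the branch divisors of $Y\times_X X'\to Y$, so no contradiction with $\pi_1(Y)=1$ arises. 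The correct output of your own first step is $m=12$, as in the paper, not $m=1$.
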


Once boundedness of the bases is settled, the following step consists in showing that boundedness also holds for the total space of an elliptic fibration carrying a rational section, see 
Theorem~\ref{ell.cy.bound.base.thm}.
In this case, the idea is to use the Zariski closure $S$ of the rational section $X \dashrightarrow Y$ together with the pullback of a suitable very ample divisor $H$ from the base $X$ with bounded volume -- a bound guaranteed by boundedness of $X$ -- to arrive to a suitable birational model $Y'$ of $Y$ on which $K_Y \equiv 0$, and the strict transform of $S+H$ on $Y'$ has bounded singularities and bounded positive volume.
Then by the results of~\cite{MR3224718} and~\cite{1811.10709}, these conditions imply that 
$Y'$ 
is bounded and the divisors contracted in the birational map 
$Y \dashrightarrow Y'$ 
can be extracted to yield a bounded model 
$Y_1$ 
isomorphic in codimension one to 
$Y$.
These techniques have been recently applied also to the study of the boundedness of 
$d$-dimensional 
minimal models of Kodaira dimension 
$d-1$, 
see~\cites{2005.04254, 2005.05508}, and fibrations on K-trivial varieties of relative dimensione strictly greater than one, see~\cites{2202.07238, 2204.10456}.
Using the ideas of the latter paper and the techniques of this manuscript, it is possible to extend the boundedness up to flops to elliptic Calabi-Yau varieties that admit a section of fixed degree, see~\cite{2005.05508}.
Moreover, the recent work~\cite{2112.01352} shows that boundedness holds for elliptic Calabi--Yau threefolds and that, for elliptic Calabi--Yau varieties in any fixed dimension, it is possible to derive their boundedness from their boundedness up to flops, once the boundedness of the bases of such fibrations is known.
This last result is obtained by resolving a weak version of the so-called Kawamata--Morrison Cone conjecture for elliptic fibrations, see~\cite{2112.01352} for more about this problem and its history.

\subsection*{Acknowledgments} 
GD would like to thank J\'anos Koll\'ar for many valuable discussions.
RS would like to thank Paolo Cascini, Stefano Filipazzi, and Enrica Floris for many useful discussions.
The authors wish to thank Yanning Xu and Stefano Filipazzi for reading preliminary drafts of this work.
The authors wish to heartily thank the anonymous referees for suggesting many changes and improvements to the manuscript.

\section{Preliminaries}
We adopt the standard notation and definitions from~\cite{KMM} and~\cite{KM}, and we freely use those.

A set $I \subset \bR$ is said to be a DCC (resp. ACC) set, if it does not contain any strictly decreasing (resp. increasing) sequence $\{i_k\}$ of elements of $I$.

\subsection{Pairs and singularities}\label{pair.sings.sect}
A {\it log pair} $(X, B)$ consists of a normal projective variety $X$ and an effective $\bR$-divisor $B$ on $X$ such that $K_X+B$ is $\bR$-Cartier.
\\
Given $f\colon Y\rightarrow X$ a log resolution of the log pair $(X, B)$, we write
\[
K_Y +B' =f^*(K_X+B),
\]
where $B'$ is the sum of the strict transform $f^{-1}_\ast B$ of $B$ on $Y$ and a divisor completely supported on the exceptional locus of $f, \; B'= f^{-1}_\ast B + \sum a_i E_i$.  
We will denote by $\mu_D B'$ the multiplicity of $B'$ along a prime divisor $D$ on $Y$.
For a non-negative real number $\epsilon$, the log pair $(X,B)$ is called
\begin{itemize}
\item[(a)] \emph{$\epsilon$-kawamata log terminal} (\emph{$\epsilon$-klt}, in short) if $\mu_D B'< 1-\epsilon$ for all $D \subset Y$;
\item[(b)] \emph{$\epsilon$-log canonical} (\emph{$\epsilon$-lc}, in short) if $\mu_D B'\leq  1-\epsilon$ for all $D \subset Y$;
\item[(c)] \emph{terminal} if  $\mu_D B'< 0$ for all $f$-exceptional $D \subset Y$ and all possible choices of $f$;
\item[(d)] \emph{canonical} if  $\mu_D B'\leq 0$ for all $f$-exceptional $D \subset Y$ and all possible choices of $f$.
\end{itemize}

Let us note that $0$-klt (resp., $0$-lc) is just klt (resp., lc) in the usual sense. 
We say that a log pair $(X, B)$ has {\it strictly klt} singularities if $(X, B)$ is $0$-klt but not $1$-lc.
When we consider a log pair $(X, B)$ with $B=0$, we will usually write $X$ instead of $(X,0)$.

The {\it log discrepancy} of a prime divisor $D$ on $Y$ is defined to be $a(D, X, B):=1-\mu_D B'$.
It does not depend on the choice of the log resolution $f$.

\subsection{Generalised pairs}
\label{gen.pair.sect} 
For the definition of b-divisor and related notions, we refer the reader to~\cite{bir-zh}.
There, the authors introduced also the notion of generalised pairs. 
Let us recall that a b-$\mathbb{R}$-divisor $\mathbf{N}$ is said to descend to the divisor $N'$ on a model $X'$ if $\mathbf{N}$ equals the Cartier closure of its trace $\mathbf{N}_{X'}$ on $X'$ and $\mathbf{N}_{X'}=N'$.
\begin{definition}
Let $Z$ be a variety. 
A generalised polarised pair over $Z$ is a tuple $(X' \to X, B, M')$ consisting of the following data:
\begin{itemize}
\item a normal variety $X \to Z$ projective over $Z$ equipped with a projective birational morphism $\phi:X' \rightarrow X$, 
\item an effective $\bR$-Weil divisor $B$ on $X$,
\item a b-$\bR$-Cartier b-divisor $\mathbf{M}$ over $X$ which descends on $X'$ such that $M':=\mathbf{M}_{X'}$ is nef over $Z$, and
\item $K_X +B+ M$ is $\bR$-Cartier, where $M := \phi_\ast M'$.
\end{itemize}
\end{definition}

To streamline notation, we will often indicate a generalised pair simply by 
$(X, B+M)$
rather than by 
$(X' \rightarrow X, B, M')$, 
where we adopt the notation of the previous definition.
We call $M'$ the nef part of the generalised pair.

Similarly to log pairs, we can define discrepancies and singularities for generalised pairs. 
Replacing $X'$ with a higher birational model, we can assume that $\phi$ is a log resolution of $(X, B)$. 
Then we can write 
\[
K_{X'} + B'+M'=\phi^*(K_X+B+M)
\]
for some uniquely determined divisor $B'$. 
For a prime divisor $D$ on $X'$ the generalised log discrepancy $a(D,X,B + M)$ is defined to be $1-\mu_D B'$.
We say $(X, B+M)$ is generalised lc (resp. generalised klt, generalised $\epsilon$-lc) if for each $D$ the generalised log discrepancy $a(D,X,B+M)$ is $\geq0$ (resp.$> 0, \; \geq \epsilon$).

\subsection{Minimal model program}
Moreover, we will make use of the Minimal Model Program (MMP, in short) for (generalised) pairs with non-pseudo-effective log canonical class. 
In this case, the existence and termination of the MMP has been established in the pair setting by Birkar-Cascini-Hacon-M$^\text{c}$Kernan,~\cite{BCHM}*{Cor. 1.3.3}, and by Birkar-Zhang in the generalised setting,~\cite{bir-zh}*{Lemma 4.4}. 

\begin{theorem}
\label{mmp}
\cites{bir-zh, BCHM}
Let $(X, B+M)$ be a projective $\mathbb{Q}$-factorial generalised klt pair. 
Assume $K_{X}+B+M$ is not pseudo-effective. 
Then we may run a $(K_{X}+B+M)$-MMP $g \colon X \dashrightarrow Y$ that terminates with a Mori fibre space $f\colon Y \rightarrow T$.
\end{theorem}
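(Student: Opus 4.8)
The plan is to obtain the required program as an MMP with scaling of an ample divisor, and to use the assumption that $K_X+B+M$ is not pseudo-effective only at the end, to exclude termination with a minimal model. First I would fix a general ample $\mathbb{R}$-divisor $A$ on $X$ such that $(X,B+A+M)$ is still generalised klt and $K_X+B+A+M$ is nef — indeed ample for a good choice, since $K_X+B+M$ is $\mathbb{R}$-Cartier and $A$ may be taken sufficiently positive. Set
\[
\lambda=\inf\{\,t\ge 0:\ K_X+B+tA+M\ \text{is nef}\,\}.
\]
Since $K_X+B+M$ is not pseudo-effective we have $\lambda>0$, and $K_X+B+\lambda A+M$ is nef but not ample; as $A$ is ample, this class is strictly positive on the $(K_X+B+M)$-nonnegative part of $\NE(X)$, so by Kleiman's criterion together with the cone theorem for $\mathbb{Q}$-factorial generalised klt pairs (\cite{bir-zh}, over \cite{BCHM}) there is a $(K_X+B+M)$-negative extremal ray $R$ with $(K_X+B+\lambda A+M)\cdot R=0$.

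Next I would run the $(K_X+B+M)$-MMP with scaling of $A$, starting by contracting $R$. At each step the cone and contraction theorem for generalised klt pairs provides the extremal contraction associated to the relevant $(K+B+M)$-negative extremal ray: if this contraction is of fibre type, the program stops and it is the desired Mori fibre space; if it is divisorial or small, I replace $X$ by the divisorial contraction, respectively by the flip — the existence of flips for $\mathbb{Q}$-factorial generalised klt pairs being part of \cite{bir-zh}, over \cite{BCHM} — and carry the strict transform of $A$ along, obtaining a new $\mathbb{Q}$-factorial generalised klt pair and a new nef threshold $\lambda'\le\lambda$. This produces a sequence of such pairs and a non-increasing sequence of nef thresholds; the key imported statement is that this MMP with scaling terminates, which is exactly what \cite{BCHM} proves in the pair case and \cite{bir-zh}*{Lemma 4.4} in the generalised case.

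It remains to see that the hypothesis prevents the program from ending with a minimal model, i.e. a model $Y$ on which $K_Y+B_Y+M_Y$ is nef. Indeed, pseudo-effectivity of the log canonical class is preserved by each step: a flip is an isomorphism in codimension one, while a divisorial contraction $\phi$ of a $(K+B+M)$-negative ray satisfies $K_X+B+M=\phi^{\ast}(K_{X'}+B'+M')+aE$ with $a>0$ and $E$ the contracted divisor, so pseudo-effectivity of the class downstairs would pull back and add to an effective divisor, forcing $K_X+B+M$ to be pseudo-effective. Hence a minimal-model outcome contradicts the hypothesis, and the program must terminate with a Mori fibre space $f\colon Y\to T$, as claimed.

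The main obstacle lies not in this bookkeeping but in the two deep inputs it uses: the existence of flips for $\mathbb{Q}$-factorial generalised klt pairs, and the termination of the MMP with scaling when the log canonical class is not pseudo-effective. Both are precisely the content of \cite{BCHM} and \cite{bir-zh}*{Lemma 4.4}, so the proof ultimately amounts to invoking these results, the non-pseudo-effectivity hypothesis serving only to single out the fibering outcome.
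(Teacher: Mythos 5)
Your proposal is correct and coincides with the paper's treatment: the paper gives no independent argument for Theorem~\ref{mmp}, it simply cites \cite{BCHM}*{Cor.~1.3.3} and \cite{bir-zh}*{Lemma~4.4}, which is exactly where your proof places all the weight (cone/contraction theorem, existence of flips, termination of the MMP with scaling in the non-pseudo-effective case). Your additional bookkeeping — that non-pseudo-effectivity of $K_X+B+M$ is preserved under divisorial contractions and flips, so the program cannot end with a minimal model — is the standard and correct way to single out the Mori fibre space outcome.
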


Let us recall the definition of Mori fibre space. 

\begin{definition}
Let $(X, B+M)$ be a generalised klt pair and let $f \colon X\rightarrow T$ be a projective morphism of normal varieties with $\dim(T)<\dim(X)$. 
Assume that $f_\ast \mathcal{O}_X=\mathcal{O}_T$. 
Then $f$ is a Mori fibre space if 
	\begin{enumerate}
		\item $X$ is $\mathbb{Q}$-factorial,
		\item $f$ is a primitive contraction, i.e. the relative Picard number $\rho(X/T)=1$ and 
		\item $-(K_{X}+B+M)$ is $f$-ample.
	\end{enumerate}
\end{definition}

\subsection{Boundedness of pairs} 
\label{bound.sect}

We recall the different notions of boundedness for varieties and log pairs.
\begin{definition}
A collection of projective varieties $\mathfrak{D}$ is said to be \emph{bounded} (resp.,  \emph{birationally bounded}, or \emph{bounded in codimension one}) if there exists  $h\colon \mathcal{Z}\rightarrow S$ a projective morphism of schemes of finite type such that each $X\in \mathfrak{D}$ is isomorphic (resp., birational, or isomorphic in codimension one) to $\mathcal{Z}_s$ for some closed point $s\in S$.
\end{definition}

\begin{definition}
\label{def.bound.pairs}

A collection of projective log pairs 
$\mathfrak{D}$ 
is said to be \emph{log birationally bounded} (resp.,  \emph{log bounded}, or \emph{log bounded in codimension one}) if there is a quasi-projective variety 
$\mathcal{Z}$, 
which may possibly be reducible, a reduced divisor 
$\mathcal{E}$ 
on 
$\mathcal Z$, 
and a projective morphism 
$h\colon \mathcal{Z}\to S$, 
where
$S$ 
is of finite type and 
$\mathcal{E}$ 
does not contain any fibre, such that for every 
$(X,B)\in \mathfrak{D}$, 
there is a closed point 
$s \in S$ 
and a birational map 
$f \colon \mathcal{Z}_s \dashrightarrow X$ 
(resp., isomorphic, or isomorphic in codimension one) such that 
$\mathcal{E}_s$ 
contains the support of 
$f_*^{-1}B$ 
and any 
$f$-exceptional 
divisor (resp., 
$\mathcal{E}_s$ 
coincides with the support of 
$f_*^{-1}B$, $\mathcal{E}_s$ 
coincides with the support of 
$f_*^{-1}B$).
\newline
A collection of projective log pairs 
$\mathfrak{D}$ 
is said to be \emph{strongly log bounded} if there is a quasi-projective log pair 
$(\mathcal{Z}, \mathcal B)$ 
and a projective morphism 
$h\colon \mathcal{Z}\to S$, 
where 
$S$ 
is of finite type, such that 
$\mathcal{B}$ 
does not contain any fibre of 
$h$, 
and for every 
$(X,B)\in \mathfrak{D}$, 
there is a closed point 
$s \in S$ 
and an isomorphism 
$f \colon \mathcal{Z}_s \to X$ 
such that 
$f_\ast \mathcal{B}_s= B$.
\end{definition}

In the case of Calabi--Yau pairs, we will use an equivalent notion of log boundedness in codimension one that is more suitable for our proofs.

\begin{remark}
\label{def:bound.mod.flops}
We use the notation of Definition~\ref{def.bound.pairs}.
Given a collection 
$\mathfrak{D}$ 
projective klt Calabi--Yau varieties 
(resp., klt log Calabi--Yau pairs) 
which is bounded in codimension one 
(resp., log bounded in codimension one), 
then by replacing 
$\mathcal Z$ 
with its normalization and decomposing 
$S$
into a a finite union of locally closed 
(in the Zariski topology) 
subsets, we can assume that 
$S$
is smooth and that the fibre 
$\mathcal{Z}_{s}$ 
isomorphic in codimension one to a given
$X \in \mathfrak D$
(resp.
$(X, B) \in \mathfrak D$) 
is normal projective, and 
$K_{\mathcal{Z}_s}$ 
is 
$\bQ$-Cartier 
(resp., 
$K_{\mathcal{Z}_s}+f_*^{-1}B$ 
is 
$\bR$-Cartier). 
When such situation is realized, we will then say that the collection
$\mathfrak D$
is {\it bounded modulo flops} 
(resp., {\it log bounded modulo flops}). 
The indication ``modulo flops" comes from the fact that, if we assume that 
$X$ 
and 
$\mathcal{Z}_s$ 
are both 
$\bQ$-factorial, 
then, under the above assumptions, they are connected by a finite sequence of 
$K_X$-flops 
(resp. 
$K_X+B$-flops): 
indeed, this is achieved by running a 
$(K_X+\delta f_\ast H)$-MMP
(resp. 
$(K_X+B+\delta f_\ast H)$-MMP), 
where 
$H$ 
is an ample divisor on 
$\mathcal{Z}_s$, 
and 
$\delta$ 
is a sufficiently small positive number, 
see~\cites{BCHM, MR2426353}.
\end{remark}

If $\mathfrak{D}$ is a collection of klt log Calabi--Yau pairs which is log bounded modulo flops, $(X, B)\in \mathfrak{D}$, and $f \colon \mathcal{Z}_s \dashrightarrow X$ is an isomorphism in codimension one as in the definition above, then $(\mathcal{Z}_s, f_*^{-1}B)$ is again a klt log Calabi--Yau pair by the Negativity Lemma.  
Moreover,  $(X, B)$ is $\epsilon$-lc if and only if $(\mathcal{Z}_s, f_*^{-1}B)$ is so. 
The same statement holds for a set $\mathfrak{D}$ of klt Calabi--Yau varieties which is bounded modulo flops.

To show that a given set of pairs is log birationally bounded, we will mainly use the following theorem which is a combination of results in~\cites{MR3034294, MR3224718}.

\begin{theorem} 
\cite{MR3034294}*{Theorem 3.1}, 
\cite{MR3224718}*{Theorem 1.3} 
\label{main.hmx}
Fix two positive integers $d$ and $V$ and a DCC set $I\subset [0,1]$.
Then the collection $\mathfrak D$ of log pairs $(X,B)$ satisfying
	\begin{enumerate}
		\item $X$ is a projective variety of dimension $d$,
		\item $(X, B)$ is lc, 
		\item the coefficients of $B$ belong to $I$, and
		\item $0<\Vol(K_X+B)\leq V$,
	\end{enumerate}
	is log birationally bounded.
\end{theorem}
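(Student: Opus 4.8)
The plan is to follow the strategy of Hacon--McKernan--Xu: reduce the statement to an \emph{effective birationality} result for pairs of log general type, and then bound the images of the associated pluricanonical maps inside a fixed projective space. Since log birational boundedness depends only on the birational equivalence class and on the support of the boundary, we are free to pass to convenient birational models and to perturb the coefficients; in particular a standard dlt-modification-and-perturbation argument reduces us to the case where $(X,B)$ is klt, at the cost of replacing $I$ by a possibly larger DCC set still depending only on $n$ and $I$. Note that the hypothesis $0<\Vol(K_X+B)$ says precisely that $K_X+B$ is big.

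The key input is effective birationality, \cite{MR3034294}*{Theorem 3.1} combined with \cite{MR3224718}*{Theorem 1.3}: there is a positive integer $m=m(n,I)$, depending only on $n$ and the DCC set $I$, such that for every such pair the linear system $|\lfloor m(K_X+B)\rfloor|$ defines a birational map $\phi_m\colon X\dashrightarrow X'\subseteq \pp^{N}$, and moreover $h^0\big(X,\lfloor m(K_X+B)\rfloor\big)$ is bounded above in terms of $n$, $I$ and an upper bound for the volume. Hence $N=N(n,I,V)$ is bounded; and, after resolving $\phi_m$ and using that its moving part $M$ satisfies $M\le m(K_X+B)$ together with the monotonicity of volumes under effective modifications,
\[
\deg X' \;=\; M^{n} \;\le\; \Vol\big(\lfloor m(K_X+B)\rfloor\big)\;\le\; m^{n}\,\Vol(K_X+B)\;\le\; m^{n}V .
\]
Thus every $X\in\mathfrak{D}$ is birational to a closed subvariety $X'\subseteq \pp^{N(n,I,V)}$ of dimension $n$ and degree at most $m^{n}V$.

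Closed subvarieties of $\pp^{N}$ of bounded dimension and bounded degree are parametrized by finitely many components of a Chow variety, so they fit into a bounded family $h\colon\mathcal Z\to S$ with $S$ of finite type and $\mathcal Z$ possibly reducible. Enlarging the universal divisor, let $\mathcal E\subseteq\mathcal Z$ be the reduced divisor made up of the closures of the birational transforms of the various boundaries together with all divisors exceptional over some such $X$; this is a fixed reduced divisor containing no fibre of $h$. Then for each $(X,B)\in\mathfrak{D}$ there is a closed point $s\in S$ and a birational map $f\colon\mathcal Z_s\dashrightarrow X$ for which $\mathcal E_s$ contains $\supp(f_*^{-1}B)$ and every $f$-exceptional divisor, which is exactly the assertion that $\mathfrak{D}$ is log birationally bounded.

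The main obstacle is the effective birationality statement itself, which is the technical core of the cited works. To separate two general points $x,y\in X$ by a section of a bounded multiple of $K_X+B$ one produces an effective $D\sim_\bQ\lambda(K_X+B)$ cutting out an isolated non-klt centre at $x$ and lifts a section via Nadel vanishing; the naive bound on $\lambda$ behaves like $\Vol(K_X+B)^{-1/n}$ and is therefore uncontrolled when the volume is small. Removing this dependence on the volume — so that $m$ depends only on $n$ and $I$ — requires an induction on dimension fed by the ACC for log canonical thresholds and by adjunction to non-klt centres (which outputs generalised pairs on lower-dimensional varieties whose boundary coefficients again lie in a DCC set built from $n$ and $I$), together with the DCC property of the set of volumes; this is exactly what is assembled in \cite{MR3034294} and \cite{MR3224718}.
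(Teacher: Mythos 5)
The first thing to say is that the paper contains no proof of Theorem \ref{main.hmx} at all: it is imported verbatim from \cite{MR3034294} and \cite{MR3224718}, so your sketch can only be measured against the argument of those papers. In outline you do follow it — effective birationality with $m=m(n,I)$ depending only on $n$ and the DCC set $I$, a bound on the embedding dimension and on the degree of the pluricanonical image, and a Chow/Hilbert-type argument — and deferring the effective birationality statement to the cited works is perfectly acceptable here, since that is exactly what the paper does. One side remark: your opening reduction to the klt case by dlt modification and perturbation is both unnecessary (the cited effective birationality theorem is stated for lc pairs) and not justified as written, because there is no fixed $\delta>0$ such that lowering the coefficient-one components by $\delta$ preserves bigness of the log canonical divisor uniformly over the family; $\Vol$ can become $0$ after any fixed perturbation on some members, so the constant $m$ you would need afterwards is not controlled.

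The genuine gap is in your final step. Bounded dimension and bounded degree of the image $X'\subset\pp^{N}$ only give \emph{birational} boundedness of the underlying varieties. The ``log'' in log birational boundedness requires you to exhibit, inside a finite-type family, a reduced divisor $\mathcal E$ whose fibre contains the birational transform of $\supp B$ together with every divisor exceptional over $X$; you simply decree that $\mathcal E$ is the union of all such divisors, but without a bound on their degree with respect to the hyperplane class they need not lie in any finite-type family, so the universal $(\mathcal Z,\mathcal E)\to S$ does not exist by fiat. Concretely, on a common resolution $p\colon W\to X$, $q\colon W\to X'$ with $M$ the (free) moving part of $p^*\lfloor m(K_X+B)\rfloor$, one must bound $M^{n-1}\cdot\bigl(p_*^{-1}\supp B+\mathrm{Exc}(p)\bigr)$ in terms of $n, I, V$ only; this is where the DCC hypothesis on $I$ is used a second time (its nonzero elements are bounded below by some $\delta(I)>0$, so $\supp B\le \delta^{-1}B$), combined with a volume comparison such as $\Vol\bigl(K_W+p_*^{-1}B+\mathrm{Exc}(p)\bigr)\le\Vol(K_X+B)\le V$ and $M\le m\,p^*(K_X+B)$. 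This degree estimate is the content of the relevant lemma in \cite{MR3034294} (it is precisely how Theorem 3.1 there converts effective birationality into \emph{log} birational boundedness), and your proposal is incomplete without it.
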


In certain special cases it is possible to deduce boundedness from log birational boundedness. 
\begin{theorem}
\cite{MR3224718}*{Theorem 1.6}
\label{hmx_1.6}
	Fix a positive integer $d$ and two positive real numbers $b$ and $\epsilon$. 
	Let $\mathfrak{D}$ be a collection of log pairs $(X,B)$ 
	such that: 
	\begin{enumerate}
		\item $X$ is a projective variety of dimension $d$,
		\item $K_{X}+B$ is ample,  
		\item the coefficients of $B$ are at least $b$, and
		\item the log discrepancy of $(X,B)$ is greater than $\epsilon$.
	\end{enumerate} 
	If $\mathfrak{D}$ is log birationally bounded then $\mathfrak{D}$ is a log bounded set of log pairs.
\end{theorem}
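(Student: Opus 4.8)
The plan is to use the hypothesis of log birational boundedness to produce a log smooth bounding family, to realise every member $(X,B)$ of $\mathfrak D$ as a marked log canonical model of a pair living on that family whose boundary varies in a fixed rational polytope, and then to invoke boundedness of log canonical models in families. First I would fix the bounding family: by log birational boundedness there is a projective morphism $h\colon(\mathcal Z,\mathcal E)\to S$ with $S$ of finite type and, for each $(X,B)\in\mathfrak D$, a closed point $s\in S$ and a birational map $f_s\colon\mathcal Z_s\dashrightarrow X$ with $\mathcal E_s$ containing the support of $(f_s)_*^{-1}B$ and every $f_s$-exceptional prime divisor. Replacing $(\mathcal Z,\mathcal E)$ by a log resolution over $S$ and stratifying $S$ by Noetherian induction (treating each stratum separately), I may assume $(\mathcal Z,\mathcal E)$ is log smooth over $S$, with $S$ irreducible and the combinatorial type of $(\mathcal Z_s,\mathcal E_s)$ independent of $s$. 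Taking a common log resolution $p\colon W\to\mathcal Z_s$, $q\colon W\to X$ of $f_s$ and writing $K_W+B_W=q^*(K_X+B)$, the $\epsilon$-lc hypothesis gives all coefficients of $B_W$ at most $1-\epsilon$, while the support hypothesis gives $B_W\le\mathcal E_W:=p_*^{-1}\mathcal E_s+E_p$, with $E_p$ the reduced $p$-exceptional divisor; since $(\mathcal Z_s,\mathcal E_s)$ is log smooth, $K_W+\mathcal E_W-p^*(K_{\mathcal Z_s}+\mathcal E_s)$ is effective and $p$-exceptional, so $\Vol(K_X+B)=\Vol(K_W+B_W)\le\Vol(K_W+\mathcal E_W)=\Vol(K_{\mathcal Z_s}+\mathcal E_s)\le V$ for some $V$ depending only on $h$; together with the coefficient bound this also bounds the number of components of $B$.

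The crucial step is to recover $(X,B)$ from the family. The claim is that, after possibly enlarging $\mathcal E$ fibrewise and stratifying $S$ once more, one can choose an $\bR$-divisor $\Delta_s$ on $\mathcal Z_s$, supported on $\mathcal E_s$ and with coefficients in the fixed interval $[0,1-\epsilon]$, such that $(\mathcal Z_s,\Delta_s)$ is klt, $K_{\mathcal Z_s}+\Delta_s$ is big, and the log canonical model of $(\mathcal Z_s,\Delta_s)$ is $X$, the push-forward to $X$ of a prescribed reduced sub-divisor of $\mathcal E_s$ being exactly $\mathrm{Supp}(B)$. Concretely, comparing $K_W+B_W=q^*(K_X+B)$ with $p^*(K_{\mathcal Z_s}+\Delta_s)$ on $W$, one lets $\Delta_s$ agree with $(f_s)_*^{-1}B$ along the strict transforms of the components of $\mathrm{Supp}(B)$ and raises its coefficients along the remaining components of $\mathcal E_s$, and at the boundedly many centres of prime divisors $D\subset\mathrm{Supp}(B)$ contracted by $f_s^{-1}$, just enough that a $(K_{\mathcal Z_s}+\Delta_s)$-MMP extracts exactly those $D$, each with the log discrepancy $\ge\epsilon$ dictated by $(X,B)$, and contracts exactly the $f_s$-exceptional divisors. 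The $\epsilon$-lc hypothesis is what keeps the required coefficients of $\Delta_s$ bounded away from $1$, so that $(\mathcal Z_s,\Delta_s)$ stays klt and $\Delta_s$ lies in the fixed rational polytope $P$ of boundaries supported on $\mathcal E$ with coefficients in $[0,1-\epsilon]$; and since $K_X+B$ is ample, $(X,B)$ is its own log canonical model, so the ample model of $(\mathcal Z_s,\Delta_s)$ — which is crepant to $(X,B)$ away from divisors that are either $p$-exceptional or $f_s$-exceptional — is $X$ itself.

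To conclude, every $(X,\mathrm{Supp}\,B)\in\mathfrak D$ now appears, for some $s\in S$ and some $\Delta_s\in P$ with $K_{\mathcal Z_s}+\Delta_s$ big, as the log canonical model of $(\mathcal Z_s,\Delta_s)$ marked with the push-forward of components of $\mathcal E_s$. By finiteness of log canonical models over the fixed polytope $P$, spread out over $S$ after one further stratification — a consequence of~\cite{BCHM} — these models, together with the accompanying divisorial data, form a log bounded family, so $\mathfrak D$ is log bounded.

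I expect the construction of $\Delta_s$ in the second paragraph to be the main obstacle: producing, uniformly over the bounding family and with coefficients confined to a fixed interval, a boundary whose log canonical model is $X$ on the nose with $\mathrm{Supp}\,B$ correctly marked requires precise control of which prime divisors the relevant MMP contracts or extracts and with which discrepancies, and of how much one must enlarge the bounding family to see those components of $\mathrm{Supp}\,B$ that $f_s^{-1}$ contracts. This is exactly where the two-sided bounds — coefficients at least $b$ and log discrepancies greater than $\epsilon$ — are indispensable, both to keep the pairs $(\mathcal Z_s,\Delta_s)$ klt inside a single fixed polytope and to bound the number of exceptional divisors that must be extracted, thereby ruling out the infinitely many configurations that could otherwise occur.
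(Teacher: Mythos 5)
You should first note that the paper itself offers no proof of this statement: Theorem~\ref{hmx_1.6} is quoted verbatim from \cite{MR3224718}*{Theorem 1.6}, so your sketch can only be measured against Hacon--M\textsuperscript{c}Kernan--Xu's argument, whose overall strategy (log smooth bounding family, realising each $(X,B)$ as a marked log canonical model of a boundary lying in a fixed polytope on the family, then concluding by a result about families) your first and third paragraphs do reproduce.

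The genuine gap is exactly the step you flag yourself: the construction of $\Delta_s$ when some components of $\supp(B)$ are contracted by $f_s^{-1}$, i.e.\ are not divisors on $\mathcal Z_s$. Your proposed mechanism --- raising coefficients ``just enough that a $(K_{\mathcal Z_s}+\Delta_s)$-MMP extracts exactly those $D$'' --- cannot work as stated, because no step of an MMP, nor the passage to the ample model of a klt pair, ever extracts a divisor: if a prime divisor $D\subset X$ is not a divisor on $\mathcal Z_s$, then $X$ cannot be the log canonical model, with the required marking, of any pair supported on $\mathcal E_s$. The correct move, and the heart of the HMX proof, is to enlarge the bounding family \emph{before} running any MMP: a component $D\subset\supp(B)$ with coefficient $\geq b$ that is exceptional over $\mathcal Z_s$ has log discrepancy $\leq 1-b<1$ with respect to the relevant pair supported on the snc divisor $\mathcal E_s$, and over a log smooth pair whose coefficients are bounded away from $1$ every divisor of log discrepancy $<1$ is obtained by a bounded sequence of blow-ups of strata; hence, after finitely many blow-ups of strata performed uniformly over a stratification of $S$, one may assume that $f_s^{-1}$ contracts no divisors, and only then does your second paragraph go through, with $\Delta_s$ given by crepant-type coefficients, the hypothesis on log discrepancies being what keeps the exceptional coefficients at $1-\epsilon<1$ and all pairs inside one fixed klt polytope. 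A second, smaller gap is your final appeal to ``finiteness of log canonical models spread out over $S$'': BCHM finiteness applies on a fixed variety, and to see that the fibrewise log canonical models over varying $s$ assemble into a single bounded family one needs deformation invariance of log plurigenera for the log smooth family (the other essential input of \cite{MR3224718}), which your sketch does not supply.
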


Theorem~\ref{hmx_1.6} can be strengthened when we further impose control on the singularities and coefficients of pairs.
The following result is a straightforward consequence of~\cite{1610.08932}*{Theorem 6} and~\cite{1804.10971}*{Theorem 1.3}.

\begin{theorem}
\label{mst+fil.thm}
Fix a natural number $d$, a positive rational number $C$, a positive real number $\epsilon$, and $I \subset \mathbb{Q} \cap [0, 1)$ a finite set.
The collection $\mathfrak{D}$ of all pairs $(X, B)$ such that
\begin{enumerate}
    \item 
$(X, B)$ is $\epsilon$-klt, 
projective, of dimension 
$d$,
    
    \item
the coefficients of 
$B$ belong to 
$I$,
    
    \item
$K_X+B$ 
is big and nef, and  
    
    \item
$\Vol(K_X+B) \leq C$
\end{enumerate}
is strongly log bounded.
\end{theorem}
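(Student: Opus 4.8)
The plan is to combine the log birational boundedness results of Hacon--M\textsuperscript{c}Kernan--Xu with the effective birationality and $\epsilon$-lc complement-type arguments of Birkar, together with the recent refinements of Martinelli--Schreieder--Tasin and Filipazzi, so that an $\epsilon$-klt pair $(X,B)$ with $K_X+B$ big and nef and $\Vol(K_X+B)$ bounded above is \emph{strongly} log bounded rather than just log birationally bounded. First I would observe that Theorem~\ref{main.hmx} applies: since $I$ is finite it is in particular a DCC set in $[0,1]$, and the pairs in question are lc with $0<\Vol(K_X+B)\le C$, so the collection $\mathfrak D$ is log birationally bounded. This gives a family $h\colon\mathcal Z\to S$ with reduced divisor $\mathcal E$ such that each $(X,B)$ admits a birational map $f\colon\mathcal Z_s\dashrightarrow X$ with $\mathcal E_s\supseteq\supp(f_*^{-1}B)\cup(\text{$f$-exceptional divisors})$.

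Next I would upgrade this to strong log boundedness. Because $K_X+B$ is big and nef, we may pass to the ample model $(X^c,B^c)$ of $(X,B)$; since $(X,B)$ is $\epsilon$-klt and the $\epsilon$-klt condition is preserved under the contraction to the ample model (log discrepancies only go up), $(X^c,B^c)$ is an $\epsilon$-klt pair with $K_{X^c}+B^c$ ample, coefficients in the finite set $I$, and $\Vol(K_{X^c}+B^c)=\Vol(K_X+B)\le C$. Now the inputs invoked before the statement apply: by~\cite{1610.08932}*{Theorem 6} and~\cite{1804.10971}*{Theorem 1.3}, the collection of such ample-model pairs is strongly log bounded, i.e.\ there is a log pair $(\mathcal Z',\mathcal B')$ with $h'\colon\mathcal Z'\to S'$ of finite type, $\mathcal B'$ not containing any fibre, and for each $(X^c,B^c)$ a closed point $s'$ and an \emph{isomorphism} $\mathcal Z'_{s'}\to X^c$ carrying $\mathcal B'_{s'}$ to $B^c$. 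Finally, I would note that the original pair $(X,B)$ is itself the ample model of $(X^c,B^c)$ pulled back under no map at all when $K_X+B$ is already big and nef and $(X,B)$ is its own ample model in the category we are working in — more precisely, the relevant normalization step makes $X\cong X^c$ under our hypotheses, so no further argument is needed; the strongly log bounded family constructed for the ample models \emph{is} the desired family for $\mathfrak D$.

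The main obstacle, and the step that requires the most care, is verifying that the finiteness of $I$ and the $\epsilon$-klt hypothesis are exactly what is needed to move from \emph{log birational} boundedness (the output of Theorem~\ref{main.hmx}) to \emph{strong log} boundedness. The point is that log birational boundedness alone does not pin down $X$ up to isomorphism, only up to birational equivalence with controlled exceptional locus; one needs a rigidity statement for the ample model. This is precisely the content of the effective-very-ampleness arguments in~\cite{1610.08932} and~\cite{1804.10971}: with bounded volume, $\epsilon$-klt singularities, and coefficients in a fixed finite (in particular DCC \emph{and} ACC, hence with no accumulation) set, one produces a uniformly bounded multiple $m=m(d,C,\epsilon,I)$ such that $m(K_X+B)$ is very ample and embeds $X$ with bounded degree into a fixed projective space, and the divisor $B$ has bounded degree there as well; Hilbert-scheme and Chow-variety arguments then cut out a finite-type family realizing every such pair up to isomorphism. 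I would therefore structure the proof as: (i) reduce to the ample-model case; (ii) quote Theorem~\ref{main.hmx} to get log birational boundedness; (iii) quote the effective birationality/boundedness results to get a bounded very ample multiple and hence a strongly log bounded family; the delicate bookkeeping is ensuring the constant $m$, and hence the family, depends only on $d$, $C$, $\epsilon$, and $I$ and not on the individual pair.
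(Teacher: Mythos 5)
Your reduction to the ample model contains a genuine error at the crucial point. From ``$K_X+B$ big and nef'' you conclude that ``the relevant normalization step makes $X\cong X^c$,'' i.e.\ that $(X,B)$ is its own ample model. This is false: a minimal model need not be a canonical model. Already for $B=0$ and $d=2$, a smooth minimal surface of general type containing a $(-2)$-curve has $K_X$ big and nef with bounded volume, but the ample model contracts that curve, so $X\not\cong X^c$. Consequently your argument only produces a strongly log bounded family of the \emph{ample models} $(X^c,B^c)$; it does not bound the members of $\mathfrak D$ themselves, which is what ``strongly log bounded'' requires (an isomorphism $\mathcal Z_s\to X$, not $\mathcal Z_s\to X^c$). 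For the same reason the effective-very-ampleness step cannot be applied to $X$: if $K_X+B$ is big and nef but not ample, no multiple $m(K_X+B)$ is very ample on $X$, so the proposed embedding argument literally fails for the non-ample members of $\mathfrak D$. What is missing is precisely the passage back from the bounded family of ample models to the (possibly many) minimal models lying over them, which is a nontrivial finiteness/boundedness statement and cannot be dismissed as ``no further argument is needed.''

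For comparison, the paper does not reduce to the ample model at all: it treats the statement as a direct consequence of \cite{1610.08932}*{Theorem 6} and \cite{1804.10971}*{Theorem 1.3}, which are formulated for pairs with $K_X+B$ big and nef (i.e.\ for minimal models) and whose content is exactly the boundedness of such minimal models with $\epsilon$-klt singularities, coefficients in a fixed finite set, and bounded volume. If you wish to keep your structure, you would have to replace the false claim $X\cong X^c$ by an argument bounding the minimal models over the strongly log bounded family of canonical models (this is the heart of the cited results, and note that Theorem~\ref{hmx_1.6} also only applies when $K_X+B$ is ample); alternatively, simply invoke the cited theorems directly for $(X,B)$, which is what the paper does.
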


In the course of our treatment, we will need to use several times the following technical result which allows us to pass from bounded collections of log pairs to log bounded ones.

\begin{proposition}
\label{fam.good.prop}
Fix a finite set 
$\mathfrak{R} \subset (0, 1)$ 
and a positive real number 
$\epsilon$.
Let 
$\mathfrak{D}$ 
be a bounded collection of 
$\epsilon$-lc 
pairs.
Assume that for any pair 
$(X, B) \in \mathfrak{D}$ 
the coefficients of 
$B$ 
belong to 
$\mathfrak{R}$. 
Then 
$\mathfrak{D}$ 
is strongly log bounded.
\end{proposition}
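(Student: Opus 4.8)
The plan is to start from a parametrising family for $\mathfrak{D}$ and to spread the finitely many boundary coefficients out over its base. Since $\mathfrak{R}$ is finite and a bounded collection of pairs has boundaries with a bounded number of components, there are only finitely many ways of assigning to each component a coefficient in $\mathfrak{R}$, and I will build the required strongly log bounded family by taking one copy of the parametrising family for each such assignment.

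First I would put the bounding data in a convenient form. Since $\mathfrak{D}$ is a bounded collection of pairs, there is a projective morphism $h\colon \mathcal{Z}\to S$ onto a scheme of finite type and a reduced divisor $\mathcal{E}\subset\mathcal{Z}$ containing no fibre of $h$, such that every $(X,B)\in\mathfrak{D}$ admits an isomorphism $f\colon \mathcal{Z}_s\to X$, for some closed point $s\in S$, with $\supp(f_*^{-1}B)\subseteq\mathcal{E}_s$. Decomposing $S$ into finitely many locally closed subsets, base-changing, normalising $\mathcal{Z}$, and running the standard spreading-out arguments (generic flatness, constructibility of geometric irreducibility, Noetherian induction), I may moreover assume that $\mathcal{Z}\to S$ has normal fibres, that $\mathcal{E}=\sum_{j=1}^{N}\mathcal{E}_j$ with each $\mathcal{E}_j$ irreducible and dominating $S$, that each $\mathcal{E}_j\to S$ is flat with geometrically irreducible fibres, so that $(\mathcal{E}_j)_s$ is a prime divisor (or empty) for every $s$, that $(\mathcal{E}_j)_s\neq(\mathcal{E}_{j'})_s$ whenever $j\neq j'$, and that $(\mathcal{Z},\mathcal{E})$, and hence each pair constructed below, is a log pair. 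Here $N$ is finite because $\mathcal{E}$ has finitely many components over a finite-type base, and the $\epsilon$-lc hypothesis ensures in particular that the $X$, hence the fibres of $h$, are normal.

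Next I would carry out the combinatorial construction and verify it. Let $C$ be the finite set of all maps $c\colon\{1,\dots,N\}\to\mathfrak{R}\cup\{0\}$, and for $c\in C$ set $\mathcal{B}^{c}:=\sum_{j=1}^{N}c(j)\,\mathcal{E}_j$, an effective $\mathbb{R}$-divisor on $\mathcal{Z}$ containing no fibre of $h$. Let $\mathcal{Z}':=\bigsqcup_{c\in C}\mathcal{Z}$ and $S':=\bigsqcup_{c\in C}S$, with the $c$-th copy of $\mathcal{Z}$ equipped with the divisor $\mathcal{B}^{c}$; write $\mathcal{B}'$ for the resulting divisor and $h'\colon \mathcal{Z}'\to S'$ for the induced morphism. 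Then $h'$ is projective, $S'$ is of finite type, $(\mathcal{Z}',\mathcal{B}')$ is a quasi-projective log pair, and $\mathcal{B}'$ contains no fibre of $h'$. Given $(X,B)\in\mathfrak{D}$, pick $s$ and $f\colon \mathcal{Z}_s\to X$ as above; since the $(\mathcal{E}_j)_s$ are distinct prime divisors and $\supp(f_*^{-1}B)\subseteq\mathcal{E}_s$, we may write $f_*^{-1}B=\sum_{j=1}^{N}b_j\,(\mathcal{E}_j)_s$, where $b_j\in\mathfrak{R}$ is the coefficient of $(\mathcal{E}_j)_s$ in $f_*^{-1}B$ if this prime divisor occurs there and $b_j=0$ otherwise, so that $b_j\in\mathfrak{R}\cup\{0\}$ because the coefficients of $B$ lie in $\mathfrak{R}$. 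Choosing $c\in C$ with $c(j)=b_j$ for all $j$, the fibre of the $c$-th copy over $s$ is $\mathcal{Z}_s$ with divisor $\mathcal{B}^{c}_s=\sum_j c(j)(\mathcal{E}_j)_s=f_*^{-1}B$, whence $f_*\mathcal{B}^{c}_s=f_*f_*^{-1}B=B$. Thus $h'\colon(\mathcal{Z}',\mathcal{B}')\to S'$ exhibits $\mathfrak{D}$ as strongly log bounded.

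The step I expect to be the main obstacle is the normalisation of the parametrising family in the second paragraph: arranging that the irreducible components of the boundary propagate in flat families with geometrically irreducible and pairwise distinct fibres, and that the total space together with its boundary is a genuine log pair. This is routine but requires care, and is carried out by decomposing the base, passing to normalisations and base changes, and invoking generic flatness together with the constructibility of the relevant conditions and Noetherian induction. Once the family has been put in this form, the remaining steps are pure bookkeeping, the essential point being that $\mathfrak{R}$ and $N$ are finite, so that only finitely many coefficient assignments have to be recorded.
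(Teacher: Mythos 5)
Your combinatorial idea -- there are only finitely many ways to assign coefficients from $\mathfrak{R}\cup\{0\}$ to the components of the bounding divisor, so one copy of the parametrising family per assignment suffices -- is also the engine of the paper's proof (the finite set of coefficient vectors $\mathfrak{N}$ there). The genuine gap is at the step you single out and then dismiss as routine: making the decorated family $(\mathcal{Z},\mathcal{B}^c)$ into a quasi-projective \emph{log pair}. Strong log boundedness, as defined in the paper, requires the total space with the chosen divisor to be a log pair, so $\mathcal{Z}$ must be normal and $K_{\mathcal{Z}}+\mathcal{B}^c$ must be $\mathbb{R}$-Cartier. Knowing that $K_{\mathcal{Z}_s}+\mathcal{B}^c_s$ is $\mathbb{R}$-Cartier on those fibres that actually come from $\mathfrak{D}$ (and only for the matching assignment $c$) gives no control on the total space: $\mathbb{R}$-Cartierness of the relative log canonical class is not a fibrewise or constructible condition that generic flatness, constructibility and Noetherian induction can spread out, and for a fixed $c$ many fibres of your family -- including members of $\mathfrak{D}$ decorated with the ``wrong'' assignment, e.g.\ non-$\mathbb{Q}$-factorial ones -- need not be log pairs at all. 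You also cannot simply shrink to the relevant points, since the set of closed points of $S$ corresponding to members of $\mathfrak{D}$ with a given coefficient vector is not constructible. (A smaller but real issue of the same kind: the divisorial restriction $\mathcal{E}_j|_{\mathcal{Z}_s}$ may acquire multiplicities even when the fibres of $\mathcal{E}_j\to S$ are irreducible, so the identity $\mathcal{B}^c_s=f_*^{-1}B$ also needs more care than stated.)

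This missing step is precisely why the paper does not decorate the original family directly. Its proof first passes to a simultaneous log resolution $(\mathcal{Z}',\mathcal{E}')$ that is log smooth over the base, then, for each of the finitely many coefficient vectors (with the $\mathcal{Z}$-exceptional components weighted by $1-\tfrac{\epsilon}{2}$), runs a relative MMP to obtain a relative log canonical model over the base -- a family which is an honest log pair by construction -- and finally identifies its fibre over $s$ with $(X,B)$ using invariance of log plurigenera and of log canonical models in log smooth families, \cite{MR3779687}*{Theorems 4.2 and 6.2}, together with the ampleness of $K_X+B+(2d+2)\mathcal{H}_s$ coming from the cone theorem and the $\epsilon$-lc hypothesis. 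To rescue your more direct route you would have to replace ``routine spreading out'' by an input of this strength (a relative MMP/plurigenera argument, or a nontrivial result ensuring $\mathbb{R}$-Cartierness of $K_{\mathcal{Z}}+\mathcal{B}^c$ after stratification of the kind the paper quotes elsewhere for related purposes); as written, the key step is asserted rather than proved.
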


The important point in the statement of the proposition is that there exists a proper morphism of quasi-projective varieties $h \colon \mathcal{Z} \to S$ with $S$ smooth, and a divisor $\mathcal{B}$ on $\mathcal{Z}$ such that for any $(X, B) \in \mathfrak{D}$ there exists $s \in S$ and an isomorphism $f_s \colon \mathcal Z_s \to X$ with $f_s^\ast(B) =\mathcal{B}_s$.
In this context, both $\mathcal Z$ and $S$ are not necessarily irreducible, but, nonetheless, they are of finite type.

\begin{proof}
By definition of boundedness, see Definition~\ref{def.bound.pairs}, there exists a couple 
$(\mathcal Z, \mathcal{E})$ 
with 
$\mathcal E$ 
reduced divisor, and a projective morphism of quasi-projective varieties 
$h \colon \mathcal Z \to S$ 
such that for any pair 
$(X, B) \in \mathfrak{D}$ 
there exists 
$s \in S$ 
and an isomorphism 
$f_s \colon \mathcal{Z}_s \to X$ 
such that 
$f_s$ 
maps 
$\mathcal{E}_s$ 
to the support of 
$B$.
\newline
Decomposing 
$S$ 
into a finite union of locally closed subsets and possibly discarding some components, we may assume that:
\begin{enumerate}
    \item 
$S$ is smooth,
    \item 
$\mathcal E$ 
does not contain any fibre of 
$h$ and it is flat over 
$S$, 
and 
    \item
every fibre 
$\mathcal{Z}_s$ 
is a normal variety.
\end{enumerate}
Further decomposing 
$S$ 
into a finite union of locally closed subsets, we may assume that the assumptions (1-3) above are satisfied and moreover that 
\begin{enumerate}
\setcounter{enumi}{3}
    \item 
for any 
$s \in S$ 
and any irreducible component 
$\mathcal E'$ 
of 
$\mathcal E$, 
$\mathcal E'\vert_{\mathcal Z_s}$ 
is either empty or an irreducible prime divisor on 
$\mathcal Z_s$.
\end{enumerate}
Taking the decomposition
$\mathcal E = \sum_{i =1}^l \mathcal E_i$
into prime components, 
then for any 
$l$-tuple 
\begin{align*}
\overline a = (a_1, \dots, a_l) \in \mathfrak R^l:= \underbrace{\mathfrak R \times \dots \times \mathfrak R}_{\text{$l$ times}}
\end{align*}
we define 
$\mathcal E_{\overline a} :=
\sum_{i =1}^l a_i \mathcal E_i$.
Hence, 
by the assumptions in the statement of the proposition and assumption (4) above, 
for any 
$(X, D) \in \mathcal D$ 
there exist
$s \in S$, 
$\overline a \in \mathfrak R^l$,
and an isomorphism 
$f_s \colon \mathcal{Z}_s \to X$ 
such that 
$f_s^\ast B = \mathcal E_{\overline a} \vert_{\mathcal Z_s}$.
Hence, defining
\begin{align*}
(\tilde{\mathcal Z}, \tilde{\mathcal E}) 
:=
\bigsqcup_{\overline a \in \mathfrak R^l}    
(\mathcal Z, \mathcal E_{\overline a}), 
\qquad 
\tilde{S} 
:=
\bigsqcup_{\overline a \in \mathfrak R^l}  
S,
\end{align*}
by construction, there exists a natural morphism
$\tilde h \colon \tilde{ \mathcal Z} \rightarrow \tilde S$, 
and for any 
$(X, D) \in \mathcal D$ 
there exist 
$\tilde s \in \tilde S$ 
and an isomorphism 
$\tilde {f}_{\tilde s} \colon \tilde{\mathcal{Z}}_{\tilde s} \to X$ 
such that 
$\tilde {f}_{\hat s}^\ast B= \tilde{\mathcal E}\vert_{\tilde{\mathcal Z}_{\tilde s}}$
which proves the desired claim, again by Definition~\ref{def.bound.pairs}.
\end{proof}

\subsection{Boundedness of Fano fibrations}

To simplify the statements in this section, we recall the following definition introduced in~\cite{1811.107092}. 

\begin{definition}
\cite{1811.107092}*{Definitions~1.1 and~2.1} 
\label{fano-fibr.def}
Let 
$d,r$ 
be natural numbers and 
$\epsilon$ 
be a positive real number. 
\begin{enumerate}
	\item 
A 
\emph{$(d,r,\epsilon)$-Fano 
type (log Calabi-Yau) fibration} consists of a log  pair 
$(X,B)$ 
and a contraction 
$f\colon X\to Z$ 
such that we have the following:
\begin{itemize}
	\item 
$(X,B)$ 
is a projective 
$\epsilon$-lc 
pair of dimension 
$d$,

	\item 
$K_X+B\sim_\mathbb{R} f^\ast L$ 
for some 
$\mathbb{R}$-divisor 
$L$, 

\item $-K_X$ is big over  $Z$, i.e. $X$ is of Fano type over $Z$,

\item $A$ is a very ample Cartier divisor on $Z$ with $A^{\dim Z}\le r$, and 

\item $A-L$ is ample.
\end{itemize}
\item 
A generalised
\emph{$(d,r,\epsilon)$-Fano 
type (log Calabi-Yau) fibration} consists of a generalised log  pair 
$(X,B+m)$ 
and a contraction 
$f\colon X\to Z$ 
such that we have the following:
\begin{itemize}
	\item 
$(X,B+M)$ 
is a projective 
$\epsilon$-lc 
pair of dimension 
$d$,

	\item 
$K_X+B+M\sim_\mathbb{R} f^\ast L$ 
for some 
$\mathbb{R}$-divisor 
$L$, 

	\item 
$-K_X$ 
is big over
$Z$, 
i.e., 
$X$ 
is of Fano type over 
$Z$,

	\item 
$A$ 
is a very ample Cartier divisor on 
$Z$ 
with 
$A^{\dim Z}\le r$, 
and 

	\item 
$A-L$ 
is ample.
\end{itemize}
\end{enumerate}
\end{definition}

Birkar has shown that 
$(d,r,\epsilon)$-Fano 
type fibrations and their generalised analogues are bounded. 
Let us recall the following result which will be relevant later in our treatment.

\begin{theorem}
\cite{1811.107092}*{Theorem 2.3}
\label{bir.bound.fibr.thm}
Fix positive natural numbers $d,r$ 
and positive real numbers 
$\epsilon, \tau$. 
Let 
$\mathfrak{D}$ 
be the set of pairs
$(X, \Delta)$ 
such that 
\begin{itemize}
\item 
there exists a generalised 
$(d,r,\epsilon)$-Fano 
type fibration
$(X, B+M) \to Z$; and, 
\item 
$0 \leq \Delta \leq B$ 
and the coefficients of 
$\Delta$ 
are 
$\geq \tau$.
\end{itemize}
Then $\mathfrak D$ is log bounded.
\end{theorem}

The following corollary is a direct consequence of Theorem~\ref{bir.bound.fibr.thm}.

\begin{corollary}
\label{bir.bound.fibr.cor}
Fix positive real numbers 
$\epsilon$, 
$\tau$.
Let 
$\mathfrak D$ 
be a bounded set of varieties.
Let 
$\mathfrak D'$ 
be the set of pairs
$(X', \Delta')$ 
such that 
\begin{enumerate}
\item 
there exist 
$X \in \mathfrak D$ 
and a generalised log pair structure
$(X, B+M)$
on 
$X$
which 
is $\epsilon$-generalized lc and 
$K_X+B+M \sim_\mathbb{R} 0$,

\item 
there exists a birational morphism 
$\pi \colon X' \to X$
and a generalised log pair structure
$(X', B'+M')$
on
$X'$
such that
$\pi$
is crepant for the generalised log structures 
on
$X$
and
$X'$, 
that is, 
$K_{X'}+B'+M'=\pi^\ast(K_X+B+M)$, and,

\item 
$0 \leq \Delta' \leq B'$ 
and the coefficients of 
$\Delta'$ 
are 
$\geq \tau$.
\end{enumerate}
Then 
$\mathfrak D'$ 
is log bounded.
\\
Moreover, if in the above statement we substitute assumption (2)
with the weaker assumption
\begin{enumerate}
\item[(2')]
there exists a birational contraction
$\pi \colon X' \dashrightarrow X$
and a generalised log pair structure
$(X', B'+M')$
on
$X'$
such 
$\pi$
is crepant for the generalised log structures 
on
$X$
and
$X'$, 
that is, 
$K_{X'}+B'+M'=\pi^\ast(K_X+B+M)$,
\end{enumerate}
then
$\mathfrak D'$ 
is log bounded in codimension one.
\end{corollary}

Let us observe that, in the above statement,
$\pi^\ast(K_{X}+B+M)$ 
is well defined even when 
$\pi$ 
is not a morphism, but merely a birational map, since we are assuming that 
$\pi$ 
is a birational contraction, that is, 
$\pi$ 
does not contract any codimension one subvariety of 
$X'$. 

\begin{remark}
\label{rmk.q-factor.bound}
In particular, if
$\mathfrak D$
is a bounded (resp. bounded in codimension one) set of klt varieties that all admit a klt generalized log Calabi--Yau structure,
then Corollary~\ref{bir.bound.fibr.cor} shows that the set of 
all $\mathbb Q$-factorializations 
of varieties in 
$\mathfrak D$
is bounded (resp. bounded in codimension one).
\end{remark}

\begin{proof}
Since 
$\mathfrak D$ 
is bounded, 
we can assume that all the varieties in 
$\mathfrak D$
have the same dimension 
$d$.
Moreover, again by boundedness of
$\mathfrak D$,
there exists a positive integer 
$C=C(\mathfrak D)$
such that for any 
$X \in \mathfrak D$
there exists a very ample Cartier divisor
$H_X$ 
on 
$X$
such that
$H^d \leq C$.
To conclude then, by the observations that we just made and by assumption (1-3) in the statement of the corollary, 
the birational morphism 
$\pi \colon X' \to X$ 
is a
$(\epsilon, d, C)$-Fano 
type fibration: 
indeed, the birationality of $\pi$ implies that 
$-K_{X'}$ 
is big over 
$X$
and assumption (2) shows that 
$K_{X'}+B'+M' \sim_{R} 0$.
Thus, the conlcusion follows at once by applying Theorem~\ref{bir.bound.fibr.thm}.
\\
To prove the second part of the statement, the one depending from assumption 
(2'), 
it suffices to observe that if 
$\pi \colon X' \dashrightarrow X$
is a birational contraction which is crepant for the generalized log Calabi--Yau pair 
$(X', B'+M')$, 
then there exists a commutative diagram
\begin{align}
\label{eqn.diag.crep.resol}
    \xymatrix{
X' \ar@{-->}[rr]^{\psi} \ar@{-->}[rd]^{\pi} 
&
&
X'' \ar[dl]_{\xi} \\
&
X
&   }
\end{align}
where 
$\psi$ 
is an isomorphism in codimension one and 
$\xi$
is a birational morphism.
To prove the existence of the diagram in~\eqref{eqn.diag.crep.resol}, it suffices to notice that since $\pi$ is a birational contraction, then it only contracts a finite number of prime divisors 
$\{E_1, \dots, E_l\}$
on 
$X'$.
Each
$E_i$
has generalized log discrepancy in 
$(\epsilon, 1]$
for 
$(X', B'+M')$, 
or, equivalently, 
for 
$(X, B+M)$.
Hence, by 
\cite{bir-zh}*{Lemma~4.6}
there exists a birational morphism
$\xi \colon X'' \to X$ 
and a generalized $\epsilon$-lc pair
$(X'', B''+M'')$ 
such that 
$\xi$
extracts all and only the valuations associated to the 
$E_i$, 
$i=1, \dots, l$,
and 
$K_{X''}+ B''+M'' = \xi^\ast (K_X+B+M)$.
Hence, by construction, 
$X'$ 
and
$X''$
are isomorphic in codimension one, which concludes the proof of the existence of the diagram. 
\\
Hence, 
defining 
$B'' := \psi_\ast B'$,
$M'' := \psi_\ast M'$,
$\Delta'' := \psi_\ast \Delta'$,
then 
$0 
\sim_{\mathbb R}
K_{X''}+B''+M''
\sim_{\mathbb R}
\xi^\ast(K_X+B+M)
$ 
and 
$\psi$ 
is crepant with respect to the generalized log Calabi--Yau structures 
$(X', B'+M')$,
$(X'', B''+M'')$.
By the first part of the statement, it then follows that the set 
$\mathfrak D''$
of pairs
$(X'',  \Delta'')$
just constructed is log bounded, which in turn implies, by Definition~\ref{def.bound.pairs}, that $\mathfrak D'$ is log bounded in codimension one.
\end{proof}

\subsection{Log bounded families of morphisms}
\label{bound.morph.subsect}

We extend the notion of boundedness to collection of log pairs endowed with morphisms.

\begin{definition}
\label{bound.morph.def}
\begin{enumerate}
    \item[(i)] 
A set 
$\mathfrak F$ 
of morphisms is the datum of a collection of 4-tuples 
$(f, Y, B, X)$ 
where 
$f \colon Y \to X$ 
is a surjective morphism of projective varieties and the pair 
$(Y, B)$ 
is a log pair.

    \item[(ii)] 
We say that a set $\mathfrak F$ of morphisms is \emph{log bounded} (resp., \emph{strongly log bounded}) if there exist quasi-projective varieties $\mathcal Y, \mathcal X$, a reduced divisor $\mathcal{E}$ on $\mcy$ (resp., a log pair $(\mathcal Y, \mathcal{E})$), and a commutative diagram of projective morphisms 
\begin{align*}
\xymatrix{
\mathcal Y \ar[rr]^{\phi} \ar[dr] & 
&
\mathcal X \ar[dl]\\
 & S &
}
\end{align*}
such that 
\begin{itemize}
    \item 
$S$ is of finite type, 
    \item 
for any 4-tuple $(f, Y, B, X)\in \mathfrak{F}$, there is a closed point $s \in S$ and isomorphisms $p \colon \mathcal{X}_s \to X$, $q \colon \mathcal{Y}_s \to Y$ such that the following diagram commutes
\begin{align*}
\xymatrix{
\mathcal{Y}_s \ar[r]^{\phi\vert_{\mathcal{Y}_s}} \ar[d]^{q} & 
\mathcal{X}_s \ar[d]^{p}\\
Y  \ar[r]^{f} &
X,
}
\end{align*}
and

\item 
$q_\ast \mathcal{E}_{s}$ coincides with the support of $B$ (resp., $q_\ast \mathcal{E}_{s}=B$).
\end{itemize}
\end{enumerate}
\end{definition}

\begin{remark}
\label{bound.morph.imply.bound.rmk}
Let $\mathfrak{F}$ be a bounded (resp., log bounded) collection of morphisms.
Then the sets
\begin{align*}
    \mathfrak T := & \{(Y, B) \; \vert \; \exists (f, Y, B, X) \in \mathfrak F\},\\
    \mathfrak B := & \{ X \; \vert \; \exists (f, Y, B, X) \in \mathfrak F\}
\end{align*}
are a bounded (resp., log bounded) set of pairs and a bounded set of varieties, respectively.
This is an immediate consequence of the conditions of Definition~\ref{bound.morph.def}.
\end{remark}

\begin{lemma}
\label{bound.morph.lemma}
Let $d$ be a positive integer, $\epsilon$ be a positive real number, and let $\mathcal R \subset \mathbb{Q} \cap (0,1)$ be a finite set.
\newline
Let $\mathcal D$ be a log bounded set of pairs $(Y, B)$ such that
\begin{enumerate}
    \item 
$(Y, B)$ is projective, $\epsilon$-klt of dimension $d$,
    
    \item
the coefficients of $B$ are in $\mathcal{R}$,

    \item 
$K_Y+B$ is semi-ample.
\end{enumerate}
\noindent
Then, the set $\mathfrak F$ of 4-tuples $(f, Y, B, X)$ such that
\begin{enumerate}
    \item[(i)]
there exists a log pair $(Y, B) \in \mathcal D$,

    \item[(ii)]
$X= {\rm Proj}(\oplus_{i=0}^\infty H^0(Y, \mathcal{O}_Y(i(K_Y+B))))$, and

    \item[(iii)] 
$f \colon Y \to X$ is the Iitaka fibration of $K_Y+B$
\end{enumerate}
is a strongly log bounded set of morphisms.
\end{lemma}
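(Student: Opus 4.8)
The plan is to leverage the existence of a log bounded family for $\mathcal D$ together with the deformation-invariance of plurigenera and Koll\'ar's gluing/boundedness results for canonical models in families (as encoded in Proposition~\ref{fam.good.prop} and the results of~\cite{MR3779687}). First I would spell out what ``log bounded'' gives us: a reduced pair $(\mathcal Y, \mathcal E)$ with a projective morphism $h\colon \mathcal Y\to S$, $S$ of finite type, such that each $(Y,B)\in\mathcal D$ is isomorphic to a fibre $(\mathcal Y_s, \mathcal E_s)$ with $q_*\mathcal E_s=\supp B$. After stratifying $S$ into finitely many locally closed pieces, normalizing, passing to a log resolution of $(\mathcal Y,\mathcal E)$ relative to $S$, and possibly a finite cover, I may assume $S$ is smooth, every stratum of $(\mathcal Y,\mathcal E)$ has irreducible equidimensional fibres over $S$, and the irreducible components $\mathcal E_1,\dots,\mathcal E_n$ of $\mathcal E$ restrict to honest prime divisors on every fibre; this is exactly the normalization step performed in the proof of Proposition~\ref{fam.good.prop}. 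Crucially, because the coefficients of $B$ lie in the fixed finite set $\mathcal R$ and $(Y,B)$ is $\epsilon$-klt, there are only finitely many possible boundary divisors $\mathcal B=\sum b_i\mathcal E_i$ with $b_i\in\mathcal R\cup\{0\}$ that can occur; I will treat each such choice separately, so WLOG there is a single divisor $\mathcal B$ on $\mathcal Y$ with $\mathcal B_s=B$ for all relevant $s$.

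Next, the key input: by~\cite{MR3779687}*{Theorem 4.2}, for fixed coefficients the plurigenera $h^0(\mathcal Y_s, \oo_{\mathcal Y_s}(m(K_{\mathcal Y_s}+\mathcal B_s)))$ are independent of $s$ over each irreducible component of $S$. Since $K_Y+B$ is semi-ample, hence the section ring $R(Y,K_Y+B)=\bigoplus_m H^0(Y,\oo_Y(m(K_Y+B)))$ is finitely generated with $\Proj$ equal to $X$ and the natural map $Y\to X$ the Iitaka fibration, I want to realize the relative $\Proj$ of $\mathcal R(\mathcal Y/S, K_{\mathcal Y/S}+\mathcal B)$ as a family $\mathcal X\to S$ together with $\phi\colon\mathcal Y\dashrightarrow\mathcal X$ whose fibre over $s$ recovers $Y\to X$. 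The deformation-invariance of plurigenera, combined with~\cite{MR3779687}*{Theorems 4.2 and 6.2} (which handle the commuting with base change of the relative canonical ring in families of klt pairs of this type), gives that after further finite stratification of $S$ the relative canonical ring is a flat sheaf of finitely generated $\oo_S$-algebras, so $\mathcal X:=\Proj_S\mathcal R(\mathcal Y/S,K_{\mathcal Y/S}+\mathcal B)$ is a scheme of finite type over $S$ with $\mathcal X_s\cong X$ and $\phi|_{\mathcal Y_s}$ the Iitaka fibration $f$. This produces the required commutative diagram of Definition~\ref{bound.morph.def} with the isomorphisms $p,q$; after these reductions $\phi$ is in fact a morphism (the Iitaka fibration of a semi-ample class being a morphism) defined over $S$.

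Finally, to upgrade from log bounded to \emph{strongly} log bounded I need the boundary divisor $\mathcal E$ (or rather the divisor $\mathcal B$ with coefficients in $\mathcal R$) to match $B$ exactly, not just in support, fibre by fibre. But this is built into the construction above once I fixed $\mathcal B$: since $\mathcal B_s=B$ by choice of stratum and choice of coefficient assignment, and since there were only finitely many assignments, the finite union of the resulting families $(\mathcal Y,\mathcal B)\to S$ is the desired strongly log bounded family of morphisms. Equivalently, one can phrase the endgame as an application of Proposition~\ref{fam.good.prop} to the family $(\mathcal Y,\mathcal B)$ (which is $\epsilon$-lc and has coefficients in the finite set $\mathcal R$, hence bounded implies strongly log bounded), and then carry the Iitaka fibration along. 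I expect the main obstacle to be the flatness/base-change statement for the relative canonical ring: one must ensure that forming $\Proj$ of $\bigoplus_m h_*\oo_{\mathcal Y}(m(K_{\mathcal Y/S}+\mathcal B))$ commutes with restriction to fibres, which requires not merely that each graded piece has locally constant rank (invariance of plurigenera) but also that the algebra structure is compatible — this is where~\cite{MR3779687}*{Theorem 6.2} on the existence of good minimal models / canonical models in families does the real work, and one has to invoke it after all the preliminary stratifications have made the family sufficiently well-behaved (snc over $S$, irreducible fibres of strata, etc.). Everything else is routine Noetherian induction on $S$, exactly as at the end of the proof of Proposition~\ref{fam.good.prop}.
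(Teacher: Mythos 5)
Your overall architecture is the same as the paper's: reduce to a strongly log bounded family $(\mathcal Y,\mathcal B)\to S$ via Proposition~\ref{fam.good.prop} and the finiteness of $\mathcal R$, stratify $S$, use invariance of plurigenera, and then realize the fibrewise Iitaka fibrations as the fibres of a relative Iitaka fibration $\mathcal Y\to\mathcal X$ over $S$, finishing by Noetherian induction. However, the crux — which you correctly identify as "the main obstacle" — is left as a black box, and the tools you name do not obviously close it. Constancy of $h^0(\mathcal Y_s,\oo_{\mathcal Y_s}(m(K_{\mathcal Y_s}+\mathcal B_s)))$ alone does not give you that sections on a fibre lift (equivalently, that $\pi_\ast$ of the relevant sheaves commutes with base change), nor that the graded algebra structure of the relative section ring is compatible with restriction to fibres; and before any of this can even be set up one needs a uniform integer $I$ (via~\cite{Bir16a}*{Lemma 2.25}) making $I(K_{\mathcal Y}+\mathcal B)$ Cartier along the fibres, so that $\oo_{\mathcal Y}(lI(K_{\mathcal Y}+\mathcal B))\vert_{\mathcal Y_s}$ is the sheaf you think it is. The paper resolves the lifting problem by passing to a relative log resolution, using $\epsilon$-klt-ness to replace the boundary by $\psi^{-1}_\ast\mathcal B+(1-\epsilon)E$, and then invoking~\cite{MR3779687}*{Theorems 1.4 and 4.2} (the extension theorem together with invariance), which applies precisely because hypothesis (3) (semi-ampleness of $K_Y+B$) guarantees good minimal models for the fibres over the dense set $S'$; your appeal to "Theorem 6.2 on canonical models in families" is not a substitute for this extension statement, and you never prove the flatness/finite-generation-with-base-change of the relative canonical ring that your $\Proj_S$ construction needs.

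A second, related gap: you assert that "after these reductions $\phi$ is in fact a morphism ... defined over $S$" because the Iitaka fibration of a semi-ample class is a morphism. But semi-ampleness is a hypothesis only on the special fibres corresponding to pairs in $\mathcal D$, i.e.\ over the (dense, not open) set $S'$; it is not known on nearby fibres, let alone relatively over $S$. The paper's proof supplies exactly the missing spreading-out argument: once the restriction maps are surjective, base-point-freeness of $m'I(K_{\mathcal Y_s}+\mathcal B_s)$ at a point of $S'$ forces the evaluation map $\pi^\ast\pi_\ast\oo_{\mathcal Y}(m'I(K_{\mathcal Y}+\mathcal B))\to\oo_{\mathcal Y}(m'I(K_{\mathcal Y}+\mathcal B))$ to be surjective along $\mathcal Y_s$, hence on a Zariski neighbourhood, so after a further stratification the multiple is relatively free and the relative Iitaka fibration is an honest morphism whose fibres are identified with the $X$'s by the surjectivity and rank-constancy statements. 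Without these two ingredients your construction of $\mathcal X$ and the identification $\mathcal X_s\cong X$ do not go through.
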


\begin{proof}
As 
$\mathcal D$ 
is a log bounded collection of 
$\epsilon$-klt 
pairs and 
$\mathcal R$ 
is finite, we can assume that 
$\mathcal D$ 
is strongly log bounded by Proposition~\ref{fam.good.prop}.
Hence, there exists a scheme of finite type 
$S$, 
a log pair 
$\pi \colon (\mathcal Y, \mathcal B) \to S$ 
over 
$S$ 
such that for any log pair 
$(Y, B) \in \mathcal{D}$, 
there exists 
$s \in S$ 
and 
$(\mathcal{Y}_s, \mathcal{B}_s)$ 
is isomorphic to $(Y, B)$.
\newline
Decomposing $S$ into a finite union of locally closed subsets and possibly discarding some components, we may assume that every fibre $\mathcal{Y}_s$ is a variety and that $\mathcal{B}$ does not contain any fibre. 
Decomposing $S$ into a finite union of locally closed subsets and passing to a finite cover of $S$, we may assume that for any $s\in S$ there is a 1-1 correspondence between the irreducible components of $\mathcal B$ and those of $\mathcal B_s$.
Decomposing $S$ into a finite union of locally closed subsets, we may assume that $S$ is smooth.
By~\cite{Bir16a}*{Lemma 2.25}, we can assume that there exists $I=I(\mathfrak D)$ such that $I(K_\mathcal{Y}+\mathcal B)$ is Cartier along any fibre $\mathcal{Y}_s$. 
Up to shrinking $S$ and decomposing it into a finite union of locally closed subsets, we can also assume that the set 
\begin{align*}
   S':= \{s\in S \ \vert \ \exists ( Y, B) \in \mathfrak{D} \textrm{ such that } (Y, B) \simeq (\mathcal{Y}_s, \mathcal{B}_s)\}
\end{align*}
is Zariski dense in $S$.
By~\cite{1810.01990}*{Proposition 2.10}, up to shrinking $S$ and decomposing it into a finite union of locally closed subsets, we can assume that $\mathcal{Y}$ is $\mathbb{Q}$-factorial and $S'$ is still dense.
In particular, this implies that for all $s \in S$, $(\mathcal Y_s, \mathcal B_s)$ is $\epsilon$-klt.

\medskip

{\bf Claim 1}. 
{\it 
Decomposing $S$ into a finite union of locally closed subsets, we may assume that, for all $l \in \mathbb N_{>0}$, the restriction map 
\begin{align}
\label{lift.eqn}
    H^0(\mathcal Y, \mathcal{O}_{\mathcal{Y}}(lI(K_{\mathcal{Y}} + \mathcal B))) \to 
    H^0(\mathcal{Y}_s, \mathcal{O}_{\mathcal{Y}_s}(lI(K_{\mathcal{Y}_s} + \mathcal B\vert_{\mathcal{Y}_s}))),
\end{align}
is surjective at any point $s \in S$, and that for any connected component $\bar{S}$ of $S$
\begin{align}
\label{const.rk.eqn}
h^0(\mathcal{Y}_s, \mathcal{O}_{\mathcal{Y}_s}(lI(K_{\mathcal{Y}_s} + \mathcal B\vert_{\mathcal{Y}_s}))) \textrm{ is independent of $s \in \bar{S}$}.
\end{align}
}
\begin{proof}
Decomposing $S$ into a finite union of locally closed subset, we can assume that there exists a log resolution $\psi\colon (\mathcal{Y}', \overline{\mathcal{B}}) \to \mathcal Y$ of $(\mathcal Y, \mathcal B)$, where $\overline{\mathcal{B}}:= \psi^{-1}_\ast \mathcal B + E$ and $E$ is the exceptional divisor of $\psi$, and for any $s \in S$, $(\mathcal{Y}'_s, \overline{\mathcal{B}}_s)$ is a log resolution of $(\mathcal{Y}_s, \mathcal{B}_s)$.
In particular, for any $s \in S$, 
\begin{align*}
H^0(\mathcal{Y}_s, \mathcal{O}_{\mathcal{Y}_s}(l(K_{\mathcal{Y}_s}+\mathcal{B}_s))) = 
H^0(\mathcal{Y}'_s, \mathcal{O}_{\mathcal{Y}'_s}(l(K_{\mathcal{Y}'_s}+\overline{\mathcal{B}}^{\epsilon}_s))), \ 
\overline{\mathcal{B}}^{\epsilon}:= \psi^{-1}_\ast \mathcal B + (1-\epsilon) E,
\end{align*}
where in the equation above we have used that the pairs we consider are $\epsilon$-klt.
Hence, the properties in~\eqref{lift.eqn}-\eqref{const.rk.eqn} follow from~\cite{MR3779687}*{Theorems~1.4,~4.2}, since for any $s \in S'$ the pair $(\mathcal{Y}_s, \mathcal{B}_s)$ has a good minimal model, by assumption (3) in the statement of the Lemma. 
\end{proof}
\noindent
If $s \in S$ is a point such that $\vert mI(K_{\mathcal{Y}_s} + \mathcal B\vert_{\mathcal{Y}_s})\vert $ is base point free, then it follows from Claim 1 that the natural restriction morphism
\begin{align}
\label{eqn:natural.restr.fiber.}
\xymatrix{
    \pi^\ast \pi_\ast \mathcal{O}_{\mathcal{Y}}(mI(K_{\mathcal{Y}} + \mathcal B)) \ar[r] & 
    \mathcal{O}_{\mathcal{Y}}(mI(K_{\mathcal{Y}} + \mathcal B))
    \ar[r] &
    \mathcal{O}_{\mathcal{Y}, y}(mI(K_{\mathcal{Y}} + \mathcal B))
    }
\end{align}
is surjective at every $y \in \mathcal{Y}_s$. 
As having trivial cokernel is an open condition and $\pi$ is proper, then there exists a Zariski neighborhood $U$ of $s$ in $S$ such that for all $s' \in U$,' the surjectivity of the composition of morphisms in~\eqref{eqn:natural.restr.fiber.} holds for all $y \in \mathcal{Y}_{s'}$.
Hence, up to decomposing $S$ into a finite union of locally closed subsets, we can assume that there exists a positive integer $m'$ such that $m'I(K_{\mathcal{Y}} + \mathcal B)$ is relatively base point free over $S$.
The relative Iitaka fibration for $K_{\mathcal{Y}} + \mathcal B$ over $S$
\begin{align*}
    \xymatrix{
    (\mathcal Y , \mathcal{B})\ar[dr] \ar[rr]^\phi &
    & 
    \mathcal X \ar[dl]\\
    & S &
    }
\end{align*}
provides the desired bounded family of triples:
in fact, given $(Y, B) \in \mathfrak D$, if $s \in S$ is such that there exists an isomorphism $h \colon \mathcal{Y}_s \to Y$ and $h_\ast\mathcal{B}_s = B$, then~\eqref{lift.eqn}-\eqref{const.rk.eqn} imply that there exists also an isomorphism $g \colon \mathcal{X}_s \to X$ and a commutative diagram
\begin{align*}
    \xymatrix{
    \mathcal{Y}_s \ar[r]^{\phi\vert_{\mathcal{Y}_s}} \ar[d]^h &
    \mathcal{X}_s \ar[d]^g \\
    Y \ar[r]^{f} &
    X.
    }
\end{align*}
\end{proof}

Let us recall the following special classes of effective Weil divisors.
\begin{definition}
\label{deg.div.def}
\cite{Lai}*{Definition 2.9}
\cite{MR1993750}*{Definition~3.2}
Let $Y$ be a normal variety and let $f \colon Y \to X$ be a proper surjective morphism of normal varieties.
An effective Weil $\mathbb{R}$-divisor $D$ on $Y$ is said to be
\begin{enumerate}
    \item[(i)] 
$f$-exceptional if 
$\codim ({\rm Supp}(f(D))) \geq 2$;

    \item[(ii)] 
of insufficient fibre type for 
$f$ 
if 
$\codim ({\rm Supp}(f(D))) = 1$, 
and there exists a prime divisor 
$\Gamma$ 
on 
$Y$ 
such that 
$\Gamma \not \subseteq {\rm Supp}(D)$ 
and 
${\rm Supp}(f(\Gamma)) \subset {\rm Supp}(f(D))$ 
has codimension one in 
$X$;

	\item[(iii)]
very exceptional for 
$f$
if 
$D = D_1+D_2$ 
with 
$D_1$ 
$f$-exceptional and 
$D_2$
of insufficient fibre type.
\end{enumerate}
\end{definition}

\begin{remark}
\label{q-fact.degen.rmk}
Let $f \colon Y \to X$ be a proper surjective morphism of normal varieties and let $\phi \colon Y' \dashrightarrow Y$ be a birational contraction over $X$, that is, there exists a proper surjective morphism $f' \colon Y' \to X$ and a commutative diagram
\begin{align*}
    \xymatrix{
    Y' \ar@{-->}[rr]^\phi \ar[dr]_{f'} &
    & Y \ar[dl]^{f}\\
    & X &
    }
\end{align*}
Let us denote by 
$E'$ 
the divisorial part of the exceptional locus of 
$\phi$.
If 
$D$ 
is very exceptional for 
$f$ 
on 
$Y$, then 
$D'$ 
is very exceptional for 
$f'$ 
on 
$Y'$, 
where 
$D'$ 
is the strict transform of 
$D$ 
on 
$Y'$. 
Moreover, if 
$f$ 
is of relative dimension 
$1$ 
and 
$D$ 
is 
very exceptional
for 
$f$ 
on 
$Y$, 
then 
$D'+E'$ 
is 
very exceptional
for 
$f'$ 
on 
$Y'$.
\end{remark}

\begin{lemma}
\label{degen.bound.lemma}
Let 
$\mathfrak F$ 
be a log bounded set of morphisms.
Then the set 
$\mathfrak{F}_{{\rm deg}}$ 
of morphism given by 4-tuples of the form 
$(f, Y, B+D, X)$  
such that
\begin{itemize}
    \item 
there exists 
$(f, Y, B, X) \in \mathfrak{F}$, 

    \item 
$D$ 
is reduced and 
$\mathbb{Q}$-Cartier, 
and
	
	\item 
$D$ 
is 
very exceptional for $f$,
\end{itemize} 
is a log bounded set of morphisms.
\\
Moreover, if 
$\mathfrak F$ 
is strongly log bounded, then also 
$\mathfrak{F}_{{\rm deg}}$ 
is strongly log bounded as well.
\end{lemma}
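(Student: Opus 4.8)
The plan is to realise $\mathfrak{F}_{\mathrm{deg}}$ using the \emph{same} underlying family of morphisms as some bounded realisation of $\mathfrak{F}$, after enlarging its reduced divisor to a fixed divisor that contains the support of every admissible $B+D$. Concretely, I would start from a diagram $\mathcal{Y}\xrightarrow{\phi}\mathcal{X}\to S$ together with a reduced divisor $\mathcal{E}$ on $\mathcal{Y}$ as in Definition~\ref{bound.morph.def} realising $\mathfrak{F}$, and then, after decomposing $S$ into finitely many locally closed pieces, passing to finite covers, shrinking, and discarding degenerate strata, arrange the usual spread-out hypotheses: $S$ is smooth; over each piece $\phi$ restricts to a surjective morphism $\mathcal{Y}_s\to\mathcal{X}_s$ of varieties; the divisorial part $\mathcal{G}\subseteq\mathcal{Y}$ of the locus where the fibre dimension of $\phi$ exceeds $\dim\mathcal{Y}_s-\dim\mathcal{X}_s$, and the divisorial part $\mathcal{V}\subseteq\mathcal{X}$ of the locus over which $\phi$ fails to be flat with irreducible fibre, restrict fibrewise to (possibly empty) divisors; $\mathcal{E}$ contains no fibre; and, writing $\mathcal{E}_1$ for the reduced divisor on $\mathcal{Y}$ supported on $\mathcal{E}$, on $\mathcal{G}$, and on the divisorial part of $\phi^{-1}(\mathcal{V})$, there is a one-to-one correspondence between the irreducible components of $\mathcal{E}_1$ and those of $(\mathcal{E}_1)_s$ over each connected component of $S$, with $\mathcal{E}_1$ still containing no fibre.

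The heart of the argument is that, by the definition of an $f$-degenerate divisor (Definition~\ref{deg.div.def}), $D$ is vertical over $X$ and contains no fibre of $f$ over a codimension-one point of $X$; hence every prime component $P$ of $D$ is of one of two kinds. Either $\codim_X f(P)\ge 2$, so that the generic fibre of $P\to\overline{f(P)}$ has dimension $>\dim Y-\dim X$ and $P$ lies in the fibrewise-divisorial locus $\mathcal{G}_s$; or $B':=f(P)$ has codimension one and $\supp D$ does not contain every prime divisor of $Y$ dominating $B'$, so that $Y$ has at least two prime divisors over $B'$ and $f$ has reducible fibre over the generic point of $B'$, whence $B'\subseteq\mathcal{V}_s$ and $P$ lies in the divisorial part of $(\phi^{-1}(\mathcal{V}))_s$. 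In either case $P\subseteq(\mathcal{E}_1)_s$, so $\supp(B+D)\subseteq(\mathcal{E}_1)_s$. Since $D$ is $\bQ$-Cartier and $K_Y+B$ is $\bR$-Cartier, $(Y,B+D)$ is a genuine log pair, and the diagram $(\mathcal{Y},\mathcal{E}_1)\to\mathcal{X}\to S$ then witnesses that $\mathfrak{F}_{\mathrm{deg}}$ is a log bounded set of morphisms.

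For the strongly log bounded refinement, note that $\mathcal{E}_1$ has some finite number $N$ of components and that, by the above, the components of any admissible $D$, viewed on $\mathcal{Y}_s$, form a subset of the $N$ components of $(\mathcal{E}_1)_s$. Starting instead from a strongly log bounded realisation $(\mathcal{Y},\mathcal{E})\to\mathcal{X}\to S$ of $\mathfrak{F}$ with $q_*\mathcal{E}_s=B$, and writing $\mathcal{E}=\sum_{i=1}^N b_i\mathcal{E}_1^{(i)}$ with $b_i\ge 0$ after enlarging the support of $\mathcal{E}$ to that of $\mathcal{E}_1$ by components with coefficient zero, I would replace $S$ by a disjoint union $\bigsqcup_{T\subseteq\{1,\dots,N\}}S_T$ of $2^N$ copies of $S$ and equip the copy over $S_T$ with the log structure $\mathcal{E}+\sum_{i\in T}\mathcal{E}_1^{(i)}$. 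Given $(f,Y,B+D,X)\in\mathfrak{F}_{\mathrm{deg}}$, taking $T$ to be the set of indices of the components of $D$ on $\mathcal{Y}_s$ and using the component correspondence yields a point of $S_T$ whose fibre is $Y\to X$ with marked divisor pushing forward exactly to $B+D$; this gives the required strongly log bounded family.

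The one genuinely delicate point is the spreading-out in the first paragraph: one must verify that, after finitely many decompositions of $S$ into locally closed pieces and finite base changes, the relative fibre-jumping locus $\mathcal{G}$ and the relative reducible-fibre locus $\mathcal{V}$ do restrict fibrewise to honest divisors compatibly with the component correspondence, and that degenerate strata — where, say, some component of $\mathcal{E}_1$ becomes an entire fibre or $\dim\mathcal{X}_s$ drops — can be peeled off and handled separately by Noetherian induction on $S$. One must also make sure that the exact consequence of Definition~\ref{deg.div.def} invoked in the second paragraph, namely that an $f$-degenerate divisor contains no fibre of $f$ over a codimension-one point, is what confines the components of $D$ to the fixed locus $\mathcal{E}_1$; beyond this the proof is purely formal, and the $\bQ$-Cartier hypothesis on $D$ is used only to guarantee that $(Y,B+D)$ is a log pair.
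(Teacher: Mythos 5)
Your proof is correct in substance, but it follows a genuinely different route from the paper's. Both arguments begin with the same key observation, namely that the support of any $f$-degenerate reduced divisor is confined to the preimage of the fibre-dimension-jumping locus together with the preimage of the (closure of the) reducible-fibre locus; the paper, however, only extracts a numerical consequence from this: after flattening that locus $\mathcal{L}_{\mathcal{Y}}$ over $S$ it obtains a uniform bound $\deg_{\mathcal{H}_s}D\le C$ for every admissible $D$, and then produces the family realizing $\mathfrak{F}_{\mathrm{deg}}$ by invoking the theory of Chow varieties. You instead upgrade the containment to a structural statement: after flattening and arranging the fibrewise component correspondence, every prime component of an admissible $D$, being a prime divisor of $\mathcal{Y}_s$ contained in a union of prime divisors, must be an entire component of the fixed relative divisor $\mathcal{E}_1$; hence each member carries only finitely many candidate degenerate divisors, and $\mathfrak{F}_{\mathrm{deg}}$ is realized by at most $2^N$ decorated copies of the original family, with no recourse to Chow varieties. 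This is more elementary and even gives a finer conclusion (per-fibre finiteness of the possible $D$), at the price of the spreading-out you flag: the precise statement needed is that, once the reduced loci $\phi^{-1}(\mathcal{J}_{\mathcal{X}})$ and $\phi^{-1}(\mathcal{I}_{\mathcal{X}})$ are made equidimensional over $S$ by stratification, their fibrewise divisorial parts are fibres of their codimension-one components --- exactly the flattening step the paper performs for $\mathcal{L}_{\mathcal{Y}}$, so this is standard Noetherian-induction material. Two minor points of care: Definition~\ref{bound.morph.def} requires $q_*\mathcal{E}_s$ to coincide with, not merely contain, $\supp(B+D)$, so even for plain log boundedness you should invoke your subset-indexed copies of $S$ rather than the single family $(\mathcal{Y},\mathcal{E}_1)$; and your componentwise reading of Definition~\ref{deg.div.def} (over each codimension-one component of $f(D)$ some prime divisor is missing from $\supp D$) is the intended ``very exceptional'' interpretation, which the paper's own proof also uses implicitly, so no harm is done there.
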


\begin{proof}
By the log boundedness of $\mathfrak F$, there exists a commutative diagram of projective morphisms 
\begin{align}
\label{fam.morf.eqn}
\xymatrix{
(\mathcal Y, \mathcal B) \ar[rr]^{\phi} \ar[dr] & 
&
\mathcal X \ar[dl]\\
 & S &
}
\end{align}
such that for any 4-tuple $(f, Y, B, X)\in \mathfrak{F}$, there is a closed point $s \in S$ and isomorphisms $g \colon \mathcal{X}_s \to X$, $h \colon \mathcal{Y}_s \to Y$ such that the following diagram commutes
\begin{align*}
\xymatrix{
\mathcal{Y}_s \ar[r]^{\phi\vert_{\mathcal{Y}_s}} \ar[d]^{h} & 
\mathcal{X}_s \ar[d]^{g}\\
Y  \ar[r]^{f} &
X,
}
\end{align*}
and $h_\ast \mathcal{B}_s = \supp(B)$. 
If $\mathfrak{F}$ is strongly log bounded, then the only difference is that we can take $\mathcal{B}$ such that $h_\ast \mathcal{B}_s=B$.
\newline
Decomposing $S$ into a finite union of locally closed subsets and passing to a finite cover of $S$, we may assume that for any $s\in S$ there is a 1-1 correspondence between the irreducible components of $\mathcal B$ and those of $\mathcal B_s$.
Decomposing $S$ into a finite union of locally closed subsets, we may assume that $S$ is smooth.
\newline
On 
$\mathcal X$ 
we define the following two Zariski closed subsets
\begin{align*}
\mathcal{J}_\mcx\ & :=
\{x \in \mathcal X \; \vert \; \dim \mathcal{Y}_x > \dim \mathcal Y -\dim \mathcal X
\},\\
\mathcal{I}_\mcx & :=\overline{\{x \in \mathcal X \; \vert \; \mathcal{Y}_x \textrm{ is reducible}\}}^{\rm Zar},    
\end{align*}
where 
$\overline{A}^{\rm Zar}$ 
denotes the Zariski closure of a set 
$A$.
Furthermore, we define
\begin{align*}
\mathcal{J}_\mcy := 
\phi^{-1} (\mathcal{J}_\mcx), \
\mathcal{I}_\mcy := 
\phi^{-1} (\mathcal{I}_\mcx), \
\mathcal{L}_\mcy := 
\mathcal{J}_\mcy \cup \mathcal{I}_\mcy,
\end{align*} 
and we consider the variety $\mathcal{J}_\mcy$ (resp. $\mathcal{J}_\mcx$, $\mathcal{I}_\mcy$, $\mathcal{I}_\mcx$, $\mathcal{L}_\mcy$) as a scheme with the reduced scheme structure.
We will denote by
$\mathcal{J}_{\mcy, s}$ (resp.
$\mathcal{I}_{\mcy, s}$, $\mathcal{L}_{\mcy, s}$)
the schematic fibre of $\mathcal{J}_{\mcy}$ (resp. $\mathcal{I}_{\mcy}$, $\mathcal{L}_{\mcy}$) at $s \in S$.

It follows from Definition~\ref{deg.div.def} and the definitions of the previous paragraph that for any 
$s \in S$ 
if an effective reduced divisor 
$D$ 
on 
$\mathcal{Y}_s$ 
is very exceptional for 
$\phi\vert_{\mys}$ 
then 
$\supp(D) \subset \supp(\mathcal{L}_{\mcy, s})$:
in fact, 
given an irreducible component 
$D'$ 
of 
$D$,
either
$D'$
is  
$\phi\vert_{\mys}$-exceptional
and thus
$D' \subset \mathcal J_{\mathcal X, s}$, 
or 
$D'$
is of insufficient fibre type for 
$\phi\vert_{\mys}$
so that there exists a prime divisor 
$\Gamma \subset \mathcal Y_s$,
$\Gamma \neq D'$ 
such that
$\phi\vert_{\mys}(\Gamma) = \phi\vert_{\mys}(D')$
has codimension one in 
$\mathcal X_s$,
which in turn implies that 
$D' \subset \mathcal I_{\mathcal Y,s}$.
 
Decomposing 
$S$ 
into a finite union of locally closed subsets, we can assume that 
$\mathcal{L}_\mathcal{Y}$ 
is flat over 
$S$. 
Hence, if 
$\mathcal H$ 
is a very ample polarization on 
$\mathcal Y$ 
relatively over 
$S$ 
then there exists a positive constant 
$C=C(\mathfrak F)$ 
such that for any 
$s \in S$,
\begin{align}
\label{degree.ineq}
\deg_{\mathcal{H}_s} \mathcal{L}_{\mathcal Y, s} \leq C,
\end{align} 
where 
$\deg_{\mathcal{H}_s}$ 
indicates the degree with respect to the polarization 
$\mathcal{H}_s$.
As 
$D \subset \mathcal{L}_{\mathcal Y, s}$ 
is a reduced Weil divisor on 
$\mathcal{Y}_s$, 
then~\eqref{degree.ineq} implies that
\begin{align*}
D \cdot \mathcal{H}_s^{\dim \mathcal{Y}_s-1}
= \deg_{\mathcal{H}_s} D \leq deg_{\mathcal{H}_s} \mathcal{L}_{\mathcal Y, s} 
\leq C.
\end{align*}
In view of the existence of Chow varieties, see~\cite{MR1440180}*{\S 1.3}, then it follows that there exists a commutative diagram of projective morphisms 
\begin{align}
\label{fam.morf.deg.eqn}
\xymatrix{
(\mathcal Y', \mathcal B' + \mathcal D') \ar[rr]^{\phi'} \ar[dr] & 
&
\mathcal X' \ar[dl]\\
 & S' &
}
\end{align}
such that for any 4-tuple 
$(f, Y, B+D, X)\in \mathfrak{F}_{\rm deg}$, 
there exists a closed point 
$s' \in S'$ 
and isomorphisms 
$g' \colon \mathcal{X}'_{s'} \to X$, 
$h' \colon \mathcal{Y}'_{s'} \to Y$ 
such that the following diagram commutes
\begin{align*}
\xymatrix{
\mathcal{Y}'_{s'} 
\ar[r]^{\phi'\vert_{\mathcal{Y}'_{s'}}} 
\ar[d]^{h'} & 
\mathcal{X}'_{s'} 
\ar[d]^{g'}\\
Y  \ar[r]^{f} &
X,
}
\end{align*}
and 
$h'_\ast\mathcal{B}'_{s'} = \supp(B)$, 
$h'_\ast\mathcal{D}'_{s'} = D$.
If $\mathfrak{F}$ is strongly log bounded, then the only difference is that we can take $\mathcal{B}'$ such that 
$h'_\ast \mathcal{B}'_{s'}=B$.
In particular, 
$h'_\ast(\mathcal{B}'_{s'} + \mathcal{D}'_{s'}) = B+D$, 
which implies that 
$\mathfrak{F}_{\rm deg}$ 
is strongly log bounded.
\end{proof}

Let us recall the following result that will be useful in dealing with very exceptional divisors.

\begin{lemma}
\label{deg.div.cy.lem}
Let 
$(Y, B)$ 
be a klt pair.
Let 
$f \colon Y \to X$ 
a projective contraction of normal 
$\mathbb{Q}$-factorial 
varieties.
Let 
$D$ 
be an effective Weil 
$\mathbb{R}$-divisor 
on 
$Y$ 
that is very exceptional for
$f$.
Assume that 
$K_Y + B \sim_{\mathbb R, Y} 0$.
Then, there exists a rational contraction $Y \dashrightarrow \hat Y$ over $X$ which contracts exactly the support of $D$.
\end{lemma}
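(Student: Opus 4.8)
The plan is to run a suitable MMP on $Y$ over $X$ and show that it contracts precisely the support of $D = D_1 + D_2$. Since $K_Y + B \sim_{\mathbb{R}, X} 0$, for any $t > 0$ the pair $(Y, B + tD)$ satisfies $K_Y + B + tD \sim_{\mathbb{R}, X} tD$, so an MMP for $K_Y + B + tD$ over $X$ is the same as an MMP for $tD$ (equivalently $D$) over $X$. The key input is that $D$ is $f$-degenerate, hence \emph{very exceptional} over $X$ in the sense of Shokurov; by the negativity-type results on very exceptional divisors (Shokurov, \cite{MR1993750}*{Lemma~3.3}, and its treatment in \cite{BCHM}), a $(K_Y + B + tD)$-MMP over $X$ — which exists and terminates because $(Y, B)$ is klt and $K_Y + B \sim_{\mathbb{R},X} 0$, so $(Y, B+tD)$ is klt for small $t$ and $K_Y+B+tD$ is not pseudo-effective over $X$ unless $D=0$ — terminates with a model $\hat Y$ on which the pushforward of $D$ is zero. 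First I would fix a sufficiently small $t \in (0,1) \cap \mathbb{Q}$ so that $(Y, B + tD)$ is still klt (possible since $(Y,B)$ is klt and $D$ is effective $\mathbb{Q}$-Cartier — here one uses that $D$ has a $\mathbb{Q}$-Cartier structure so the pair makes sense), and so that $B + tD$ still has the required coefficient bounds if needed downstream.

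Next I would invoke the MMP with scaling for $K_Y + B + tD$ over $X$. Each step of this MMP is $D$-negative and, since all intermediate $K + B$-classes are relatively trivial over $X$, each extremal contraction/flip occurs entirely inside $\operatorname{Supp}(D)$ (a divisor mapping into a codimension-$\geq 1$ locus of $X$ that is "covered" in the insufficient-fibre/exceptional sense cannot stay on the birational model). The core lemma (very exceptionality is preserved under such steps, cf.\ Remark~\ref{q-fact.degen.rmk}) guarantees that the strict transform of $D$ stays $f$-degenerate at every stage, and that the program eventually contracts all of it: the end product $Y \dashrightarrow \hat Y$ over $X$ is a birational contraction with $\hat D = 0$, i.e.\ every component of $D$ is contracted. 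Conversely, no divisor \emph{outside} $\operatorname{Supp}(D)$ is contracted, because the MMP is $(K_Y + B + tD)$-negative and $K_Y+B$ is relatively numerically trivial, so only $D$-positive... — more precisely, one checks that a non-$D$ divisor contracted by the MMP would force $D$ to be relatively big over its image, contradicting degeneracy; thus the contracted locus is exactly $\operatorname{Supp}(D)$.

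The main obstacle I anticipate is the bookkeeping needed to guarantee that the MMP contracts \emph{exactly} $\operatorname{Supp}(D)$ and nothing more, and nothing less: "nothing less" is the content of Shokurov's very-exceptional contraction theorem (one must verify $D$ is genuinely very exceptional over $X$, which is precisely the content of Definition~\ref{deg.div.def} combined with $\mathbb{Q}$-factoriality of $X$ and $Y$), while "nothing more" requires the observation that $K_Y + B \sim_{\mathbb{R},X} 0$ pins down the MMP to act only on the $D$-part. A secondary technical point is ensuring the MMP exists and terminates: this follows from Theorem~\ref{mmp} (or directly \cite{BCHM}) once we know $K_Y + B + tD$ is not pseudo-effective over $X$ — but relative non-pseudo-effectivity of a nonzero very exceptional divisor over $X$ is exactly Shokurov's lemma, so the circularity is only apparent. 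I would organise the write-up as: (1) reduce to an MMP for $D$ over $X$; (2) quote very-exceptionality of $f$-degenerate divisors and the resulting contraction statement; (3) conclude that the resulting rational contraction $Y \dashrightarrow \hat Y$ contracts precisely $\operatorname{Supp}(D)$.
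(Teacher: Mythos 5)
Your overall strategy is essentially the paper's: reinterpret a $(K_Y+B+tD)$-MMP over $X$ as a $D$-MMP (using $K_Y+B\sim_{\mathbb R}0$) and use very exceptionality of the $f$-degenerate divisor $D$ to conclude that exactly $\supp(D)$ gets contracted. The paper does precisely this in one line, after passing to a small $\mathbb{Q}$-factorialization and choosing $0<a\ll 1$ with $(Y',B'+aD')$ klt, by quoting Birkar's very-exceptional LMMP theorem, \cite{MR2929730}*{Theorem~1.8}, which packages both the termination of the MMP with scaling over $X$ and the fact that it contracts exactly $D$. Your ``nothing more'' argument (any divisor contracted in a step of this MMP is swept out by $D$-negative curves, hence lies in $\supp(D)$) is fine.

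The genuine gap is in your justification of existence and termination. You assert, twice, that $K_Y+B+tD$ is not pseudo-effective over $X$ unless $D=0$, and that ``relative non-pseudo-effectivity of a nonzero very exceptional divisor over $X$ is exactly Shokurov's lemma.'' This is false: $K_Y+B+tD\sim_{\mathbb{R},X} tD$ with $tD$ effective, and an effective divisor is always relatively pseudo-effective. Shokurov's statement for very exceptional divisors is a negativity statement (a divisor that is very exceptional over $X$ and nef over $X$ is $\leq 0$, cf. \cite{MR2929730}*{Lemma~3.3}), not a pseudo-effectivity statement. Consequently Theorem~\ref{mmp} and \cite{BCHM}*{Corollary~1.3.3} (termination ending in a Mori fibre space for non-pseudo-effective classes) do not apply, and as written your MMP has no termination argument; the ``circularity'' you dismiss is resolved by a false claim. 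The repair is to discard that route and instead invoke \cite{MR2929730}*{Theorem~1.8} directly, as the paper does (its proof rests on special termination and scaling, not on non-pseudo-effectivity), or alternatively to note that $D$ is vertical over $X$ and $K_Y+B+tD$ is relatively abundant, obtain a relative good minimal model over $X$ (e.g.\ via \cite{HX}*{Theorem~1.1}), and then apply the negativity statement to the pushforward of $D$, which is nef over $X$ and still very exceptional, to see it must vanish. With either substitution the rest of your outline goes through.
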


\begin{proof}
Fix 
$0<a\ll 1$ 
such that the pair 
$(Y, B +aD)$ 
is klt. 
We can apply~\cite{MR2929730}*{Theorem~1.8} to obtain the desired result. 
\end{proof}

In Section~\ref{ell.cy.sect}, we will study boundedness of families of elliptic Calabi--Yau varieties with bounded bases.
In order to prove their boundedness up to flops, we will have to show that we can contract very exceptional divisors in log bounded families of morphisms.
Using the above lemma, we can ensure that this result holds.

\begin{proposition}
\label{MMP.deg.bound.prop}
Let 
$d$ 
be a positive integer, 
$\epsilon$ 
be a positive real number, and let 
$\mathcal R \subset \mathbb{Q} \cap (0,1)$ 
be a finite set.
\newline
Let 
$\mathfrak D$
be a log bounded set of log pairs 
$(Y, B)$ 
such that
\begin{enumerate}
    \item 
$(Y, B)$ 
is 
$\mathbb{Q}$-factorial 
projective, 
$\epsilon$-klt 
of dimension 
$d$,
    
    \item
the coefficients of 
$B$ 
are in 
$\mathcal{R}$,

    \item 
$K_Y+B$ 
is semiample.
\end{enumerate}
\noindent
Then, there exists a strongly log bounded set of morphisms 
$\mathfrak{F}_{{\rm deg, min}}$ 
whose elements are 4-tuples 
$(f', Y', B', X)$ 
which satisfy the following property:
\newline
for any 
$(Y, B) \in \mathfrak D$ 
and any divisor 
$D$ 
that is very exceptional for the Iitaka fibration 
$f \colon Y \to X$ 
of 
$K_Y+B$,
there exists a 4-tuple
$(f', Y', B', X) \in \mathfrak{F}_{{\rm deg, min}}$ 
such that
\begin{enumerate}
    \item[(i)] 
$X:= {\rm Proj}(\oplus_{i=0}^\infty H^0(Y, \mathcal{O}_Y(i(K_Y+B))))$, and

    \item[(ii)]
$f' \colon Y' \to X$ is a good minimal model for $D$ over $X$ and $B'$ is the strict transform of $B$ on $Y'$.
\end{enumerate}
\end{proposition}

Under the assumptions of the proposition, as $K_Y+B \sim_{\mathbb{Q}, f} 0$, then $K_Y+B+ \lambda D \sim_{\mathbb{Q}, f} \lambda D$, for all $\lambda \in \mathbb{R}$, and the existence of a good minimal model $Y'$ for $D$ over $X$ is implied by Lemma~\ref{deg.div.cy.lem}.

\begin{proof}
By Lemma~\ref{bound.morph.lemma}, the set 
$\mathfrak F$ 
of 4-tuples 
$(f, Y, B, X)$ 
such that  
\begin{itemize}
    \item 
$(Y, B) \in \mathfrak D$, 
and

    \item 
$f \colon Y \to X$ 
is the Iitaka fibration of 
$K_Y+B$, 
\end{itemize}
is a strongly log bounded set of morphisms.
By Lemma~\ref{degen.bound.lemma}, any divisor 
$D$ 
that is very exceptional for 
$f$ 
is bounded on 
$Y$. 
Hence, the set 
$\mathfrak{F}_{\rm{deg}}$ 
of 4-tuples 
$(f, Y, B+D, X)$ 
such that
\begin{itemize}
    \item 
$(Y, B) \in \mathfrak D$,

    \item 
$f \colon Y \to X$ 
is the Iitaka fibration of 
$K_Y+B$, 
and

    \item 
$D$ 
is very exceptional for 
$f$,
\end{itemize}
is a strongly log bounded set of morphisms and there is a commutative diagram
\begin{align}
\label{diag.bound.fam.proof.eqn}
    \xymatrix{
    (\mathcal{Y}, \mathcal B+ \mathcal D) 
    \ar[rr]^-{\phi} \ar[rd] &
    & \mathcal X\ar[ld]\\
    & S &
    }
\end{align}
such that 
\begin{itemize}
\item 
$\mathcal X$ 
is the Iitaka fibration of 
$K_\mathcal{Y}+\mathcal{B}$ 
over 
$S$,

\item 
for any 4-tuple 
$(f, Y, B+D, X) \in \mathfrak{F}_{\rm{deg}}$ 
there exists a closed point 
$s \in S$ 
and isomorphisms 
$g \colon \mathcal{X}_{s} \to X$, 
$h \colon \mathcal{Y}_s \to Y$ 
such that the following diagram commutes
\begin{align}
\label{comm.square.diag.eqn.1}
\xymatrix{
\mathcal{Y}_s \ar[r]^{\phi\vert_{\mathcal{Y}_s}} \ar[d]^{h} & 
\mathcal{X}_s \ar[d]^{g}\\
Y  \ar[r]^{f} &
X,
}
\end{align}
and 
$h_\ast (\mathcal{B}_s+ \mathcal{D}_s) = B+D$.
\end{itemize}
Decomposing $S$ into a finite union of locally closed subsets and possibly discarding some components, we may assume that every fibre $\mathcal{Y}_s$ is a variety and that $\supp(\mathcal B+\mathcal{D})$ does not contain any fibre. 
Decomposing $S$ into a finite union of locally closed subsets and passing to a finite cover of $S$, we may assume that for any $s\in S$ and there is a 1-1 correspondence between the irreducible components of $\mathcal B+\mathcal{D}$ and those of $\mathcal{B}_s+\mathcal{D}_s$.
Decomposing $S$ into a finite union of locally closed subsets, we may assume that $S$ is smooth;
up to shrinking $S$, we can also assume that the set 
\begin{align*}
   S':= 
   \{s\in S \ \vert \ \exists (f, Y, B+D, X) \in \mathfrak{F}_{{\rm deg}} 
   \textrm{ such that } 
   (Y, B+D) \simeq (\mathcal{Y}_s, \mathcal{B}_s+\mathcal{D}_s)\}
\end{align*}
is Zariski dense in $S$.
Applying~\cite{1810.01990}*{Proposition 2.10} to the pairs 
$(Y, B)$ 
in each of the 4-tuples 
$(f, Y, B+D, X) \in \mathfrak{F}'$, 
we may assume that 
$\mathcal Y$ 
is 
$\mathbb{Q}$-factorial, 
possibly after further decomposing 
$S$ 
again into a finite union of locally closed subsets.

\medskip

{\bf Claim}. 
{\it Up to decomposing 
$S$ 
into a finite union of locally closed subsets and discarding those components that do not contain points of 
$S'$, 
we may assume that there exists a positive rational number 
$\delta$ 
such that 
$(Y, B+\delta D)$ 
is 
$\frac \epsilon 2$-klt 
for any 4-tuple 
$(f, Y, B+D, X) \in \mathfrak{F}'$}.

\begin{proof}[Proof of the Claim]
Given a point 
$s \in S'$, 
there exists a positive rational number 
$\delta'$ 
such that 
$(\mathcal{Y}_s, \mathcal{B}_s + \delta' \mathcal{D}_s)$ 
is 
$ \frac \epsilon 2$-klt.
As 
$\mathcal Y$ 
is 
$\mathbb Q$-factorial, 
then 
$(\mathcal Y, \mathcal B + \delta' \mathcal D)$ 
is 
$ \frac \epsilon 2$-klt 
in a neighborhood of 
$\mathcal{Y}_s$ 
in 
$\mathcal Y$, 
as being 
$\frac \epsilon 2$-klt 
is an open property.
Thus, decomposing 
$S$ 
into a finite union of locally closed subsets we can find a uniform choice of 
$\delta$ 
such that 
$(\mathcal Y, \mathcal B + \delta \mathcal D)$ 
is 
$ \frac \epsilon 2$-klt, which proves the desired claim. 
By noetherian induction and up to discarding those components of $S$ not containing points of $S'$, we can still assume that $S'$ is dense in $S$ and repeat the same argument, which then concludes the proof of the claim.
\end{proof}
\noindent
In view of the claim, we can run the 
$(K_\mathcal Y+ \mathcal B + \delta \mathcal D)$-MMP
over 
$\mathcal{X}$ 
\begin{align}
\label{diag.bound.fam.proof.eqn3}
\xymatrix{
\mathcal Y =:
\mathcal Y_0 \ar@{-->}[r]^{\pi_0} \ar[drrr]_{\phi=:\phi_0}&
\mathcal Y_1 \ar@{-->}[r]^{\pi_1} \ar[drr]^{\phi_1}&
\mathcal Y_2 \ar@{-->}[r]^{\pi_2} \ar[dr]^{\phi_2}&
\dots \ar@{-->}[r]^{\pi_{i-1}} &
\mathcal Y_i \ar@{-->}[r]^{\pi_i} \ar[dl]_{\phi_i}&
\mathcal Y_{i+1} \ar@{-->}[r]^{\pi_{i+1}} \ar[dll]^{\phi_{i+1}}&
\dots
\\
& & &
\mathcal X &
& & &
}
\end{align}
which must terminate with a good minimal model 
\begin{align}
\label{diag.bound.fam.proof.eqn2}
    \xymatrix{
(\mathcal{Y}, \mathcal B+ \mathcal D) 
\ar@{-->}[rrrr]^-{\pi:= \pi_{l-1} \circ \pi_{l-2}\circ \dots \pi_2 \circ \pi_1 \circ\pi_0} 
\ar[rrd]_\phi 
    & & & &
(\mathcal{Y}_l, \pi_\ast (\mathcal{B}+ \mathcal{D}) )
=:(\mathcal{Y}', \mathcal{B}'+ \mathcal{D}') 
\ar[lld]^{\phi_l=:\phi'}
    \\
& &
\mathcal X\ar[d] 
& & 
 	\\
& &
S 
& &
}
\end{align}
by~\cite{HX}*{Theorem 1.1}, since $\mathcal D$ is vertical over $\mathcal X$ by construction: indeed, $\mathcal D\vert_{\mathcal{Y}_s}$ is vertical over $\mathcal{X}_s$ for any $s \in S'$. 
By decomposing 
$S$ 
into a finite union of locally closed subsets and using Noetherian induction, we can assume that at each step 
$\xymatrix{
\mathcal Y_i \ar[r]^{\pi_i} & 
\mathcal Y_{i+1}
}$
of the MMP in~\eqref{diag.bound.fam.proof.eqn3}, the exceptional locus of 
$\pi_i$ 
is either empty or horizontal over every component of 
$S$, 
and that if 
${\rm exc}(\pi_i)$ 
is divisorial (resp. has codimension at least $2$), then the same holds for the restriction of 
${\rm exc}(\pi_i\vert_{\mathcal Y_{i, s}})$ 
along each fibre over 
$s \in S$ 
that intersects the exceptional locus of 
$\pi_i$. 
From these assumptions and observations, we can conclude that for all 
$s \in S'$, 
$(\mathcal{Y}'_s, \mathcal{B}'_s+\mathcal{D}'_s) \to \mathcal{X}_s$ 
is a good relatively minimal model of 
$(\mathcal{Y}_s, \mathcal{B}_s + \mathcal{D}_s)$ 
over 
$\mathcal{X}_s$.
But, as 
$s \in S'$, 
$\mathcal D_s$ is very exceptional for $f$, by the definition of $S'$.
Thus,
$\mathcal D_s$ 
must be contracted on every good minimal model of 
$(\mathcal Y_s, \mathcal B_s+\mathcal D_s)$
by Lemma~\ref{deg.div.cy.lem}.
\end{proof}

\subsection{Canonical bundle formula}
\label{sect.cbf}

As our main focus in this paper will be on Mori fibrations of log Calabi--Yau pairs, we collect in this subsection some results about lc-trivial fibrations which can be applied to our setting. 
For more details, we refer the reader to~\cites{MR2153078, MR2134273}, and to~\cite{MR2944479}*{Theorem 3.1} for the case of real coefficients.   
For the definition of b-divisor and related notions, we refer the reader to~\cite{bir-zh}.

Given a log pair $(X,B)$ and a lc-trivial (resp. klt-trivial) fibration, that is, a contraction $f \colon X \to Z$ of normal quasi-projective varieties such that $(X, B)$ is log canonical (resp. klt) at the generic point of $Z$, and $K_X + B \sim_\mathbb{R} f^* L$, then we can define a divisor $B_Z$ on $Z$ by posing 
\begin{align}
\label{def.bound.part.eqn}
B_Z := \sum (1-l_D) D,
\end{align} 
where the sum is taken over every prime divisor $D$ on $Z$, and $l_D$ is the log canonical threshold of $f^*D$ with respect to $(X,B)$ over the generic point of $D$.
We can also define a divisor $M_Z$ on $Z$, by posing
\begin{align}
\label{def.moduli.part.eqn}
M_Z := L-(K_Z+B_Z).
\end{align}
By these definitions, we get the following formula, usually referred to as the canonical bundle formula, 
\begin{align}
\label{cbf.def.eqn}
K_{X}+B \sim _{\mathbb{R}} f^*(K_{Z}+B_{Z}+M_{Z}).
\end{align}

\begin{remark}
\label{cbf.lin.eq.rmk}
It is actually not hard to see that the following more refined version of~\eqref{cbf.def.eqn} holds when $B_Z$ is a $\mathbb{Q}$-divisor,
\begin{align*}
r(K_X+B) \sim (f^\ast r(K_Z+B_Z+M_Z)),
\end{align*}
where 
\begin{align*}
r := \min \{m \in \mathbb{N}_{>0} \; | \; m(K_F + B|_F) \sim 0, \; \text{for a general fibre } F\},
\end{align*}
cf.~\cite{MR3180598}*{Remark 2.10}. 
\end{remark}

Given a higher model  
$Z' \to Z$ 
of 
$Z$, 
considering a sufficiently high resolution of indeterminacies
$q \colon X' \to X$
of the induced rational map
$X \dashrightarrow Z'$
and the sub-pair 
$(X', B')$
-- i.e., 
$B'$ 
may have negative coefficients --
 where 
$B'$ 
is the sub-boundary obtained by log-pullback of 
$K_X+B$ 
on 
$X'$, 
$K_{X'}+B' = q^\ast (K_X+B)$,
it is possible to define divisors 
$B_{Z'}$, 
$M_{Z'}$ 
in the same way as it was done above, see~\cite{MR2134273}*{\S~3}.
The collections of these divisors, when 
$Z'$ 
varies among higher models of 
$Z$, 
define 
b-$\mathbb{R}$-divisors 
$\mathbf{B}$, 
$\mathbf{M}$ 
whose traces on 
$Z$ 
are 
$B_Z$, $M_Z$, 
respectively.
The b-divisors 
$\mathbf{B}$, 
$\mathbf{M}$ 
are referred to as the boundary and the moduli part of 
$f$,
respectively.
In the rest of the paper, we will refer to $B_Z$ as the boundary divisor and to $M_Z$ as the moduli part of the fibration $f$ on $Z$.
 
When the singularities of the pair $(X, B)$ are mild, then one can describe more precisely the singularities of $(Z, B_Z+M_Z)$.  
\begin{theorem}
~\cite{MR2944479}*{Theorem~3.1}
\label{bundle}
Let $f \colon X\rightarrow Z$ be a contraction of normal varieties and
let $(X, B)$ be a log canonical pair that is klt over the generic point of $Z$ and $K_{X}+B \sim_{f,\mathbb{R}} 0$.
\newline
If $(X, B)$ is klt, then there exists a choice of an effective $\mathbb{R}$-divisor $M \sim_\bR M_Z$ such that $(Z,B_{Z}+M)$ is klt.
\end{theorem}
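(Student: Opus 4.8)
The plan is to reduce to the case of rational coefficients and then invoke the b-nefness and b-goodness (semiampleness) of the moduli part together with a Bertini-type argument.

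\emph{Step 1: reduction to $\mathbb{Q}$-coefficients.} Writing $B=\sum b_iB_i$, the set of $\mathbb{R}$-boundaries $\Theta=\sum b_i'B_i$ with $K_X+\Theta\sim_{f,\mathbb{R}}0$ is a rational affine subspace of $\bigoplus\mathbb{R}B_i$ (as usual, since principal divisors with support in a fixed finite set of primes, together with $\mathbb{R}$-Cartier divisors pulled back from $Z$, form a finitely generated subgroup up to $\mathbb{Q}$-scaling), and $B$ lies in it; hence one can write $B=\sum_jt_jB^{(j)}$ with $t_j>0$, $\sum_jt_j=1$, each $B^{(j)}$ a $\mathbb{Q}$-boundary with $(X,B^{(j)})$ klt (being klt is open, so this holds for $B^{(j)}$ close to $B$) and $K_X+B^{(j)}\sim_{f,\mathbb{Q}}0$. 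Choosing $L^{(j)}$ with $K_X+B^{(j)}\sim_{\mathbb{Q}}f^\ast L^{(j)}$ and forming the boundary and moduli $\mathbb{Q}$-divisors $B_Z^{(j)},M_Z^{(j)}$ as in~\eqref{def.bound.part.eqn}--\eqref{def.moduli.part.eqn}, linearity of the log canonical threshold in~\eqref{def.bound.part.eqn} gives $B_Z=\sum_jt_jB_Z^{(j)}$, and then $M_Z=\sum_jt_jM_Z^{(j)}$ for $L=\sum_jt_jL^{(j)}$. Since log discrepancies are affine in the boundary, it is enough to treat each $\mathbb{Q}$-klt-trivial fibration $(X,B^{(j)})\to Z$: if $M^{(j)}\geq0$ is a $\mathbb{Q}$-divisor with $M^{(j)}\sim_{\mathbb{Q}}M_Z^{(j)}$ and $(Z,B_Z^{(j)}+M^{(j)})$ klt, then $M:=\sum_jt_jM^{(j)}$ is effective, $M\sim_{\mathbb{R}}M_Z$, and $(Z,B_Z+M)$ is klt. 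So assume $B$ is a $\mathbb{Q}$-divisor and $K_X+B\sim_{f,\mathbb{Q}}0$.

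\emph{Step 2: semiampleness and Bertini.} Let $\mathbf{B},\mathbf{M}$ be the boundary and moduli b-divisors of $f$. By Ambro's theorem on the moduli b-divisor of a klt-trivial fibration there is a birational morphism $\mu\colon Z'\to Z$, which we may also take to be a log resolution of $(Z,B_Z)$, such that $\mathbf{M}$ descends to $Z'$ and $\mathbf{M}_{Z'}$ is semiample; fix $m\geq 2$ with $m\mathbf{M}_{Z'}$ base point free. A standard computation with~\eqref{def.bound.part.eqn} shows that, because $(X,B)$ is klt, over every codimension-one point the log canonical threshold of $f^\ast D$ is $\leq 1$ and $>0$, so $B_Z\geq 0$ has all coefficients $<1$; the same bounds hold for the coefficients of $\mathbf{B}$ on every model of $Z$ (possibly with negative coefficients along exceptional divisors, but still $<1$). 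Thus $\mathbf{B}_{Z'}$ is snc with coefficients $<1$ and with effective trace on $Z$. Choosing $M'\in\frac{1}{m}|m\mathbf{M}_{Z'}|$ general, base point freeness and Bertini make $M'$ transverse to $\mathbf{B}_{Z'}$ with all coefficients $\frac{1}{m}<1$, so $(Z',\mathbf{B}_{Z'}+M')$ is sub-klt.

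\emph{Step 3: descent.} Put $M:=\mu_\ast M'\geq 0$, a $\mathbb{Q}$-divisor with $M\sim_{\mathbb{Q}}\mu_\ast\mathbf{M}_{Z'}=M_Z$. Working on a base change $X'\to Z'$ (with $B'$ the crepant pullback of $B$) one has $K_{Z'}+\mathbf{B}_{Z'}+\mathbf{M}_{Z'}\sim_{\mathbb{Q}}\mu^\ast(K_Z+B_Z+M_Z)$, hence $K_{Z'}+\mathbf{B}_{Z'}+M'\sim_{\mathbb{Q}}\mu^\ast(K_Z+B_Z+M)$. Since $(Z',\mathbf{B}_{Z'}+M')$ is sub-klt and $\mu_\ast(\mathbf{B}_{Z'}+M')=B_Z+M\geq0$, the pair $(Z,B_Z+M)$ is klt. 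Combined with Step 1 this gives the statement.

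\emph{Expected main obstacle.} The essential input is Step 2: the b-nefness and b-goodness (semiampleness) of the moduli part of a klt-trivial fibration --- Ambro's theorem, later generalized by Fujino--Gongyo and Kawamata and extended to real coefficients in the source we quote --- which is a genuinely deep result; everything else is formal manipulation with b-divisors, linearity of log discrepancies, and Bertini. The one delicate bookkeeping point is verifying that the klt hypothesis on $(X,B)$ forces every coefficient of $\mathbf{B}$ to be $<1$ on all birational models of $Z$, since this is exactly what prevents the general member $M'$ from creating a non-klt centre on $Z'$.
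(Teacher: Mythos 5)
The paper itself gives no proof of this statement: it is quoted directly from \cite{MR2944479}*{Theorem 3.1}, so the only question is whether your blind argument is sound. It is not, at the decisive point. In Step 2 you attribute to Ambro the assertion that the moduli b-divisor $\mathbf{M}$ descends to a \emph{semiample} divisor $\mathbf{M}_{Z'}$ on a suitable model, and you then take a general member of the free linear system $|m\mathbf{M}_{Z'}|$ via Bertini. Ambro's theorem only gives that $\mathbf{M}_{Z'}$ is nef; (b-)semiampleness of the moduli part is a well-known open problem, whose effective version is recorded in this very paper as Conjecture \ref{conj:eff.semi} and described as unsolved, with only special cases (e.g.\ relative dimension one, and the numerically trivial case of Theorem \ref{flor.thm}) available. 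The input actually used to prove the cited theorem is Ambro's finer result that for a klt-trivial fibration $\mathbf{M}$ is b-nef and \emph{good} (abundant): on a suitable model it is the pullback of a nef and big divisor under a morphism to the base of an associated fibration. One then writes nef and big as ample plus a small effective divisor and perturbs to obtain an effective $M'\sim_{\mathbb{Q}}\mathbf{M}_{Z'}$ making $(Z',\mathbf{B}_{Z'}+M')$ sub-klt, after which your Step 3 descent (relative $\mathbb{Q}$-triviality over $Z$ plus the negativity lemma to identify $K_{Z'}+\mathbf{B}_{Z'}+M'$ with $\mu^\ast(K_Z+B_Z+M)$) goes through. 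With ``semiample'' replaced by ``nef and good'' your skeleton matches the literature, but as written Step 2 rests on an open conjecture rather than a theorem.

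A secondary inaccuracy is in Step 1: the log canonical threshold is concave, not linear, in the boundary, so for an arbitrary rational convex decomposition $B=\sum_j t_jB^{(j)}$ one only gets $B_Z\leq\sum_j t_jB_Z^{(j)}$, not the equality you claim; the paper's Remark \ref{cbf.rmk} flags exactly this non-canonicity in the $\mathbb{R}$-coefficient case. This gap is repairable for the statement at hand: since $K_Z+B_Z+M_Z=\sum_j t_j(K_Z+B_Z^{(j)}+M_Z^{(j)})$, the effective difference $\sum_j t_jB_Z^{(j)}-B_Z$ can simply be absorbed into $M$, or one can choose the $B^{(j)}$ inside a cell of the rational polytope on which the finitely many relevant threshold functions are affine. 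But ``linearity of the log canonical threshold'', as invoked, is false, and you should not pass over it silently.
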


\begin{remark}
\label{cbf.rmk}
We will use the same notation as in the theorem above.
When 
$B$ 
is a divisor with rational coefficients and 
$(X, B)$ 
is klt, the above result was first proved by Ambro,~\cite{MR2134273}, who also proved that there exists a sufficiently high birational model 
$Z'$ 
of 
$Z$ 
on which 
$\mathbf{M}$ 
descends to 
$M_{Z'}$ 
which is a nef 
$\mathbb{Q}$-divisor.
This is usually summarized by saying that the moduli part 
$\mathbf M$ 
is a b-nef 
b-$\mathbb Q$-divisor.
Later, in~\cite{MR3329677}, the same result was extended to the case of lc pairs.
\newline
Under such assumption, it is an immediate consequence that 
$(Z' \to Z, B_Z, M_{Z'})$ 
is a generalised pair in the sense of \S~\ref{gen.pair.sect}.
\newline
If 
$B$ 
is an 
$\mathbb{R}$-divisor, 
it is not true in general that 
$\mathbf{M}$ 
is going to be a b-nef 
b-$\mathbb R$-divisor, 
but rather its trace on any model of 
$X$ 
will be pseudoeffective.
\newline
Nonetheless, by writing 
$K_X+B$ 
as a convex sum of rational klt log divisors, i.e., 
$K_X+B=\sum_{i=1}^s r_i(K_{X}+B_i)$, 
where for all 
$i=1, \dots, s$,
$r_i > 0$, 
$B_i$
is a 
$\mathbb{Q}$-divisor,
$K_X+B_i\sim_\mathbb{Q}0$, 
and 
$\sum_{i=1}^s r_i=1$,
we can then write 
$K_X+B \sim_\mathbb{R} f^\ast (K_Z+C_Z+N)$, 
where 
$N$ 
is the trace on 
$Z$ 
of a b-nef 
b-$\mathbb R$-divisor 
$\mathbf N$ 
obtained as the convex sum of the moduli parts of the log divisors 
$K_X+B_i$, 
while 
$C_Z$ 
is the convex sum of the boundary parts of the log divisors 
$K_X+B_i$ 
effective divisor.
On the other hand, this kind of operation is non-canonical and thus there is no way to enforce uniqueness of 
$C_Z$ 
and 
$\mathbf N$, 
cf.~\cite{MR4398255}.
\end{remark}

It is conjectured that when 
$(X, B)$ 
is klt and $B$ 
has rational coefficients, then on a sufficiently high model 
$Z' \to Z$ 
the trace 
$M_{Z'}$ 
of the moduli part 
$\mathbf M$ 
on 
$Z'$
is semi-ample.
Actually, the following much stronger statement, known as the effective semi-ampleness conjecture of the moduli part, is expected to hold;
it first appeared in~\cite{MR2448282}*{Conjecture~7.13}.

\begin{conjecture}
\label{conj:eff.semi}
Let 
$k, r$ 
be positive integers.
There exists a positive integer 
$m=m(k,r)$ 
such that for any klt projective pair 
$(X, B)$, 
where 
$B$ 
has rational coefficients, and any klt-trivial fibration 
$f \colon X \rightarrow Z$ 
as in Theorem~\ref{bundle}, where the relative dimension of 
$f$ 
is 
$k$ 
and 
$r$ 
is the positive integer defined in Remark~\ref{cbf.lin.eq.rmk}, there exists a birational model 
$Z' \rightarrow Z$ 
where the multiple 
$mM_{Z'}$ 
of the moduli b-divisor on 
$Z'$ 
descends and is base point free.
\end{conjecture}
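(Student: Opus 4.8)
The plan is to reformulate Conjecture~\ref{conj:eff.semi} as an effectivity statement about the variation of Hodge structure attached to a klt-trivial fibration, and then to reduce it, using boundedness of the general fibre, to known effective results in low relative dimension. First I would pass to the $\mathbb{Q}$-linear setting: by Remark~\ref{cbf.lin.eq.rmk} one may fix $r$ so that $r(K_X+B)\sim f^\ast r(K_Z+B_Z+M_Z)$ as genuine linear equivalence, and by Remark~\ref{cbf.rmk} there is a birational model $Z'\to Z$ on which $\mathbf{M}$ descends to a nef $\mathbb{Q}$-divisor $M_{Z'}$. The problem then becomes: bound both the denominator of $M_{Z'}$ and the multiple making it base point free by a single constant $m(k,r)$, independently of $(X,B)$ and of $Z$.

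Next I would run the standard analysis of klt-trivial fibrations. After a generically finite base change $\tilde Z\to Z'$ and a semistable reduction in codimension one, the moduli part becomes, up to a correction supported along the branch locus, the Deligne canonical extension of the bottom Hodge piece $F^{\mathrm{top}}$ of a variation of Hodge structure associated to a smooth family of fibres $(\tilde X_t,\tilde B_t)$ with $r(K_{\tilde X_t}+\tilde B_t)\sim 0$. The two points to exploit are: (i) $\mathbf{M}$ pulls back compatibly under such base changes, so both nefness (already known, Remark~\ref{cbf.rmk}) and semi-ampleness may be checked after a finite cover; and (ii) since $r(K_{\tilde X_t}+\tilde B_t)\sim 0$ and $\dim\tilde X_t=k$, the fibres $(\tilde X_t,\tilde B_t)$ range in a \emph{bounded} family, so the associated period domains, the relevant arithmetic monodromy groups, and the Hodge bundles are all uniformly controlled. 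One then wants to conclude that $M_{\tilde Z}$ is the pullback of an ample divisor from a Baily--Borel-type compactified moduli space and that the index of a cover trivialising the unipotent part of the monodromy is bounded in terms of $k$ and $r$, which together yield the uniform $m(k,r)$.

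For relative dimension $k=1$ this program is classical and complete: the fibres are genus-one curves, the variation of Hodge structure is that of the universal elliptic curve, $\overline{M}_{1,1}\cong\mathbb{P}^1$, and Kodaira's computation shows that $12M_{Z'}$ descends and is base point free, so one may take $m(1,r)=12$, independent even of $r$. This is exactly the instance needed for elliptic fibrations, and it is what underlies Theorem~\ref{index.fibr.thm}. For small $k$ beyond this, further cases become accessible through the moduli theory of K3 and abelian surfaces, again with explicit multiples.

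The main obstacle is the general-$k$ case of steps (i)--(ii): there is at present no unconditional effective bound on the index of the monodromy group of an arbitrary bounded family of polarised klt Calabi--Yau pairs, and, more seriously, semi-ampleness of the moduli b-divisor — rather than mere nefness — is itself open outside low dimension and a handful of fibre types. For this reason Conjecture~\ref{conj:eff.semi} is used in this paper only through its established instances: the case $k=1$, for elliptic fibrations, and otherwise we arrange, via towers of Fano fibrations, that the moduli parts encountered either have uniformly bounded index for independent reasons or do not obstruct the boundedness argument.
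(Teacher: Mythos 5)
The statement you were asked about is stated in the paper as a \emph{conjecture} (the effective semi-ampleness conjecture of Prokhorov--Shokurov, \cite{MR2448282}*{Conjecture 7.13}); the paper gives no proof of it and explicitly says it remains open, using only two established instances: the classical $k=1$ case for elliptic fibrations, where $12M_{Z'}$ descends and is base point free (Remark~\ref{rmk.coeff.ell.fibr}, via \cite{MR2448282}*{Example 7.16}), and the case where $M_Z$ is numerically trivial, settled by Floris (Theorem~\ref{flor.thm}), where the integer depends a priori on the middle Betti number $b$ of a cover of the fibre and is made uniform only through boundedness of the fibres (Corollary~\ref{cbf.triv.cor}). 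Your text is consistent with this: it is a program, not a proof, and you yourself name the two missing ingredients --- an effective, uniform bound on the (unipotent part of the) monodromy of the relevant variation of Hodge structure over a bounded family of fibres, and semi-ampleness (as opposed to the known nefness/b-nefness) of the moduli b-divisor in general relative dimension. Those are exactly the points at which the conjecture is open, so there is no complete argument here to compare with a proof in the paper, nor could there be.

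Two small corrections to how you describe the paper's use of the conjecture. First, the paper does not invoke general semi-ampleness anywhere: for elliptic fibrations it uses only the explicit $12$ from the one-dimensional fibre case, and in the product-type/tower situation it uses Floris's numerically trivial case, where the dependence is on $b$ rather than on $(k,r)$; the bridge back to a uniform constant is boundedness of the fibre pairs, not an effective monodromy bound. Second, your remark that the $k=1$ constant is ``independent even of $r$'' is fine for the actual elliptic case used here, but note that in the paper's formulation the constant of the conjecture is allowed to depend on $r$ precisely because for higher-dimensional fibres the index of $K_F+B|_F$ genuinely enters (this is also why Theorem~\ref{flor.thm} is phrased through the cover associated to $|r(K_X+B)|_F|$). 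If you wish to present something provable rather than the conjecture itself, the statement actually needed and established in the paper is Corollary~\ref{cbf.triv.cor}, and a writeup along your lines should be aimed at that.
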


The effective semi-ampleness conjecture remains a hard unsolved problem, but for our purposes we only need the case where $M_Z$ is numerically trivial and the generic fibre of $f$ is smooth: this was settled in~\cite{MR3180598}.

\begin{theorem}
\label{flor.thm}
\cite{MR3180598}*{Theorem 1.3}
Fix a positive integer $b$.
There exists an integer $m=m(b)$ such that for any klt-trivial fibration $f\colon (X, B)\rightarrow Z$ with
\begin{itemize}
\item 
$B$ a Weil $\mathbb{Q}$-divisor,

\item 
$M_Z\equiv 0$, and

\item 
$\dim H^{\dim E}(E,\mathbb{C})=b$ for a non-singular model $E$ of the cover $E'\rightarrow F$ associated to the unique element of $|r(K_X+B)|_F |$ of a general fibre $F$ of $f$, where $r$ is the positive integer defined in Remark \ref{cbf.lin.eq.rmk},
\end{itemize}

we have that $mM_Z\sim 0$.
\end{theorem}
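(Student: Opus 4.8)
The plan is to translate the statement into a question about a polarised variation of Hodge structure whose fibrewise rank is controlled by $b$, and then to exploit that the hypothesis $M_Z\equiv 0$ removes all of the variation, forcing the Hodge-theoretic line bundle that computes the moduli part to be torsion of order bounded in terms of $b$.

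First I would replace $f$ by the klt-trivial fibration that actually governs $\mathbf M$. Over the generic point of $Z$ the divisor $r(K_X+B)|_F$ is linearly equivalent to $0$ and has a unique effective representative, by the definition of $r$ in Remark~\ref{cbf.lin.eq.rmk}; the associated degree-$r$ cyclic cover, after a resolution of singularities and a Stein factorisation, produces a projective fibration $\pi\colon E\to Z$ carrying a $\mathbb{Z}/r$-action, such that $E$ is smooth over the generic point of $Z$ and the middle cohomology $V:=R^{\dim E/Z}\pi_\ast\mathbb{Q}$ underlies a polarised variation of Hodge structure. By the theory of the canonical bundle formula recalled in \S\ref{sect.cbf} (see Theorem~\ref{bundle}, Remark~\ref{cbf.rmk}, and the work of Ambro~\cite{MR2134273} and Fujino--Mori), the moduli b-divisor $\mathbf M$ of $f$ is read off from the Hodge subbundle $S\subset V$ on which $\mathbb{Z}/r$ acts through the character twisting $K_F+B|_F$: there is an integer $\rho$, bounded solely in terms of $b$ — because $r$ is the order of a torsion cohomology class of $E$ whose size is controlled once $\dim H^{\dim E}(E,\mathbb{C})=b$ is fixed — with the property that $\rho\,\mathbf M$ agrees birationally with the Deligne canonical extension of the determinant line bundle $\mathcal L:=\det S$.

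Next I would pass to a good model. The local monodromies of $V$ are quasi-unipotent, and the monodromy representation lands in $GL_b(\mathbb{Z})$ preserving a polarisation, so there is a constant $N=N(b)$ with $\gamma^{N}$ unipotent for every local monodromy $\gamma$, by Minkowski-type control on torsion subgroups of $GL_b(\mathbb{Z})$ together with the fact that the eigenvalues of $\gamma$ are roots of unity of bounded order. Hence the standard results on the canonical bundle formula in codimension one furnish a birational model $Z'\to Z$ on which $\mathbf M$ descends and on which $N\,M_{Z'}$ is an integral divisor, with $N=N(b)$; moreover $M_{Z'}$ equals, up to the bounded factor $\rho$, the Deligne extension of $\mathcal L$, which is a nef line bundle by the semipositivity of Hodge bundles. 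Now I would bring in the hypothesis: since $\mathbf M$ is a b-nef b-divisor, $M_Z\equiv 0$ propagates to $M_{Z'}\equiv 0$, hence $\mathcal L\equiv 0$. The Chern form of $\mathcal L$ for the Hodge metric is semipositive and represents $c_1(\mathcal L)$, so it must vanish identically; therefore $S$ is flat over the smooth locus of $\pi$, its period map is constant, and — tracking the integral structure of $V$ and the $\mathbb{Z}/r$-action — the rank-one local system underlying a power of $\mathcal L$ of bounded degree is defined over $\mathbb{Z}$, hence has monodromy in $GL_1(\mathbb{Z})=\{\pm1\}$, which forces $\mathcal L$ to be torsion of order bounded in terms of $b$. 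Collecting the factors $\rho(b)$, $N(b)$, and this torsion order yields $m=m(b)$ with $m\,M_{Z'}\sim 0$; and since $\mathbf M$ descends on $Z'$, this upgrades to $m\,\mathbf M\sim 0$ as a b-divisor, so taking traces on $Z$ gives $m\,M_Z\sim 0$.

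The hard part will be making every auxiliary constant — the cover degree $r$ (equivalently the factor $\rho$), the unipotent-reduction index $N$, and the torsion order of $\mathcal L$ — depend on $b$ alone, in particular not on $\dim Z$ nor on the number of boundary components of the fibration; this is exactly the technical content of \cite{MR3180598}. The two genuinely delicate inputs there are the bound on the order of the relevant torsion cohomology class of $E$ in terms of $b$, and the uniform bound on the finite part of a quasi-unipotent automorphism of a $b$-dimensional lattice. By contrast, the geometric half of the argument is comparatively soft: in the numerically trivial case one does not have to engineer any positivity, one only has to observe that the vanishing of the Hodge form eliminates all variation, after which the integral structure of $V$ does the rest.
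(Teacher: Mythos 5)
You should note first that the paper contains no proof of Theorem~\ref{flor.thm}: it is quoted verbatim from Floris \cite{MR3180598}*{Theorem 1.3}, so there is no internal argument to compare against. Your sketch follows the same Ambro--Floris strategy as the cited source (cyclic cover of the fibre, polarised VHS on the middle cohomology of rank $b$, identification of the moduli part with a canonical extension of the relevant eigen-line bundle, numerical triviality plus semipositivity killing the variation, and the integral structure bounding the torsion order), so at the level of architecture it is the ``right'' proof.

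However, as written, the two steps that carry the actual content are incorrectly justified. First, your bound on $r$ (equivalently on $\rho$) ``because $r$ is the order of a torsion cohomology class of $E$ whose size is controlled once $\dim H^{\dim E}(E,\mathbb{C})=b$ is fixed'' is not a valid argument: torsion in integral cohomology is not controlled by Betti numbers. The correct reason is that the generator of $\mathbb{Z}/r$ acts on the rank-$b$ lattice $H^{\dim E}(E,\mathbb{Z})/\mathrm{tors}$ with a primitive $r$-th root of unity as an eigenvalue on the eigenspace carrying the distinguished holomorphic form, whence $\varphi(r)\le b$. Second, the concluding step ``the rank-one local system underlying a power of $\mathcal L$ \ldots is defined over $\mathbb{Z}$, hence has monodromy in $\mathrm{GL}_1(\mathbb{Z})=\{\pm1\}$'' is false: that eigen-local system is only defined over $\mathbb{Q}(\zeta_r)$, not over $\mathbb{Z}$, and if its monodromy really lay in $\{\pm1\}$ the torsion order would essentially be $2$, which is not what happens. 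What is true is that constancy of the period map together with the polarised $\mathbb{Z}$-structure forces the full monodromy group to be a \emph{finite} subgroup of $\mathrm{GL}_b(\mathbb{Z})$ (a discrete subgroup of a compact group, using that the Hodge decomposition is preserved), and then Minkowski-type bounds on finite subgroups of $\mathrm{GL}_b(\mathbb{Z})$ bound the order of the character through which it acts on the eigen-line; this is where $m(b)$ genuinely comes from, and it is the input you correctly name in your closing paragraph but do not actually use in the argument. Two smaller points: the finiteness-of-monodromy step (flat $\Rightarrow$ finite) needs the justification just indicated rather than assertion, and the propagation $M_Z\equiv 0\Rightarrow M_{Z'}\equiv 0$ should be argued via the negativity lemma ($M_{Z'}=\nu^*M_Z-D$ with $D\ge 0$ exceptional) plus intersection with a power of an ample divisor, not merely ``since $\mathbf M$ is b-nef''.
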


The statement of Theorem~\ref{flor.thm} is slightly different than that of the effective semi-ampleness conjecture: 
in fact, while in Theorem~\ref{flor.thm} the integer 
$m$ 
depends on the dimension 
$b$ 
of the middle cohomology of the resolution of a finite cover of a general fibre, in Conjecture~\ref{conj:eff.semi} the integer 
$m$ 
depends on the relative dimension 
$k$ 
of 
$f$ 
and on the integer 
$r$.
While it is clear from the statement of Theorem~\ref{flor.thm} that the integer 
$b$ 
is inherently related to the integer 
$r$, 
as 
$E$ 
is a resolution of the ramified cover of degree 
$r$ 
induced by 
$r(K_F+B\vert_F) \sim 0$, 
it is not evident that this result provides a full solution to Conjecture~\ref{conj:eff.semi} when the moduli part is numerically trivial.
Nonetheless, when the log pair induced on the general fibre of the klt-trivial fibration 
$f$ 
belongs to a bounded family, using Theorem~\ref{flor.thm} and standard techniques in the theory of bounded pairs, we can show that the positive integer 
$m$ 
that trivializes the moduli part 
$M_Z$ 
can be chosen to be uniformly bounded.

\begin{corollary}
\label{cbf.triv.cor}
Fix a DCC set 
$\mathcal{R} \subset (0, 1) \cap \bQ$. 
Let 
$\mathfrak{D}$ 
be a bounded set of klt pairs.
Assume that for any pair 
$(X, B) \in \mathfrak{D}$, 
the coefficients of 
$B$ 
are in 
$\mathcal{R}$,
and 
$K_X+B \sim_\bQ 0$.
Then there exists an integer number 
$m=m(\mathfrak{D}, \mathcal{R})$ 
such that for any klt-trivial fibration 
$f\colon (X,B)\rightarrow Z$ 
with
\begin{itemize}
\item 
$M_Z\equiv 0$, 
and

\item 
the pair 
$(F, B|_F) \in \mathfrak{D}$, 
where 
$F$ 
is a general fibre of 
$f$,
\end{itemize}
we can choose a divisor 
$M$ 
in the class 
$M_Z$ 
with 
$mM\sim 0$.
\end{corollary}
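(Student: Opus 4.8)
The plan is to reduce everything to a single application of Floris's theorem (Theorem~\ref{flor.thm}) with one value of the invariant $b$ that is valid for the whole family $\mathfrak{D}$. The two quantities attached to a klt-trivial fibration $f\colon(X,B)\to Z$ that enter that statement are the index $r$ of Remark~\ref{cbf.lin.eq.rmk} of the general fibre $F$, and the Betti number $\dim H^{\dim E}(E,\mathbb{C})$ of a smooth model $E$ of the associated degree-$r$ cyclic cover $E'\to F$; I will bound both uniformly in terms of $\mathfrak{D}$ by spreading $\mathfrak{D}$ out over a base of finite type. Throughout I take $B$ to be a $\bQ$-divisor, which is the case needed in our applications.

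\emph{Step 1: from $\mathfrak{D}$ to a strongly log bounded family, and a uniform index.} By the global ACC for numerically trivial pairs \cite{MR3034294}, together with the DCC assumption on $\mathfrak{R}$, the coefficients occurring in $\mathfrak{D}$ lie in a finite subset $\mathfrak{R}_0\subset\mathfrak{R}$. A simultaneous log resolution of a bounding family over a suitable stratification of the base then shows that a bounded set of klt pairs with coefficients in a finite set is automatically a set of $\epsilon_0$-lc pairs for some $\epsilon_0=\epsilon_0(\mathfrak{D})>0$, so by Proposition~\ref{fam.good.prop} the set $\mathfrak{D}$ is strongly log bounded: there is a projective morphism $\pi\colon(\mathcal{X},\mathcal{B})\to S$ with $S$ of finite type, which after decomposing $S$ into finitely many locally closed pieces we may assume to be smooth, such that every $(X,B)\in\mathfrak{D}$ is isomorphic to some fibre $(\mathcal{X}_s,\mathcal{B}_s)$. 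By \cite{Bir16a}*{Lemma 2.25}, after shrinking $S$, there is $I_1$ so that $I_1(K_{\mathcal{X}/S}+\mathcal{B})$ is Cartier along every fibre. For $(X,B)\in\mathfrak{D}$ one has $K_X+B\sim_\bQ0$, so $m(K_X+B)\sim0$ is equivalent to $h^0(X,\mathcal{O}_X(m(K_X+B)))>0$; by the deformation invariance of these plurigenera \cite{MR3779687}*{Theorem~4.2}, for each fixed $m$ the function $s\mapsto h^0(\mathcal{X}_s,\mathcal{O}(m(K_{\mathcal{X}_s}+\mathcal{B}_s)))$ is constant on each connected component of $S$. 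Hence the index $\min\{m>0\mid m(K_{\mathcal{X}_s}+\mathcal{B}_s)\sim0\}$ is constant along the members of $\mathfrak{D}$ over a fixed component, so it is bounded by some $I=I(\mathfrak{D})$; in particular the integer $r$ of Remark~\ref{cbf.lin.eq.rmk} of the general fibre of any $f$ as in the statement divides $I$, and takes only finitely many values.

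\emph{Step 2: a uniform bound on the Betti number, and conclusion.} Fix one of the finitely many values of $r$ and restrict to the union of components of $S$ on which it is the fibre index. After shrinking $S$, the fibrewise trivial, fibrewise Cartier divisor $r(K_{\mathcal{X}/S}+\mathcal{B})$ is relatively linearly trivial, so the associated relative cyclic cover $\mathcal{E}'\to\mathcal{X}\to S$ restricts over each member $(F,B|_F)=(\mathcal{X}_s,\mathcal{B}_s)$ to the cover $E'\to F$ appearing in Theorem~\ref{flor.thm}. Taking a functorial resolution $\mathcal{E}\to\mathcal{E}'$ and decomposing $S$ further into locally closed pieces, we may assume $\mathcal{E}\to S$ is smooth and projective, so that $\dim H^{\dim\mathcal{E}_s}(\mathcal{E}_s,\mathbb{C})$ is locally constant on $S$ and takes finitely many values. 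Let $b=b(\mathfrak{D})$ be the maximum of these values over all choices of $r$ and all components of $S$; by Theorem~\ref{flor.thm} each integer $b'\le b$ yields an integer $m_{b'}$, and we set $m:=\mathrm{lcm}\{\,m_{b'}\mid b'\le b\,\}$, which depends only on $\mathfrak{D}$ and $\mathfrak{R}$. Now if $f\colon(X,B)\to Z$ is a klt-trivial fibration with $M_Z\equiv0$ and $(F,B|_F)\in\mathfrak{D}$, then its general fibre is isomorphic to a fibre $\mathcal{X}_s$ of the bounding family and the smooth model of its degree-$r$ cover is the corresponding $\mathcal{E}_s$, so $\dim H^{\dim\mathcal{E}_s}(\mathcal{E}_s,\mathbb{C})=b'$ for some $b'\le b$; Theorem~\ref{flor.thm} then gives a representative $M$ of the class $M_Z$ with $m_{b'}M\sim0$, hence $mM\sim0$, as desired.

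The step I expect to be the main obstacle is the construction in Step 2: organizing the degree-$r$ cyclic covers of all members of $\mathfrak{D}$ into a single family over a base of finite type and resolving it simultaneously, while ensuring that the invariant computed on the resulting smooth fibres is precisely the one that feeds into Theorem~\ref{flor.thm}; this requires some care because $r(K_{\mathcal{X}/S}+\mathcal{B})$ need not be Cartier on the total space and because one must pass through several stratifications of $S$. The uniform index bound of Step 1 is more routine, but it also uses the deformation invariance of plurigenera in an essential way, since $K_X+B$ is not a priori torsion of uniformly bounded order.
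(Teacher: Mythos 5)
Your proposal is correct and follows essentially the same route as the paper's proof: reduce to a finite coefficient set via ACC and $\epsilon$-klt-ness, apply Proposition~\ref{fam.good.prop} to get a strongly log bounded family, use invariance of plurigenera over a log smooth model to obtain a uniform torsion index, spread out and resolve the cyclic covers in a family of finite type to bound the middle Betti number, and conclude with Theorem~\ref{flor.thm} together with a finiteness (lcm/noetherian) argument. Your stratification by the exact fibre index $r$ is only a minor refinement of the paper's use of a single uniform multiple.
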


\begin{proof}
As 
$\mathcal{R}$ 
is DCC, by~\cite{MR3224718}*{Theorem 1.5} there exists a finite subset 
$\mathcal{R}_0 \subset \mathcal{R}$ 
such that all coefficients of pairs in 
$\mathfrak{D}$ 
belong to 
$\mathcal{R}_0$.
Moreover,~\cite{DCS}*{Corollary 2.9} implies that there exists 
$\epsilon >0$ 
such that all pairs in 
$\mathfrak{D}$ 
are 
$\epsilon$-klt.
Hence, by Proposition~\ref{fam.good.prop} we can assume that 
$\mathfrak D$ 
is strongly log bounded. 
In particular, there exists a morphism of quasi-projective varieties 
$\mathcal{Z} \to S$ 
and an effective 
$\mathbb{Q}$-divisor 
$\mathcal{B}$ 
on 
$\mathcal Z$ 
such that for any pair 
$(X, B) \in \mathfrak{D}$ 
there exists 
$s \in S$ 
and an isomorphism 
$f_s \colon \mathcal{Z}_s\to X$ 
and 
$\mathcal{B}_s = f^\ast_s B$.
Moreover, up to decomposing 
$S$ 
into a disjoint union of finitely many locally closed subsets, we can assume that on each component of 
$S$ 
there exists a positive integer 
$l$ 
such that 
$l(K_{\mathcal{Z}} + \mathcal{B}) \sim_{S} 0$. 
In fact, constructing a log resolution 
\[
\xymatrix{
\mathcal{Z}' \ar[rr] \ar[dr]& & \mathcal{Z}\ar[dl]\\
& S &
}
\]
of 
$(\mathcal{Z}, \mathcal{B})$ 
which is log smooth over the base $S$ as in the proof of~\ref{fam.good.prop}, and using~\cite{MR3779687}*{Theorem 4.2} there must exist a positive integer 
$n$ 
such that 
$n(K_X+B) \sim 0$ 
for any pair in 
$\mathfrak{D}$.
Up to stratifying again
$S$ 
in the Zariski topology, we can construct a bounded set of varieties 
$\mathfrak{C}$
whose elements are smooth projective varieties 
$E$ 
that are non-singular models of the cover 
$E' \to X$ 
associated to the unique element of 
$|n(K_X+B)|$. 
The statement of the corollary then follows by noticing that since 
$\mathfrak{C}$
is bounded then there exists a natural number 
$b=b(\mathfrak{C})$ 
such that for any 
$E \in \mathfrak{C}$, $h^{\dim E}(E) \leq b$, by Ehresmann's Theorem~\cite{MR2451566}*{Theorem~9.3}. 
The conclusion then follows from Theorem~\ref{flor.thm} via noetherian induction. 
\end{proof}

Similarly to the classical case, we can define a canonical bundle formula for generalised pairs. Let $(X,B, \mathbf M)$ be a projective generalised pair, and let $f \colon X \to Z$ be a contraction where $\dim Z > 0$. 
We shall assume that $(X,B, \mathbf M)$ is generalised log canonical over the generic point of $Z$, $B$ is a $\mathbb Q$-divisor, $\mathbf M$ is a $\mathbb Q$-b-divisor and that $K_X + B + M \sim_{f, \mathbb{Q}} 0$. 
We shall also fix a $\mathbb Q$-divisor $L$ on $Z$ such that $K_X + B + M \sim_\mathbb{Q} f^\ast L$. 
Given any prime divisor $D$ on $Z$, let $l_D$ be the generalised log canonical threshold of $f^\ast D$ with respect to $(X,B+M)$ over the generic point of $D$. 
Then, we define $G_Z:= \sum c_D D$, where $c_D : = 1-l_D$ and $N_Z : = L - (K_Z + B_Z)$, so that 
\begin{align}\notag
K_X + B + M \sim_\mathbb{Q} f^\ast (K_Z + G_Z + N_Z).
\end{align}

The divisor $\mathbb Q$-divisor $G_Z$ (resp. $N_Z$) is just the trace on $Z$ of a $\mathbb Q$-b-divisor $\mathbf{G}$ (resp.$\mathbf{N}$) and it was proven in~\cite{1807.04847} that the two $\mathbb Q$-b-divisors $\mathbf{G}, \mathbf N$ induce a structure of generalised pair on the base of a relatively trivial fibration.
Finally, this result was generalized in~\cite{MR4398255} to the case of $\mathbb R$-divisors, with some extra assumption on the type of moduli part that is allowed.

\subsection{The different of a section of a fibration.}
\label{subs.diff.sect}

In this subsection, we introduce a result that will be used in the proof of Theorem~\ref{ell.cy.bound.base.thm}.

Let us first recall the notion of {\it different}.
Let $\Sigma$ be a reduced and irreducible divisor and $B$ be an effective divisor on a normal quasi projective variety $Y$.
We denote by $\nu \colon \Sigma^\nu \to \Sigma$ the normalization of $\Sigma$.
We assume that $\Sigma$ has no common components with $B$ and that $(Y, \Sigma+B)$ is a log pair in codimension 2 on $Y$, i.e., there exists a Zariski closed subset $Z \subset Y$ with ${\rm codim}_Y Z >2$ such that $K_Y+\Sigma+B$ is an $\mathbb R$-Cartier divisor when restricted to $Y \setminus Z$.
Then there exists a canonically defined effective $\mathbb{R}$-divisor $\diff_{\Sigma^\nu}(B)$ on $\Sigma^\nu$, called the \textit{different} of $B$, defined as follows -- see~\cite{MR1225842}*{\S 16} for details.
By taking hyperplane cuts and applying adjunction, it suffices to consider the case where $Y$ is a surface.
There exists a log resolution of singularities for the surface pair $(Y, \Sigma)$, $r \colon \bar{Y} \to Y$, such that the strict transform $\tilde{\Sigma}$ of $\Sigma$ coincides with the normalisation $\Sigma^\nu$ of $\Sigma$.
Hence, there exists an $\bR$-divisor $B_{\bar{Y}}$ on $\bar{Y}$ such that 
\begin{align*}
K_{\bar{Y}}+\Sigma^\nu+ B_{\bar{Y}} = r^\ast(K_Y+\Sigma+B), 
\qquad
\text{ and } 
\qquad
r_\ast B_{\bar{Y}} =B. 
\end{align*}
The different $\diff_{\Sigma^\nu}(B)$ is then defined as the $\bR$-divisor $(B_{\bar{Y}}) |_{\Sigma^\nu}$.
Moreover, by restricting to the complement in $\Sigma^{\nu}$ of a Zariski closed subset $W \subset \Sigma^\nu$ with ${\rm codim}_{\Sigma^\nu} W \geq 2$, 
$K_{\Sigma^\nu}+\diff_{\Sigma^\nu}(B)=\nu^*((K_Y+\Sigma+B) |_\Sigma)$, by construction and adjunction.

We will now consider a contraction of normal quasi-projective varieties 
$f \colon Y \to X$ 
of relative dimension one, together with a rational section 
$s \colon X \dashrightarrow Y$.

\begin{lemma}\label{section.lemma}
Let 
$f \colon Y \to X$ 
be a surjective morphism of normal quasi-projective varieties with 
$\dim Y -\dim X=1$.
Assume that 
$(Y, 0)$ 
is terminal and that there exists a rational section 
$s \colon X \dashrightarrow Y$.
Let 
$\Sigma$ 
be the Zariski closure of 
$s(B)$ 
and let 
$\nu\colon \Sigma^\nu \to \Sigma$ 
be the normalization of 
$\Sigma$. 
Then, 
$\diff_{\Sigma^\nu}(0)$ 
is exceptional over 
$B$.
\end{lemma}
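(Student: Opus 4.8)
The plan is to show that $\diff_{S^\nu}(0)$ has no prime component dominating a divisor of $B$, working one divisor of $B$ at a time and using that the different is compatible with localization. Throughout I will use the adjunction identity built into the construction of the different, $K_{S^\nu}+\diff_{S^\nu}(0)=\nu^*\big((K_T+S)|_S\big)$, together with the fact that $\diff_{S^\nu}(0)$ is effective. First I record that, $S$ being the Zariski closure of the image of the rational section $s$, the morphism $g|_S\colon S\to B$ restricts to an isomorphism onto a dense open of $B$ and is therefore birational; composing with the normalization, $S^\nu\to B$ is a birational morphism of normal varieties.

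Next I would fix a prime divisor $D'$ on $B$ with generic point $\eta$, set $R:=\mathcal{O}_{B,\eta}$ (a DVR, since $B$ is normal), and localize, putting $T_R:=T\times_B\operatorname{Spec}R$, $S_R:=S\times_B\operatorname{Spec}R$ and writing $S_R^\nu$ for the corresponding localization of $S^\nu$; it then suffices to prove $\diff_{S_R^\nu}(0)=0$. The key local claim is that $S_R$ is regular. Indeed $S_R\to\operatorname{Spec}R$ is birational, so $S_R^\nu\to\operatorname{Spec}R$ is a birational morphism from a regular one-dimensional scheme to a DVR; at any point $x$ of $S_R^\nu$ lying over the closed point of $\operatorname{Spec}R$, the induced homomorphism $R\to\mathcal{O}_{S_R^\nu,x}$ is a local, injective, birational homomorphism of discrete valuation rings, hence an isomorphism, and chasing the structure morphism $R\to\mathcal{O}_{S_R,q}\hookrightarrow\mathcal{O}_{S_R^\nu,x}=R$ (with $q=\nu(x)$) forces $\mathcal{O}_{S_R,q}=R$. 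Hence $S_R$ is regular and $S_R^\nu=S_R$. (This step uses neither properness of $g$ nor anything about $T$; if $S_R$ fails to meet the closed fibre of $\operatorname{Spec}R$ the desired vanishing is vacuous.)

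Terminality of $(T,0)$ enters only through $T_R$: a terminal variety is smooth in codimension two, and every closed point of $T_R$ lies over $\eta$, hence is a codimension-two point of $T$ and therefore a smooth point of $T$; since $T_R$ is also normal, being a localization of $T$, it follows that $T_R$ is regular. Now $S_R$ is a regular divisor in the regular scheme $T_R$, so ordinary adjunction gives $(K_{T_R}+S_R)|_{S_R}=K_{S_R}$, whence
\[
\diff_{S_R^\nu}(0)=\nu^*\big((K_{T_R}+S_R)|_{S_R}\big)-K_{S_R^\nu}=K_{S_R}-K_{S_R}=0.
\]
As $D'$ was an arbitrary prime divisor of $B$, no component of $\diff_{S^\nu}(0)$ dominates a divisor of $B$, i.e. $\diff_{S^\nu}(0)$ is exceptional over $B$, as desired.

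I expect the crux to be the last paragraph: terminality is exactly what upgrades $T$ to a \emph{regular} scheme in the relevant codimension, and that is what makes the local adjunction computation on $S_R$ produce a \emph{trivial} different rather than merely a controlled coefficient. The only other point requiring care — dispatched by the valuative argument in the second paragraph — is the verification that the closure $S$ of the section (equivalently $S^\nu$) is regular, and in fact isomorphic to $B$, over the generic point of each divisor of $B$, so that no vertical singularity of $S$ can create a horizontal contribution to the different.
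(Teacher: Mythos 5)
Your proof is correct, and its overall skeleton matches the paper's: reduce to the codimension-one points of $S$ lying over divisors of $B$, use terminality to know $T$ is regular at these codimension-two points, show $S$ is normal (hence regular) there, and conclude by classical adjunction that the different has coefficient zero along every horizontal divisor. The genuine difference lies in how the crucial sub-step -- regularity of $S$ at such a point $q$ -- is established. The paper extends the rational section across the codimension-one point $Q=g(q)$ of $B$, lifts it to the normalization, and runs a tangent-space argument ($ds_\nu$ an isomorphism forces $d\nu$ injective, hence $\nu$ a local isomorphism), so it uses the section structure rather heavily, including its extension over $Q$. You instead observe that $\mathcal{O}_{S,q}$ is a local subring of $K(B)$ dominating the DVR $\mathcal{O}_{B,\eta}$, and invoke maximality of valuation rings under domination to get $\mathcal{O}_{S,q}=\mathcal{O}_{B,\eta}$; the section enters only through the birationality of $g|_S\colon S\to B$. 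This buys you something: you never need to extend $s$ across codimension-one points of $B$ (a step that implicitly leans on a valuative-criterion/properness consideration), your argument is insensitive to whether $g$ is proper, and the detour through $S^\nu$ in your write-up is in fact dispensable -- the domination argument applies directly to $\mathcal{O}_{S,q}$. One small imprecision: it is not quite true that every closed point of $T_R$ lies over $\eta$ (if $g$ is not proper there may be closed points of $T_R$ over the generic point of $\operatorname{Spec}R$, of codimension one in $T$); but since every point of $T_R$ is a point of $T$ of codimension at most two, terminality still gives regularity of $T_R$, so the conclusion of that paragraph stands unchanged.
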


Let us remark here that since 
$(Y, 0)$ 
is assumed to be terminal, 
then 
$Y$ 
is smooth in codimension 
$2$, 
and hence 
$(Y, \Sigma)$ 
is a log pair in codimension 
$2$, 
thus satisfying the assumption made above in the definition of different.

\begin{proof}
See~\cite{DCS}*{Proof of Lemma~5.1}.
\end{proof}

\section{Rationally connected log Calabi--Yau pairs}
\label{rc.lcy.sect}
\subsection{Towers of Fano fibrations and boundedness} \label{towers.sect}
We briefly recall some important results from~\cites{1811.10709, DCS}.

Given a klt log Calabi-Yau pair $(X, B)$ with $B \neq 0$, any run of a $K_X$-MMP 
\begin{align*}
X \dashrightarrow X' \to Z
\end{align*}
terminates with a Mori fibre space structure $f' \colon X' \to Z$, see Theorem~\ref{mmp}.
By the canonical bundle formula, cf. \S~\ref{sect.cbf}, $Z$ carries a structure of log Calabi--Yau pair $(Z, \Gamma)$ as implied by Theorem~\ref{bundle}.
If $K_Z \sim_{\mathbb{Q}} 0$, i.e. $\Gamma=0$, then we say that the pair $(X, B)$ is of product-type, see~\cite{DCS}*{Definition 2.23}. 
Otherwise, $\Gamma > 0$ and, assuming $K_Z$ is $\mathbb{Q}$-factorial, we can run a $K_Z$-MMP in turn and repeat the same analysis as above.
The $\mathbb{Q}$-factoriality of $Z$ is not a very strong assumption, as, for example, it is readily implied by $X$ being $\mathbb{Q}$-factorial.

By iterating this procedure and using the so-called two-ray game, see~\cite{MR1225842}*{Chapter 5}, the following description of Calabi-Yau pairs was given in~\cite{DCS}*{Theorem 3.2}.
\begin{theorem} 
\label{structure}
Let $(X, B)$ be a klt Calabi-Yau pair with $B \neq 0$. 
Then there exists a birational contraction 
\begin{align*}
		\pi \colon X \dashrightarrow X'
\end{align*}
to a $\mathbb{Q}$-factorial Calabi-Yau pair $(X', B' =\pi_\ast B)$, $B'\neq 0$ and a tower of morphisms
\begin{align}
\label{mfs.tower.diag}
\xymatrix{
			X'=X_0 \ar[r]^{p_0} & 
			X_1 \ar[r]^{p_1} & 
			X_2 \ar[r]^{p_2} &
			\dots \ar[r]^{p_{k-1}}& 
			X_{k}
}
\end{align}		
\noindent such that 
\begin{enumerate}
		\item[(i)] 
for any $1 \leq i < k$ there exists a boundary $B_i \neq 0$ on $X_i$ and $(X_i, B_i)$ is a klt Calabi-Yau pair,

 		\item[(ii)] 
for any $0 \leq i < k$ the morphism $p_i\colon X_i \to X_{i+1}$ is a Mori fibre space, with $\rho(X_i/X_{i+1})=1$, and

		\item[(iii)]  
either $\dim X_k=0$, or $\dim X_k >0$ and $X_k$ is a klt variety with $K_{X_k} \sim_\mathbb{Q} 0$.
\end{enumerate}		
\end{theorem}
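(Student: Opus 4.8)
The plan is to build the tower greedily, one Mori fibre space at a time, transferring the log Calabi--Yau structure to the base at each stage by the canonical bundle formula, and then to use the two-ray game to realise all the intermediate birational modifications as modifications of a single model (this is essentially \cite{DCS}*{Theorem~3.2}).

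\emph{Reduction and the first fibration.} First I would pass to a small $\mathbb{Q}$-factorialization $g\colon\tilde X\to X$; it is crepant for $(X,B)$ since it is small, so $(\tilde X,\tilde B)$ with $\tilde B=g_\ast^{-1}B$ is again klt Calabi--Yau with $\tilde B\neq 0$, and $X\dashrightarrow\tilde X$ is a birational contraction. Thus we may assume $X$ is $\mathbb{Q}$-factorial. Since $B$ is effective and nonzero, $K_X\equiv -B$ is not pseudo-effective, so by Theorem~\ref{mmp} one may run a $K_X$-MMP; each step is $(K_X+B)$-trivial because $K_X+B\equiv 0$, hence crepant for $(X,B)$, and no component of $B$ is ever contracted, since a divisor negative on a contracted ray cannot lie in $B$ (triviality of $K_X+B$ forces $B$ to be positive there). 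The MMP terminates with a Mori fibre space $p_0\colon X_0\to X_1$, where $X_0$ is $\mathbb{Q}$-factorial, $X\dashrightarrow X_0$ is a birational contraction, $\rho(X_0/X_1)=1$, and $(X_0,B_0)$ with $B_0$ the strict transform of $B$ is klt Calabi--Yau with $B_0\neq 0$.

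\emph{Descent of the Calabi--Yau structure, and iteration.} Since $K_{X_0}+B_0\equiv 0$ and $p_0$ is a contraction of relative Picard number one, $K_{X_0}+B_0\sim_{\mathbb{R},X_1}0$, and $(X_0,B_0)$ is klt over the generic point of $X_1$, so the canonical bundle formula of \S~\ref{sect.cbf} applies. By Theorem~\ref{bundle} there is an effective $\mathbb{R}$-divisor $M\sim_\mathbb{R} M_{X_1}$ making $(X_1,B_1)$ klt, where $B_1:=B_{X_1}+M$ and $B_{X_1}$ is the boundary part; moreover $K_{X_1}+B_1\equiv 0$, and $X_1$ is $\mathbb{Q}$-factorial as the base of a Mori fibre space out of a $\mathbb{Q}$-factorial variety. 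If $\dim X_1=0$, or if $B_1=0$ (equivalently $K_{X_1}\equiv 0$), stop with $X_k=X_1$; this realises (iii). Otherwise $B_1\neq 0$, so exactly as above $K_{X_1}$ is not pseudo-effective, one runs a $K_{X_1}$-MMP to a Mori fibre space, descends the Calabi--Yau structure, obtains $(X_2,B_2)$, and repeats. Since $\dim X_{i+1}<\dim X_i$ at each stage, the process stops after at most $\dim X$ steps, and the intermediate pairs $(X_i,B_i)$, $1\le i<k$, satisfy (i) while each $p_i$ satisfies (ii).

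\emph{Assembling a single model, and the main difficulty.} The remaining issue is that the $K_{X_i}$-MMP at the $i$-th stage modifies $X_i$ birationally \emph{before} the Mori fibre contraction $X_i\to X_{i+1}$, so a priori only the bottom of the tower is a morphic image of $X_0$. To fix this one processes the stages from the bottom up, using the two-ray game, cf.\ \cite{MR1225842}*{Chapter~5}: each elementary step of a $K_{X_i}$-MMP is an extremal contraction $X_i\to V$, and applying the relative cone theorem to $X_{i-1}\to X_i\to V$, which has relative Picard number two, the second extremal ray yields a matching elementary modification of $X_{i-1}$; propagating this up the tower turns every modification of an intermediate base into a modification of $X_0$, to be absorbed into $\pi$. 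This last step is the main obstacle: one must run the two-ray game coherently across all stages so as to land on a single fixed $\mathbb{Q}$-factorial model $X'$, while tracking the contracted divisors carefully enough to guarantee that the accumulated map $\pi\colon X\dashrightarrow X'$ is a birational contraction and that $\mathbb{Q}$-factoriality and the crepant Calabi--Yau condition survive at every stage. By contrast, the conceptual heart — that each base again inherits a klt Calabi--Yau structure with an \emph{effective} boundary — is precisely Theorem~\ref{bundle}, used as a black box, and termination is free from the dimension count.
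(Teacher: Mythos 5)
Your overall strategy coincides with the one behind this statement: the paper does not reprove Theorem~\ref{structure} but quotes it from~\cite{DCS}*{Theorem 3.2}, and the proof there is exactly the loop you describe --- run a $K_X$-MMP (legitimate since $B\neq 0$ effective forces $K_X\equiv -B$ to be non-pseudo-effective), end with a Mori fibre space by Theorem~\ref{mmp}, descend the klt log Calabi--Yau structure to the base via the canonical bundle formula (Theorem~\ref{bundle}), iterate, and terminate by the dimension drop. Up to that point your argument is essentially complete. One small slip: the parenthetical claim that no component of $B$ is ever contracted does not follow from what you say --- $(K_X+B)\cdot R=0$ and $K_X\cdot R<0$ only give $B\cdot R>0$, and an individual component of $B$ may still be negative on the contracted ray --- but the claim is also unnecessary, since at the end $B_0\equiv -K_{X_0}$ is relatively ample over $X_1$, so $\pi_\ast B\neq 0$ regardless of which components were contracted along the way.

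The genuine gap is the step you yourself flag and then leave unexecuted: turning the zig-zag ``MMP, fibre, MMP on the base, fibre, \dots'' into a single $\mathbb{Q}$-factorial model $X'$ from which the whole tower consists of honest morphisms. Asserting that the two-ray game ``should'' propagate each elementary step of the $K_{X_i}$-MMP up through $X_{i-1}\to X_i$ is not a proof: one must actually show that lifting a divisorial contraction or a flip of $X_i$ across the Mori fibre space produces again a $\mathbb{Q}$-factorial total space, carrying a crepant klt Calabi--Yau boundary, with an extremal ($\rho=1$) fibration onto the modified base, and that the accumulated map $X\dashrightarrow X'$ remains a birational contraction after all these liftings are composed and propagated through every level of the tower. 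This analysis of the relative Picard-rank-two cones (cf.~\cite{MR1225842}*{Chapter 5}) is precisely the technical content of the proof in~\cite{DCS}; without it, what your argument establishes is only the weaker statement in which each $X_{i+1}$ is the base of a Mori fibre space on some birational modification of $X_i$, rather than on $X_i$ itself, i.e.\ the tower~\eqref{mfs.tower.diag} of morphisms from one fixed $X'$ is not yet produced. (An alternative, closer to how the present paper uses the theorem in the proof of Theorem~\ref{delta.lcy.thm}, is to rebuild the tower bottom-up with statements like Proposition~\ref{bir.contr.lcy.prop}, but that too requires an argument for $\mathbb{Q}$-factoriality and for $\rho(X_i'/X_{i+1}')=1$ on the new models, which your sketch does not supply.)
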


When $\dim X_k >0, \; K_{X_k} \sim_\mathbb{Q} 0$, then we say that $(X, B)$ is of product type, see~\cite{DCS}*{Definition 2.23}.

Using Theorem~\ref{structure}, the strategy of~\cite{DCS}, and the techniques of~\cites{Bir16a, Bir16b}, Birkar has shown that log Calabi--Yau pairs with bounded singularities admitting a tower of fibration as in~\eqref{mfs.tower.diag} are log bounded, provided we assume that the last element of the tower belongs to a bounded family.

\begin{theorem}
\cite{1811.10709}*{Theorem 1.4}
\label{birk.tower.thm}
Let $d, r$ be natural numbers, $\epsilon, \tau$ be positive real numbers.
Consider pairs $(X, B)$ and contractions $f \colon X \to Z$ such that 
\begin{itemize}
    \item 
$(X, B)$ 
is projective 
$\epsilon$-lc 
of dimension 
$d$ 
and 
$B$ 
is an $\mathbb R$-divisor,

    \item 
$K_X+B \sim_\mathbb{R} f^\ast L$ 
for some 
$\mathbb{R}$-divisor 
$L$,
    
    \item 
the coefficients of 
$B$ 
are at least 
$\tau$,

    \item 
$f$ 
factors as a sequence of non-birational contractions
    \begin{align}
\label{mfs.tower.eq}
X = X_1 \to X_2 \to \dots \to X_k = Z,
    \end{align}
    
    \item 
for each 
$i$, 
$-K_{X_i}$ 
is ample over 
$X_{i+1}$,
    
    \item 
there is a very ample divisor 
$A$ 
on 
$Z$ 
with 
$A^{\dim Z} \leq r$, 
and 

    \item 
$A-L$ 
is ample.
\end{itemize}
Then the set of such $(X, B)$ forms a log bounded family.
\end{theorem}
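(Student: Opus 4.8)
We sketch the strategy. The plan is to argue by induction on the number of contractions appearing in the tower, working throughout with generalised pairs: a single application of the canonical bundle formula already forces the base to carry only a generalised pair structure, so the natural inductive statement is the one in which $(X,B)$ is allowed to be a generalised $\epsilon$-lc pair $(X,B+M)$ with $K_X+B+M\sim_{f,\mathbb{Q}}0$ and everything else unchanged, the relevant single-step input then being the generalised analogue of Theorem~\ref{bir.bound.fibr.thm} recorded in \cite{1811.10709}*{Theorem~2.2}. When the tower consists of a single contraction $f\colon X\to Z$, the hypotheses say precisely that $(X,B)$ is a projective $\epsilon$-lc pair of dimension $d$ with $K_X+B\sim_\mathbb{Q}f^\ast L$, that $-K_X$ is ample --- hence big --- over $Z$, that $A$ is very ample on $Z$ with $A^{\dim Z}\le r$, and that $A-L$ is ample; taking $\Delta=B$, whose coefficients are $\ge\tau$, Theorem~\ref{bir.bound.fibr.thm} (respectively \cite{1811.10709}*{Theorem~2.2}) gives that the pairs $(X,B)$ form a log bounded family. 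This is the base of the induction.

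For the inductive step, write $f=h\circ g$ where $g\colon X=X_1\to X_2$ is the first contraction of the tower and $h\colon X_2\to Z$ is the composition of the remaining ones. Then $K_X+B\sim_\mathbb{Q}f^\ast L=g^\ast(h^\ast L)$, so $K_X+B\sim_{g,\mathbb{Q}}0$, and $(X,B)$ is klt because it is $\epsilon$-lc with $\epsilon>0$; applying the canonical bundle formula to $g$ (Theorem~\ref{bundle}, together with Remarks~\ref{cbf.lin.eq.rmk} and~\ref{cbf.rmk} in order to stay within $\mathbb{Q}$-linear equivalence) produces a generalised pair $(X_2,B_{X_2}+M_{X_2})$ with
\[
K_X+B\sim_\mathbb{Q}g^\ast(K_{X_2}+B_{X_2}+M_{X_2})\qquad\text{and}\qquad K_{X_2}+B_{X_2}+M_{X_2}\sim_\mathbb{Q}h^\ast L.
\]
Now $X_2\to X_3\to\cdots\to X_k=Z$ is a tower with one fewer contraction, $-K_{X_i}$ is still ample over $X_{i+1}$ for $i\ge2$, and $A,L$ on $Z$ are unchanged, so this is exactly the data to which one would like to feed the inductive hypothesis.

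The point that is not yet in place --- and the main obstacle --- is a uniform control of the singularities and of the boundary coefficients of the new generalised pair. Because $-K_X$ is ample over $X_2$, the general fibre $F$ of $g$ is Fano and $(F,B|_F)$ is an $\epsilon$-lc log Calabi--Yau pair, hence lies in a bounded family by BAB \cite{Bir16b}; combining this with the effective adjunction and boundedness-of-complements results of \cite{1811.10709} one obtains constants $\epsilon'=\epsilon'(d,r,\epsilon)>0$ and $\tau'=\tau'(d,r,\epsilon,\tau)>0$ such that $(X_2,B_{X_2}+M_{X_2})$ is generalised $\epsilon'$-lc and the coefficients of $B_{X_2}$ are $\ge\tau'$. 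This is the delicate step: a priori the moduli part of a canonical bundle formula can be arbitrarily singular, and it is precisely the boundedness of the fibres that is being exploited. Granting it, the inductive hypothesis applies and shows that the pairs $(X_2,B_{X_2}+M_{X_2})$, carrying their fibration structure over $Z$, form a log bounded family; in particular $X_2$ is bounded and, since $A^{\dim Z}\le r$ is fixed, $h^\ast A$ may be taken of bounded degree with respect to a bounded very ample polarisation of $X_2$.

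It remains to lift boundedness from $X_2$ to $X$. On the bounded variety $X_2$ one chooses a very ample divisor $A_2$ of bounded degree, say $A_2^{\dim X_2}\le r_2=r_2(d,r,\epsilon,\tau)$, with $A_2-h^\ast A$ ample; then, setting $L':=K_{X_2}+B_{X_2}+M_{X_2}\sim_\mathbb{Q}h^\ast L$, the divisor $A_2-L'\sim_\mathbb{Q}(A_2-h^\ast A)+h^\ast(A-L)$ is a sum of an ample and a nef divisor, hence ample. Since $g\colon(X,B)\to X_2$ satisfies $K_X+B\sim_\mathbb{Q}g^\ast L'$, with $(X,B)$ being $\epsilon$-lc of dimension $d$, $-K_X$ ample --- hence big --- over $X_2$, $A_2$ very ample on $X_2$ with $A_2^{\dim X_2}\le r_2$, and $A_2-L'$ ample, one applies Theorem~\ref{bir.bound.fibr.thm} (respectively \cite{1811.10709}*{Theorem~2.2}) once more with $\Delta=B$, whose coefficients are $\ge\tau$, to conclude that the pairs $(X,B)$ form a log bounded family. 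This closes the induction.
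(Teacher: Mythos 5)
This statement is not proved in the paper at all: it is quoted verbatim from Birkar's work \cite{1811.10709} (Theorem 1.4 there), so there is no internal argument of the paper to compare your sketch against; I can only evaluate it on its own terms. Its overall architecture — induction on the length of the tower, the canonical bundle formula of Theorem~\ref{bundle} to put a generalised pair on $X_2$, and a final application of Theorem~\ref{bir.bound.fibr.thm} (or its generalised analogue) to lift boundedness from $X_2$ to $X$ — is a sensible plan and broadly in the spirit of how such results are approached.

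However, the step you yourself call delicate is a genuine gap, not something settled by ``BAB plus effective adjunction and boundedness of complements.'' To apply your inductive hypothesis you must know that $(X_2, B_{X_2}+M_{X_2})$ is generalised $\epsilon'$-lc with $\epsilon'$ depending only on $d,r,\epsilon$ (and you also assert a uniform lower bound $\tau'$ on the coefficients of $B_{X_2}$). The $\epsilon'$-claim is precisely the Shokurov--Birkar conjecture on singularities of bases of Fano-type log Calabi--Yau fibrations, recorded in this very paper as Conjecture~\ref{rel.bab.gen.conj}; it is open in general and known only in special cases such as relative dimension one with numerically trivial moduli part \cite{MR3507921}. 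Boundedness of the general fibre via \cite{Bir16b} does not yield it: the discriminant coefficients are lc thresholds computed over codimension-one points of $X_2$, where the fibres are not general and need not lie in any bounded family, and the moduli b-divisor is likewise not controlled by the general fibre. The form of this singularity control that is actually available in \cite{1811.10709} is proved for $(d,r,\epsilon)$-Fano type fibrations, i.e.\ it presupposes a bounded very ample polarisation on the base of $g\colon X\to X_2$ — which is exactly what your induction is supposed to produce only afterwards, so the argument as written is circular. (The uniform $\tau'$ is also unjustified: discriminant coefficients $1-l_D$ can be arbitrarily small; this particular point could be repaired by weakening the inductive statement, since your final lifting step only uses the coefficients of $B$ itself, but the $\epsilon'$ issue cannot be bypassed this way.) Closing that gap is the actual substance of Birkar's proof and requires considerably more than the one-sentence appeal in your sketch.
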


Combining the two results above, Birkar also showed that boundedness holds for log Calabi--Yau pairs of non-product type.

\begin{theorem}
\cite{1811.10709}*{Theorem 1.5}
\label{lcy.fib.non-prod.thm}
Let $d$ be a natural number and $\epsilon,\tau$ be positive real numbers. 
Consider pairs $(X,B)$ with the following properties: 
\begin{itemize}
\item $(X,B)$ is projective $\epsilon$-lc of dimension $d$,

\item $K_X+B\sim_\mathbb{R} 0$, 

\item $B\neq 0$ and its coefficients are $\ge \tau$, and

\item $(X,B)$ is not of product type.   
\end{itemize}
Then the set of such $(X,B)$ is log bounded up to isomorphism in codimension one.
\end{theorem}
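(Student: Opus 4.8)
The plan is to deduce the statement from the structure theorem for klt log Calabi--Yau pairs, Theorem~\ref{structure}, together with Birkar's boundedness of towers of Fano fibrations, Theorem~\ref{birk.tower.thm}. First I would apply Theorem~\ref{structure} to $(X,B)$, which is legitimate since $(X,B)$ is klt (being $\epsilon$-lc with $\epsilon>0$), projective, with $K_X+B\sim_{\mathbb R}0$ and $B\neq0$: this yields a birational contraction $\pi\colon X\dashrightarrow X'$ onto a $\mathbb Q$-factorial log Calabi--Yau pair $(X',B'=\pi_*B)$ and a tower $X'=X_0\xrightarrow{p_0}X_1\xrightarrow{p_1}\cdots\xrightarrow{p_{k-1}}X_k$ of Mori fibre spaces, each $p_i$ ending a $K_{X_i}$-MMP so that $-K_{X_i}$ is $p_i$-ample. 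Since $(X,B)$ is not of product type --- a property invariant under crepant birational maps by~\cite{DCS} --- we must have $\dim X_k=0$. Moreover, because $K_X+B\sim_{\mathbb R}0$, every step of these MMP's is $(K+B)$-crepant: a $K$-negative extremal contraction is $(K+B)$-trivial, and for a divisorial contraction the negativity lemma forces the relative $(K+B)$-discrepancy of the contracted divisor to vanish. Hence $\pi$ is crepant, so $(X',B')$ is again $\epsilon$-lc, the coefficients of $B'$ are among those of $B$ and are therefore $\ge\tau$, and $K_{X'}+B'\sim_{\mathbb R}0$.

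Next I would feed $(X',B')$ and $f\colon X'\to X_k$ into Theorem~\ref{birk.tower.thm}, taking $Z=X_k=\mathrm{pt}$, $L=0$, and $A$ the class of a point (so $r=1$ and $A-L$ is ample vacuously). The hypotheses are then all in place: $(X',B')$ is $\epsilon$-lc of dimension $d$, $K_{X'}+B'\sim_{\mathbb R}f^*L$, the coefficients of $B'$ are $\ge\tau$, $f$ factors as the sequence of non-birational contractions $p_i$, and $-K_{X_i}$ is ample over $X_{i+1}$ for each $i$. It is worth noting that Theorem~\ref{birk.tower.thm} imposes no condition on the intermediate pairs $(X_i,B_i)$, so the canonical bundle formula and the singularities of the bases of the $p_i$ never enter; all of the genuine work is packaged inside Theorem~\ref{birk.tower.thm} itself. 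We conclude that the set of pairs $(X',B')$ obtained in this way is log bounded.

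Finally I would transfer this boundedness back to $(X,B)$. Being a birational contraction, $\pi$ may contract divisors $E_1,\dots,E_m$ on $X$; by crepancy $a(E_j,X',B')=a(E_j,X,B)=1-\operatorname{mult}_{E_j}B$, which is $\ge\epsilon$ and, when $E_j$ is a component of $B$, also $\le1-\tau$. Divisors of log discrepancy bounded away from $1$ over a log bounded family of $\epsilon$-lc pairs form in turn a bounded family of divisors, so, after a finite stratification of the base of the family for $\{(X',B')\}$, one can extract all of them in families, obtaining a log bounded family of $\mathbb Q$-factorial crepant models of the pairs $(X',B')$; the original $X$ is then isomorphic in codimension one to the member of this family extracting exactly $E_1,\dots,E_m$, since the remaining modifications effected by $\pi$ are flips and hence take place in codimension $\ge2$. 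This gives log boundedness up to isomorphism in codimension one. Granting Theorems~\ref{structure} and~\ref{birk.tower.thm}, the main obstacle is precisely this last transfer step --- carrying out the extraction of the $\pi$-exceptional divisors uniformly in families --- and it is the unavoidable use of a birational contraction in Theorem~\ref{structure} that is responsible for the conclusion being only ``up to isomorphism in codimension one'' rather than honest boundedness; a secondary point requiring care is the crepancy argument that keeps the new total space $\epsilon$-lc with boundary coefficients $\ge\tau$.
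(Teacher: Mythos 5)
First, a framing remark: the paper does not prove this statement at all --- it is quoted verbatim from \cite{1811.10709}*{Theorem 1.5} --- but the surrounding text indicates exactly the strategy you follow (Theorem~\ref{structure} to produce a crepant birational contraction $\pi\colon X\dashrightarrow X'$ with a tower ending at a point, then Theorem~\ref{birk.tower.thm} applied with $Z$ a point, just as in Corollary~\ref{logCY.fibred.cor}), and your first two steps are correct: since $K_X+B\sim_{\mathbb R}0$, every step of the relevant MMPs is $(K+B)$-crepant, so $(X',B')$ is $\epsilon$-lc with coefficients $\ge\tau$, and the degenerate application of Theorem~\ref{birk.tower.thm} over a point gives log boundedness of $(X',B')$.

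The gap is in your transfer step. The divisors contracted by $\pi$ need not be components of $B$: a $K_X$-MMP is numerically a $(-B)$-MMP, and it can contract a prime divisor $E$ with $E\not\subseteq\supp(B)$. For instance, take $(\mathbb P^2,\tfrac12 C_1+\tfrac12 C_2)$ with $C_1$ a smooth conic and $C_2$ a quartic with a node $p\notin C_1$, and let $(X,B)=(\mathbb F_1,\tfrac12\tilde C_1+\tfrac12\tilde C_2)$ be its crepant pullback under the blow-up of $p$; contracting the $(-1)$-curve is a legitimate $K_X$-MMP step, yet that curve is not a component of $B$ and has log discrepancy exactly $1$ with respect to $(\mathbb P^2,\tfrac12 C_1+\tfrac12 C_2)$. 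Consequently your key assertion --- that the contracted divisors have log discrepancy bounded away from $1$ and that such divisors over a log bounded family of $\epsilon$-lc pairs form a bounded family, extractable uniformly after stratification --- does not apply to all the $E_j$; and for divisors of log discrepancy up to and including $1$ the analogous boundedness claim is unjustified (over a single klt pair there may be infinitely many divisors of log discrepancy exactly one, and you have no a priori control on how many $\pi$ contracts or on their degrees). The correct mechanism, which is the one this paper itself uses in Step 2 of the proof of Theorem~\ref{rc.cy.log.bound.thm}, sidesteps any boundedness statement about the divisors themselves: all $E_j$ have log discrepancy in $(0,1]$ with respect to $(X',B')$, so by \cite{BCHM}*{Corollary 1.4.3} there is a single crepant birational morphism $\widetilde X\to X'$ extracting exactly $E_1,\dots,E_m$; since $-K_{\widetilde X}$ is (trivially) big over $X'$, $K_{\widetilde X}+\widetilde B\sim_{\mathbb R}0$ for the crepant boundary $\widetilde B$, and $X'$ carries a very ample divisor of volume bounded in terms of $d,\epsilon,\tau$ by the boundedness already established, this morphism is a $(d,r,\epsilon)$-Fano type fibration; Theorems~\ref{bound.fano-type.fibr.thm} and~\ref{bir.bound.fibr.thm} then give boundedness of $\widetilde X$ and log boundedness of $(\widetilde X,\,\text{strict transform of }B)$ directly. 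Since $\widetilde X$ is isomorphic in codimension one to $X$, this completes the transfer; with that replacement your argument becomes a correct proof along the intended lines.
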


To obtain this result, it is necessary to bound the singularities of the pair together with the coefficients of the boundary $B$.
While the former condition is unavoidable as already clear in the case of singular del Pezzo surfaces, the latter is a technical condition that it should be possible to waive, as predicted by Conjecture~\ref{eps.lcy.conj}.
Indeed, this is what we achieve in Theorem~\ref{delta.lcy.thm} at the expense of fixing the torsion index of $K_X+B$.
A first step in this direction is represented by Theorem~\ref{mfs.tow.CYend.thm} below.

As an immediate corollary to Theorem~\ref{birk.tower.thm}, we get the following log boundedness result that will be useful in the next subsection.

\begin{corollary}\label{logCY.fibred.cor}
Fix $d$ and $\Phi \subset [0,1)$ a DCC set of rational numbers. 
Then the set of klt log CY pairs $(X, B)$ such that 
\begin{itemize}
    \item $(X, B)$ is projective klt of dimension $d$,
    \item $K_X+B \sim_\mathbb{Q} 0$,
    \item the coefficients of $B$ are in $\Phi$,
    \item the constant map $X \to \{pt. \}$ factors as a sequence of non-birational contractions
    \begin{equation}\label{mfs.tower.pt.eq}
        X = X_1 \to X_2 \to \dots \to X_l = \{pt. \},
    \end{equation}
    and,
    \item for each $i, \; -K_{X_i}$ is ample over $X_{i+1}$.
\end{itemize}
Then the set of such $(X, B)$ forms a log bounded family.
\end{corollary}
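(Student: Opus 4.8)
The plan is to deduce this directly from Theorem~\ref{birk.tower.thm} by taking the base $Z$ to be a point. When $\dim Z=0$ one may pick $A$ to be any ample divisor on $Z$, so that $A^{\dim Z}=1=:r$, and since $K_X+B\sim_\bQ 0$ one sets $L=0$; the requirements that $A$ be very ample with $A^{\dim Z}\le r$ and that $A-L$ be ample are then vacuous. The factorization $X=X_1\to\cdots\to X_l=Z$, the ampleness of each $-K_{X_i}$ over $X_{i+1}$, and the relation $K_X+B\sim_\bQ 0=f^\ast L$ are exactly the hypotheses of the Corollary. So the entire task reduces to producing positive reals $\tau$ and $\epsilon$, depending only on $d$ and $\Phi$, such that the coefficients of $B$ are $\ge\tau$ and $(X,B)$ is $\epsilon$-lc; feeding these into Theorem~\ref{birk.tower.thm} then yields the asserted log boundedness.

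Producing $\tau$ is immediate: because $\Phi$ is a DCC set, $\Phi\cap(0,1)$ is bounded below by some $\tau>0$, which is then a uniform lower bound for the nonzero coefficients of every $B$ in the family. (If $\Phi\cap(0,1)=\emptyset$ then $B=0$ for all members, so $-K_X\sim_\bQ 0$ is ample and $\dim X=0$; this degenerate case is trivial and may be discarded.)

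The substance lies in the uniform $\epsilon$. First I would record that each $X$ in the family is of Fano type: each $X_i$ is klt, which one sees by induction starting from $(X,B)$ klt and applying the canonical bundle formula (Theorem~\ref{bundle}) to the relatively trivial fibrations $X_{i-1}\to X_i$, so each $X_i$ is klt with $-K_{X_i}$ ample over $X_{i+1}$, hence of Fano type over $X_{i+1}$; since being of Fano type is stable under composition of contractions, downward induction along the tower gives that $X=X_1$ is of Fano type over $X_l=Z$, i.e.\ of Fano type. Next, since the coefficients of $B$ lie in the DCC set $\Phi$ and $K_X+B\sim_\bQ 0$ with $\dim X=d$, the global ACC theorem of Hacon--M\textsuperscript{c}Kernan--Xu, \cite{MR3224718}*{Theorem 1.5}, forces these coefficients into a finite subset $\Phi_0=\Phi_0(d,\Phi)\subset\Phi$. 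Finally one appeals to the fact that a klt log Calabi--Yau pair $(X,B)$ with $X$ of Fano type of dimension $d$ and coefficients of $B$ in the fixed finite set $\Phi_0$ is $\epsilon$-lc for some $\epsilon=\epsilon(d,\Phi_0)>0$ --- this follows from the boundedness of complements for Fano type varieties and the attendant boundedness results, see \cite{Bir16b} and \cite{1811.10709}. With $\tau$ and $\epsilon$ at hand, Theorem~\ref{birk.tower.thm} applies.

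The main obstacle is precisely this last input. It is genuinely necessary to exploit the Fano-type structure (equivalently, the presence of the tower): klt log Calabi--Yau pairs of a fixed dimension with coefficients in a fixed DCC set are in general \emph{not} uniformly $\epsilon$-lc --- for instance, a klt Calabi--Yau variety with $K_X\sim_\bQ 0$ can have arbitrarily small minimal log discrepancy once it is allowed not to be rationally connected, and more pointedly the weighted projective spaces $\pp(1,1,n)$ show that the existence of a klt log Calabi--Yau boundary with \emph{fixed} DCC coefficients is a strong constraint on the singularities. It is exactly the rigidity coming from Fano type, combined with the finiteness of $\Phi_0$, that rules this out and pins down a uniform $\epsilon$; everything else in the argument is a formal reduction to Theorem~\ref{birk.tower.thm}.
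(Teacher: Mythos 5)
Your reduction is the same as the paper's: apply Theorem~\ref{birk.tower.thm} with $Z$ a point, note that the DCC hypothesis gives a uniform lower bound $\tau$ on the nonzero coefficients, and then the only real issue is a uniform $\epsilon$. Where you diverge is in producing $\epsilon$. The paper does this in one line: by \cite{CDHJS}*{Lemma 3.12} (a consequence of the global ACC theorem of \cite{MR3224718}, cf.\ also \cite{DCS}*{Corollary 2.9}), \emph{every} klt log Calabi--Yau pair of dimension $d$ with coefficients in a fixed DCC set $\Phi$ is $\epsilon$-klt for some $\epsilon=\epsilon(d,\Phi)$ --- no Fano-type structure, no complements. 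Your alternative route (the tower makes $X$ of Fano type, global ACC puts the coefficients in a finite rational set $\Phi_0$, boundedness of complements then bounds $n$ with $n(K_X+B)\sim 0$, whence $(X,B)$ is $\tfrac1n$-lc) does work in outline, but it is considerably heavier and leans on unproved inputs (transitivity of the Fano-type property along the tower, and the precise form of the complement theorem), whereas the paper's input is a single quotable lemma.

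More importantly, your closing claim that the Fano-type input is ``genuinely necessary'' is false, and it asserts the impossibility of exactly the argument the paper uses. If $(X,B)$ is klt log Calabi--Yau of dimension $d$ with coefficients in $\Phi$ and some divisor $E$ over $X$ has log discrepancy $a\in(0,1)$, extracting $E$ (via \cite{BCHM}*{Corollary 1.4.3}) yields a klt numerically trivial pair $(X',B'+(1-a)E)$ of dimension $d$ with coefficients in the DCC set $\Phi\cup\{1-a\}$; if such $a$ could be arbitrarily small one would obtain coefficients $1-a$ accumulating at $1$, contradicting the global ACC theorem. This argument needs neither rational connectedness nor Fano type, and it applies equally to $B=0$: a klt variety of fixed dimension with $K_X\equiv 0$ cannot have arbitrarily small minimal log discrepancy, contrary to your stated counterexample. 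The $\pp(1,1,n)$ remark does not rescue the claim either, since for $n\gg 0$ those spaces admit no klt log Calabi--Yau boundary with coefficients in a fixed DCC set avoiding the singular point. So your proof, read as a proof, goes through, but its ``main obstacle'' paragraph misidentifies the difficulty and dismisses the simpler (and correct) route actually taken in the paper.
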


\begin{proof}
As $\Phi$ is a DCC subset, then~\cite{MR3224718}*{Theorem 1.1} implies that there exists $\epsilon=\epsilon(d, \Phi)$ such that all the pairs $(X, B)$ here considered are $\epsilon$-klt, see~\cite{CDHJS}*{Lemma 3.12}.
The result then follows by applying Theorem~\ref{birk.tower.thm} with $Z$ equal to a point and $l=d$, cf. also~\cite{1811.10709}*{\S 7}.
\end{proof}

\subsection{Boundedness with fixed torsion index}

As we have seen above, there is no boundedness result currently for the case of log Calabi--Yau pairs of product type.
On the one hand, this is not surprising, as, for example, among such pairs there are those of the form $(S \times \mathbb{P}^1, p_2^\ast(0 + \infty))$, where $S$ is a K3 surface and $p_2$ is the projection to the second factor: these log Calabi--Yau pairs cannot possibly be bounded, as K3 surfaces are not bounded.
On the other hand, Conjecture~\ref{eps.lcy.conj} predicts that if the total space of the pair is rationally connected, then even in the product-type case we should expect boundedness.
As already mentioned, our aim is to take care of those log Calabi--pairs that are of product-type, as those have yet to be fully understood as far as their boundedness goes.
To this end, we prove that when the torsion index of $K_X+B$ is bounded on the total space of a product-type log Calabi--Yau pair $(X, B)$ and $X$ is endowed with a tower of Mori fibre spaces terminating with a K-trivial variety $Z$, then also the torsion index of $K_Z$ is bounded.

\begin{theorem}
\label{mfs.tow.CYend.thm}
Fix $d, l$ positive integers.
Consider pairs $(X, B)$ and contractions $f \colon X \to Z$ such that
\begin{itemize}
    \item $(X, B), \; B>0$ is a klt projective pair of dimension $d$,
    \item $l(K_X+B) \sim 0$,
    \item $f$ factors as a sequence of non-birational contraction
    \begin{equation}
    \label{mfs.tower.eq2}
        X = X_1 \to X_2 \to \dots \to X_k = Z,
    \end{equation}
     \item for each $i, \; -K_{X_i}$ is ample over $X_{i+1}$, and
    \item $K_Z \equiv 0$
\end{itemize}
Then there exists $m=m(d, l)$ such that $mK_Z \sim 0$.
\end{theorem}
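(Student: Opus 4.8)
The plan is to transfer the torsion of $K_X+B$ onto $Z$ in a single application of the canonical bundle formula to the composite contraction $f\colon X\to Z$ of the tower~\eqref{mfs.tower.eq2}, and then to bound the torsion index of the resulting moduli part by combining boundedness of the general fibre with the effective triviality of a numerically trivial moduli part. As $l(K_X+B)\sim 0$, the coefficients of $B$ lie in the finite set $\Phi_l:=\{\,j/l\mid 0\le j<l\,\}$, and $(X,B)$, being klt, is klt over the generic point of $Z$. Restricting~\eqref{mfs.tower.eq2} to a general fibre $F$ of $f$ produces a sequence of non-birational contractions $F=F_1\to F_2\to\dots\to F_k=\{pt.\}$ with $-K_{F_i}$ ample over $F_{i+1}$, so that $(F,B|_F)$ is a klt log Calabi--Yau pair of dimension at most $d$ with $K_F+B|_F\sim_{\bQ}0$, with coefficients in $\Phi_l$, and admitting such a tower. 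Hence by Corollary~\ref{logCY.fibred.cor} these pairs form a log bounded family, and since $\Phi_l$ is finite we may, after stratifying, arrange that they belong to a single bounded family of pairs $\mathfrak D_0=\mathfrak D_0(d,l)$.

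Next I would apply the canonical bundle formula of \S\ref{sect.cbf} to $f\colon(X,B)\to Z$, obtaining $K_X+B\sim_{\bQ}f^*(K_Z+B_Z+M_Z)$ with $(Z,B_Z+M_Z)$ generalised klt by Theorem~\ref{bundle}; since $B$ has rational coefficients, $\mathbf M$ is a b-nef b-$\bQ$-divisor by Remark~\ref{cbf.rmk}, so $M_Z$, being the push-forward to $Z$ of a nef divisor from a high birational model, is pseudoeffective. From $l(K_X+B)\sim 0$ we have $K_X+B\sim_{\bQ}0$, hence $K_Z+B_Z+M_Z\sim_{\bQ}0$ because $f$ is a contraction. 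Combined with the hypothesis $K_Z\equiv 0$ this forces $B_Z+M_Z\equiv 0$; as $B_Z$ is effective and $M_Z$ is pseudoeffective, it follows that $M_Z\equiv 0$ and $B_Z\equiv 0$, whence $B_Z=0$.

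Now $f\colon(X,B)\to Z$ is a klt-trivial fibration with $M_Z\equiv 0$ and with general fibre in $\mathfrak D_0$, so Corollary~\ref{cbf.triv.cor}, applied with $\mathfrak D=\mathfrak D_0$ and $\mathfrak R$ the (finite) set of rational numbers in $(0,1)$ with denominator dividing $l$, yields an integer $m_0=m_0(d,l)$ and a divisor $M$ in the $\bQ$-linear class of $M_Z$ with $m_0M\sim 0$; in particular $K_Z\sim_{\bQ}-M_Z\sim_{\bQ}-M$. To turn this into a bounded multiple of $K_Z$ that is genuinely linearly trivial, I would invoke the integral form of the canonical bundle formula (Remark~\ref{cbf.lin.eq.rmk}): writing $r:=\min\{\,m\in\bZ_{>0}\mid m(K_F+B|_F)\sim 0\,\}$, which divides $l$ because $l(K_X+B)\sim 0$ restricts to $l(K_F+B|_F)\sim 0$ on $F$, one has $r(K_X+B)\sim f^*L$ for an integral Weil divisor $L$ on $Z$ with $L\sim_{\bQ}r(K_Z+B_Z+M_Z)=r(K_Z+M_Z)$. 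Since $\tfrac lr\, f^*L=l(K_X+B)\sim 0$, descending this relation along the contraction $f$ gives $\tfrac lr\,L\sim 0$ on $Z$; combining it with $m_0M\sim 0$ and the comparison between $L$ and $r(K_Z+M_Z)$ and clearing indices then yields $mK_Z\sim 0$ for a suitable $m=m(d,l)$.

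I expect the last step to be the main obstacle. Corollary~\ref{cbf.triv.cor} only controls $M_Z$ up to $\bQ$-linear equivalence, and the canonical bundle formula a priori gives only $K_Z\sim_{\bQ}0$, which by itself says nothing about the torsion index; to produce a bounded $m$ with $mK_Z\sim 0$ one must track integral, not merely $\bQ$-, linear equivalences through the canonical bundle formula, exploit that the fibre torsion index $r$ divides $l$, and verify that the relevant linear equivalences really descend along the contraction $f$ (using $f_*\mathcal O_X=\mathcal O_Z$ and the rational connectedness of the fibres of $f$). What makes the whole argument possible is the observation in the second paragraph: the hypothesis $K_Z\equiv 0$ is precisely what forces $M_Z\equiv 0$ and so permits the application of Corollary~\ref{cbf.triv.cor}.
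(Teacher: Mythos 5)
Your argument is correct and takes essentially the same route as the paper: restrict the tower to a general fibre so that $(F,B|_F)$ lies in the log bounded family of Corollary~\ref{logCY.fibred.cor}, use $K_Z\equiv 0$ to force $B_Z=0$ and $M_Z\equiv 0$, apply Corollary~\ref{cbf.triv.cor} to bound the torsion index of the moduli part, and conclude via the integral canonical bundle formula of Remark~\ref{cbf.lin.eq.rmk} together with descent of linear equivalence along the contraction $f$. The final step you flag as a potential obstacle is handled exactly as you sketch it in the paper's proof, which simply takes $m=lm'$ with $m'$ the torsion bound for $M_Z$.
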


\begin{proof}
We set 
$\Phi:= \left\{ \frac 1l, \frac 2l, \dots \frac{l-1}l \right\}$.
Hence for all pairs 
$(X, B) \in \mathfrak D$, 
the coefficients of 
$B$ 
belong to (the finite set)
$\Phi$.
\newline
As $l(K_X+B) \sim 0$, we can write the canonical bundle formula for $(X, B)$ and $f$, cf. Remark~\ref{cbf.lin.eq.rmk}, as
\[
l(K_X+B) \sim f^\ast l(K_Z+B_Z+M_Z).
\]
As $K_Z \equiv 0$, it immediately follows from the definition of $B_Z, M_Z$ that $B_Z = 0 \equiv M_Z$. 
Since $f$ factors as in~\eqref{mfs.tower.eq2}, the general fibre $(F, B|_F)$ is one of the pairs described in Corollary~\ref{logCY.fibred.cor}.
Hence, Corollary~\ref{cbf.triv.cor} implies that there exists $m'=m'(d, \Phi)$ such that $m'M_Z \sim 0$.
Thus,
\[
lm'(K_X+B) \sim 0 \sim f^\ast lm'(K_Z+M_Z) \sim f^\ast lm'K_Z.
\]
To conclude, it suffices to take $m:=lm'$.
\end{proof} 

\subsection{Birational transformations of fibred log Calabi--Yau pairs}
\label{sect.trans.fib.lcy}

In this subsection we collect a few technical results on birational transformations of log Calabi--Yau pairs endowed with a fibration that will be useful in the paper.

\begin{proposition}
\label{small.qfact.lcy.prop}
 Let $(Y, D)$ be a klt pair, $D$ a $\mathbb{Q}$-divisor, and let $f \colon Y \to Z$ be a projective contraction of normal varieties.
 Assume that $K_Y+D \sim_{f, \mathbb{Q}} 0$ and let $s\colon Z' \to Z$ be a small contraction.
Then there exists a 
$\mathbb{Q}$-factorial 
klt pair 
$(Y', D')$ 
isomorphic to 
$(Y, D)$ 
in codimension one and a projective contraction of normal varieties 
$f' \colon Y' \to Z'$.
\end{proposition}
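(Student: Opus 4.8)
The plan is to build $(Y',D')$ by running a relative minimal model program over $Z'$ on a suitable birational model of $Y$, using the hypothesis $K_Y+D\sim_{f,\mathbb{Q}}0$ to control which divisors have to be contracted. First I would reduce to the case that $Y$ is $\mathbb{Q}$-factorial: replacing $Y$ by a small $\mathbb{Q}$-factorialization (which exists by \cite{BCHM}) and $D$ by its strict transform changes nothing in codimension one, preserves $K_Y+D\sim_{f,\mathbb{Q}}0$, and keeps $f$ a contraction. Write $K_Y+D\sim_{\mathbb{Q}}f^*L$ for some $\mathbb{Q}$-Cartier $L$ on $Z$. Let $W$ be a small $\mathbb{Q}$-factorialization of the normalization of the main component of the fibre product $Y\times_Z Z'$; it comes with a birational morphism $\phi\colon W\to Y$ and a morphism $q\colon W\to Z'$ with $f\phi=sq$, and after Stein factorization $q$ is a contraction (recall $Z'$ is normal, so the finite part of the Stein factorization is an isomorphism).

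The crucial observation is that \emph{every $\phi$-exceptional prime divisor is $q$-degenerate} in the sense of Definition~\ref{deg.div.def}. Indeed, over the generic point of $Z'$ the map $\phi$ restricts to the identity of the generic fibre of $f$ (the generic fibre of $q$ equals that of $f$ since $s$ is birational), so $\phi$ is an isomorphism in codimension one there; hence no $\phi$-exceptional divisor $E$ dominates $Z'$. If $q(E)$ has codimension $\ge 2$ in $Z'$, then $E$ is $q$-exceptional. If $q(E)$ is a prime divisor of $Z'$, then $s(q(E))$ cannot be contracted by $s$ — $s$ is small — so $s(q(E))$ is a prime divisor of $Z$; therefore $q^{-1}(q(E))$ contains (the strict transform of) a component of $f^{-1}(s(q(E)))$, which is not $\phi$-exceptional, and so $E$ is of insufficient fibre type. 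Thus, setting $a_i:=a(E_i,Y,D)>0$ for the $\phi$-exceptional primes $E_i$, I would equip $W$ with a klt boundary: the pair $(W,\phi^{-1}_*D+\sum_{a_i\le 1}(1-a_i)E_i)$ is klt, being a model of the klt pair $(Y,D)$ obtained by extracting valuations, cf. \cite{BCHM}, and adding $\sum_i\eta_iE_i$ with $0<\eta_i\ll 1$ produces a klt pair $(W,\Delta_W)$ with
\[
K_W+\Delta_W=\phi^*(K_Y+D)+F,\qquad F\ge 0,\quad \supp F=\bigcup_i E_i .
\]

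Then $K_W+\Delta_W\sim_{\mathbb{Q}}q^*s^*L+F\sim_{q,\mathbb{Q}}F$, with $F$ effective and $q$-degenerate by the previous paragraph. Running a $(K_W+\Delta_W)$-MMP over $Z'$ with scaling of an ample divisor, the theory of very exceptional divisors — the same input used in the proof of Lemma~\ref{deg.div.cy.lem}, see \cite{MR1993750} and \cite{MR2929730} — shows that this MMP terminates with a model $\psi\colon W\dashrightarrow Y'$ over $Z'$ on which the pushforward of $F$ is zero, and that it contracts exactly $\supp F=\bigcup_i E_i$. Letting $f'\colon Y'\to Z'$ be the induced contraction (an MMP over $Z'$ preserves $(-)_*\mathcal O=\mathcal O_{Z'}$) and $D':=\psi_*\Delta_W=\psi_*\phi^{-1}_*D$, the pair $(Y',D')$ is $\mathbb{Q}$-factorial and klt, and since $W\to Y$ and $W\dashrightarrow Y'$ contract precisely the same divisors, $(Y',D')$ is isomorphic to $(Y,D)$ in codimension one. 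This is the required pair.

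The main obstacle is the degeneracy claim: that all $\phi$-exceptional divisors are $q$-degenerate over $Z'$. This is exactly where both hypotheses are used — $s$ being \emph{small} forbids $s$-exceptional divisors, forcing codimension-one images in $Z'$ to originate from codimension-one loci in $Z$, while $K_Y+D\sim_{f,\mathbb{Q}}0$ makes the generic fibre of $f$ crepant and hence, after passing to the fibre product, makes $\phi$ an isomorphism in codimension one over the generic point of $Z'$, so that no exceptional divisor of $\phi$ is horizontal over $Z'$. Once this is in place, the remaining points (the klt perturbation of $\Delta_W$ and the fact that the MMP for a degenerate divisor contracts exactly its support without touching any other divisor) are routine.
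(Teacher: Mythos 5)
Your overall strategy is the right one, and it is essentially the paper's: pass to a model dominating both $Y$ and $Z'$, write the log canonical divisor of an auxiliary klt pair on that model as $\phi^*(K_Y+D)+F$ with $F\ge 0$ supported on the exceptional divisors, observe that $F$ is degenerate (very exceptional) over $Z'$ because $s$ is small, and contract exactly $\supp F$ by a relative MMP over $Z'$; your verification of the degeneracy claim is correct. The genuine gap is in the construction of the model $W$ itself. You take $W$ to be ``a small $\mathbb{Q}$-factorialization of the normalization of the main component of $Y\times_Z Z'$,'' but small $\mathbb{Q}$-factorializations are only known to exist (via~\cite{BCHM}*{Corollary 1.4.3}) for varieties carrying a klt (or at least mild) pair structure, and the normalized main component of the fibre product is just some normal variety over $f^{-1}(s(\mathrm{Exc}(s)))$ with no a priori control on its singularities; its existence is therefore unjustified. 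Worse, even granting such a $W$, the claim that $(W,\phi^{-1}_*D+\sum_{a_i\le 1}(1-a_i)E_i)$ is klt ``being a model of the klt pair $(Y,D)$ obtained by extracting valuations, cf.~\cite{BCHM}'' does not apply: $W$ was not produced by~\cite{BCHM}*{Corollary 1.4.3}, the divisors it extracts may well have log discrepancy $a_i>1$, and once you truncate their crepant coefficients at $0$ (and add the $\eta_i E_i$) the pair is no longer crepant over $(Y,D)$ --- its log discrepancies are those of $(Y,D)$ \emph{minus} the multiplicities of the pullback of the effective divisor $\sum_{a_i>1}(a_i-1)E_i+\sum\eta_iE_i$, so klt-ness is not inherited. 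For an arbitrary $\mathbb{Q}$-factorial model over a klt pair this can genuinely fail (a normal $\mathbb{Q}$-factorial variety mapping birationally to a smooth one can even fail to be log canonical, e.g. surfaces obtained by contracting a non-lt sandwiched configuration of exceptional curves), so this step needs either a proof that the fibre-product model has controlled singularities or a different choice of $W$.

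The fix is exactly the paper's choice: take $W=\overline{Y}$ to be a smooth resolution of the indeterminacy of $Y\dashrightarrow Z'$ which is also a log resolution of $(Y,D)$. Then $(\overline{Y},\tilde D+(1-\delta)E)$ is klt for free ($\overline Y$ smooth, snc boundary with coefficients $<1$), one still has $K_{\overline Y}+\tilde D+(1-\delta)E=p^*(K_Y+D)+F$ with $F\ge 0$ supported on the whole $p$-exceptional divisor, and your degeneracy argument and the relative MMP over $Z'$ (the very-exceptional-divisor technology of~\cite{MR2929730}, which the paper also uses in Lemma~\ref{deg.div.cy.lem}, or alternatively the paper's argument via relative Kodaira dimension $0$ and existence of good minimal models) then contract exactly $\supp F$ and produce the required $\mathbb{Q}$-factorial pair $(Y',D')$ with a contraction to $Z'$, isomorphic to $(Y,D)$ in codimension one.
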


\begin{proof}
Let 
\begin{align}
\label{res.lcy.indet}
	\xymatrix{
Y \ar[d]_{f} & 
\overline{Y} \ar[l]_{p} \ar[d]^{\overline{f}} & 
\\
Z & 
Z' \ar[l]_{s}
			}
\end{align}
be a smooth resolution of indeterminacies of the rational map 
$s^{-1} \circ f \colon Y \dashrightarrow Z'$.
As 
$(Y, D)$ 
is klt, it follows that for 
$0< \delta \ll 1$, 
$K_{\overline{Y}}+ \tilde{D} + (1-\delta) E= p^\ast (K_Y+D) + F$, 
where 
$\tilde{D}$ 
is the strict transform of 
$D$, 
$E$ 
is the support of the 
$p$-exceptional 
divisor and 
$F$ 
is an effective and 
$p$-exceptional 
divisor whose support coincides with that of 
$E$.
Hence, 
$K_{\overline{Y}}+ \tilde{D} + (1-\delta) E 
\sim_{\overline{f}, \mathbb{Q}} F$.
As the diagram in~\eqref{res.lcy.indet} is a resolution of indeterminacies of 
$s^{-1} \circ f$ 
and 
$s$ 
is a small contraction, 
then 
$F \vert_{\overline{Y}_z} \sim 0$
for
$z \in Z' \setminus {\rm exc}(s)$ and
the support of $F$ coincides with the divisorial part of the relative base locus of $K_{\overline{Y}}+ \tilde{D} + (1-\delta) E$ over $Z'$.
By~\cite{HX}*{Theorem 1.1} and~\cite{MR2929730}*{Theorem 1.4} a $(K_{\overline{Y}}+ \tilde{D} + (1-\delta) E)$-good minimal model must exist, as the general fibre of $f$ is a good minimal model for the restriction of $K_{\overline{Y}}+ \tilde{D} + (1-\delta) E$ to a general fibre of $\overline{f}$.
It follows that the relative $(K_{\overline{Y}} +\tilde{D} + (1-\delta) E)$-MMP$/Z'$ contracts $F$ and hence it terminates with a projective contraction $f' \colon Y' \to Z'$ such that $K_{Y'} +D' \sim_{f', \mathbb{Q}} 0$, where $D'$ is the strict transform of $\tilde{D}$.
As this run of the MMP contracts exactly $F$, which is the exceptional locus of $p$, it follows that $Y$ and $Y'$ are isomorphic in codimension one.
\end{proof}

The next result shows that we can modify the base $Z$ of a fibration $f \colon Y \to Z$ by means of an isomorphism $Z \dashrightarrow Z'$  in codimension one and show that $Z'$ is also the base of an elliptic log Calabi--Yau pair. Contrary to the previous statement, here we assume that the base is $\mathbb{Q}$-factorial.

\begin{proposition}
\label{bir.contr.lcy.prop}
Let $(Y, D)$ be a klt pair and let $f \colon Y \to Z$ be a projective contraction of normal varieties.
Assume that $K_Y+D \sim_{\mathbb{Q}} 0$ and $Z$ is $\mathbb{Q}$-factorial and let $t\colon Z \dashrightarrow Z'$ be a birational contraction of normal projective varieties.
Then there exists a $\mathbb{Q}$-factorial klt pair $(Y',D')$ isomorphic to $(Y, D)$ in codimension one and a projective contraction of normal varieties $f' \colon Y' \to Z'$.
\end{proposition}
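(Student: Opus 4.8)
The plan is to follow the proof of Proposition~\ref{small.qfact.lcy.prop} almost verbatim, the only new input being that, since $t$ is birational, everything can be controlled on the general fibre of the induced map to $Z'$.

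\emph{Step 1: resolution.} First I would choose a log resolution $g\colon\overline{Y}\to Y$ of $(Y,D)$ through which the rational map $Y\dashrightarrow Z'$ becomes a morphism $\overline{f}\colon\overline{Y}\to Z'$. Since the general fibre of $\overline{f}$ coincides with the general fibre of the contraction $Y\to Z$ (as $t$, being birational, is an isomorphism over the generic point of $Z$), the morphism $\overline{f}$ is again a contraction. Writing, for $0<\delta\ll 1$,
\[
K_{\overline{Y}}+\Gamma=g^{*}(K_{Y}+D)+F,\qquad \Gamma:=g^{-1}_{*}D+(1-\delta)\operatorname{Exc}(g),
\]
one has that $(\overline{Y},\Gamma)$ is klt, $F\geq 0$ is $g$-exceptional with $\supp F=\operatorname{Exc}(g)$, and, because $K_{Y}+D\sim_{\mathbb{Q}}0$, also $K_{\overline{Y}}+\Gamma\sim_{\mathbb{Q}}F$.

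\emph{Step 2: relative good minimal model over $Z'$.} A general fibre of $\overline{f}$ is a birational modification of the general fibre $(F_{Y},D|_{F_{Y}})$ of $(Y,D)\to Z$, which is a klt pair with $K_{F_{Y}}+D|_{F_{Y}}\sim_{\mathbb{Q}}0$; thus $(F_{Y},D|_{F_{Y}})$ is a good minimal model of the restriction of $(\overline{Y},\Gamma)$ to the general fibre of $\overline{f}$. By~\cite{HX}*{Theorem 1.1} and~\cite{MR2929730}*{Theorem 1.4} a good minimal model of $(\overline{Y},\Gamma)$ over $Z'$ exists, so we may run the $(K_{\overline{Y}}+\Gamma)$-MMP $h\colon\overline{Y}\dashrightarrow Y'$ over $Z'$ and reach a $\mathbb{Q}$-factorial klt pair $(Y',\Gamma')$, $\Gamma'=h_{*}\Gamma$, with a contraction $f'\colon Y'\to Z'$ and $K_{Y'}+\Gamma'$ semiample over $Z'$. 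Since the relative Iitaka dimension of $K_{\overline{Y}}+\Gamma$ over $Z'$ is $0$ (look again at the general fibre), we get $K_{Y'}+\Gamma'\sim_{f',\mathbb{Q}}0$, hence $F':=h_{*}F\sim_{f',\mathbb{Q}}0$; moreover $F'$ is vertical over $Z'$, since fibrewise the MMP already contracts the whole of $F$ restricted to the general fibre.

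\emph{Step 3: the MMP contracts exactly $\operatorname{Exc}(g)$.} Only components of $\supp F=\operatorname{Exc}(g)$ can be contracted or flipped by this MMP, so it suffices to prove $F'=0$. Suppose $F'\neq0$. As $F'$ is effective, vertical and $\sim_{f',\mathbb{Q}}0$, there is a prime divisor $L_{0}$ on $Z'$ with $f'^{-1}(L_{0})\subseteq\supp F'$ (otherwise $F'$ is $f'$-exceptional and the Negativity Lemma already forces $F'=0$). Here is where the hypothesis that $t$ is a birational \emph{contraction} is used: $t^{-1}$ extracts no divisor, so $L_{0}$ is the strict transform of a prime divisor $L_{0,Z}$ on $Z$; then $f^{-1}(L_{0,Z})$ is a genuine divisor on $Y$, one of whose components dominates $L_{0,Z}$, and its strict transform on $\overline{Y}$ is a prime divisor $\widetilde{W}$ which is \emph{not} $g$-exceptional and which satisfies $\overline{f}(\widetilde{W})=L_{0}$. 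Being non-$g$-exceptional, $\widetilde{W}$ is not $h$-exceptional either, so $h_{*}\widetilde{W}$ is a prime divisor on $Y'$ contained in $f'^{-1}(L_{0})\subseteq\supp F'$; but $\supp F'=h_{*}\bigl(\operatorname{Exc}(g)\setminus\{h\text{-exceptional divisors}\}\bigr)$, which forces $\widetilde{W}\in\operatorname{Exc}(g)$, a contradiction. Hence $F'=0$, so $\Gamma'$ equals the strict transform $D':=h_{*}g^{-1}_{*}D$ of $D$, and $h$ and $g$ contract exactly the same divisors; therefore $Y'\dashrightarrow Y$ is an isomorphism in codimension one carrying $D'$ to $D$. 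Finally $(Y',D')$ is $\mathbb{Q}$-factorial klt by the Negativity Lemma, $K_{Y'}+D'\sim_{f',\mathbb{Q}}0$, and $f'\colon Y'\to Z'$ is the desired projective contraction.

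I expect Step 3 to be the main obstacle: checking rigorously that the relative MMP over $Z'$ contracts precisely the $g$-exceptional locus — in particular that no divisor dominating a divisor of $Z'$ is contracted — requires careful bookkeeping of which prime divisors on $\overline{Y}$ lie over divisors of $Z$ versus of $Z'$, and it is exactly at this point that one must invoke that $t$ is a birational contraction rather than merely a birational map.
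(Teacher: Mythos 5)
Your overall strategy (resolve, run a relative MMP over $Z'$, and show it contracts exactly $\operatorname{Exc}(g)$) can be made to work, but Step 3 as written has a genuine gap, and it sits exactly where you predicted. First, the assertion in Step 2 that $K_{Y'}+\Gamma'\sim_{f',\mathbb{Q}}0$ (equivalently $F'\sim_{f',\mathbb{Q}}0$) does not follow from relative semiampleness together with Iitaka dimension $0$ on the general fibre: these only give $m(K_{Y'}+\Gamma')=\psi^{*}A$ for a morphism $\psi\colon Y'\to W$ with $W\to Z'$ birational and $A$ ample over $Z'$, and knowing that one can take $W=Z'$ is essentially what you are trying to prove. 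Second, the dichotomy opening Step 3 is incomplete: the negation of ``some $f'^{-1}(L_{0})\subseteq\supp F'$'' is not ``$F'$ is $f'$-exceptional''; it also allows components of $F'$ that map onto divisors of $Z'$ without the whole fibre over them lying in $\supp F'$, i.e.\ the insufficient fibre type case of Definition~\ref{deg.div.def}, and there the ordinary Negativity Lemma (which is a statement about birational morphisms) gives nothing. What your witness-divisor construction -- the only place where ``$t^{-1}$ extracts no divisors'' enters -- actually proves is that $F$, hence $F'$, is degenerate (very exceptional) over $Z'$; to conclude $F'=0$ one must then invoke the negativity/contraction statement for degenerate divisors over a fibration (Shokurov's very exceptional negativity, or the machinery the paper packages in Remark~\ref{q-fact.degen.rmk} and Lemma~\ref{deg.div.cy.lem} via \cite{MR2929730}), after which your argument closes. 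So the outline is salvageable, but the key step is argued with the wrong tool and a missing case.

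For comparison, the paper's proof avoids this issue entirely and is shorter: it takes an ample divisor $H'$ on $Z'$, uses the $\mathbb{Q}$-factoriality of $Z$ (a hypothesis your argument never uses) to form the $\mathbb{Q}$-Cartier strict transform $H$ on $Z$, runs the $(K_Y+D+\epsilon f^{*}H)$-MMP, which by abundance of $f^{*}H$ terminates with a good minimal model over $Z'$ by~\cite{GL}*{Theorem 4.3}, and then, rather than proving that no superfluous divisor was contracted, simply re-extracts the finitely many contracted divisors (all of log discrepancy at most $1$ with respect to the crepant log Calabi--Yau structure) using~\cite{BCHM}*{Corollary 1.4.3} to obtain $Y'$ isomorphic to $Y$ in codimension one over $Z'$.
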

 
A birational map $t \colon Z \dashrightarrow Z'$ is a birational contraction if $t$ is proper and $t^{-1}$ does not contract any divisors.

\begin{proof}
Let $H'$ be an ample divisor on $Z'$ and let $H$ be its pullback on $Z$. 
$H$ exists and it is well defined as $t$ is a birational contraction.
If we consider the pair $(Y, D+ \epsilon f^\ast H), \; 0<\epsilon\ll 1$ then $f^\ast H$ is abundant, since the Kodaira and numerical dimension are invariant by pullback under contraction morphisms. 
By~\cite{GL}*{Theorem 4.3}, there exists a run of the $(K_Y+D+\epsilon f^\ast H)$-MMP, $Y\dashrightarrow Y''$ which terminates with a good minimal model $Y'' \to Z'$.
Moreover, $Y''$ admits a structure of a log Calabi--Yau pair $(Y'', D'')$, where $D''$ is the strict transform of $D$ on $Y''$.
Let $\{E_1, \dots, E_k\}$ be the divisors contracted by the birational contraction $Y \dashrightarrow Y''$. 
The log discrepancy of any $E_i$ with respect to $(Y, D)$ (or, equivalently, $(Y'', D'')$) is at most $1$.
Hence, by~\cite{BCHM}*{Corollary 1.4.3}, there exists a model $Y' \to Y''$ of $Y''$ on which the only extracted divisors are the $E_i$.
This yields the desired model in the statement of the proposition.
\end{proof}

\section{Rationally connected K-trivial varieties}
\label{rc.cy.sect}

In this section we show that the set of $d$-dimensional rationally connected (in short, RC) klt projective varieties with torsion canonical bundle are bounded up to flops, if we bound the torsion index, i.e., if we assume that there exists a fixed integer $l$ such that $lK_X\sim 0$.
When the dimension of $X$ is 3, this result was implicitly proven in~\cite{CDHJS}*{Theorem 5.1}.

\subsection{Partial resolutions of RC K-trivial varieties and towers structure}
\label{term.rc.cy.sect}
Given a rationally connected klt projective variety $X$ with $K_X \sim_\mathbb{Q}0$, by~\cite{BCHM}*{Corollary 1.4.3}, we can construct a partial resolution $\pi \colon X' \to X$ of $X$ such that
\[
K_{X'}+D = \pi^\ast K_X, \; \lfloor D \rfloor =0,
\]
the divisorial part of the exceptional locus of $\pi$ coincides with the support of $D$, and the pair $(X', D)$ is canonical, $\mathbb{Q}$-factorial with $K_{X'}+D \sim_{\mathbb{Q}} 0$. 
The above conditions imply that $(X', 0)$ is canonical; thus, as $X'$ is rationally connected, $K_{X'}$ cannot be pseudo-effective and we obtain that 
\[
D>0.
\] 
Moreover,~\cite{MR2802603}*{Theorem 2.3} implies that for any $0< \epsilon \ll 1, \; X$ is a good minimal model for the $K_{X'} +(1+\epsilon) D$-MMP.
If we assume that $lK_X \sim 0, \; l \in \mathbb{N}$, then also $l(K_{X'}+D) \sim 0$ and the coefficients of $D$ belong to the subset $\{\frac 1 l, \frac 2 l, \dots, \frac{l-1}{l}\}$.
Under this assumption, the pair $(X', D)$ is a $\frac 1 l$-lc pair.

As $K_{X'}$ is not pseudo-effective, we can apply Theorem~\ref{structure} and there is a crepant birational contraction $(X', D) \dashrightarrow (X'', D')$ to a $d$-dimensional log Calabi--Yau pair $(X'', D')$ such that $l(K_{X''} +D') \sim 0$ and $X''$ is equipped with a non-birational contraction $g \colon Y \to Z$ which can be factored into a sequence of Mori fibrations:
\begin{align}
\label{tower.term.eq}
\xymatrix{
 (X'', D')=(X_0, D_0) \ar[r] & X_1 \ar[r] & \dots \ar[r] & X_{s-1} \ar[r] & X_s=Z.
}
\end{align}

If the pair $(X_0, D_0)$ is not of product type then we know that $\dim Z=0$ and $(X'', D')$ belongs to a log bounded family, by Theorem~\ref{logCY.fibred.cor}.
If $\dim Z > 0$, then $K_Z \equiv 0, \; Z$ is $\mathbb{Q}$-factorial and rationally connected:
Theorem~\ref{mfs.tow.CYend.thm} then implies that there exists $m=m(d, l)$ such that $mK_Z \sim 0$.

\subsection{Boundedness of RC K-trivial varieties with bounded torsion index}
\label{bound.rc.cy.sect}
The decomposition introduced in the previous section, suggest that the following result should hold inductively.

\begin{theorem}
\label{rc.cy.log.bound.thm}
Fix positive integers $d, l$.
Consider varieties $X$ such that
\begin{itemize}
    \item $X$ is klt projective of dimension $d$, 
    \item $X$ is rationally connected, and
    \item $lK_X \sim 0$.
\end{itemize}
Then the set of such $X$ is bounded up to flops.
\end{theorem}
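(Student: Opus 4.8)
The plan is to argue by induction on $d=\dim X$. The case $d=1$ is vacuous, since $\pp^1$ is the only rationally connected curve and $lK_{\pp^1}\not\sim 0$; so fix $d\ge 2$ and assume the statement in all dimensions $<d$ and for every torsion index. Given $X$, the first step is exactly the reduction carried out in \S\ref{term.rc.cy.sect}: pass to a $\bQ$-factorial canonical partial resolution $\pi\colon X'\to X$ with $K_{X'}+D=\pi^*K_X$ and $D>0$ $\pi$-exceptional, then by Theorem~\ref{structure} to a crepant birational contraction $(X',D)\dashrightarrow(X'',D')$ onto a $\bQ$-factorial klt log Calabi--Yau pair, with $l(K_{X''}+D')\sim 0$, coefficients of $D'$ in the finite set $\{\tfrac1l,\dots,\tfrac{l-1}l\}$, $(X'',D')$ being $\tfrac1l$-lc, and carrying a tower of Fano Mori fibre spaces $X''=X_0\to\cdots\to X_s=Z$ with $-K_{X_i}$ ample over $X_{i+1}$. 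As $X$, hence $X''$ and $Z$, are rationally connected, there are two cases. If $\dim Z=0$ the boundary coefficients lie in a fixed finite (hence DCC) set and the tower runs down to a point, so $(X'',D')$ is log bounded directly by Corollary~\ref{logCY.fibred.cor}.

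The substantive case is $\dim Z>0$, i.e. the product type case. Here $K_Z\equiv 0$, and by Theorem~\ref{mfs.tow.CYend.thm} there is $m=m(d,l)$ with $mK_Z\sim 0$; since $Z$ is rationally connected, $\bQ$-factorial and klt of dimension $<d$, the induction hypothesis (applied with torsion index $m$) gives that $Z$ is bounded up to flops. I would then replace $Z$ by a $\bQ$-factorial member $\bar Z$ of the bounding family (Remark~\ref{def:bound.mod.flops}), isomorphic to $Z$ in codimension one and with $mK_{\bar Z}\sim 0$, and transfer the fibration to $\bar Z$ using Proposition~\ref{bir.contr.lcy.prop}: this produces a $\bQ$-factorial klt pair $(Y,D_Y)$, isomorphic in codimension one to $(X'',D')$ — hence still $\tfrac1l$-lc, with $l(K_Y+D_Y)\sim 0$ and the same finite coefficient set — together with a contraction $Y\to\bar Z$. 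Next I would rebuild a tower of Fano fibrations over $\bar Z$: the general fibre of $Y\to\bar Z$ coincides with that of the original tower, so it is rationally connected of positive dimension, hence uniruled, so $K_Y$ is not pseudo-effective over $\bar Z$; running a $K_Y$-MMP over $\bar Z$ and iterating — the relative analogue of Theorem~\ref{structure} — yields, after a further crepant contraction, a $\bQ$-factorial log Calabi--Yau pair $(Y^*,D^*)$, isomorphic in codimension one to $(X'',D')$, with a tower $Y^*=Y^*_0\to\cdots\to Y^*_k=\bar Z$ of Fano fibrations; the iteration must terminate exactly at $\bar Z$, because any intermediate base would be rationally connected with $\bQ$-trivial canonical class, hence a point. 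Since $\bar Z$ now ranges in a bounded family depending only on $d$ and $l$, there is $r=r(d,l)$ and a very ample $A$ on $\bar Z$ with $A^{\dim\bar Z}\le r$; as $K_{Y^*}+D^*\sim_{\bQ}0$ and $A$ is ample, Theorem~\ref{birk.tower.thm} with $\epsilon=\tau=\tfrac1l$ shows that the pairs $(Y^*,D^*)$ form a log bounded family, so $(X'',D')$ is log bounded up to flops.

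To descend from $(X'',D')$ to $X$: since $D$ is $\pi$-exceptional, $D'$ is the pushforward of $D$, and the induced birational map $X''\dashrightarrow X$ is an isomorphism away from $\supp D'$ and contracts precisely $\supp D'$; as $K_{X''}+D'\sim_{\bQ}0$, the variety $X$ is recovered from $(X'',D')$ by running a $(K_{X''}+(1+\epsilon)D')$-MMP for $0<\epsilon\ll1$ and, if necessary, passing to a relative canonical model. Running this MMP in families after a Noetherian stratification of the bounding base — as in the proof of Proposition~\ref{fam.good.prop} — turns the family of $(X'',D')$, log bounded up to flops, into a family of varieties bounded up to flops containing all such $X$, completing the induction.

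The hard part will be the product-type case, and within it the step of re-establishing the tower of Fano fibrations over the bounded base $\bar Z$: one must make the relative analogue of Theorem~\ref{structure} work and check that the resulting pair $(Y^*,D^*)$ still meets all the hypotheses of Theorem~\ref{birk.tower.thm} — $\tfrac1l$-lc singularities, coefficients $\ge\tfrac1l$, torsion index $l$, and a base of bounded polarization degree. Everything else is bookkeeping, together with the already-available Theorems~\ref{mfs.tow.CYend.thm} and~\ref{birk.tower.thm} and Corollary~\ref{logCY.fibred.cor}; note in particular that bounding the torsion index of the base via Theorem~\ref{mfs.tow.CYend.thm} is precisely what makes the induction go through.
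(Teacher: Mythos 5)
Your overall skeleton matches the paper's (induction on $d$, the reduction of \S\ref{term.rc.cy.sect}, Theorem~\ref{structure}, Corollary~\ref{logCY.fibred.cor} in the non-product case, Theorem~\ref{mfs.tow.CYend.thm} plus the inductive hypothesis to bound $Z$, Theorem~\ref{birk.tower.thm} over the bounded base, and an MMP in families to descend), but the two places where you deviate are exactly where gaps appear. First, the transfer of the tower to $\bar Z$: you apply Proposition~\ref{bir.contr.lcy.prop} once to the composite $X''\to Z$ and then try to \emph{rebuild} a tower over $\bar Z$ by a relative analogue of Theorem~\ref{structure}. That relative statement is not available in the paper and, more seriously, your termination claim is unjustified: ``any intermediate base would be rationally connected with $\bQ$-trivial canonical class, hence a point'' is false --- rationally connected klt varieties with torsion canonical class of positive dimension exist (they are precisely the objects of this theorem, and $\bar Z$ itself is one). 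A freshly run relative tower over $\bar Z$ could a priori stop at an intermediate base $W\to\bar Z$ of positive relative dimension with relatively trivial canonical class; $W$ is not known to be bounded, so Theorem~\ref{birk.tower.thm} cannot be invoked. There is also a tension in asking $(Y^*,D^*)$ to be simultaneously isomorphic in codimension one to $(X'',D')$ (which may require re-extracting divisors contracted by the relative MMP) and to carry the tower of Fano fibrations. The paper avoids all of this by transferring the \emph{existing} tower level by level: repeated application of Proposition~\ref{bir.contr.lcy.prop} produces $\bQ$-factorial models $\overline{X}_i$ isomorphic in codimension one to the $X_i$, so $\rho(\overline{X}_i/\overline{X}_{i+1})=\rho(X_i/X_{i+1})=1$ and $-K_{\overline{X}_i}$ stays relatively ample, and termination at $\bar Z$ is inherited rather than re-proved.

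Second, the descent from $(X'',D')$ to $X$ is not ``bookkeeping''. Your claim that $X''\dashrightarrow X$ is an isomorphism off $\supp D'$ and contracts precisely $\supp D'$ is not justified: the birational contraction $X'\dashrightarrow X''$ of Theorem~\ref{structure} is obtained by running a $K_{X'}$-MMP, and since $K_{X'}\equiv -D$ this MMP can contract divisors that are \emph{not} components of $D$. Consequently the minimal model of $K_{X''}+(1+\epsilon)D'$ computed from (a bounded model of) $X''$ need not be isomorphic to $X$ in codimension one --- it is missing exactly those contracted divisors. This is why the paper inserts an intermediate step: the divisors contracted by $X'\dashrightarrow X''$ (all of log discrepancy in $(0,1]$ for the crepant CY structure) are re-extracted over the bounded model via~\cite{BCHM}*{Corollary 1.4.3}, boundedness of this extraction is preserved by Theorem~\ref{bir.bound.fibr.thm}, and only then is the $(K+(1+\epsilon)D)$-MMP run in families, which is the content of Proposition~\ref{flops.lcy.family.prop} (your appeal to the method of Proposition~\ref{fam.good.prop} is the right spirit, but it must be applied to the re-extracted model $(\overline{X}',\overline{D})$, not directly to the bounded model of $(X'',D')$). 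Without these repairs the proposed proof does not establish boundedness of $X$ up to flops.
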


\begin{proof}
We prove the Theorem by induction on $d$.
The case $d=1$ is trivial.
\newline
We will use the notation from \S~\ref{term.rc.cy.sect} and prove the inductive step, assuming the theorem holds in dimension at most $d-1$.
For the reader's convenience, we divide the proof into different steps.

\medskip

{\bf Step 1}.
{\it  
In this step we show that the set of pairs 
$(X'', D')$ 
constructed in~\eqref{tower.term.eq} is log bounded up to flops}.
\newline
It has already been discussed in \S~\ref{term.rc.cy.sect} that the claimed conclusion holds when 
$X''$ 
is not of product type.
Hence we are left to prove the case when in~\eqref{tower.term.eq} 
$\dim Z >0$ 
and 
$K_Z \sim_\mathbb{Q} 0$. 
We have already observed above that there exists 
$m=m(d, l)$ 
such that 
$mK_Z \sim 0$.
In particular, this implies that 
$(Z, 0)$ 
is 
$\frac 1m$-lc.
By the inductive hypothesis,
$Z$ 
is bounded up to flops.
Hence, there exists a klt variety 
$\overline{Z}$ 
isomorphic to 
$Z$ 
in codimension one, 
$mK_{\overline{Z}} \sim 0$, 
and 
$\overline{Z}$ belongs to a bounded family.
Moreover, as 
$Z$ 
and 
$\overline Z$ 
are connected by flops, 
also 
$\overline Z$
is 
$\frac 1m$-lc.
By Remark~\ref{rmk.q-factor.bound}, the set of all 
$\mathbb Q$-factorializations 
of the varieties 
$\overline Z$ 
is also bounded, as noted also right after the statement of~\ref{bir.bound.fibr.cor}.
Thus, we can assume that $\overline{Z}$ is $\mathbb{Q}$-factorial, as well, and by repeatedly applying Proposition~\ref{bir.contr.lcy.prop} we can assume that there exists a tower of Mori fibrations analogous to that in~\eqref{tower.term.eq}
\begin{align*}
\xymatrix{
(\overline{X}'', \overline{D}')=(\overline{X}_0, \overline{D}_0) \ar[r] &
\overline{X}_1 \ar[r] &
\dots \ar[r] &
\overline{X}_{s-1} \ar[r] &
\overline{X}_s=\overline{Z},
 }
\end{align*}
where each $\overline{X}_i$ is $\mathbb{Q}$-factorial and isomorphic in codimension one to the corresponding $X_i$ in~\eqref{tower.term.eq}.
Hence, as $\rho(X_i/X_{i+1})=1$ the same holds for $\rho(\overline{X}_i/\overline{X}_{i+1})$ which implies that $-K_{\overline{X}_i}$ is ample over $\overline{X}_{i+1}$. 
Finally, Theorem~\ref{birk.tower.thm} implies that $(\overline{X}'', \overline{D}')$ is log bounded.

\medskip

{\bf Step 2}. 
{\it 
In this step we show that the set of pairs 
$(X', D)$ is log bounded up to flops}.
\newline
The pairs 
$(X', D)$, 
$(X'', D')$, 
and 
$(\overline{X}'', \overline{D}')$ 
are all crepant birational.
We denote by 
$\{E_1, \dots, E_j\}$ 
the divisors contracted by the birational contraction 
$X' \dashrightarrow X''$.
As 
$X''$, 
$\overline{X}''$ 
are isomorphic in codimension one, the 
$E_i$ 
are exceptional for the rational contraction 
$X' \dashrightarrow \overline{X}''$, 
too.
The log discrepancy of the 
$E_i$ 
with respect to 
$(X', D)$, 
$(X'', D')$, 
and 
$(\overline{X}'', \overline{D}')$ 
is the same and it is contained in 
$(0, 1]$.
By~\cite{BCHM}*{Corollary 1.4.3}, there exists a 
$\mathbb{Q}$-factorial 
klt log Calabi--Yau pair 
$(\overline{X}', \overline{D})$ 
and a morphism 
$\overline{r} \colon \overline{X}' \to \overline{X}''$
extracting exactly the valuations corresponding to 
$\{ E_1, \dots, E_j\}$;
moreover, 
$K_{\overline{X}'}+ \overline{D} 
= \overline{r}^\ast(K_{\overline{X}''}+\overline{D}')$.
By construction, 
$(X', D)$ 
and 
$(\overline{X}', \overline{D})$ 
are isomorphic in codimension one.
Thus, Corollary~\ref{bir.bound.fibr.cor} implies that also the set of pairs 
$(\overline{X}', \overline{D})$ 
that we have constructed is log bounded which terminates the proof of this step since 
$(\overline{X}', \overline{D})$ 
and 
$(X', D)$
are isomorphic in codimension one.

\medskip

{\bf Step 3}. 
{\it In this step we show that $X$ is bounded up to flops}.
\newline
We know that the pair $(\overline{X}', \overline{D})$ is log bounded and it is isomorphic in codimension one to $(X', D)$.
Moreover, the Kodaira dimension of $K_{\overline{X}'}+ \lceil\overline{D}\rceil$ is $0$ as $K_X \sim_\mathbb{Q} 0$; 
thus, a minimal model for the $(K_{\overline{X}'}+(1+\epsilon)\overline{D})$-MMP on $\overline{X}'$, for $\epsilon \ll 1$, will be a $\mathbb{Q}$-factorial variety $\overline{X}$ with $lK_{\overline{X}} \sim 0$.
By construction, $X$ and $\overline{X}$ are isomorphic in codimension one. 
If we can prove that $\overline{X}$ belongs to a bounded family then the theorem follows at once.
But this is just a consequence of Proposition~\ref{flops.lcy.family.prop} below.
\end{proof}

\begin{proposition}
\label{flops.lcy.family.prop}
Fix a DCC set $\mathfrak{R} \subset (0, 1) \cap \bQ$.
Let $\mathfrak{D}$ be a bounded set of klt pairs.
For any pair $(X, B) \in \mathfrak{D}$ assume that: 
\begin{itemize}
\item 
$B > 0$ and its coefficients belong to $\mathfrak{R}$,

\item 
$K_X+B \sim_\bQ 0$,

\item 
the Kodaira dimension of $-K_X$ is zero, and

\item 
there exists a good minimal model $X_1$ for $K_X+(1+\epsilon) B$, $0< \epsilon \ll 1$.
\end{itemize}   
Then the set 
\begin{align*}
\mathfrak{D}_1:=
\{ &
X_1 \ \vert
\ (X, B) \in \mathfrak D, 
X_1 \textrm{ is a good minimal model}
\\ 
&
\textrm{for } (K_X+(1+\epsilon)B), \; 
\textrm{for} \; 
0< \epsilon \ll 1
\}
\end{align*}
is bounded up to flops.
\end{proposition}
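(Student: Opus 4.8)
The plan is to run the relevant minimal model program in a bounded family and to exhibit $\mathfrak{D}_1$ as (a subset of the) fibres of the resulting family. Throughout I will freely stratify the base and pass to finite covers, exactly as in the proofs of Proposition~\ref{fam.good.prop} and Proposition~\ref{MMP.deg.bound.prop}.

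\textbf{Step 1: reduce to a strongly log bounded family.} Since $\mathfrak{R}$ is DCC and $\mathfrak{D}$ is bounded, by \cite{MR3224718}*{Theorem 1.5} there is a finite subset $\mathfrak{R}_0\subset\mathfrak{R}$ containing all coefficients of members of $\mathfrak{D}$, and by \cite{DCS}*{Corollary 2.9} there is $\epsilon_0>0$ with every $(X,B)\in\mathfrak{D}$ being $\epsilon_0$-klt; hence Proposition~\ref{fam.good.prop} lets me assume $\mathfrak{D}$ is strongly log bounded, realised by a projective morphism $\pi\colon(\mathcal Z,\mathcal B)\to S$ with $S$ of finite type. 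Decomposing $S$ into finitely many locally closed subsets (and discarding components), passing to a finite cover, and applying \cite{1810.01990}*{Proposition 2.10}, I may assume that every $\mathcal Z_s$ is a variety, $\mathcal B$ contains no fibre, the components of $\mathcal B$ are in bijection with those of $\mathcal B_s$ for all $s$, $S$ is smooth, $\mathcal Z$ is $\mathbb{Q}$-factorial, and the locus $S':=\{s\in S\mid(\mathcal Z_s,\mathcal B_s)\in\mathfrak D\}$ is dense; after one more stratification (using \cite{Bir16a}*{Lemma 2.25} and \cite{MR3779687}*{Theorem 4.2}) I may also assume $K_{\mathcal Z}+\mathcal B\sim_{\mathbb Q,S}0$. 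Observe that $K_{\mathcal Z}+(1+\epsilon)\mathcal B\sim_{\mathbb Q,S}\epsilon\mathcal B$, so the $(K_{\mathcal Z}+(1+\epsilon)\mathcal B)$-MMP over $S$ is independent of the choice of small $\epsilon>0$ (it is the $\mathcal B$-MMP).

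\textbf{Step 2: run the MMP in the family.} By hypothesis each $(X,B)\in\mathfrak D$ admits a good minimal model for $K_X+(1+\epsilon)B$, so the fibres of $\pi$ over the dense set $S'$ have good minimal models; hence the generic fibre over each irreducible component of $S$ does as well, and the relative $(K_{\mathcal Z}+(1+\epsilon)\mathcal B)$-MMP over $S$, run with scaling of an ample divisor, terminates with a good minimal model $g\colon\mathcal Z_1\to S$ by \cite{HX}*{Theorem 1.1} and \cite{MR2929730}*{Theorem 1.4}, just as in the proof of Proposition~\ref{small.qfact.lcy.prop}. The crucial point — which I expect to be the main technical obstacle — is to make this MMP \emph{uniform} over $S$: as in the proof of Proposition~\ref{MMP.deg.bound.prop}, by decomposing $S$ into finitely many locally closed subsets and invoking Noetherian induction, one arranges that each step of the MMP (divisorial contraction or flip) is compatible with restriction to every fibre over the corresponding stratum, so that for every $s$ the fibre $(\mathcal Z_1)_s$ is a good minimal model of $(\mathcal Z_s,(1+\epsilon)\mathcal B_s)$, and in particular of $(X,(1+\epsilon)B)$ when $s\in S'$.

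\textbf{Step 3: identify the fibres and conclude.} Fix $(X,B)\in\mathfrak D$ with good minimal model $X_1$ and let $B_1$ denote the strict transform of $B$; then $K_{X_1}+(1+\epsilon)B_1\sim_{\mathbb Q}\epsilon B_1$, this divisor is semiample, and, the Iitaka dimension of the log canonical divisor being preserved by passing to a minimal model, $\kappa(X_1,\epsilon B_1)=\kappa(X,\epsilon B)=\kappa(X,-K_X)=0$ (using $K_X+B\sim_{\mathbb Q}0$). A semiample $\mathbb Q$-divisor of Iitaka dimension $0$ is $\mathbb Q$-linearly trivial, so $B_1\sim_{\mathbb Q}0$, and since $B_1\ge 0$ on the projective variety $X_1$ this forces $B_1=0$ and $K_{X_1}\sim_{\mathbb Q}0$. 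By Step 2, for $s\in S'$ the fibre $(\mathcal Z_1)_s$ is obtained compatibly with the fibrewise MMP, so the strict transform $\mathcal B_1$ of $\mathcal B$ on $\mathcal Z_1$ restricts to $0$ over the dense set $S'$; as $\mathcal B_1$ contains no fibre in its support, a standard argument (decomposing $S$ so that no component of $\mathcal B_1$ is exceptional over $S$) gives $\mathcal B_1=0$. Hence every $(\mathcal Z_1)_s$ is normal projective with $K_{(\mathcal Z_1)_s}$ $\mathbb Q$-Cartier, klt and $\mathbb Q$-trivial. Finally, any two good minimal models of a fixed klt pair are isomorphic in codimension one, so each $X_1\in\mathfrak D_1$ is isomorphic in codimension one to $(\mathcal Z_1)_s$ for the corresponding $s\in S'$; therefore $g\colon\mathcal Z_1\to S$ exhibits $\mathfrak D_1$ as bounded modulo flops, which is the assertion.
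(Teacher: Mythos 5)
Your overall strategy (realise $\mathfrak D$ in a family over a finite-type base, run the relevant MMP relatively over the base, and identify the fibres of the outcome with the good minimal models $X_1$ up to isomorphism in codimension one) is the same as the paper's, and your Step 3 (semiampleness plus Iitaka dimension zero forces $B_1=0$ and $K_{X_1}\sim_{\mathbb Q}0$; uniqueness of good minimal models in codimension one) is correct. The problem is in Step 2, and it is a genuine gap rather than a routine omission. You run the $(K_{\mathcal Z}+(1+\epsilon)\mathcal B)$-MMP directly on the bounded family $(\mathcal Z,\mathcal B)\to S$ and justify its existence, termination with a good minimal model, and fibrewise compatibility by (i) the assertion that good minimal models on the dense set $S'$ of closed fibres imply a good minimal model for the generic fibre, and (ii) a stratification/Noetherian-induction argument making each MMP step restrict to every fibre. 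Neither (i) nor (ii) is available for a family that is merely fibrewise klt with $\mathbb Q$-factorial total space: existence of good minimal models is not known to spread from a dense set of closed fibres to the generic fibre, and the restriction of a relative MMP step to an arbitrary fibre need not be a step of (or even crepant for) the fibrewise MMP. These are exactly deformation-invariance statements, and the known results that provide them (\cite{MR3779687}*{Theorems 1.2 and 4.2} and \cite{MR3779687}*{Lemma 6.1}) require a family that is \emph{log smooth over the base}. Your appeal to \cite{HX}*{Theorem 1.1} as in Proposition~\ref{small.qfact.lcy.prop} does not apply either: there the relative good minimal model exists because the restriction of the divisor to the general fibre of the \emph{fibration one contracts over} is already semiample by construction, whereas here the needed input is precisely the unproved statement about the generic fibre over the parameter base $S$.

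The paper closes this gap by first replacing $(\mathcal Z,\mathcal B)\to S$ with a simultaneous log resolution $\sigma\colon(\mathcal Y,\mathcal E)\to S$ that is log smooth over $S$, where $K_{\mathcal Y}+\mathcal E=\sigma^\ast(K_{\mathcal Z}+\mathcal B)+\mathcal F$ with $\mathcal F$ effective and $\sigma$-exceptional, $\sigma_\ast\mathcal E=\mathcal B$, $\lfloor\mathcal E\rfloor=0$ and $\supp\mathcal E$ containing all exceptional divisors; since the fibres are $\epsilon$-klt, sections of multiples of $K+(1+\epsilon)\mathcal E$ on $\mathcal Y_s$ compute those of $K+(1+\epsilon)\mathcal B$ on $\mathcal Z_s$. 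Invariance of log plurigenera for log smooth families (\cite{MR3779687}*{Theorem 4.2}) then shows the Iitaka dimension is $0$ on \emph{every} fibre, so the divisor is nowhere movable; \cite{MR3779687}*{Theorem 1.2} gives good minimal models fibrewise and a relative good minimal model over $S$, and \cite{MR3779687}*{Lemma 6.1} guarantees that its fibres are good minimal models of the fibre pairs, after which your Step 3 conclusion goes through. To repair your proof you would need to insert this reduction to a log smooth family (or an equivalent substitute) before running the MMP; as written, the transfer of the good-minimal-model hypothesis across the family is asserted, not proved, and it is the heart of the proposition.
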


In the hypotheses of Proposition~\ref{flops.lcy.family.prop} the minimal model $X_1$ is a Calabi--Yau variety, $K_{X_1} \sim_\bQ 0$, which is klt but non-canonical.
Moreover, under these assumptions, the minimal model $X_1$ is independent of the choice of $1< \epsilon \ll 1$.

\begin{proof}
As $\mathfrak{R}$ is a DCC set, by~\cite{MR3224718}*{Theorem 1.5} there exists a finite subset $\mathfrak{R}_0 \subset \mathfrak{R}$ such that the coefficients of pairs in $\mathfrak{D}$ are in $\mathfrak{R}_0$.
Moreover,~\cite{DCS}*{Corollary 2.9} implies that there exists $\epsilon >0$ such that all pairs in $\mathfrak{D}$ are $\epsilon$-klt.
By Proposition~\ref{fam.good.prop}, there exists a pair $(\mathcal{X}, \mathcal{B})$ together with a projective morphism of quasi-projective varieties $\mathcal{X} \to S$ such that for any pair $(X, B) \in \mathfrak{D}$ there exists $s \in S$ and an isomorphism $h_s \colon \mathcal{X}_s \to X$ and $\mathcal{B}|_{\mathcal{Z}_s} = h^\ast_s B$.
By taking the Zariski closure of the points $s \in S$ corresponding to pairs in $\mathfrak{D}$, and decomposing that into a disjoint union of finitely many locally closed subsets, we can assume that $S$ is smooth and the points on $S$ corresponding to pairs in $\mathfrak{D}$ are dense.
By~\cite{MR3507257}*{Proposition 2.4} up to substituting $S$ with a Zariski dense open, we can assume that $(\mathcal{X}, \mathcal{B})$ is klt and $\mathbb{Q}$-factorial.
Restricting to a Zariski open set of $S$ and/or decomposing $S$ into a disjoint union of finitely many locally closed subsets are operations that we are allowed to perform: by noetherian induction, these operations can be performed only finitely many times, hence they do not affect any boundedness argument.
\newline
By passing to a sufficiently high log resolution 
\[
\xymatrix{
\mathcal{Y} \ar[rr]^\sigma \ar[dr] & &\mathcal{X} \ar[dl]\\
& S &
}
\]
of $(\mathcal{X}, \mathcal{B})$, there exist effective divisors $\mathcal E, \mathcal F$ on $\mathcal Y$ such that $K_\mathcal{Y} + \mathcal{E}=\sigma^\ast(K_\mathcal{X}+ \mathcal{B}) + \mathcal{F}$, with $\sigma_\ast \mathcal E= \mathcal B$ and $\mathcal{F}$ is $\sigma$-exceptional. 
Up to decomposing $S$ in a disjoint union of finitely many locally closed subsets and possibly passing to a higher model $\mathcal{Y}$, we can assume that $(\mathcal{Y}, \mathcal{E})$ is log smooth over $S$.
Moreover, we can also assume that the support of $\mathcal E$ contains all $\sigma$-exceptional divisors and that $\lfloor \mathcal E \rfloor =0$.
\newline
Let us notice that for any sufficiently divisible $m \in \mathbb{N}$, and for $0<\epsilon \ll 1, \; \epsilon \in \mathbb{Q}$, 
\[ 
h^0(\mathcal{Y}_s, \mathcal{O}_{\mathcal{Y}_s}(m(K_{\mathcal{Y}_s} +(1+\epsilon) \mathcal{E}|_{\mathcal{Y}_s}))) =  h^0(\mathcal{X}_s, \mathcal{O}_{\mathcal{X}_s}(m(K_{\mathcal{X}_s} +(1+\epsilon) \mathcal{B}|_{\mathcal{X}_s}))).
\]
By~\cite{MR3779687}*{Theorem 4.2}, 
\[ 
h^0(\mathcal{Y}_s, \mathcal{O}_{\mathcal{Y}_s}(m(K_{\mathcal{Y}_s} +(1+\epsilon) \mathcal{E}|_{\mathcal{Y}_s})))
\] 
is a constant function of $s \in S$; 
in particular $\kappa(\mathcal{Y}_s, \mathcal{O}_{\mathcal{Y}_s}(K_{\mathcal{Y}_s} +(1+\epsilon) \mathcal{E}|_{\mathcal{Y}_s})) = 0, \; \forall s\in S$.
Thus, for any $s \in S, \; K_{\mathcal{Y}_s} +(1+\epsilon) \mathcal{E}|_{\mathcal{Y}_s}$ is not movable.
Then~\cite{MR3779687}*{Theorem 1.2} implies that for any $s \in S$ a good minimal model for $K_{\mathcal{Y}_s} +(1+\epsilon) \mathcal{E}|_{\mathcal{Y}_s}$ exists.
If we run the $K_{\mathcal{Y}} +(1+\epsilon)\mathcal{E}$-MMP over $S$,~\cite{MR3779687}*{Theorem 1.2} implies that a good minimal $\mathcal{Z} \to S$ model must exist and by~\cite{MR3779687}*{Lemma 6.1} $\mathcal{Z}_s$ will yield a good minimal model for each pair $(\mathcal{Y}_s, (1+\epsilon) \mathcal{E}|_{\mathcal{Y}_s})$, thus concluding the proof.
As $\mathcal{Z}_s$ is a good minimal model for $K_{\mathcal{Y}_s} + (1+\epsilon) \mathcal{E}|_{\mathcal{Y}_s}$, it follows from the definition of good minimal model that $X_1$ and $\mathcal{Z}_s$ are isomorphic in codimension one.
\end{proof}

The following immediate corollary of the Theorem~\ref{rc.cy.log.bound.thm} extends~\cite{CDHJS}*{Cor. 5.2} to any dimension. 
It relates the boundedness of rationally connected K-trivial klt varieties to the boundedness of the torsion index of the canonical divisor inside the class group.

\begin{corollary}
\label{equiv.rc.cy.bound.cor}
Fix a positive integer $d$.
Let $\mathfrak{C}$ be the set of varieties $X$ satisfying the following hypotheses:
\begin{enumerate}
    \item $X$ is a klt projective variety of dimension $d$, 
    \item $X$ is rationally connected, and
    \item $K_X \equiv 0$.
\end{enumerate}
Then, $\mathfrak{C}$ is bounded up to flops if and only if there exists a positive integer $l=l(d)$ such that $lK_X \sim 0$ for any $X \in \mathfrak{C}$.
\end{corollary}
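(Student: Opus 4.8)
The statement is an equivalence and one direction is essentially free. For the ``if'' direction the plan is simply to invoke Theorem~\ref{rc.cy.log.bound.thm}: if there is an integer $l=l(d)$ with $lK_X\sim 0$ for every $X\in\mathfrak C$, then each member of $\mathfrak C$ satisfies the three hypotheses of that theorem, so $\mathfrak C$ is bounded up to flops. All the work is therefore in the converse.

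For the ``only if'' direction, assume $\mathfrak C$ is bounded up to flops. First I would fix a family realising this: by Remark~\ref{def:bound.mod.flops} there is a normal $\mathcal Z$ and a projective morphism $\mathcal Z\to S$, with $S$ of finite type, such that every $X\in\mathfrak C$ is isomorphic in codimension one to some fibre $\mathcal Z_s$ with $K_{\mathcal Z_s}$ $\mathbb Q$-Cartier. After stratifying $S$ into finitely many locally closed subsets I may assume the relevant fibres are normal varieties, that they admit a simultaneous log resolution $\mathcal W\to\mathcal Z$ (so that $\mu_s\colon\mathcal W_s\to\mathcal Z_s$ is a resolution for each $s$), and --- crucially --- by \cite{Bir16a}*{Lemma~2.25}, applied with zero boundary, that there is a single positive integer $m=m(d)$ such that $mK_{\mathcal Z_s}$ is Cartier for every relevant $s$. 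Since an isomorphism in codimension one between normal varieties induces an isomorphism of divisor class groups carrying $K_X$ to $K_{\mathcal Z_s}$, it is enough to prove $mK_{\mathcal Z_s}\sim 0$; and for these $s$ the variety $\mathcal Z_s$ is klt, rationally connected, with $K_{\mathcal Z_s}\equiv 0$.

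The heart of the argument is then short and topological. The variety $\mathcal W_s$ is smooth, projective and rationally connected, hence simply connected by Campana and Koll\'ar--Miyaoka--Mori; thus $H^2(\mathcal W_s,\mathbb Z)$ is torsion free, so $\operatorname{NS}(\mathcal W_s)$ is torsion free, and since $H^1(\mathcal W_s,\mathcal O_{\mathcal W_s})=0$ we get $\operatorname{Pic}(\mathcal W_s)=\operatorname{NS}(\mathcal W_s)$; in particular numerical and linear equivalence of divisors coincide on $\mathcal W_s$. Now $\mu_s^\ast(mK_{\mathcal Z_s})$ is Cartier and numerically trivial on $\mathcal W_s$ (it meets every $\mu_s$-contracted curve in degree $0$, and any other curve in degree $mK_{\mathcal Z_s}\cdot(\text{its image})=0$), hence $\mu_s^\ast(mK_{\mathcal Z_s})\sim 0$. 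Pushing forward and using the projection formula together with $\mu_{s\ast}\mathcal O_{\mathcal W_s}=\mathcal O_{\mathcal Z_s}$ yields $\mathcal O_{\mathcal Z_s}(mK_{\mathcal Z_s})\cong\mathcal O_{\mathcal Z_s}$, that is $mK_{\mathcal Z_s}\sim 0$, and hence $mK_X\sim 0$. So $l(d):=m$ works.

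I expect the main obstacle to be the bookkeeping needed to make the boundedness input uniform: arranging, after a harmless stratification of $S$ and the usual noetherian induction, that a \emph{single} $m$ makes $mK_{\mathcal Z_s}$ Cartier on every fibre simultaneously, and that a simultaneous resolution is available. Once that is set up, the only genuinely geometric input is that a resolution of a rationally connected variety is simply connected, which makes numerical triviality of a Cartier divisor equivalent to linear triviality.
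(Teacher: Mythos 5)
Your proposal is correct, and its ``only if'' direction takes a genuinely different route from the paper's. Both arguments share the same first step: pass to the family of Remark~\ref{def:bound.mod.flops}, reduce the claim to the fibres $\mathcal{Z}_s$ isomorphic in codimension one to members of $\mathfrak{C}$, and use \cite{Bir16a}*{Lemma 2.25} to get one integer $I$ with $I K_{\mathcal{Z}_s}$ Cartier for all relevant $s$ (strictly speaking this integer depends on the chosen bounded family rather than only on $d$, which is all that is needed). From there the paper argues ``horizontally'' in the family: it stratifies $S$, arranges density of the relevant locus and $\mathbb{Q}$-Cartierness of $K_{\mathcal Z}$, invokes invariance of plurigenera (\cite{MR3779687}*{Theorem 4.2}) to make $h^0(\mathcal{Z}_s,\mathcal{O}(mIK_{\mathcal{Z}_s}))$ constant on each connected component, and then propagates torsionness from one relevant point per component, uniformity coming from the finiteness of the components. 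You instead prove the stronger fibrewise fact that for a klt rationally connected variety with $K\equiv 0$ the Cartier index of $K$ is already its torsion index: a resolution $\mathcal{W}_s$ is rationally connected --- this uses the theorem of Hacon--M\textsuperscript{c}Kernan that resolutions of rationally chain connected klt varieties are rationally connected, which you assert but should cite, as it is the one nontrivial input not appearing in the paper's proof --- hence simply connected with $H^1(\mathcal{O}_{\mathcal{W}_s})=0$, so numerically trivial line bundles on $\mathcal{W}_s$ are trivial, and pulling back $I K_{\mathcal{Z}_s}$ and pushing forward gives $IK_{\mathcal{Z}_s}\sim 0$ directly. What your route buys is a cleaner and purely fibrewise argument: no simultaneous resolution (your stratification and simultaneous log resolution are in fact unnecessary), no density or smoothness bookkeeping, no per-component integers $m_i$ (and hence none of the paper's minor maximum-versus-common-multiple issue); what the paper's route buys is that it only uses deformation-theoretic tools already set up elsewhere in the paper and needs nothing about fundamental groups or rational connectedness of resolutions.
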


\begin{proof}
If there exists $l=l(d)$ such that $lK_X \sim 0$ for any $X \in \mathfrak{C}$, then the boundedness up to flops of $\mathfrak{C}$ is a consequence of Theorem~\ref{rc.cy.log.bound.thm}.
\newline
Let us assume that $\mathfrak{C}$ is bounded up to flops.
Hence, there exists  $h\colon \mathcal{Z}\rightarrow S$ a projective morphism of schemes of finite type such that for each $X\in \mathfrak{C}$ is isomoprhic in codimension one to $\mathcal{Z}_s$ for some closed point $s\in S$ and, moreover, $\mathcal{Z}_s$ is normal.
As $lK_{\mathcal{Z}_s} \sim 0$ if and only if $lK_X \sim 0$, it suffices to show that there exists a positive integer $l$ such that $lK_{\mathcal{Z}_s} \sim 0$ for any $s \in S$.
Moreover, by~\cite{Bir16a}*{Lemma 2.25} we can assume that there exists $I=I(\mathfrak C)$ such that $IK_{\mathcal{Z}_s}$ is Cartier for any $s \in S$.
\newline
Decomposing $S$ into a finite union of locally closed subsets and possibly discarding some components, we may assume that $S$ is smooth and that every fibre $\mathcal{Z}_s$ is a normal variety.
Up to shrinking $S$, we can also assume that the set 
\begin{align*}
    S' := 
    \left\{
    s\in S \ \vert \ 
    \exists X \in \mathfrak{C} 
    \textrm{ such that } X \textrm{ is isomorphic in codimension one to } 
    \mathcal{Z}_s
    \right\}
\end{align*}
is Zariski dense in $S$.
Furthermore, by~\cite{HX}*{Proposition 2.4}, decomposing $S$ into a finite union of locally closed subsets and discarding those components that do not contain points of $S'$, we can assume that $K_\mathcal{Z}$ is $\mathbb{Q}$-Cartier.

\medskip

{\bf Claim}. 
{\it Up to decomposing $S$ into a finite union of locally closed subsets, we may assume that for any  connected component $\bar{S}$ of $S$, $h^0(\mathcal{Z}_s, \mathcal{O}_{\mathcal{Z}_s}(mI(K_{\mathcal{Z}_s})))$ is independent of $s \in \bar{S}$, for all $m >0$}.
\begin{proof}
Up to decomposing $S$ into a finite union of locally closed subsets, we may assume that there exists a log resolution $\psi\colon (\mathcal{Z}', \mathcal{B}') \to \mathcal Z$ of $(\mathcal Z, 0)$, where $\mathcal{B}'$ is the exceptional divisor of $\psi$.
Furthermore, we may also assume that for any $s \in S$, $(\mathcal{Z}'_s, \mathcal{B}'_s)$ is a log resolution of $\mathcal{Z}_s$.
In particular, for any $s \in S$, for all $m > 0$
\begin{align*}
H^0(\mathcal{Z}_s', \mathcal{O}_{\mathcal{Z}_s'}(m(K_{\mathcal{Z}_s'}+\mathcal{B}_s'))) = 
H^0(\mathcal{Z}_s, \mathcal{O}_{\mathcal{Z}_s}(mK_{\mathcal{Z}_s})).
\end{align*}
The conclusion then follows from~\cite{MR3779687}*{Theorem 4.2}.
\end{proof}
\noindent
At this point, we discard those connected components of $S$ that do not contain points of $S'$.
By construction then, given a connected component $S_i$ of $S$, there is a point $s \in S'\cap S_i$; if $m_i$ is a positive integer such that $m_iK_{\mathcal{Z}_s} \sim 0$, the claim above implies that 
\begin{align}
\label{inv.plurig.cy.eqn}
h^0(\mathcal{Z}_s, \mathcal{O}_{\mathcal{Z}_s}(m_iK_{\mathcal{Z}_s}))=1, \textrm{ for all $s \in S_i$.}
\end{align} 
We define $l$ to be the maximum of the positive integers $m_i$ just defined.
This is well defined since $S$ has only finitely many connected components, being of finite type.
As for any $s \in S'$, $K_{\mathcal{Z}_s} \sim_{\mathbb{Q}} 0$, then~\eqref{inv.plurig.cy.eqn} implies that $lK_{\mathcal{Z}_s} \sim 0$.
\end{proof}

\section{Elliptic Calabi--Yau varieties with a rational section}
\label{ell.cy.sect}

In this section we will prove some results regarding the geometric structure of elliptic Calabi--Yau varieties and of the bases of such fibrations that will be needed in the proof of the main theorems of this paper.

\subsection{Birational geometry of bases of fibred Calabi--Yau varieties} \label{base.simply.cy.sect}

In this subsection we prove that the base of a Calabi--Yau manifold endowed with a non-birational fibration is rationally connected.

Koll\'ar and Larsen,~\cite{MR2641190}*{Theorem 3}, proved that a simply connected  smooth (or canonical) K-trivial variety $Y$ endowed with a dominant rational map $m \colon Y \dashrightarrow X$ to a non-uniruled variety $X$ is isomorphic to a product $Y \simeq Y_1 \times Y_2$, where the map $m$ restricted to $Y_2$ induces a dominant generically finite map 
$Y_2 \dashrightarrow X$.
In the case of a fibred Calabi--Yau manifold, such a product cannot exist, as we now show.
We remind the reader that in this paper a Calabi--Yau manifold 
$Y$ 
is assumed to be simply connected and to have vanishing Hodge numbers 
$h^{i, 0}$ 
for 
$1 \leq i \leq \dim X -1$.

\begin{corollary}\label{rc.base.cor}
Let $Y$ be a projective Calabi--Yau manifold. 
Assume that $Y$ is endowed with a morphism $f \colon Y \to X$ of relative dimension $0< d < \dim Y$.
Then $X$ is rationally connected.
\end{corollary}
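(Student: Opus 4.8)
The plan is to argue by contradiction, using the Koll\'ar--Larsen structure theorem recalled just above together with the Hodge-theoretic rigidity of Calabi--Yau manifolds. Assume $X$ is not rationally connected. I would first pass to the maximal rationally connected (MRC) fibration $\pi\colon X\dashrightarrow X'$, whose base $X'$ then has $\dim X'\geq 1$, and recall (Graber--Harris--Starr) that the base of an MRC fibration is never uniruled. Composing with $f$ produces a dominant rational map $g:=\pi\circ f\colon Y\dashrightarrow X'$ from the smooth Calabi--Yau variety $Y$ — in particular $Y$ is simply connected and $K_Y\sim 0$ — onto the non-uniruled variety $X'$.

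Next I would invoke Koll\'ar--Larsen \cite{MR2641190}*{Theorem 3}: since $Y$ is simply connected, smooth and $K$-trivial and $g$ is a dominant rational map to a non-uniruled variety, $Y$ splits as a product $Y\simeq Y_1\times Y_2$ in such a way that the restriction of $g$ to a general fibre $\{pt\}\times Y_2$ is dominant and generically finite onto $X'$. In particular $\dim Y_2=\dim X'$. The key numerical point is that this forces $0<\dim Y_2<\dim Y$: the lower bound holds because $\dim X'\geq 1$ (as $X$ is not rationally connected), and the strict upper bound because $\dim Y_2=\dim X'\leq\dim X=\dim Y-d$ and $d\geq 1$.

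Finally I would extract the contradiction from the cohomological constraints defining a Calabi--Yau manifold. Since $\omega_Y\cong\mathcal O_Y$ and $\omega_{Y_1\times Y_2}\cong\mathrm{pr}_1^\ast\omega_{Y_1}\otimes\mathrm{pr}_2^\ast\omega_{Y_2}$, both $\omega_{Y_i}$ are trivial; hence by Serre duality $h^{\dim Y_2}(Y_2,\mathcal O_{Y_2})=h^0(Y_2,\omega_{Y_2})=1$, and the K\"unneth formula gives
\[
H^{\dim Y_2}(Y,\mathcal O_Y)\ \supseteq\ H^0(Y_1,\mathcal O_{Y_1})\otimes H^{\dim Y_2}(Y_2,\mathcal O_{Y_2})\ \neq\ 0 ,
\]
which contradicts $H^i(Y,\mathcal O_Y)=0$ for $0<i<\dim Y$ because $0<\dim Y_2<\dim Y$. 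Therefore $X$ is rationally connected.

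I expect the only genuinely subtle point to be the reduction to a non-uniruled target: $X$ itself need not be non-uniruled, so one cannot apply Koll\'ar--Larsen to $f$ directly and must instead route through the MRC fibration, using Graber--Harris--Starr to ensure its base is non-uniruled. Once that reduction is in place, the remainder is dimension bookkeeping together with the standard K\"unneth and Serre-duality computation, so no further difficulty is anticipated.
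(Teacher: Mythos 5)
Your proposal is correct and follows essentially the same route as the paper: assume $X$ is not rationally connected, pass to the MRC fibration to get a non-uniruled positive-dimensional target, apply Koll\'ar--Larsen to split $Y\simeq Y_1\times Y_2$, and derive a contradiction with the Calabi--Yau vanishing $H^i(Y,\mathcal{O}_Y)=0$ for $0<i<\dim Y$ via K\"unneth. The only cosmetic difference is that you phrase the final contradiction through $H^{\dim Y_2}(Y,\mathcal{O}_Y)$ and Serre duality, whereas the paper uses the Hodge number $h^{\dim Y_1,0}(Y)$; these are equivalent.
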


\begin{proof}
Let us assume that 
$X$ 
is not rationally connected. 
Considering the MRC fibration of 
$X$, 
$X \dashrightarrow W$, 
its image 
$W$ 
is a non-uniruled variety of positive dimension by~\cite{GHS}*{Corollary~1.4}.
Hence, by the results of Koll\'ar and Larsen 
$Y$ 
would have to be isomorphic to a product 
$Y \simeq Y_1 \times Y_2$, 
with 
$\dim Y_1, \dim Y_2 >0$. 
As 
$K_Y\sim 0$, 
K\"unneth's formula for Hodge numbers implies that 
$h^0(K_{Y_1})=1=h^0(K_{Y_2})$.
As 
$h^0(K_{Y_1})=h^{\dim Y_1, 0}(Y_1)$, 
pulling back from 
$Y_1$ 
to 
$Y$, 
we get that 
$h^{\dim Y_1, 0}(Y) \neq 0$, 
which contradicts 
$Y$ 
being Calabi-Yau, since 
$\dim Y_1 < \dim Y$.
\end{proof}

Slightly more general versions of this result have appeared also in~\cite{1808.01115}*{Theorem~1.1} and~\cite{Lin20}*{Theorem~1.4}.

\subsection{Boundedness for elliptic Calabi--Yau varieties over bounded bases}
The main goal of this subsection is to prove a generalization of~\cite{DCS}*{Theorem~1.3}.
In what follows, by a rational section $X \dashrightarrow Y$ of a contraction $f\colon Y \to X$, we will mean a rational map $s \colon X \dashrightarrow Y$ such that $f \circ s$ is the identity on $X$.
With this notation, we will denote by $S$ the Zariski closure of the image of $X$ via $s$ and by $S^\nu$ the normalization $\nu \colon S^\nu \to S$.

\begin{theorem}
\label{ell.cy.bound.base.thm}
Fix positive integers $n, d, l$.
Then the set of varieties $Y$ such that
\begin{enumerate}
    \item $Y$ is a proper klt variety of dimension $n$,
    \item $lK_Y \sim 0$,
    \item $Y \to X$ is an elliptic fibration with a rational section $X \dashrightarrow Y$, 
    \item there exists a very ample Cartier divisor $A$ on $X$ with $A^{n-1} \leq d$
\end{enumerate}
is bounded up to isomorphism in codimension one.

If, moreover, in (1) we further assume that 
$Y$ 
is projective, 
then the set of varieties satisfying the conditions above is bounded up to flops.
\end{theorem}

The existence of a very ample Cartier divisor $A$ with bounded volume implies that the set of bases of the elliptic fibrations is bounded.

\begin{proof}
For the reader's convenience, we divide the proof in several steps. 

We first show that any proper variety 
$Y$ 
satisfying conditions 
(1)-(4)
in the statement of the theorem is isomorphic in codimension one to a projective klt variety 
$Y'$ 
that also satisfies properties 
(2)-(3).
That implies that it suffices to consider the case where $Y$ is projective in the proof of the theorem.
To show the above statement, let us consider a log resolution
$r \colon \widetilde Y \to Y$ 
of 
$Y$
with 
$\widetilde Y$ 
projective, which exists by Hironaka's resolution of singularities and Chow lemma.
As 
$Y$, 
by assumption is endowed with an elliptic fibration with a rational section over 
$X$, 
then the same holds for 
$\widetilde Y$.
Writing 
$K_{\widetilde Y}+\widetilde B=r^\ast(K_Y)+G$, 
then 
$widetilde B$, $G$
are effective 
(but possibly $0$) 
with no common components and 
$\lfloor \widetilde B \rfloor=0$.
Moreover, since 
$lK_Y \sim 0$
then for all 
$0< \epsilon \ll 1$, 
$K_{\widetilde Y}+(1+\epsilon)\widetilde B 
\sim_{\mathbb R} 
\epsilon \widetilde B + G$.
As 
$ \epsilon \widetilde B + G$
is 
$r$-exceptional, 
then 
$K_{\widetilde Y}+(1+\epsilon)\widetilde B$
is linearly trivial along the general fiber of the morphism 
$\widetilde Y \to X$.
Hence, by~
there exists a projective variety 
$Y'$
that will be a
$(K_{\widetilde Y}+(1+\epsilon)\widetilde B)$ good minimal model over 
$X$.
Moreover, since 
$ \epsilon \widetilde B + G$
is very exceptional for 
$\widetilde Y \to X$ 
by construction of 
$\widetilde Y$,
cf. Definition~\ref{deg.div.def},
it then follows from 
Lemma~\ref{deg.div.cy.lem} 
that the support of 
$ \epsilon \widetilde B + G$
is exceptional for the birational contraction (over $X$)
$\widetilde Y \dashrightarrow Y'$.
Hence, 
$K_{Y'} \sim_{\mathbb Q} 0$
and the condition 
$lK_Y \sim 0$
and the above observations immediately imply that
$lK_{Y'} \sim 0$.
The fact that $Y'$ satisfies the other conditions in the statement of the theorem immediately follow from our construction.

\medskip

{\bf Step $0$}. 
{\it In this step we show that there exists a $\mathbb{Q}$-factorialization $X'$ of $X$ which belongs to a bounded family and we construct an elliptic fibration $Y' \to X'$ with $Y'$ $\mathbb{Q}$-factorial and isomorphic to $Y$ in codimension one.}
\newline
As $X$ is the base of an elliptic fibration and $Y$ is klt, then $X$ supports an effective divisor $\Gamma$ such that $(X, \Gamma)$ is klt, see Theorem~\ref{bundle}.
By~\cite{MR3224718}*{Theorem~1.1} and~\cite{MR2448282}*{Theorem~8.1}, it is possible to choose $\Gamma$ so that its coefficients vary in a DCC set $I=I(n)$.
Hence, by~\cite{Bir16a}*{Lemma~2.48} it follows that there exists $\epsilon=\epsilon(n, I)$ such that $(X, \Gamma)$ is $\epsilon$-lc.
As the DCC set defined $I$ above only depends on $n$, and $\epsilon$ is a function of $n, I$, we can conclude that $\epsilon$ only depends on $n$.
\newline
Hence, by~\cite{BCHM}*{Corollary 1.4.3} $X$ admits a small $\mathbb{Q}$-factorialization $X' \to X$. 
$X'$ belongs to a bounded family, by Remark~\ref{rmk.q-factor.bound}.
In particular, there exists an integer $d'=d'(n, d, l)$ and a very ample Cartier divisor $A'$ on $X'$ such that $A'^{n-1} \leq d'$.
Applying Proposition~\ref{small.qfact.lcy.prop} with $D=0$, we see that there exists a $\mathbb{Q}$-factorial K-trivial klt variety $Y'$ isomorphic to $Y$ in codimension one and an elliptic fibration $f' \colon Y' \to X'$.
It is immediate to see that the strict transform $S'$ of $S$ on $Y'$ is a rational section of $f'$.
\\
We will denote by $A''$ the divisor $A'':=(4n+4)A'$. 

\medskip

{\bf Step $1$}. 
{\it In this step, we show that it suffices to prove that the set of varieties
$Y'$ 
constructed in Step 
$0$ 
is bounded up to flops}.
\newline
As 
$Y$ 
and 
$Y'$ 
are K-trivial varieties that are klt and isomorphic in codimension one,
the claim is immediately verified. 

\medskip

{\bf Step $2$}. {\it In this step, we show that there exists a log bounded set of morphisms $\mathfrak F$ and a 4-tuple $(f_{Y''}, Y'', 0, X') \in \mathfrak F$ together with a birational contraction $Y'' \dashrightarrow Y'$ over $X'$ that is crepant with respect to $(Y', 0)$.}

\medskip

The proof of Step 2 is divided into several steps (Step 2.1-2.6).
We start by introducing some notation.

\medskip

Let $(Y^t, \Delta^t)$ be a terminalization $t \colon Y^t \to Y'$ of $(Y', 0)$, $K_{Y^t}+\Delta^t = t^\ast K_{Y'} \sim_\mathbb{Q} 0$.
We denote with $S^t$ the strict transform of $S'$ on $Y^t$.
As $Y'$ is klt and $lK_{Y'} \sim 0$, then $\Delta^t \geq 0$ and its positive coefficients belong to the set $\{\frac 1 l, \frac 2 l, \dots, \frac{l-1}{l}\}$.
It follows from Definition~\ref{deg.div.def} that the exceptional divisor $E$ of $t$ is very exceptional for the composition $f' \circ t$, cf. Remark~\ref{q-fact.degen.rmk}. 
The support of $E$ consists of the support of $\Delta$ together with divisors of log discrepancy one for $(Y, 0)$.

\medskip

{\bf Step 2.1} { \it In this step, we show that there exists a $\bQ$-factorial projective variety $Z$ which is isomorphic in codimension one to $Y^t$, such that $(Z, 0)$ is terminal and the strict transform $S_Z$ of $S^t$ is big and nef over $X'$}.
\newline
Fix $0 < \epsilon \ll 1$ and run the $(K_{Y^t} + \Delta^t + \epsilon S^t)$-MMP over $X'$. 
As $(Y^t, \Delta^t + \epsilon S^t)$ is klt and $K_{Y^t}+ \Delta^t + \epsilon S^t \sim_\mathbb{R}\epsilon S^t$ is relatively big over $X'$, this run of the MMP must terminate with a minimal model $f_Z \colon Z \to X'$ for $K_{Y^t} + \Delta^t + \epsilon S^t$.
Denoting by $\Delta_Z$ (resp. $S_Z$) the strict transform on $Z$ of $\Delta^t$ (resp. $S^t$), it follows that $S_Z$ is nef and big over $X'$, since $K_{Y^t}+\Delta^t \sim_\bQ 0$ and $S^t$ is horizontal over $X'$.
Moreover, as $(Y^t, \Delta^t)$ is terminal and $K_{Y^t} + \Delta^t + \epsilon S^t \equiv \epsilon S^t$, it follows that every step of this MMP is a $(K_{Y^t}+\Delta^t)$-flop and $Z$ will also be terminal as no divisor is contracted.

\medskip

{\bf Step 2.2}. 
{\it In this step, we show that we can run the relative 
$(K_Z+S_Z)$-MMP 
over 
$X'$ 
with scaling of an ample divisor which terminates with a minimal model 
$f_{Z_1} \colon (Z_1, S_{Z_1}) \to X'$. 
We show that each step of this MMP is a 
$(K_{Z} +\Delta_Z)$-flop 
and that 
$(Z_1, \Delta_{1})$ is terminal.
Here 
$\Delta_{1}$ 
denotes the strict transform of 
$\Delta_Z$ 
on 
$Z_1$.
Moreover, 
$(Z_1, S_{Z_1})$ 
is plt, where 
$S_{Z_1}$ 
denotes the strict transform of 
$S_Z$ 
on 
$Z_1$
}.
\newline
As 
$S_Z$ 
is big and nef over 
$X'$, 
then it is relatively semiample, by the Base point free theorem~\cite{KM}*{Theorem 3.3}, as 
$\mu S_Z \sim_\mathbb{R} K_{Z} +\Delta_Z +\mu S_Z$.
Thus, we can choose 
$\bar{S} \sim_{f_Z, \bQ} S_Z$ 
such that
$\lfloor \bar{S} \rfloor =0$
and
$(Z, \bar{S})$ 
is terminal, by Bertini's theorem, since 
$(Z, 0)$ 
is terminal.
Thus, we can run the 
relative 
$(K_Z + \bar{S})$-MMP 
over 
$X'$ 
with scaling of an ample divisor which must terminate with a 
$\mathbb{Q}$-factorial 
minimal model 
$f_{Z_1} \colon Z_1 \to X'$, 
since 
$\deg((K_Z+\bar{S})\vert_F)=1$ 
along a general fibre 
$F$ 
of 
$f_Z$.
\newline 
We denote by 
$S_{Z_1}$ 
(resp. 
$\Delta_{Z_1}$) 
the strict transform of 
$S_Z$ 
(resp. 
$\Delta^t$) 
on 
$Z_1$.
It follows that $K_{Z_1}+S_{Z_1}$ is also big and nef over $X'$, as $S_Z \sim_\mathbb{Q} \bar{S}$.
As the pair $(Z, \bar{S})$ is terminal and $Z_1$ is a minimal model for the $(K_Z+\bar{S})$-MMP, then $(Z_1, 0)$ is terminal as well.
\newline
We denote by $\nu_{S_{Z_1}} \colon S_{Z_1}^\nu \to S_{Z_1}$ the normalization of $S_{Z_1}$.
By Lemma~\ref{section.lemma}, as $S_{Z_1}$ is a rational section for $f_{Z_1}$ and $Z_1$ is terminal, it follows that $(f_{Z_1} \circ \nu_{S_{Z_1}})_\ast \diff(0)=0$.
The negativity lemma then implies that 
\begin{equation}
\label{log.pullback.plt.eqn2}
    K_{S_{Z_1}^\nu}+\diff(0)=(f_{Z_1} \circ \nu_{S_{Z_1}})^\ast(K_{X'}) -E,
\end{equation}
where $E$ is an effective divisor exceptional over $X'$.
Hence, the pair $(S_{Z_1}^\nu, \diff(0))$ is klt, since $(X', 0)$ is. 
By inversion of adjunction, 
$S_{Z_1}$ 
is the only lc center of the pair 
$(Z_1, S_{Z_1})$
which is then plt, 
which in turn implies that 
$S_{Z_1}$ 
is normal. 

\medskip

{\bf Step 2.3}.
{\it Let $G_1 \in |f_{Z_1}^\ast(4n+4) A'|$ be a general member. In this step, we show that for $\lambda \in (0, 1)$, $(Z_1, \lambda (S_{Z_1}+G_1))$ is $(1-\lambda)$-klt and $K_{Z_1}+S_{Z_1}+G_1$ is big and nef}.
\newline
$(Z_1, S_{Z_1}+G_1)$ is dlt: in fact, $(Z_1, S_{Z_1})$ is plt, and $G_1$ is a general element of a base point free linear system; thus we can conclude by Bertini's Theorem for discrepancies. 
As discrepancies are linear functions of the divisorial part of a pair, and $(Z_1, 0)$ is terminal, the pair $(Z_1, \lambda (S_{Z_1}+G_1))$ is $(1-\lambda)$-klt. 
As $K_{Z_1}+S_{Z_1}$ is nef and big/$X'$ and $G_1 \sim f_{Z_1}^\ast A''$ where $A''$ is Cartier and very ample, the cone theorem implies that $K_{Z_1}+S_{Z_1}+G_1$ is nef and big. 

\medskip

{\bf Step 2.4}. 
{\it In this step, we show that there exists $C'=C'(n, d, l)$ such that $(K_{Z_1}+ S_{Z_1}+(n-2)G_1)^n \leq C'$}.
\newline
Let us denote by 
\[
\alpha = G_1, \; \beta=K_{Z_1}+S_{Z_1}+(n-2)G_1. 
\]
Then, 
\[
(\alpha^{n-2} \beta^2)^{n-1} \geq (\alpha^{n-1} \beta)^{n-2} \beta^{n},
\]
see~\cite{MR2095471}*{Theorem 1.6.3.(i)}.
Hence, to prove that the statement of Step 2.4 holds it suffices to show that there exists positive constants $C_1=C_1(n, d, l)$, $C_2=C_2(n, d, l)$ such that 
\begin{align*}
(\alpha^{n-2} \beta^2)^{n-1} \leq C_1 \ 
\text{and} \
(\alpha^{n-1} \beta)^{n-2} \geq C_2.
\end{align*}
To compute $C_2$, let us note that $0< \alpha^{n-1} \beta$ since $0 \neq G_1^{n-1}$ is a movable class while $K_{Z_1}+S_{Z_1}+G_1$ is big and nef, cf.~\cite{MR3019449}*{Theorem~0.2}.
Moreover,
\begin{align}
\label{eqn.alpha.beta.1}
\nonumber
0< \alpha^{n-1} \beta &= \\ 
\nonumber
G_1^{n-1} \cdot (K_{Z_1}+S_{Z_1}+(n-2) G_1) &= \\
G_1^{n-1} \cdot (K_{Z_1}+S_{Z_1}) &= & 
\hfill[\text{since } G_1^{n}=0]\\
\nonumber
G_1^{n-1} \cdot S_{Z_1} - G_1^{n-1} \cdot \Delta_{Z_1}. & &
\hfill[\text{since } K_{Z_1} \equiv -\Delta_{Z_1}]
\end{align}
As $G_1$ is Cartier and $G_1 \sim f^\ast_{Z_1} (4n+4)A'$,
\begin{align}
\label{eqn.alpha.beta.2}
G_1^{n-1} \cdot S_{Z_1} = 
(4n+4)^{n-1}A'^{n-1} \in \mathbb{N}_{>0},
\end{align}
where we used the fact that $S_{Z_1} \to X'$ is a birational map and $A'$ is very ample and Cartier on $X'$.
As the coefficients of $\Delta_{Z_1}$ belong to the set $\{ \frac 1 l, \frac 2 l, \dots \frac{l-1}{l}\}$, we can write $\Delta_{Z_1}$ as the sum of its prime components
$\Delta_{Z_1} = \sum_{j=1}^s c_j D_j$, 
where 
$c_j \in \left\{ \frac 1 l, \frac 2 l, \dots \frac{l-1}{l}\right\}$, 
and 
$D_j$ 
is a prime divisor for
$j=1, \dots, s$.
\newline
As $G_1$ is Cartier and nef, then
\begin{align}
\label{eqn.alpha.beta.3}
G_1^{n-1} \cdot \Delta_{Z_1} = 
\sum_{j=1}^s c_j (G_1^{n-1} \cdot D_j) 
\quad
\text{ and } 
\quad
G_1^{n-1} \cdot D_j \in \mathbb{N}.
\end{align}
By putting together~\eqref{eqn.alpha.beta.1}-\eqref{eqn.alpha.beta.3}, we obtain that 
\begin{align}
\label{eqn.alpha.beta.4}
\nonumber
0< \alpha^{n-1} \beta &= \\
G_1^{n-1} \cdot S_{Z_1} &- G_1^{n-1} \cdot \Delta_{Z_1} \ \in \frac 1 l \mathbb{N}_{>0}.
\end{align}
Hence, it suffices to take 
$C_2(n, d, l):=\frac 1 l$.
\newline
We now show the existence of the constant $C_1=C_1(n, d ,l)$ such that 
\begin{align*}
(\alpha^{n-2} \beta^2)^{n-1} \leq C_1.
\end{align*}
Let $\bar{A} \subset X'$ be a sufficiently general curve which is a complete intersection of $n-2$ elements of $|A''|$.
In particular, $\bar{A}$ belongs to a bounded family.
\newline
Since $X'$ is normal, $\bar{A}$ is a smooth curve and by the adjunction formula
\begin{align}
\nonumber
\deg_{\bar{A}} K_{\bar{A}} = & (K_{X'} + (n-2) A'') \cdot (A'')^{n-2} & \\
\label{adj.eqn1}
\leq & (n-2)A''^{n-1} & [\text{as } K_{X'} \leq  0 \text{ and } A'' \text{ is ample}]\\ 
\nonumber
\leq & d'(n-2) (4n+4)^{n-1} & [A''=(4n+4)A', \  {\rm Vol}(A')\leq d'].
\end{align}
Let us define $T := Z_1 \times_{X'} \bar{A}$ and $S_T := S_{Z_{1}}|_{T}$.
As $T$ is the complete intersection of $n-2$ sufficiently general divisors in a base point free linear system and $(Z_1, 0)$ is terminal, Bertini's theorem and adjunction imply that $T$ is smooth and $(T, S_T)$ is plt.
Moreover, $T$ is an elliptic surface over $\bar A$, by construction admitting the section $S_T$.
We will denote by $F$ the numerical class of a fibre of the morphism $f_T:= f_{Z_1}\vert_T \colon T \to \bar{A}$.
With this notation, then
\begin{equation}
\label{alpha.beta.eqn1}
\alpha^{n-2} \beta^2 = (K_{Z_1}+S_{Z_1}+(n-2)G_1)^2 \cdot T = ((K_{Z_1}+S_{Z_1}+(n-2)G_1)|_T)^2.
\end{equation}
By the adjunction formula, 
\begin{align}
\label{eqn.adj.formula.T}
 K_T=& (K_{Z_1}+(n-2)G_1)|_T.
\end{align}
Hence,
\begin{align}
\nonumber
 K_T=  
 & 
 (K_{Z_1}+(n-2)G_1) \cdot  T 
 &  
[\text{by } \eqref{eqn.adj.formula.T}]\\
\nonumber  
= 
& 
K_{Z_1} \cdot T +(n-2)(G_1 \cdot  T) 
&  
\\
\label{adj.eqn.2} 
= & 
K_{Z_1} \cdot G_1^{n-2} +(n-2)(G_1 \cdot  T) &  
[T=f_{Z_1}^\ast \bar{A}\equiv f_{Z_1}^\ast A''^{n-2} , \ G_1 \sim f_{Z_1}^\ast A''] \\
\nonumber
\equiv & 
-\Delta_{Z_1} \cdot G_1^{n-2} + (n-2)(A''^{n-1}) F & 
[K_{Z_1} + \Delta_{Z_1} \sim_\bQ 0, \ G_1|_{T} \equiv (A''^{n-1}) F]\\
\nonumber 
\leq & 
 d'(n-2)(4n+4)^{n-1}F & 
[-\Delta_{Z_1} \cdot G_1^{n-2} \leq 0 \ \text{and}\\
\nonumber && A''^{n-1} \leq d'(4n+4)^{n-1}]
\end{align}
Thus, we can rewrite~\eqref{alpha.beta.eqn1}, as
\[
\alpha^{n-2} \beta^2 =  
(K_{T}+S_{T})^2 \leq (K_{T}+S_{T}) \cdot (d'(4n+4)^{n-1}(n-2)F + S_T).
\]
As the general fibre of $T \to \bar{A}$ has genus $1$ and $S_T$ is a section, $(K_{T}+S_{T}) \cdot F = 1$.
On the other hand, the adjunction formula implies that
\begin{align*}
& (K_{T}+S_{T}) \cdot S_T =
 \deg_{S_T} K_{S_T} = 
 \deg_{\bar{A}} K_{\bar{A}} \leq 
d'(n-2)(4n+4)^{n-1}
\end{align*}
since $S_T \to \bar{A}$ is an isomorphism; the displayed inequality is just~\eqref{adj.eqn1}.
Hence, taking $C_1 = 2d'(n-2)(4n+4)^{n-1}$ proves the claim. 

\medskip

{\bf Step 2.5}. 
{\it 
In this step, we show that there exists a log bounded set of morphisms 
$\mathfrak{F}_1$ 
and a 4-tuple 
$(f_{Z'}, Z', G' +\Delta', X') 
\in \mathfrak{F}_1$ 
such that 
$Z'$ 
is 
$\mathbb{Q}$-factorial 
and isomorphic in codimension one to the variety 
$Z$ 
constructed in Step 2.1}.
\newline
We know that 
$(Z_1, 0)$ 
is terminal, 
$(Z_1, \Delta_1)$ 
is 
$\frac 1 l$-klt since $l(K_{Z_1}+\Delta_1)\sim 0$, 
while 
$(Z_1, S_{Z_1} + G_1)$ 
is dlt.
Hence, 
$(Z_1, \frac 1 2(S_{Z_1} + G_1))$ 
is 
$\frac 1 2$-klt 
and 
$(Z_1, \frac 1 2(\Delta_1+ S_{Z_1} + G_1))$ 
is 
$\frac{1}{2l}$-klt.
\newline
By Step 2.4, we have that
$\vol(K_{Z_1}+ \frac 1 2(S_{Z_1} + G_1)) \leq C'$.
Moreover, the 
$(K_{Z_1}+ \frac 1 2(S_{Z_1} + G_1))$-MMP terminates with a minimal model 
$Z_1 \dashrightarrow Z_2$. 
Since $G_1 \sim \vert f^\ast_{Z_1} (4n+4) A'\vert$ and $A'$ is Cartier and very ample on $X'$, this run of the MMP is over $X'$ and moreover $\vol(K_{Z_1}+ \frac 1 2(S_{Z_1} + G_1)) >0$.
By 
$S_{Z_2}$ 
(resp. 
$G_2$) 
we denote the strict transform of 
$S_{Z_1}$ 
(resp. 
$G_1$) 
on 
$Z_2$.
Hence, Theorem~\ref{mst+fil.thm} implies that the set of all pairs of the form 
$(Z_2, \frac 1 2(S_{Z_2} + G_2))$
that we just constructed 
is strongly log bounded.
\newline
Denoting with  $\Delta_2$ the strict transform of $\Delta_1$ on $Z_2$, it follows that $(Z_2, \Delta_2)$ is a log Calabi--Yau pair such that $l(K_{Z_2}+\Delta_2) \sim 0$.
As $Z_2$ is bounded, there exists a very ample Cartier divisor $H$ and a constant $C_2=C_2(n, d, l)$ such that 
\begin{align*}
|K_{Z_2} \cdot H^{n-1}| \leq C_2.
\end{align*}
As $K_{Z_2}+\Delta_2 \sim_\bQ 0$, it follows that also 
\begin{align*}
\Delta_2 \cdot H^{n-1} \leq C_2, 
\end{align*}
so that $(Z_2, \Delta_2 + G_2)$ is bounded since the coefficients of $\Delta_2$ belong to the set $\{\frac 1 l, \frac 2 l, \dots, \frac{l-1}{l}\}$.
The birational map $Z \dashrightarrow Z_2$ which is the composition of the birational contractions $Z \dashrightarrow Z_1$ (Step 2.2) and $Z_1 \dashrightarrow Z_2$ is in turn a birational contraction.
All these maps are by construction maps over $X'$. 
\newline
As $K_{Z}+\Delta_Z \sim_\mathbb{Q} 0$, it follows from Theorem~\ref{bir.bound.fibr.thm} that there exists a $\mathbb{Q}$-factorial variety $Z'$ with a partial resolution over $X'$ that is isomorphic to $Z$ in codimension one and such that the pair $(Z', \Delta'+G')$ belongs to a bounded family, where we denote by $\Delta'$ the strict transform of $\Delta_Z$ and by $G'$ the pullback on $Z'$ of $G_2$.
As $Z'$ is a partial resolution of $Z_2$ over $X'$, it follows that there exists a morphism $f_{Z'} \colon Z' \to X'$ which is the Iitaka fibration of the semiample divisor $K_{Z'}+\Delta'+ \frac{1}{2}G_1'$. 
Thus Lemma~\ref{bound.morph.lemma} implies that the set $\mathfrak{F}_1$ of 4-tuples $(f_{Z'}, Z', \Delta'+G', X')$ is a log bounded set of morphisms. 

\medskip

{\bf Step 2.6}. {\it In this step, we construct the set $\mathfrak F$ whose existence is claimed in the statement of Step 2}.
\newline
By construction, $(Z', \Delta')$ is isomorphic in codimension one to $(Y^t, \Delta^t)$, by Step 2.1, as it is isomorphic in codimension one to $(Z, \Delta_Z)$. 
As $\Delta^t$ is exceptional over $Y'$, by Remark~\ref{q-fact.degen.rmk} $\Delta'$ is very exceptional for $f_{Z'}$.
Applying Proposition~\ref{MMP.deg.bound.prop}, it follows that the set 
\begin{align*}
    \mathfrak{F}:= &
    \{(f_{Y''}, Y'', 0, X')\;\vert \; \exists (f_{Z'}, Z', \Delta'+G', X') \in \mathfrak{F}' \textrm{ such that } f_{Y''} \colon Y'' \to X' \\
    & \textrm{ is good minimal model for } (Z', (1+\epsilon)\Delta'+G') \textrm{ over } X', \; 0< \epsilon \ll 1\}
\end{align*}
is a log bounded set of morphisms. 

\medskip

{\bf Step 3}. 
{\it In this step we show that $Y'$ is bounded up to flops}.
\newline
We have shown in Step 2 that there exists a log bounded set of morphism $\mathfrak F$ and a 4-tuple $(f_{Y''}, Y'', 0, X') \in \mathfrak F$, together with a birational contraction $Y'' \dashrightarrow Y'$ over $X'$ which is a birational contraction is crepant with respect to $(Y', 0)$.
\newline
Let $D''$ be the exceptional divisor for the map $Y'' \dashrightarrow Y'$.
Remark~\ref{q-fact.degen.rmk} implies that $D''$ is very exceptional for $f_{Y''}$.
By Lemma~\ref{degen.bound.lemma} the set 
\begin{align*}
    \mathfrak{F}_{\rm deg}:=
    & \{(f_{Y''}, Y'', D, X')\;\vert \; (f_{Y''}, Y'', 0, X') \in \mathfrak{F} \textrm{ and }
    \\
    & \; D \textrm{ is very exceptional for } f_{Y''}\}
\end{align*}
is a log bounded set of morphisms, and $(f_{Y''}, Y'', D'', X') \in \mathfrak{F}_{\rm deg}$ by construction. 
Applying Proposition~\ref{MMP.deg.bound.prop} it follows that the set 
\begin{align*}
    \mathfrak{F}_{\rm deg}:= &
    \{(f_{Y'''}, Y''', 0, X')\;\vert \; \exists (f_{Y''}, Y'', D'', X') \in \mathfrak{F}_{\rm deg} \textrm{ such that } \\
    & f_{Y'''} \colon Y''' \to X' \textrm{ is good minimal model for } (Y'', D'') \textrm{ over } X'\}
\end{align*}
is a log bounded set of morphisms.
As $f_{Y'''} \colon Y''' \to X'$ is a relatively good minimal model for $(Y'', D'')$ over $X'$ and $D''$ is the exceptional divisor of the birational contraction $Y'' \dashrightarrow Y'$, it follows that $Y'$ is isomorphic in codimension one to $Y'''$. 
As $Y'''$ belongs to a bounded family, see Remark~\ref{bound.morph.imply.bound.rmk}, the conclusion follows.
\end{proof}

\subsection{Finiteness of index for bases of elliptic Calabi--Yau varieties}

Given an elliptic fibration $f \colon Y \to X$ where $K_Y \sim_{f, \mathbb{Q}} 0$, using the canonical bundle formula, see \S~\ref{sect.cbf}, we can write
\[
K_Y \sim_{\mathbb{Q}} f^\ast (K_{X}+B_{X}+M_{X}). 
\]
\begin{remark}
\label{rmk.coeff.ell.fibr}
Since 
$f$ 
is an elliptic fibration we know that the coefficients of 
$B_{X}$ 
vary in the DCC subset 
$\mathcal R = \{1-\frac1m \ \vert \ m \in \mathbb N_{>0}\} \cup \{ \frac 16, \frac 14, \frac 13, \frac 12, \frac 56, \frac 34, \frac 23\} \subset \mathbb Q \cap (0,1)$, 
cf.~\cite{MR2448282}*{Example 3.1}. 
Moreover, on a sufficiently high birational model of $X$, say, $r \colon X' \to X$, the moduli part $M_{X'}$ becomes semi-ample: more precisely, $|12M_{X'}|$ is base point free, cf.~\cite{MR2448282}*{Example 7.16}.
Hence, we can choose $\overline{M}' \sim 12 M_{X'}$ and define $\overline{M} := r_\ast \overline{M}'$ such that $(X, B_{X} + \frac 1 {12} \overline{M})$ is klt and $12(K_{X}+B_{X}+M_{X}) \sim 12(K_{X} + B_{X} + \frac{1}{12} \overline{M})$.
\end{remark}

\begin{theorem}\label{index.fibr.thm}
Fix positive integers $d, l$.
Let $f \colon Y \to X$ be a contraction of normal projective varieties  such that
\begin{itemize}
    \item $Y$ is of dimension $d$,
    \item $(Y, 0)$ is klt,
    \item $lK_Y \sim 0$, and
    \item $f \colon Y \to X$ is an elliptic fibration.
\end{itemize}
Then there exists $m=m(d, l)$ such that exactly one of the following cases is realized:
\begin{enumerate}
\item[(a)] $mK_X \sim 0$; or
\item[(b)] there exists an effective divisor $B>0$ on $X$, $(X, B)$ is a klt log Calabi--Yau pair and $m(K_X+B) \sim 0$.
\end{enumerate}

The same conclusion holds also if 
$Y$
is only assumed to be proper.

\end{theorem}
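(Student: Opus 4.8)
Since $f$ is an elliptic fibration, the general fibre $F$ is a smooth curve of genus one, so $K_F\sim 0$; hence, in the notation of Remark~\ref{cbf.lin.eq.rmk} applied to $(Y,0)$, the integer $r$ equals $1$, and the refined canonical bundle formula is the honest linear equivalence
\[
K_Y\sim f^\ast(K_X+B_X+M_X),
\]
with $B_X$ the boundary and $M_X$ the moduli part of $f$. By Remark~\ref{rmk.coeff.ell.fibr} the coefficients of $B_X$ lie in a fixed DCC set of rationals, and on a sufficiently high birational model $X'\to X$ the moduli b-divisor descends with $|12M_{X'}|$ base point free; so I may fix an effective divisor $\overline{M}$ such that, setting $B:=B_X+\tfrac1{12}\overline{M}\ge 0$, the pair $(X,B)$ is klt and $12(K_X+B_X+M_X)\sim 12(K_X+B)$.

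Next I would push this relation forward to $X$. Pulling back, $f^\ast(12(K_X+B))\sim 12K_Y$, so $f^\ast(12l(K_X+B))\sim 12lK_Y\sim 0$. The divisor $12l(K_X+B)$ is an honest integral Weil divisor: the Kodaira-type contributions to $B_X$ have denominator dividing $12$, a multiple fibre of multiplicity $m'$ contributes $1-1/m'$ with $m'\mid 12l$ (restrict $f$ to a general complete-intersection curve through the relevant divisor and use $lK_Y\sim 0$), and $\tfrac1{12}\overline{M}$ has coefficients in $\tfrac1{12}\bZ$. Since $f_\ast\mathcal{O}_Y=\mathcal{O}_X$, and $f_\ast\mathcal{O}_Y(f^\ast(12l(K_X+B)))=\mathcal{O}_X(12l(K_X+B))$ because both sides are reflexive and agree over the codimension-one locus where everything is Cartier, we conclude $12l(K_X+B)\sim 0$. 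Set $m:=12l=m(d,l)$.

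Finally the dichotomy is read off from $B$. If $B=0$, equivalently $B_X=0$ and $\overline{M}=0$, then $mK_X\sim 0$, which is case (a); otherwise $B>0$, $(X,B)$ is a klt log Calabi--Yau pair, and $m(K_X+B)\sim 0$, which is case (b). The two alternatives are mutually exclusive, since $mK_X\sim 0$ together with $m(K_X+B')\sim 0$ for some effective $B'>0$ would give $mB'\sim 0$ with $mB'$ a nonzero effective divisor, which is absurd; hence exactly one of them occurs. The substantive input — the only real obstacle — is the uniform control of the moduli part; for elliptic fibrations this is classical, being exactly the semi-ampleness of $12M_{X'}$ recorded in Remark~\ref{rmk.coeff.ell.fibr} (alternatively Corollary~\ref{cbf.triv.cor} applied to the bounded family of genus-one curves when $M_X\equiv 0$), and the remaining delicate point is the denominator bookkeeping above which ensures that the pushforward argument yields a genuine linear equivalence and not merely a $\bQ$-linear one.
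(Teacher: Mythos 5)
Your overall route is the same as the paper's: apply the refined canonical bundle formula of Remark~\ref{cbf.lin.eq.rmk} with $r=1$, use the semiampleness of $12M_{X'}$ recorded in Remark~\ref{rmk.coeff.ell.fibr} to replace the moduli part by an effective divisor $\frac{1}{12}\overline{M}$ with $(X,B_X+\frac{1}{12}\overline{M})$ klt, push the trivialization $12lK_Y\sim 0$ down from $Y$ to $X$, take $m=12l$, and read off the dichotomy from whether $B$ vanishes. The mutual exclusivity remark is fine.

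The one place where you go beyond the paper is the ``denominator bookkeeping'', and the justification you give there does not work. Restricting $f$ to a general complete-intersection curve $\bar{A}\subset X$ produces an elliptic surface $T\to\bar{A}$ whose canonical class is $(K_Y+(\dim Y-2)H)\vert_T$ for the very ample $H$ you cut with, so the hypothesis $lK_Y\sim 0$ is destroyed by the restriction and imposes no constraint on the local multiplicities: elliptic surfaces with non-torsion canonical class (Dolgachev surfaces, say) have multiple fibres of arbitrary multiplicity, while the germ of $f$ over the generic point of the relevant divisor is unchanged by your cut. So the claim $m'\mid 12l$ is not established this way (and in the klt setting the coefficients of $B_X$ have the shape $1-\frac{1}{m'}+\frac{k}{12m'}$, so they do not split cleanly into a ``Kodaira part with denominator $12$'' plus a ``multiplicity part''). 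Fortunately the whole step is unnecessary. Remark~\ref{cbf.lin.eq.rmk} with $r=1$ gives an honest linear equivalence $K_Y\sim f^{\ast}(K_X+B_X+M_X)$ in which the pullback is an integral divisor; hence $f^{\ast}\bigl(l(K_X+B_X+M_X)\bigr)=\mathrm{div}(g)$ for some $g\in k(Y)^{\ast}$, and since $g$ has neither zeros nor poles on a general fibre it descends through the contraction $f$, so that $l(K_X+B_X+M_X)$ is itself a principal, in particular integral, divisor on $X$. As $12M_X$ is integral (it is the pushforward of the integral base-point-free divisor $12M_{X'}$), the integrality of $12lB_X$, and hence of $12l(K_X+B)=12\,l(K_X+B_X+M_X)-\bigl(12lM_X-l\overline{M}\bigr)$, comes out for free, and $12l(K_X+B)\sim 0$. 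This descent-of-$g$ argument is also the safer way to justify your pushforward step, since $f$ need not be equidimensional and $f^{-1}$ of a codimension-two subset of $X$ may contain divisors, which makes the reflexivity argument for $f_{\ast}\mathcal{O}_Y(f^{\ast}D)=\mathcal{O}_X(D)$ delicate. With these substitutions your argument coincides with the paper's proof.
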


\begin{proof}
We use the same notation as in the paragraph before the theorem.
\newline
By Remark~\ref{cbf.lin.eq.rmk} and the hypotheses, we can write the canonical bundle formula in the following form
$
0 \sim lK_Y \sim f^\ast l(K_X+B_X+M_X),
$
and thus
\begin{align*}
0 \sim 12lK_Y \sim f^\ast 12l(K_X+B_X+M_X) \sim f^\ast 12l(K_X+B_X+\frac{1}{12}\overline M).
\end{align*}
As the coefficients of 
$B_{X} + \frac{1}{12} \overline{M}$
vary in the DCC set 
$\mathcal R \cup \{\frac{1}{12}\}$, 
$(X, B_X+\frac{1}{12}\overline M)$ 
is klt, and
$K_{X} + B_{X} + \frac{1}{12} \overline{M} \sim_\mathbb{Q} 0$,
\cite{MR3224718}*{Theorem 1.5} implies that they actually belong to a finite set 
$\mathcal R_0 \subset \mathcal R \cup \{\frac{1}{12}\}$
which only depends on 
$d-1$ 
and 
$\mathcal R$.
Hence there exists a positive integer 
$k=k(\mathcal R_0)$ 
such that 
$kB_{X}$ 
is a Weil divisor.
Hence it suffices to take $m=12kl$ to conclude.

To verify the final part of the statement, it suffices to apply the same argument as the one at the start of the proof of Theorem~\ref{ell.cy.bound.base.thm} to reduce to the projective case.
\end{proof}

Much in the same vein one can prove the following generalization of the above result.

\begin{theorem}
\label{index.fibr.gen.thm}
Fix positive integers 
$d, l$ 
and a bounded set 
$\mathfrak{D}$ 
of K-trivial varieties.
Consider projective varieties 
$X$ 
and contractions 
$f \colon Y \to X$
of normal projective varieties such that
\begin{itemize}
    \item 
$Y$ 
is a klt of dimension 
$d$,

    \item 
$lK_Y \sim 0$, 
and

    \item 
$f \colon Y \to X$ 
is a fibration and the general fibre 
$F$ 
belongs to 
$\mathfrak{D}$.
\end{itemize}
Then there exists a positive integer
$m=m(d, l, \mathfrak{D})$ 
such that exactly one of the following cases is realized:
\begin{enumerate}
	\item[(a)] 
$X$ 
is projective klt and 
$mK_X \sim 0$; 
or

	\item[(b)] 
there exists a generalised klt log Calabi--Yau pair 
$(X, B+M)$ 
and 
$m(K_X+B+M) \sim 0$.
\end{enumerate}

The same conclusion holds also if 
$Y$
is only assumed to be proper.

\end{theorem}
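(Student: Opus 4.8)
The plan is to run the canonical bundle formula for the klt-trivial fibration $f\colon(Y,0)\to X$ exactly as in the proof of Theorem~\ref{index.fibr.thm}, the only difference being that the role played there by the concrete fact ``$|12M_{X'}|$ is base point free'' for elliptic fibrations is now played, in the numerically trivial case, by Corollary~\ref{cbf.triv.cor}. Since $(Y,0)$ is a klt $\mathbb{Q}$-pair and $K_Y\sim_{f,\mathbb{Q}}0$ (indeed $lK_Y\sim0$), Theorem~\ref{bundle} together with Remark~\ref{cbf.rmk} gives $K_Y\sim_\mathbb{Q}f^\ast(K_X+B_X+M_X)$, where $B_X$ is the effective $\mathbb{Q}$-boundary divisor, $\mathbf M$ is a b-nef b-$\mathbb{Q}$-divisor with $M_X=\mathbf M_X$, and $(X'\to X,\,B_X,\,\mathbf M_{X'})$ is a generalised klt pair; in particular $(X,B_X+M_X)$ is generalised klt and $K_X+B_X+M_X\sim_\mathbb{Q}0$.

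Next I would upgrade this to an integral statement. Put $r:=\min\{t\in\mathbb{N}_{>0}\mid tK_F\sim0\}$ for a general fibre $F$; since $lK_Y\sim0$ forces $lK_F\sim0$, we have $r\mid l$. By the refined canonical bundle formula (Remark~\ref{cbf.lin.eq.rmk}, cf.~\cite{MR3180598}*{Remark 2.10}), $rK_Y\sim f^\ast\bigl(r(K_X+B_X+M_X)\bigr)$, so, multiplying by the integer $l/r$, $lK_Y\sim f^\ast\bigl(l(K_X+B_X+M_X)\bigr)$. Combining this with $lK_Y\sim0$ and the fact that $f$ is a contraction (so that $\mathcal{O}_X(D)=f_\ast\mathcal{O}_Y(f^\ast D)$ for a $\mathbb{Q}$-Cartier divisor $D$), we conclude $l(K_X+B_X+M_X)\sim0$; this already provides the required integral linear equivalence, with constant $l$.

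Now I would split according to whether $K_X\equiv0$. If $K_X\not\equiv0$, then $B_X+M_X\not\equiv0$ and $(X,B_X+M_X)$ is a generalised klt log Calabi--Yau pair with $l(K_X+B_X+M_X)\sim0$, which is conclusion (b) (taking $B:=B_X$, $M:=M_X$). If instead $K_X\equiv0$, then, as $B_X\ge0$ and $\mathbf M$ is nef, necessarily $B_X=0$ and $\mathbf M\equiv0$, and the previous step gives $l(K_X+M_X)\sim0$. Applying Corollary~\ref{cbf.triv.cor} to the pairs $(G,0)$, $G\in\mathfrak D$ (with, say, $\mathfrak R=\{\tfrac12\}$, noting $K_G\sim_\mathbb{Q}0$, $F\in\mathfrak D$ and $M_X\equiv0$), produces an integer $q=q(\mathfrak D)$ and a representative $M$ of the class $M_X$ with $qM\sim0$; together with $l(K_X+M_X)\sim0$ this yields $lq\,K_X\sim0$, which is conclusion (a). Taking $m:=lq$ (with $q\ge1$) works uniformly in both cases since $l\mid m$, and the two alternatives are mutually exclusive, (a) being realized exactly when $K_X\equiv0$.

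The crux is the control of the moduli part. When $K_X\equiv0$ this is precisely where Corollary~\ref{cbf.triv.cor} --- hence Floris' effective semiampleness theorem (Theorem~\ref{flor.thm}) combined with the boundedness of $\mathfrak D$ --- is indispensable: it is what makes the multiple $q$ trivialising $M_X$ uniform. When $K_X\not\equiv0$ the moduli part need not be numerically trivial and effective semiampleness is unavailable, so one must instead rely on the bound on the denominators appearing in the canonical bundle formula (Remark~\ref{cbf.lin.eq.rmk}) to turn $K_X+B_X+M_X\sim_\mathbb{Q}0$ into an honest integral linear equivalence after multiplication by $l$. The remaining points --- rationality of $B_X$, the generalised klt property of $(X,B_X+M_X)$, and the projection-formula identity $\mathcal{O}_X(D)=f_\ast\mathcal{O}_Y(f^\ast D)$ --- are routine.
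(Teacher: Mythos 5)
Your proposal is correct and follows essentially the same route as the paper: write the canonical bundle formula with the integrality of Remark~\ref{cbf.lin.eq.rmk}, split according to whether $K_X\equiv 0$, use Theorem~\ref{bundle} and Remark~\ref{cbf.rmk} to get the generalised klt log Calabi--Yau structure in case (b), and trivialise the moduli part in case (a) via effective semiampleness. The only (cosmetic) difference is that you invoke Corollary~\ref{cbf.triv.cor} where the paper cites Theorem~\ref{flor.thm} directly; this if anything makes the uniform dependence of $m$ on the bounded family $\mathfrak{D}$ more explicit.
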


\begin{proof}
By Remark~\ref{cbf.lin.eq.rmk} and the hypotheses, we can write the canonical bundle formula in the following form
\begin{align}
\label{cbf.pf.eqn}
0 \sim lK_Y \sim f^\ast l(K_X+B_X+M_X).
\end{align}
In view of the above linear relation, we can distinguish two cases.

\medskip

{\bf Case 1}: 
$K_X \not \equiv 0$.
\newline
The existence of the klt log Calabi--Yau generalised pair
$(X, B+M)$
is simply a consequence of the canonical bundle formula above~\eqref{cbf.pf.eqn} and Remark~\ref{cbf.rmk}.
By the construction of the boundary divisor 
$B$ 
and ACC of the log canonical threshold, see~\cite{MR3224718}*{Theorem 1.1}, the coefficients of 
$B$ 
belong to a DCC set 
$\mathcal R'$, 
which in turns only depends from $d$.
By~\cite{MR1863025}*{Theorem 3.1}, the Cartier index of the moduli part 
$M'$
on a sufficiently high model 
$X' \to X$
is bounded from above by a positive integer 
$C=C(d, l, \mathfrak{D})$.
Then \cite{bir-zh}*{Theorem 1.6} implies that there exists a positive integer 
$k=k(d, \mathcal R', C)$ 
such that 
$kB_{X}$ 
is a Weil divisor.
Hence,
\begin{align*}
0 \sim 
C!klK_Y \sim 
f^\ast (C!kl(K_{X} + B_{X} + M))
\end{align*}
which concludes the proof of this case.

\medskip

{\bf Case 2}: $K_X \sim_\mathbb{Q} 0$.
\newline
In this case, 
$0 \sim K_Y \sim f^\ast(l(K_X+M_X))$ 
and 
$M_X$ 
is a torsion Weil divisor. 
Moreover, by Corollary~\ref{cbf.triv.cor}, there exists a constant 
$t=t(\mathfrak D)$ 
such that 
$tM_X \sim 0$.
Hence,
\begin{align*}
0 \sim tlK_Y \sim f^\ast tlK_X.
\end{align*}
To verify the final part of the statement, it suffices to apply the same argument as the one at the start of the proof of Theorem~\ref{ell.cy.bound.base.thm} to reduce to the projective case.
\end{proof}

\section{Rationally connected generalised log Calabi--Yau pairs}
\label{rc.gen.cy.sect}
\subsection{Singularities in Mori fibre spaces}
When studying the structure of a Mori fibre space $f \colon X \to Y$ it is natural to ask whether there is any way to control the singularities of the base $Y$ in terms of the singularities of the total space $X$.

A conjecture of Shokurov, later refined by Birkar,~\cite{MR3507921}*{Conjecture 1.2} predicts that one should expect an affirmative answer to the question above.
We have seen in \S~\ref{sect.cbf} that when we have a generalised pair $(X, B+M)$ and a morphism $f \colon X \to Z$ such that $K_X+B+M \sim_{f, \mathbb{Q}} 0$, then we can define a generalised pair $(Z, G+N)$ providing a generalization of the standard form of the canonical bundle formula.
Using the canonical bundle formula for generalised pairs, Birkar's conjecture can be expressed in a more general context.

\begin{conjecture} \label{rel.bab.gen.conj}~\cite{1811.10709}*{Conjecture 2.4}
Let $d$ be a positive integer and $\epsilon$ be a positive real number.
Then there exists a positive real number $\delta = \delta(d, \epsilon)$ such that if $(X, B+M)$ is a generalised pair with a contraction $f \colon X \to Z$ such that 
\begin{itemize}
\item $(X, B+M)$ is $\epsilon$-lc of dimension $d$,
\item $K_X+B+M\sim_{f, \mathbb{Q}} 0$, and
\item $-K_X$ is big over $Z$,
\end{itemize}
then the generalised pair structure $(Z, G+N)$ induced by the canonical bundle formula is $\delta$-lc.
\end{conjecture}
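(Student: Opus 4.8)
The statement is Birkar's conjecture on the singularities of the base of a generalised Fano type fibration; the plan below describes the strategy one would use to attack it, the genuine obstacle being an effective‑adjunction input for the moduli part. First I would reduce to the essential case. If $f$ is birational there is nothing fibred to analyse and one argues directly with the Negativity Lemma, so assume $\dim Z<\dim X$. Fix $L$ with $K_X+B+M\sim_{\mathbb{Q}}f^*L$ and form the generalised canonical bundle formula
\[
K_X+B+M\sim_{\mathbb{Q}}f^*(K_Z+G_Z+N_Z),
\]
with associated b‑divisors $\mathbf{G}$ (boundary part) and $\mathbf{N}$ (moduli part), $\mathbf{N}$ b‑nef. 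Since generalised log discrepancies of $(Z,G+N)$ are computed on models $Z'\to Z$ on which $\mathbf{N}$ descends, bounding them below by $\delta$ is the same as bounding, uniformly in $d$ and $\epsilon$, the coefficients of $G_{Z'}$ for all such $Z'$; equivalently, bounding below the generalised lc thresholds $l_D$ of $f'^{*}D$ (with respect to the log pullback of the generalised pair) for every prime divisor $D$ over $Z$.

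Next I would bound the general fibre. Since $(X,B+M)$ is generalised $\epsilon$‑lc and $-K_X$ is big over $Z$, the general fibre $F$ with the restricted generalised boundary is a generalised $\epsilon$‑lc weak Fano of dimension $\le d$, so by the generalised BAB theorem the fibres of $f$ form a bounded family. Combined with boundedness of generalised complements, this produces a uniform integer $m=m(d,\epsilon)$ and a bounded birational model on which $\mathbf{N}$ descends with $m\mathbf{N}$ b‑Cartier; it then lets one replace $\mathbf{N}$ by an auxiliary effective $\mathbb{Q}$‑divisor $N\sim_{\mathbb{Q}}N_Z$ with $(Z,G_Z+N)$ generalised $\delta'$‑lc away from the locus over which fibres degenerate (this is the generalised version of the argument behind Theorem~\ref{bundle}). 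The only remaining point is the contribution of the boundary part $\mathbf{G}$ over that degeneration locus.

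For the boundary part I would argue fibrewise over the generic point $\eta_D$ of each prime divisor $D$ on a high model $Z'$: the restriction of the generalised pair over $\eta_D$ is generalised $\epsilon$‑lc, its total space is of Fano type over the base, and $f'^{*}D$ is a reduced fibre. Running a relative MMP and extracting a bounded $\mathbb{N}$‑complement (Birkar's anti‑pluricanonical machinery) pushes the fibre coefficient over $D$ to have bounded denominator; then ACC for generalised lc thresholds, together with ACC for coefficients of generalised complements, forces $l_D\ge\delta$ for some $\delta=\delta(d,\epsilon)>0$. Carrying this out uniformly over all $D$ and all $Z'$ — using boundedness of the fibres so that only finitely many "types" of fibre degeneration occur — yields the required bound on $\mathbf{G}$, hence the $\delta$‑lc property.

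The main obstacle is precisely the moduli b‑divisor $\mathbf{N}$: to make the reduction in the second paragraph rigorous one needs effective generalised adjunction, i.e.\ a uniform bound on the Cartier index of $\mathbf{N}$, which is the generalised analogue of the effective semi‑ampleness conjecture (Conjecture~\ref{conj:eff.semi}). When the moduli part is numerically trivial this is Theorem~\ref{flor.thm} (as exploited in Corollary~\ref{cbf.triv.cor}), but for a genuine Fano type fibration $N_Z$ need not be numerically trivial, so one must instead invoke Birkar's boundedness statements for the nef part in the $(d,r,\epsilon)$‑Fano type fibration setting (cf.~Definition~\ref{fano-fibr.def} and the generalised analogue of Theorem~\ref{bir.bound.fibr.thm}). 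I expect this to be the only delicate step; granting it, the combination of generalised BAB for the fibres, boundedness of complements, and ACC for generalised lc thresholds produces the uniform $\delta=\delta(d,\epsilon)$.
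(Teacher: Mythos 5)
The statement you are trying to prove is not a theorem of the paper at all: it is Conjecture~\ref{rel.bab.gen.conj}, quoted verbatim from Birkar's work (\cite{1811.10709}*{Conjecture 2.4}), and the paper offers no proof of it. The only cases used in the paper are the known special case due to Birkar (\cite{MR3507921}*{Corollary 1.7}: relative dimension one with numerically trivial moduli part, i.e.\ $M\equiv 0$ as a b-divisor), which the authors extend to generalised pairs of relative dimension one by a perturbation argument in Lemma~\ref{sing.lemma}; elsewhere the conjecture is only ever \emph{assumed} (e.g.\ in the conditional theorem of Section~\ref{rc.gen.cy.sect}). So there is no ``paper proof'' for your proposal to match, and your proposal is not a proof either — by your own account it is a strategy that ``grants'' its key step.

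Concretely, the gap is twofold. First, the step you defer — a uniform bound $m=m(d,\epsilon)$ on the Cartier index of the moduli b-divisor $\mathbf{N}$, i.e.\ effective generalised adjunction — is the heart of the problem; boundedness of the \emph{general} fibres via generalised BAB does not give it, because the behaviour of $\mathbf{G}$ and $\mathbf{N}$ over a prime divisor $D$ is governed by the degenerate fibres over the generic point of $D$, which need not belong to any bounded family, and Theorem~\ref{flor.thm}/Corollary~\ref{cbf.triv.cor} only cover the numerically trivial moduli case, which is not the situation here. Second, even granting that step, your final deduction ``ACC for generalised lc thresholds forces $l_D\ge\delta$'' is not a valid inference: ACC rules out strictly increasing sequences of thresholds (equivalently, coefficients of $G_{Z'}$ accumulating from below to a limit), but a \emph{uniform positive lower bound} on the thresholds $l_D$, over all divisors $D$ on all models $Z'$ and all such fibrations, is precisely what the conjecture asserts, so invoking it there is circular. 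Unless you can supply genuinely new input for these two points (which would amount to solving the conjecture), the proposal should be presented as a reduction/strategy, not a proof.
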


A slightly different version of this conjecture appears in~\cite{MR4436596}*{Conjecture~1.5}.

Birkar proved in~\cite{MR3507921}*{Corollary 1.7} that Conjecture~\ref{rel.bab.gen.conj} holds when 
$\dim X=\dim Z +1$
and 
$M \equiv 0$
-- the latter asusmption implies that the moduli part is numerically equivalent to 
$0$ 
as a b-divisor on 
$Z$, 
which we will use in the proof of 
Theorem~\ref{eps.lcy.3.thm}.
In~\cite{birkar.last}, Birkar has provided a proof of a slightly different version of the above conjecture.

In the Introduction we have already illustrated what the reasons that make Conjecture~\ref{eps.lcy.conj} a central problem in the study of boundedness are. 
Also this conjecture can be extended to the more general setting of generalised pairs.

\begin{conjecture}\label{eps.glcy.conj}
Fix a positive integer $d$ and positive real number $\epsilon$.
Then the set of varieties $X$ such that
\begin{enumerate}
    \item $(X, B+M)$ is an $\epsilon$-lc generalised pair of dimension $d$,
    \item $K_X+B+M \sim_\mathbb{Q} 0$, and 
    \item $X$ is rationally connected
\end{enumerate}
is bounded.
\end{conjecture}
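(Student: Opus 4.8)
\noindent\textbf{A strategy for Conjecture~\ref{eps.glcy.conj}.}
The plan is to argue by induction on $d$, reducing the statement to a tower of generalised Fano type fibrations together with a K-trivial ``bottom'' piece of smaller dimension, following the pattern of the proof of Theorem~\ref{delta.lcy.thm} but now carrying along a nef part. After a small $\mathbb{Q}$-factorialisation (which preserves the generalised pair structure by the negativity lemma) and a slight perturbation of $B$, one may assume $X$ is $\mathbb{Q}$-factorial and $(X,B+M)$ is generalised klt. Since $X$ is rationally connected it is uniruled, so $K_X$ is not pseudo-effective; hence either $K_X\sim_\mathbb{Q}0$, in which case $X$ is a rationally connected K-trivial variety and one is reduced to the $B=M=0$ situation treated below, or $B+M\not\equiv 0$. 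In the latter case Theorem~\ref{mmp} lets us run a $K_X$-MMP $X\dashrightarrow X'$ ending with a Mori fibre space $X'\to T$; iterating, and using the generalised analogue of the structure theorem for log Calabi--Yau pairs (Theorem~\ref{structure}), we obtain a $\mathbb{Q}$-factorial model together with a tower of Mori fibre spaces $X'=X_0\to X_1\to\cdots\to X_k$ in which each $X_i$ carries a generalised log Calabi--Yau structure $(X_i,B_i+M_i)$ coming from the generalised canonical bundle formula, and either $\dim X_k=0$ (the \emph{non-product} case) or $\dim X_k>0$ with $K_{X_k}\equiv 0$ (the \emph{product} case).

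In the non-product case the idea is to apply the generalised-pair analogue of Birkar's tower boundedness theorem (the generalisation of Theorem~\ref{birk.tower.thm}, cf. the remark following Definition~\ref{fano-fibr.def}). To invoke it one needs two uniform constants: a lower bound $\epsilon'=\epsilon'(d,\epsilon)$ for the generalised log discrepancies of every pair $(X_i,B_i+M_i)$, and a lower bound $\tau=\tau(d,\epsilon)$ for the coefficients of the boundary parts $B_i$, together with a uniform bound on the Cartier indices of the nef parts $M_i$. The first is a repeated application of Conjecture~\ref{rel.bab.gen.conj}: the base of each generalised Mori fibre space in the tower should again be $\epsilon'$-lc with $\epsilon'$ depending only on $d$ and $\epsilon$. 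The second is a finiteness statement that one expects to extract from global ACC for generalised pairs, which pins the coefficients of the $B_i$ to a finite set once $K_{X_i}+B_i+M_i\sim_\mathbb{Q}0$, provided the nef parts are controlled; controlling the $M_i$ relies in turn on the effective semi-ampleness of the moduli part, Conjecture~\ref{conj:eff.semi}. Granting these inputs, the generalised tower theorem yields that the relevant set of $X$ is bounded.

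In the product case one reduces to lower dimension: $X_k$ is rationally connected, $\mathbb{Q}$-factorial and K-trivial of dimension $<d$, hence bounded by the inductive hypothesis (its $B=M=0$ incarnation), provided the torsion index of $K_{X_k}$ is first bounded. For the latter one propagates $K_X+B+M\sim_\mathbb{Q}0$ down the tower through the generalised canonical bundle formula; each step has relative dimension $\ge 1$, and as in Corollary~\ref{cbf.triv.cor} and Theorem~\ref{mfs.tow.CYend.thm} the moduli contributions are trivialised by a bounded multiple once the effective semi-ampleness input (Theorem~\ref{flor.thm}, Conjecture~\ref{conj:eff.semi}) is in hand, so that $mK_{X_k}\sim 0$ for some $m=m(d,\epsilon)$; boundedness of $X_k$ then follows along the lines of Theorem~\ref{rc.cy.log.bound.thm} and Corollary~\ref{equiv.rc.cy.bound.cor}. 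One then lifts boundedness of $X_k$ up the tower using the generalised tower theorem, and transports it back from the $\mathbb{Q}$-factorial model to $X$; passing from ``bounded up to flops'' to honest boundedness requires finiteness of marked minimal models within a fixed flop equivalence class, which is itself a consequence of the boundedness already obtained, in the spirit of the upgrade behind Theorem~\ref{eps.lcy.3.thm}.

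The main obstacle is that two of the ingredients above are themselves open: Conjecture~\ref{rel.bab.gen.conj} on the singularities of the base of a generalised Mori fibre space is at present known only in relative dimension one with trivial moduli b-divisor (via Lemma~\ref{sing.lemma} and Birkar's corresponding result), and the effective semi-ampleness of the moduli part, Conjecture~\ref{conj:eff.semi}, is known only in special situations such as Theorem~\ref{flor.thm}. Thus the above is at this stage a conditional argument; it becomes unconditional exactly when these two inputs are available, for instance in dimension $\le 3$, or whenever every relative dimension occurring in the tower equals one, recovering in particular the $B=0$ case of Theorem~\ref{eps.lcy.3.thm}. Removing the dependence on these two conjectures --- effectively, a generalised-pair BAB for fibrations without an a priori uniform control on the discrepancies and boundary coefficients of the induced bases --- is the essential difficulty.
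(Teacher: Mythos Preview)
The statement you were given is a \emph{conjecture} in the paper; the authors do not provide a proof of it, so there is no ``paper's own proof'' to compare against. What the paper does prove is the conditional inductive step stated immediately after the conjecture: assuming Conjecture~\ref{eps.glcy.conj} in dimensions $\leq d$ together with Conjecture~\ref{rel.bab.gen.conj}, one obtains boundedness \emph{up to flops} in dimension $d+1$ for those $X$ with $K_X\not\equiv 0$.

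Your strategy is broadly in the same spirit as that conditional result, but it is more baroque and, in places, asks for more than is needed. The paper's argument is a single step rather than a whole tower at once: run a $K_X$-MMP to a Mori fibre space $X'\to Z$, use the generalised canonical bundle formula to equip $Z$ with a generalised structure, invoke Conjecture~\ref{rel.bab.gen.conj} once to make $(Z,G+N)$ $\delta$-lc, apply the inductive hypothesis to bound $Z$, and then apply the generalised version of Theorem~\ref{bir.bound.fibr.thm} (\cite{1811.10709}*{Theorem 2.2}) to bound $X$. In particular, the paper does \emph{not} need to control the coefficients of the intermediate boundaries or the Cartier indices of the intermediate nef parts, and so does not need to invoke Conjecture~\ref{conj:eff.semi} at all. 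Your detour through a ``generalised tower theorem'' with uniform $\tau$ and Cartier-index bounds forces you to import effective semi-ampleness, which is an unnecessary extra hypothesis for the inductive step.

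Two further points. First, the paper's conditional theorem explicitly excludes the case $K_X\equiv 0$; your attempt to handle it by bounding the torsion index down the tower again leans on Conjecture~\ref{conj:eff.semi}, and even then the analogue of Theorem~\ref{rc.cy.log.bound.thm} for this base case is not supplied by the present paper. Second, your proposed upgrade from ``bounded up to flops'' to honest boundedness via ``finiteness of marked minimal models within a fixed flop equivalence class'' is not something the paper establishes or claims; the conditional theorem is content with boundedness up to flops, and getting rid of ``up to flops'' here is a genuine additional issue, not a formality.
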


We will denote by Conjecture~\ref{eps.glcy.conj}$_{\leq d}$ the statement of Conjecture~\ref{eps.glcy.conj} for all generalised pairs of dimension at most $d$.
We can prove the following conditional step towards the solution of the conjecture above.

\begin{theorem}
\label{thm.conditional.bound}
Assume that 
Conjecture~\ref{eps.glcy.conj}$_{\leq d}$ 
and that Conjecture~\ref{rel.bab.gen.conj} hold.
Then the set of varieties $X$ such that
\begin{enumerate}
    \item $(X, B+M)$ is an $\epsilon$-lc generalised pair of dimension $d+1$,
    \item $K_X+B+M \sim_\mathbb{Q} 0$,
    \item $K_X\not \equiv 0$, and 
    \item $X$ is rationally connected
\end{enumerate}
is bounded up in codimension one.
\end{theorem}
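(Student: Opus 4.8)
The plan is to reduce, by one step of the Minimal Model Program, to a Mori fibre space over a base of dimension at most $d$, to bound the base by invoking the two assumed conjectures, and finally to bound the total space using Birkar's boundedness of (generalised) Fano type fibrations. Since $X$ is rationally connected and $K_X\not\equiv 0$, the divisor $K_X$ is not pseudo-effective; on the other hand $B+M$ is pseudo-effective (the divisor $B\ge 0$ is effective and $M$ is the push-forward of a nef divisor) and $B+M\not\equiv 0$, for otherwise $K_X\equiv-(B+M)\equiv 0$, so $-(B+M)$ is not pseudo-effective. Replacing $X$ by a small $\bQ$-factorialisation of the generalised pair --- which changes neither rational connectedness, nor $K_X+B+M\sim_\bQ 0$, nor the generalised $\epsilon$-lc condition, nor the property of being bounded up to flops --- we may assume $X$ is $\bQ$-factorial, and then for $0<t\ll 1$ the class $K_X+(1-t)(B+M)\sim_\bQ -t(B+M)$ is not pseudo-effective, while $\bigl(X,(1-t)(B+M)\bigr)$ remains generalised $\epsilon$-lc. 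By Theorem~\ref{mmp} we may run a $\bigl(K_X+(1-t)(B+M)\bigr)$-MMP $X\dashrightarrow X'$ terminating with a Mori fibre space $g\colon X'\to T$, where $0\le\dim T\le d$. As $K_X+B+M\sim_\bQ 0$, every step is $(K_X+B+M)$-trivial, so $(X',B'+M')$ is crepant to $(X,B+M)$; in particular it is generalised $\epsilon$-lc with $K_{X'}+B'+M'\sim_\bQ 0$, and intersecting with the extremal ray $R$ of $g$ gives $-K_{X'}\cdot R=(B'+M')\cdot R>0$, hence $-K_{X'}$ is ample over $T$ and $X'$ is of Fano type over $T$. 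Moreover $T$, being dominated by $X'$, is rationally connected.

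If $\dim T\ge 1$, I would then apply the canonical bundle formula for generalised pairs (see \S~\ref{sect.cbf} and \cite{1807.04847}) to $g\colon(X',B'+M')\to T$, obtaining a generalised pair $(T,G+N)$ with $K_T+G+N\sim_\bQ 0$. Since $(X',B'+M')$ is generalised $\epsilon$-lc and $-K_{X'}$ is big over $T$, Conjecture~\ref{rel.bab.gen.conj} shows that $(T,G+N)$ is generalised $\delta$-lc for some $\delta=\delta(d+1,\epsilon)>0$. Because $\dim T\le d$ and $T$ is rationally connected, Conjecture~\ref{eps.glcy.conj}$_{\le d}$ then gives that the set of such $T$ is bounded; in particular there is an integer $r=r(d,\epsilon)$ and, for each $T$, a very ample divisor $A$ on $T$ with $A^{\dim T}\le r$.

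Finally, writing $K_{X'}+B'+M'\sim_\bQ g^{*}L$, the divisor $L\sim_\bQ K_T+G+N$ is $\bQ$-trivial, so $A-L$ is ample and $g\colon(X',B')\to T$ is a $(d+1,r,\epsilon)$-Fano type fibration in the sense of generalised pairs (when $\dim T=0$ one takes $A=0$, $r=1$). By the generalised analogue of Theorem~\ref{bound.fano-type.fibr.thm} (cf.\ \cite{1811.10709}), the set of such $X'$ is bounded. To descend to $X$, note that the birational contraction $X\dashrightarrow X'$ contracts finitely many divisors $E_1,\dots,E_k$, each with generalised log discrepancy $a(E_i,X',B'+M')=a(E_i,X,B+M)\in[\epsilon,1]$; by \cite{BCHM}*{Corollary 1.4.3} and its generalised counterpart there is a $\bQ$-factorial model $\pi\colon\widetilde X\to X'$ extracting precisely $E_1,\dots,E_k$, so that $\widetilde X$ is isomorphic in codimension one to $X$, the pair $(\widetilde X,\widetilde B+\widetilde M)$ is crepant to $(X,B+M)$ --- hence generalised $\epsilon$-lc with $K_{\widetilde X}+\widetilde B+\widetilde M\sim_\bQ 0$ --- and $\widetilde X$ is still of Fano type over $T$. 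Thus $g\circ\pi\colon(\widetilde X,\widetilde B)\to T$ is again a $(d+1,r,\epsilon)$-Fano type fibration over the bounded base $T$, so the set of all such $\widetilde X$ is bounded; since every $X$ in the family is isomorphic in codimension one to one of these, the set of $X$ is bounded up to flops.

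The bulk of the work lies in the generalised-pair bookkeeping: verifying that the generalised canonical bundle formula delivers a genuine generalised pair $(T,G+N)$ to which Conjecture~\ref{rel.bab.gen.conj} applies with a \emph{uniform} $\delta$; that the generalised $\epsilon$-lc property and the Fano-type-over-$T$ property survive both the $\bigl(K_X+(1-t)(B+M)\bigr)$-MMP and the divisorial extraction $\widetilde X\to X'$; and that Birkar's boundedness of Fano type fibrations is available in the generalised-pair form needed to bound the total space over a bounded base. The reason the conclusion is only ``bounded up to flops'' rather than ``bounded'' is exactly the presence of the exceptional divisors $E_i$ of $X\dashrightarrow X'$, which force us to work with a model of $X$ isomorphic to it only in codimension one.
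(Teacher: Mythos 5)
Your overall route is exactly the paper's (the paper compresses it into a few lines: reduce to the case of a Mori fibre space, apply the generalised canonical bundle formula together with Conjecture~\ref{rel.bab.gen.conj} to get a $\delta$-lc generalised log Calabi--Yau structure on the base, bound the base by Conjecture~\ref{eps.glcy.conj}$_{\leq d}$, and then bound the total space by the generalised version of Theorem~\ref{bound.fano-type.fibr.thm}, i.e.~\cite{1811.10709}*{Theorem 2.2}). Your MMP reduction, the crepancy of the steps, the relative ampleness of $-K_{X'}$ over $T$, and the bookkeeping of generalised log discrepancies of the contracted divisors $E_i$ (they lie in $[\epsilon,1]$) are all fine.

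There is, however, one unjustified step at the end: the claim that the extracted model $\widetilde X$ is \emph{still of Fano type over $T$}. Writing $K_{\widetilde X}=\pi^*K_{X'}+\sum a_iE_i$, one has $-K_{\widetilde X}=\pi^*(-K_{X'})-\sum a_iE_i$, and the ordinary discrepancies $a_i$ can be strictly positive even though the \emph{generalised} log discrepancies $a(E_i,X',B'+M')$ are at most $1$ (the boundary and nef part may be very singular along the centres of the $E_i$). Subtracting effective exceptional divisors with positive coefficients from the pullback of a relatively ample divisor can destroy relative bigness of $-K$, and indeed extracting divisors of low (generalised) log discrepancy from a variety of Fano type need not preserve the Fano type property (already for klt Calabi--Yau surface pairs: blowing up the base points of a cubic pencil on $\mathbb{P}^2$ produces a crepant model whose anticanonical class is nef but not big). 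So the hypothesis ``$-K_{\widetilde X}$ big over $T$'' needed for a $(d+1,r,\epsilon)$-Fano type fibration over $T$ is not established and can fail. The repair is immediate and is the pattern the paper uses for such extraction steps elsewhere (cf.\ the use of Theorem~\ref{bir.bound.fibr.thm} in Theorem~\ref{rc.cy.log.bound.thm}): apply the generalised boundedness of Fano type fibrations to the \emph{birational} morphism $\pi\colon \widetilde X\to X'$ over the now bounded base $X'$, where $-K_{\widetilde X}$ is automatically big over $X'$, $K_{\widetilde X}+\widetilde B+\widetilde M\sim_{\mathbb{Q}}\pi^*0$, and $X'$ carries a very ample divisor of bounded volume; this bounds $\widetilde X$, hence bounds $X$ up to flops.
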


\begin{proof}
We start by proving the following reduction.

\medskip

{\bf Claim}.
{\it 
To prove Theorem~\ref{thm.conditional.bound}, it suffices to show boundedness in codimension one for the set of varieties
$X$ 
that satisfy conditions (1)-(4) in the statement of the theorem and that are moreover endowed with a 
$K_X$-negative 
Mori fibre spaces structure 
$f \colon X \to Z$.
}

\begin{proof}[Proof of the Claim]
Since 
$K_X \not \equiv 0$, 
then we can run the 
$K_X$-MMP
$\pi \colon X \dashrightarrow X'$ 
which terminates with a
$K_{X'}$-negative
Mori fibre space structure 
$f' \colon X' \to Z$.
Let us observe that, by definition of a run of the MMP, $\pi$ is a birational contraction and since it is
$K_{X}$-negative, 
then 
$K_{X'} \not \equiv 0$.
Defining 
$B':= \pi_\ast B$,
$M':= \pi_\ast M$, 
then 
$(X',B'+M')$
is a generalized log Calabi--Yau pair,
$\pi$ 
is crepant for 
$(X,B+M)$ 
and
$(X',B'+M')$, 
and 
$(X', B'+M')$
also satisfies condition (1)-(4) in the statement of the theorem.
Hence, if the set of $X'$ just constructed is bounded in codimension one, then by Corollary~\ref{bir.bound.fibr.cor} the same must hold for the original set of varieties $X$ satisfying assumptions (1)-(4) of the statement the theorem.
\end{proof}

Hence, we will assume that $X$ is endowed with a Mori fibre space structure $f \colon X \to Z$. 
The target variety $Z$ will also be rationally connected, being the image of a rationally connected variety.
The canonical bundle formula, see \S~\ref{sect.cbf}, implies that there is a generalized log Calabi--Yau structure $(Z, G+N)$ on $Z$.
Moreover, by Conjecture~\ref{rel.bab.gen.conj} $(Z, G+N)$ is $\delta$-lc for some $\delta=\delta(d+1, \epsilon)$.
Hence, by applying Conjecture~\ref{eps.glcy.conj}, $Z$ belongs to a bounded family.
Thus, the conclusion follows by applying Theorem~\ref{bir.bound.fibr.thm}, since
$f \colon (X, B+M) \to Z$ 
is a 
$(d+1, r, \epsilon)$-Fano 
type fibration, for some 
$r \in \mathbb N_{>0}$ 
that is given by the boundedness of 
$Z$ that we just proved.
\end{proof}

\section{Proofs of the theorems and corollaries}
\label{pf.sect}

\begin{proof}[Proof of~\ref{delta.lcy.thm}]
We start by proving the following reduction.

\medskip

{\bf Claim}.
{\it 
To prove Theorem~\ref{delta.lcy.thm}, it suffices to show that the set of pairs
$(X, B)$ 
that satisfy conditions (1)-(3) in the statement of the theorem and that are moreover endowed with a tower of Mori fibre spaces
\begin{align}
\label{tower.eq.pf}
\xymatrix{
 (X, B)=(X_0, B_0) \ar[r] & X_1 \ar[r] & \dots \ar[r] & X_{s-1} \ar[r] & X_s=Z
 }
\end{align}
satisfying conditions (i)-(iii) of Theorem ~\ref{structure},
is log bounded up to flops.
}
\begin{proof}[Proof of the Claim]
Theorem~\ref{structure} implies that, given a log pair 
$(X, B)$ 
satisfying conditions (1)-(3) in the statement of Theorem~\ref{delta.lcy.thm}, there exists a log pair 
$(X', B')$ 
such that 
\begin{itemize}
    \item 
$(X', B')$ 
also satisfies conditions (1)-(3) in the statement of Theorem~\ref{delta.lcy.thm},
    \item 
there exists a birational contraction 
$\pi \colon X \dashrightarrow X'$, 
and,
    \item 
$(X', B')$ 
is moreover endowed with a tower of Mori fibre spaces as in~\eqref{tower.eq.pf}.
\end{itemize}
As 
$(X, B)$ 
and 
$(X', B')$ 
are log Calabi--Yau pairs, then 
$K_X+B=\pi^\ast (K_{X'}+B')$ 
and 
$\pi$ 
is crepant -- here, 
$\pi^\ast(K_{X'}+B')$
is well-defined even though 
$\pi$ 
is just a birational map since Theorem~\ref{structure} guarantees that 
$\pi$ 
is a birational contraction. 
Hence, 
if we assume that the set of pairs 
$(X', B')$
just constructed is log bounded up to flops, then Corollary~\ref{bir.bound.fibr.cor} implies that also the set of pairs 
$(X, B)$ 
satisfying conditions (1)-(3) in the statement of the theorem is log bounded up to flops.
\end{proof}
Hence, we will assume that the pairs $(X, B)$ are endowed with a tower of Mori fibre spaces as in~\eqref{tower.eq.pf}. 
If $(X, B)$ is not of product type, then $\dim Z=0$ and the conclusion follows from Corollary~\ref{logCY.fibred.cor}.
If $\dim Z >0$ and $K_Z \sim_\mathbb{Q} 0$, then Theorem~\ref{mfs.tow.CYend.thm} implies that there exists $m=m(d, l)$ such that $mK_Z \sim 0$.
Then, by Theorem~\ref{rc.cy.log.bound.thm}, $Z$ is bounded up to flops.
That is, there exists a klt variety $Z'$ isomorphic to $Z$ in codimension one which belongs to a bounded family.
By Corollary~\ref{bir.bound.fibr.cor}, we can assume that $Z'$ is $\mathbb{Q}$-factorial.
As, $Z' \dashrightarrow Z$ is an isomorphism in codimension one of projective varieties, it is also a birational contraction.
Thus, we can apply Proposition~\ref{bir.contr.lcy.prop} to the Mori fibre space $X_{s-1} \to Z$ in~\eqref{tower.eq.pf} and obtain a commutative diagram 
\begin{align*}
\xymatrix{
X'_{s-1}  \ar@{-->}[r] \ar[d]&
X_{s-1} \ar[d]\\
Z' \ar@{-->}[r] &
Z
}
\end{align*}
where the horizontal arrow $X'_{s-1} \dashrightarrow X_{s-1}$ is an isomorphism in codimension one of $\mathbb{Q}$-factorial projective varieties.
As all horizontal arrows in the diagram are isomorphisms in codimension one of $\mathbb{Q}$-factorial projective varieties, it follows that $\rho(X_{s-1} /Z)=\rho(X_{s-1}' /Z')=1$; 
hence, $X_{s-1}' \to Z'$ is a Mori fibre space.
As the map $X'_{s-1} \dashrightarrow X_{s-1}$ is in turn a birational contraction, we can apply Proposition~\ref{bir.contr.lcy.prop} to the MFS $X_{s-2} \to X_{s-1}$ in~\eqref{tower.eq.pf} and obtain a commutative diagram 
\begin{align*}
\xymatrix{
X'_{s-2}  \ar@{-->}[r] \ar[d]&
X_{s-2} \ar[d]\\
X'_{s-1} \ar@{-->}[r] &
X_{s-1}
}
\end{align*}
where the horizontal arrow $X'_{s-2} \dashrightarrow X_{s-2}$ is an isomorphism in codimension one of $\mathbb{Q}$-factorial projective varieties.
\newline
By inductively applying Propositions~\ref{bir.contr.lcy.prop}, as we just did, we construct a pair $(X', B')$ isomorphic in codimension one to $(X, B)$ and endowed with a tower of Mori fibre spaces
\begin{align}
\label{tower.eq.pf.1}
\xymatrix{
 (X', B') \ar[r] \ar@{-->}[d]& 
 X'_1 \ar[r] \ar@{-->}[d]& 
 \dots \ar[r] & 
 X'_{s-1} \ar[r] \ar@{-->}[d]& 
Z'\ar@{-->}[d] \\
 (X, B) \ar[r] & 
 X_1 \ar[r] & 
 \dots \ar[r] & 
 X_{s-1} \ar[r] & 
 Z,
 }
\end{align}
where all the vertical arrows are isomorphisms in codimension one and $\rho( X'_{i}/ X'_{i+1})=1$, for $i=0, \dots, s-1$.
\newline
The conclusion then follows by applying Theorem~\ref{birk.tower.thm} to the pair $(X', B')$ and the tower of morphisms in~\eqref{tower.eq.pf.1}.
\end{proof}

\begin{proof}[Proof of~\ref{ell.cy.thm}]
Since $Y$ is a simply connected Calabi--Yau, by Corollary~\ref{rc.base.cor} $X$ is rationally connected. 
Thus, Theorem~\ref{index.fibr.thm} implies that there is a choice of an effective divisor $B \geq 0$ (where we allow the possibility of $B=0$), such that the pair $(X, B)$ is klt, log Calabi--Yau, and the torsion index of $K_X+B$ is bounded.  
By Theorem~\ref{delta.lcy.thm}, the pair $(X, B)$ is log bounded up to flops.
By Propositions~\ref{small.qfact.lcy.prop} and~\ref{bir.contr.lcy.prop}, up to substituting $Y$ with an elliptic terminal Calabi--Yau $Y' \to X'$ with $Y'$ isomorphic to $Y$ in codimension one, we can assume that the base of the elliptic fibration is actually bounded.
The proof then follows immediately from Theorem~\ref{ell.cy.bound.base.thm}. 
\end{proof}

\begin{proof}[Proof of~\ref{ell.cy.thm.gen}]
As $Y \to X$ is an elliptic fibration, Theorems~\ref{index.fibr.thm}  implies that there is a choice of an effective divisor $B \geq 0$ (where we allow the possibility of $B=0$), such that the pair $(X, B)$ is klt, log Calabi--Yau, and the torsion index of $K_X+B$ is bounded
by a constant $m=m(l,d)$.
Hence, as $X$ is rationally connected, the pair $(X, B)$ is log bounded up to flops, by Theorem~\ref{delta.lcy.thm}.
By Propositions~\ref{small.qfact.lcy.prop} and~\ref{bir.contr.lcy.prop}, up to substituting $Y$ with an elliptic K-trivial variety $Y' \to X'$ with $Y'$ isomorphic to $Y$ in codimension one, we can assume that the base of the elliptic fibration is actually bounded.
The proof then follows immediately from Theorem~\ref{ell.cy.bound.base.thm}, since $Y$ is $\frac 1 l$-lc.
\end{proof}

\begin{proof}[Proof of~\ref{eps.lcy.3.thm}]
The case where $K_X \equiv 0$ and $X$ is a variety with strictly klt singularities follows from~\cite{1904.09642}*{Theorem 1.6}.
Hence, we can assume that $K_X \not \equiv 0$.
We can run a $K_X$-MMP
\begin{align}
\label{eqn:diag.mmp}
\xymatrix{
X=:X_0 \ar@{-->}[r]^{\pi_0} &
X_1 \ar@{-->}[r]^{\pi_1} &
X_2 \ar@{-->}[r]^{\pi_2} &
\dots \ar@{-->}[r]^{\pi_{n-2}} &
X_{n-1} \ar@{-->}[r]^{\pi_{n-1}} &
X_n
}
\end{align}
which terminates with a Mori fibre space
$f \colon X_n \to Z$.
We denote by 
$\pi \:= \pi_{n-1} \circ \pi_{n-2} \circ \dots \circ \pi_1 \circ \pi_0$.
The dimension of $Z$ is either $0, 1$, or $2$; 
moreover, as 
$X$ 
is RC, so is 
$Z$.
We denote by 
$B_n +M_n$
the strict transform of
$B +M$ 
on 
$X_n$. 
The pair
$(X_n, B_n +M_n)$ 
is 
$\epsilon$-lc 
generalised log Calabi--Yau,
and 
$B_n +M_n \not \equiv 0$,
since the 
$K_X$-MMP 
we ran was 
$B+M$-positive
as 
$K_X+B+M\sim_{\mathbb{R}}0$.
We first show that the set of varieties
$X_n$
obtained by running the 
$K_X$-MMP 
as in~\eqref{eqn:diag.mmp} 
is bounded.
\newline
If 
$\dim Z=0$, 
then 
$\rho(X_n)=1$
and, by the above observations, 
$X_n$ 
is a 
$\epsilon$-lc
Fano threefold.
Thus, the set of all varieties
$X_n$ 
obtained with the above construction and $\dim Z=0$ is bounded by
\cite{Bir16b}.
\newline
If $\dim Z=1$, 
then 
$Z = \mathbb P^1$, 
which is bounded.
Thus, 
$f \colon X_n \to Z$
is a
$(\epsilon, 3, 1)$-Fano 
fibration and Theorem~\ref{bir.bound.fibr.thm} implies the boundedness of the set of all varieties
$X_n$ 
obtained with the above construction when 
$\dim Z=1$.
\newline 
If 
$\dim Z=2$, 
then we prove the following claim.

\medskip

{\bf Claim}. 
{\it $Z$ is a $\delta$-lc Fano surface, for some positive $\delta=\delta(\epsilon)$}. 
\begin{proof}[Proof of the Claim]
If $M_n \sim_{Z, \mathbb{R}} 0$ then $K_{X_n}+B_n \sim_{Z, \mathbb{R}} 0$, hence~\cite{MR3507921}*{Corollary 1.7} implies that $Z$ is $\delta$-lc.
If $M$ is relatively ample over $X_n$ then letting $H_Z$ be an ample Cartier divisor on $Z$,
the results follows again from 
\cite{MR3507921}*{Corollary~1.7}
by adding considering the log divisor 
$K_{X_n}+B_n+M_n+tf^\ast H_Z$
for 
$t \gg 0$.
\end{proof}
\noindent 
The claim and the solution to the BAB conjecture for surfaces~\cite{MR1298994}*{Theorem 6.8} together imply that
the set of all surfaces
$Z$ 
just constructed is bounded. 
In turn, Theorem~\ref{bir.bound.fibr.thm} implies that the set of all varieties
$X_n$ 
obtained with the above construction and 
$\dim Z=2$
is bounded.
The proof of this last case then concludes by applying Corollary~\ref{bir.bound.fibr.cor}, since 
$\pi$
is a birational contraction which is crepant for 
$(X, B+M)$ and $(X_n, B_n+M_n)$.
\end{proof}

\begin{proof}[Proof of~\ref{hodge}] 
The result follows from the same argument as in the proof of \cite{DCS}*{Corollary~1.2} combined with Theorem~\ref{ell.cy.thm}. 
\end{proof}

\bibliographystyle{plain}

\begin{thebibliography}{9}
\bibitem{MR1298994} 
V. Alexeev,
\emph{Boundedness and {$K^2$} for log surfaces}, 
Internat. J. Math. 
\textbf{5}
(1994),
no. 6,
779--810.
MR1298994,
Zbl 0838.14028.

\bibitem{MR2153078}
F. Ambro, 
{\it Shokurov's boundary property},
J. Differential Geom.
{\bf 67}
(2004),
no. 2,
229--255.
MR2153078,
Zbl 1097.14029.

\bibitem{MR2134273}
F. Ambro, 
{\it The moduli {$b$}-divisor of an lc-trivial fibration},
Compos. Math.
{\bf 141}
(2005),
no. 2,
385--403.
MR2134273, 
Zbl 1094.14025.

\bibitem{Anderson2017},
L. B. Anderson, and M. Karkheiran, 
{\it TASI Lectures on Geometric Tools for String Compactifications},
PoS
{\bf TASI2017}
(2018), 
no. 013, 
available electronically as 
arXiv:1804.08792.

\bibitem{MR1672108}
V. V. Batyrev,
{\it Stringy {H}odge numbers of varieties with {G}orenstein canonical singularities},
Integrable systems and algebraic geometry ({K}obe/{K}yoto, 1997), 
1998, 
1--32.
MR1672108,
Zbl 0963.14015.

\bibitem{MR730926},
A. Beauville, 
{\it Vari\'{e}t\'{e}s {K}\"{a}hleriennes dont la premi\`ere classe de {C}hern est nulle},
J. Differential Geom.
{\bf 18}
(1983),
no. 4,
755--782 
(1984).
MR730926,
Zbl 0537.53056.

\bibitem{MR2929730}
C. Birkar,
{\it Existence of log canonical flips and a special {LMMP}},
Publ. Math. Inst. Hautes \'{E}tudes Sci.
{\bf 115}
(2012),
325--368.
MR2929730,
Zbl 1256.14012.

\bibitem{MR3507921}
C. Birkar, 
{\it Singularities on the base of a {F}ano type fibration}, 
J. Reine Angew. Math.
{\bf 715}
(2016),
125--142.
MR3507921,
Zbl 1348.14037.

\bibitem{1811.10709}
C. Birkar,
{\it Log {C}alabi--{Y}au fibrations},
2018,
arXiv e-print, 
\href{https://arxiv.org/abs/1811.10709}{arXiv:1811.10709v2}.

\bibitem{Bir16a}
C. Birkar, 
{\it Anti-pluricanonical systems on {F}ano varieties},
Ann. of Math. (2)
{\bf 190}
(2019),
no. 2,
345--463.
MR3997127,
Zbl 1470.14078.

\bibitem{Bir16b}
C. Birkar,
{\it Singularities of linear systems and boundedness of {F}ano varieties},
Ann. of Math. (2)
{\bf 193}
(2021),
no. 2,
347--405.
MR4224714,
Zbl 1469.14085.

\bibitem{1811.107092}
C. Birkar,
{\it Boundedness of {F}ano type fibrations},
2022,
arXiv e-print, 
\href{https://arxiv.org/abs/2209.08797}{arXiv:2209.08797v1}.
To appear in 
{\it Ann. Sci. ENS}.

\bibitem{2006.11238}
C. Birkar,
{\it Geometry of polarised varieties},
Publ. Math. Inst. Hautes \'{E}tudes Sci.
{\bf 137}
(2023),
47--105.
MR4588595,
Zbl 7692198.

\bibitem{birkar.last}
C. Birkar,
{\it Singularities on {F}ano fibrations and beyond}, 
2023,
arXiv e-print, 
\href{https://arxiv.org/abs/2305.18770}{arXiv:2305.18770v1}.

\bibitem{BCHM}
C. Birkar, 
P. Cascini, 
C. D. Hacon, 
and 
J. M\textsuperscript{c}Kernan,
{\it Existence of minimal models for varieties of log general type},
J. Amer. Math. Soc.
{\bf 23}
(2010),
no. 2, 
405--468.
MR2601039, 
Zbl 1210.14019.

\bibitem{2204.10456}
C. Birkar and C. D. Hacon, 
{\it Variations of generalised pairs},
2022,
arXiv e-print, 
\href{https://arxiv.org/abs/2204.10456}{arXiv:2204.104568v1}.

\bibitem{bir-zh}
C. Birkar and D.-Q. Zhang,
{\it Effectivity of {I}itaka fibrations and pluricanonical systems of polarized pairs},
Publ. Math. Inst. Hautes \'{E}tudes Sci.
{\bf 123}
(2016),
283--331.
MR3502099, 
Zbl 1348.14038.

\bibitem{MR3019449}
S. Boucksom, 
J.-P. Demailly, 
M. P\u{a}un, 
and 
T. Peternell,
{\it The pseudo-effective cone of a compact {K}\"{a}hler manifold and varieties of negative {K}odaira dimension},
J. Algebraic Geom.
{\bf 22}
(2013),
no. 2,
201--248.
MR3019449,
Zbl 1267.32017.

\bibitem{CDHJS}
W. Chen, 
G. Di Cerbo, 
J. Han, 
C. Jiang, 
and 
R. Svaldi, 
{\it Birational boundedness of rationally connected {C}alabi-{Y}au 3-folds},
Adv. Math.
{\bf 378}
(2021),
107541.
MR4191257,
Zbl 1477.14066.

\bibitem{DCS}
G. Di Cerbo 
and 
R. Svaldi,
{\it Birational boundedness of low-dimensional elliptic {C}alabi-{Y}au varieties with a section},
Compos. Math.
{\bf 157}
(2021),
no. 8,
1766--1806.
MR4292177,
Zbl 1503.14038.

\bibitem{1807.04847}
S. Filipazzi, 
{\it On a generalized canonical bundle formula and generalized adjunction},
Ann. Sc. Norm. Super. Pisa Cl. Sci. (5)
{\bf 21}
(2020),
1187--1221.
MR4288631
Zbl 1473.14103.

\bibitem{1804.10971}
S. Filipazzi, 
{\it Some remarks on the volume of log varieties},
Proc. Edinb. Math. Soc. (2)
{\bf 63}
(2020),
no. 2,
314--322.
MR4089377,
Zbl 1444.14042.

\bibitem{2005.05508}
S. Filipazzi, 
{\it On the boundedness of $n$-folds with $\kappa(X)=n-1$},
2020,
arXiv e-print, 
\href{https://arxiv.org/abs/2005.05508}{arXiv:2005.05508v1}. 
Accepted for publication in 
{\it Alg. Geom.}.

\bibitem{2112.01352}
S. Filipazzi,
C. Hacon, 
and 
R. Svaldi,
{\it Boundedness of elliptic {C}alabi--{Y}au threefolds},
2021,
arXiv e-print, 
\href{https://arxiv.org/abs/2112.01352}{arXiv:2112.01352v1}.

\bibitem{1810.01990}
S. Filipazzi 
and 
J. Moraga,
{\it Strong {$(\delta,n)$}-complements for semi-stable morphisms},
Doc. Math.
{\bf 25}
(2020),
1953--1996.
MR4187715,
Zbl 1455.14030.

\bibitem{2005.04254}
S. Filipazzi
and 
R. Svaldi,
{\it Invariance of plurigenera and boundedness for generalized pairs},
Mat. Contemp.
{\bf 47}
(2020),
114--150.
MR4191137.

\bibitem{MR3180598}
E. Floris, 
{\it Inductive approach to effective b-semiampleness},
Int. Math. Res. Not. IMRN
{\bf 6}
(2014),
1465--1492.
MR3180598,
Zbl 1325.14018.

\bibitem{MR2802603}
O. Fujino,
{\it Semi-stable minimal model program for varieties with trivial canonical divisor},
Proc. Japan Acad. Ser. A Math. Sci.
{\bf 87}
(2011),
no. 3,
25--30.
MR2802603,
Zbl 1230.14016.	


\bibitem{MR2944479}
O. Fujino, 
and 
Y. Gongyo, 
{\it On canonical bundle formulas and subadjunctions}.
Michigan Math. J.
{\bf 61}
(2012), 
no.2, 
255--264.
MR2944479, 
Zbl 1260.14010.


\bibitem{MR3329677}
O. Fujino, and Y. Gongyo, 
\emph{On the moduli b-divisors of lc-trivial fibrations},
Ann. Inst. Fourier (Grenoble)
\textbf{64}
(2014),
no. 4,
1721--1735.
MR3329677,
Zbl 1314.14030.

\bibitem{MR1863025}
O. Fujino, 
and 
S. Mori, 
\emph{A canonical bundle formula},
J. Differential Geom.,
\textbf{56}
(2000),
no. 1,
167--188.
MR1863025, 
Zbl 1032.14014.


\bibitem{GL}
Y. Gongyo, 
and 
B. Lehmann,
\emph{Reduction maps and minimal model theory},
Compos. Math.
\textbf{149}
(2013),
no. 2,
295--308.
MR3020310,
Zbl 1264.14025.

\bibitem{GHS}
T. Graber, 
J. Harris, 
and 
J. Starr, J.,
\emph{Families of rationally connected varieties},
J. Amer. Math. Soc.
\textbf{16}
(2003),
no. 1,
57--67.
MR1937199,
Zbl 1092.14063.

\bibitem{MR1272978}
M. Gross, 
\emph{A finiteness theorem for elliptic {C}alabi-{Y}au threefolds},
Duke Math. J.
\textbf{74}
(1994),
no. 2,
271--299.
MR1272978,
Zbl 0838.14033.

\bibitem{MR3034294}
C. D. Hacon,  
J. M\textsuperscript{c}Kernan, 
and 
C. Xu,
\emph{On the birational automorphisms of varieties of general type},
Ann. of Math. (2)
\textbf{177}
(2013),
no. 3,
1077--1111.
MR3034294,
Zbl 1281.14036.

\bibitem{MR3224718}
C. D. Hacon,  
J. M\textsuperscript{c}Kernan, 
and 
C. Xu,
\emph{A{CC} for log canonical thresholds},
Ann. of Math. (2)
\textbf{180}
(2014),
no. 2,
523--571.
MR3224718,
Zbl 1320.14023.

\bibitem{MR3779687}
C. D. Hacon,  
J. M\textsuperscript{c}Kernan, 
and 
C. Xu,
\emph{Boundedness of moduli of varieties of general type},
J. Eur. Math. Soc. (JEMS)
\textbf{20}
(2018),
no. 4,
865--901.
MR3779687,
Zbl 1464.14038.

\bibitem{HX}
C. D. Hacon,  
and 
C. Xu,
\emph{Existence of log canonical closures},
Invent. Math.
\textbf{192}
(2013),
no. 1,
161--195.
MR3032329,
Zbl 1282.14027.

\bibitem{MR3507257}
C. D. Hacon,  
and 
C. Xu,
\emph{Boundedness of log {C}alabi-{Y}au pairs of {F}ano type},
Math. Res. Lett.,
\textbf{22}
(2015),
no. 6,
1699--1716.
MR3507257,
Zbl 1362.14018.

\bibitem{HJ22}
J. Han, and C. Jiang,
\emph{Birational boundedness of rationally connected log {C}alabi--{Y}au pairs with fixed index}, 
2022,
arXiv e-print, \href{https://arxiv.org/abs/2204.04946}{arXiv:2204.04946v1}.

\bibitem{MR4436596}
J. Han, 
C. Jiang,  
and 
Y. Luo,
\emph{Shokurov's conjecture on conic bundles with canonical singularities},
Forum Math. Sigma
\textbf{10}
(2022),
Paper No. e38, 24pp.
MR4436596,
Zbl 1497.14067.

\bibitem{2202.05287}
J. Han, 
J. Liu,  
and 
Y. Luo,
\emph{ACC for minimal log discrepancies of terminal threefolds},
2022, 
arXiv e-print, 
\href{https://arxiv.org/abs/2202.05287}{arXiv:2202.05287v2}.

\bibitem{MR4398255}
J. Han,
and
W. Liu, 
\emph{On a generalized canonical bundle formula for generically finite morphisms},
Ann. Inst. Fourier (Grenoble)
\textbf{71}
(2021),
no. 5,
2047--2077.
MR4398255,
Zbl 	1520.14027.

\bibitem{Huyb_K3}
D. Huybrechts, 
\emph{Lectures on {K}3 surfaces},
Cambridge Stud. Adv. Math., 158, 
Cambridge University Press, Cambridge, 2016. 
xi+485 pp.
MR3586372,
Zbl 1360.14099.

\bibitem{MR3862064}
C. Jiang, 
\emph{On birational boundedness of {F}ano fibrations},
Amer. J. Math.
\textbf{140}
(2018),
no. 5,
1253--1276.
MR3862064,
Zbl 1404.14021.

\bibitem{1904.09642}
C. Jiang, 
\emph{A gap theorem for minimal log discrepancies of noncanonical singularities in dimension three},
J. Algebraic Geom.
\textbf{30}
(2021),
no. 4,
759--800.
MR4372404,
Zbl 1509.14035.

\bibitem{2202.07238}
J. Jiao, 
\emph{Boundedness of polarised {C}alabi--{Y}au fibrations},
2022,
arXiv e-print, \href{https://arxiv.org/abs/2202.07238}{arXiv:2202.07238v3}.

\bibitem{MR2426353}
Y. Kawamata, 
\emph{Flops connect minimal models},
Publ. Res. Inst. Math. Sci.
\textbf{44}
(2008),
no. 2,
419--423,
MR2426353,	
Zbl 1145.14014.

\bibitem{KMM}
Y. Kawamata, 
K. Matsuda, 
and K. Matsuki, 
\emph{Introduction to the minimal model problem},
Algebraic geometry, 
Sendai, 
1985,
Adv. Stud. Pure Math.
\textbf{10}
(1987),
283--360.
MR946243,
Zbl 0672.14006.

\bibitem{MR1440180}
J. Koll\'{a}r,
\emph{Rational curves on algebraic varieties},
Ergebnisse der Mathematik und ihrer Grenzgebiete. 
3. Folge. 
A Series of Modern Surveys in Mathematics 
[Results in Mathematics and Related Areas. 
3rd Series. 
A Series of Modern Surveys in Mathematics],
vol. 32,
Springer-Verlag, 
Berlin, 
1996.
MR1440180,
Zbl 0877.14012.

\bibitem{MR3184176}
J. Koll\'{a}r,
\emph{Moduli of varieties of general type},
Handbook of moduli. Vol. II,
Adv. Lect. Math. (ALM),
25,
131--157,
Int. Press, 
Somerville, 
MA, 
2013.
MR3184176,
Zbl 1322.14006.

\bibitem{koll.elliptic}
J. Koll\'{a}r,
\emph{Deformations of elliptic {C}alabi-{Y}au manifolds},
Recent advances in algebraic geometry,
London Math. Soc. Lecture Note Ser.,
vol. 417,
254--290,
Cambridge Univ. Press, Cambridge,
2015.
MR3380453,
Zbl 1326.14081.

\bibitem{MR1225842}
J. Koll\'ar et al.,
\emph{Flips and abundance for algebraic threefolds}. 
A summer seminar at the University of Utah, Salt Lake City, Utah, August 1991,
Ast\'{e}risque No. 211, 
Soci\'{e}t\'{e} Math\'{e}matique de France, 
Paris,
258pp.,
1992.
MR1225842,
Zbl 0782.00075.

\bibitem{MR2641190}
J. Koll\'ar, 
and M. Larsen, 
\emph{Quotients of {C}alabi-{Y}au varieties},
{Algebra, arithmetic, and geometry: in honor of {Y}u. {I}. {M}anin. {V}ol. {II}},
Progr. Math.,
vol. 270,
179--211,
Birkh\"{a}user Boston, Inc., Boston, MA,
2009.
MR2641190,
Zbl 1200.14074.

\bibitem{MR1189503}
J. Koll\'{a}r, 
Y. Miyaoka, 
and 
S. Mori,
\emph{Rational connectedness and boundedness of {F}ano manifolds},
J. Differential Geom.
\textbf{36}
(1992),
no. 3,
765--779.
MR1189503, 
Zbl 1200.14074.

\bibitem{KM}
J. Koll\'{a}r, 
and 
S. Mori,
\emph{Birational geometry of algebraic varieties}, 
Cambridge Tracts in Mathematics, 
vol. 134, 
Cambridge University Press, 
Cambridge, 
1998.
With the collaboration of C. H. Clemens and A. Corti, Translated from the 1998 Japanese original.
MR1658959, 
Zbl 1143.14014.

\bibitem{Lai}
C.-J. Lai, 
\emph{Varieties fibered by good minimal models},
Math. Ann.
\textbf{350}
(2011),
no. 3,
533--547.
MR2805635,
Zbl 1221.14018.

\bibitem{MR2095471}
R. Lazarsfeld, 
\emph{Positivity in algebraic geometry. {I}},
Ergebnisse der Mathematik und ihrer Grenzgebiete. 
3. Folge. 
A Series of Modern Surveys in Mathematics 
[Results in Mathematics and Related Areas. 
3rd Series. 
A Series of Modern Surveys in Mathematics],
vol. 48,
Springer-Verlag, Berlin,
2004.
Classical setting: line bundles and linear series.
MR2095471,
Zbl 1093.14501.

\bibitem{1808.01115}
V. Lazi\'{c}, 
and 
Peternell, T.,
\emph{Maps from {$K$}-trivial varieties and connectedness problems},
Ann. H. Lebesgue
\textbf{3}
(2020),
473--500.
MR4149818,
Zbl 1445.14061.

\bibitem{Lin20}
H.-Y. Lin,
\emph{Lagrangian constant cycle subvarieties in {L}agrangian fibrations},
{Int. Math. Res. Not. IMRN}
(2020),
no. 1,
14--24.
MR4050560,
Zbl 1483.14073.

\bibitem{1610.08932}
D. Martinelli, 
S. Schreieder, 
and 
L. Tasin, 
\emph{On the number and boundedness of log minimal models of general type},
{Ann. Sci. \'{E}c. Norm. Sup\'{e}r. (4)}
\textbf{53}
(2020),
no. 5,
1183--1207.
MR4174853,
Zbl 1481.14030.

\bibitem{MP}
J. M\textsuperscript{c}Kernan, 
and 
Y. Prokhorov, 
\emph{Threefold thresholds},
Manuscripta Math.
\textbf{114}
(2004),
no.3,
281--304.
MR2075967,
Zbl 1060.14022.

\bibitem{MR1228584}
K. Oguiso, 
\emph{On algebraic fiber space structures on a {C}alabi-{Y}au {$3$}-fold},
Internat. J. Math.
\textbf{4}
(1993),
no. 3,
439--465.
With an appendix by Noboru Nakayama.
MR1228584,
Zbl 0793.14030.

\bibitem{MR2448282}
Yu. G. Prokhorov, 
and 
V. V. Shokurov, 
\emph{Towards the second main theorem on complements},
{J. Algebraic Geom.}
\textbf{18}
(2009),
no. 1,
151--199.
MR2448282,
Zbl 1159.14020.

\bibitem{MR1993750}
V. V. Shokurov, 
\emph{Prelimiting flips},
{Tr. Mat. Inst. Steklova}
\textbf{240}
(2003),
82--219,
MR1993750,
Zbl 1082.14019.

\bibitem{MR3464638}
W. Taylor, 
and 
Y.-N. Wang, 
\emph{The {F}-theory geometry with most flux vacua},
{J. High Energy Phys.}
(2015),
no. 12, 
Paper No. 164, 
21pp.
MR3464638,
Zbl 1388.81367.

\bibitem{MR1368632}
E. Viehweg, 
\emph{Quasi-projective moduli for polarized manifolds},
{Ergebnisse der Mathematik und ihrer Grenzgebiete (3) 
[Results in Mathematics and Related Areas (3)]},
30,
Springer-Verlag, 
Berlin,
1995,
viii+320pp..
MR1368632,
Zbl 0844.14004.

\bibitem{MR2451566}
C. Voisin, 
\emph{Hodge theory and complex algebraic geometry. {I}},
Cambridge Studies in Advanced Mathematics,
vol. 76,
Cambridge University Press, Cambridge,
 2007,
x+322pp..
Translated from the French by Leila Schneps.
MR2451566,
Zbl 1129.14019.

\bibitem{Xu.thesis}
Y. Xu, 
\emph{Complements on log canonical {F}ano varieties and {I}ndex {C}onjecture of log {C}alabi--{Y}au varieties},
2020,
PhD Thesis, 
University of Cambridge.
Available at 
\href{https://doi.org/10.17863/CAM.58891}{https://doi.org/10.17863/CAM.58891}.

\end{thebibliography}

\end{document}